\documentclass[10pt]{amsart}


\usepackage[dvipdfmx]{graphicx, color}
\usepackage[all]{xy}
\usepackage{amssymb}
\usepackage{amscd}
\usepackage{amsmath,amsfonts,amsthm,amssymb,amscd}
\usepackage{mathrsfs}
\usepackage{extpfeil}
\usepackage[T1]{fontenc}
\setlength{\oddsidemargin}{0.3cm}
\setlength{\evensidemargin}{0.3cm}
\setlength{\topmargin}{0 cm}
\setlength{\textwidth}{15.50cm}
\setlength{\textheight}{21.14cm}

\usepackage{enumerate}

\usepackage{graphicx}

\makeatletter
\@namedef{subjclassname@2010}{%
  \textup{2010} Mathematics Subject Classification}
\makeatother



\newtheorem{Thm}{Theorem}[section]
\newtheorem{Cor}[Thm]{Corollary}
\newtheorem{Prop}[Thm]{Proposition}
\newtheorem{Lem}[Thm]{Lemma}
\newtheorem{Rem}[Thm]{Remark}
\newtheorem{Def}[Thm]{Definition}
\newtheorem{Asum}[Thm]{\textbf{Assumption}}







\numberwithin{equation}{section}




\newcommand{\Ker}{{\rm{Ker}}}

\newcommand{\Spec}{{\rm{Spec}}}

\newcommand{\resp}{{\rm{resp}}}
\newcommand{\Imm}{{\rm{Im}}}

\newcommand{\dlog}{{\rm{dlog}}}
\newcommand{\tcf}{{\rm{the\;class\;of}}}

\newcommand{\gr}{{\rm{gr}}}
\newcommand{\pr}{{\rm{pr}}}

\newcommand{\Coker}{{\rm{Coker}}}
\newcommand{\Cone}{{\rm{Cone}}}
\newcommand{\id}{{\rm{id}}}
\newcommand{\Symb}{{\rm{Symb}}}
\newcommand{\Res}{{\rm{Res}}}
\newcommand{\Supp}{{\rm{Supp}}}


\begin{document}


\baselineskip=19pt



\title[On syntomic complex with modulus for semi-stable reduction case  ]{On  syntomic complex with modulus for semi-stable reduction case }

\author[K. Yamamoto]{Kento Yamamoto}
\address{Department of Mathematics,Chuo University 1-13-27 Kasuga, Bunkyo-Ku, Tokyo 112-8551, Japan}
\email{k-yamamo@gug.math.chuo-u.ac.jp}

\date{\today}

\begin{abstract}
In this paper, we define syntomic complex for modulus pair $(X,D)$, where $X$ is regular semi-stable family and  $D$ is an effective Cartier divisor on $X$. We compute its cohomology sheaves.
\end{abstract}

\subjclass[2010]{14F30}

\keywords{syntomic complex with modulus}

\maketitle
\tableofcontents

\section{\textbf{Introduction}}

In their paper \cite{KSY}, Bruno Kahn, Hiroyasu Miyazaki, Shuji Saito and Takao Yamazaki study to construct a triangulated category of
motives with modulus $``MDM^{eff}
_{gm}"$ over a field $k$ that extends Voevodsky's category $DM^{eff}
_{gm}$ with non $\mathbb{A}^1$-homotopy invariant property. While
the Voevodsky's category $DM^{eff}_{gm}$ is
constructed from smooth $k$-varieties, the category of
motives with modulus $``MDM^{eff}
_{gm}"$ is expected to be constructed from proper modulus pairs $(X,D)$, that is, pairs of a proper $k$-variety $X$ and
an effective divisor $D$ on $X$ such that $X-|D|$ is smooth. 

Let $K$ be a $p$-adic field, and let $\mathscr{O}_K$ be its valuation ring with $k$ the residue field. Let $X$ be a regular semistable family over $\mathscr{O}_K$ and put $Y:=X\otimes_{\mathscr{O}_K}k$. Let $D$ be an effective Cartier divisor which is flat over $\mathscr{O}_K$ and such that  $Y \cup D_{{\rm red}}$ has normal crossings on $X$.
The first aim of this paper is to define the syntomic complex $\mathscr{S}_n(r)_{X|D}$ with modulus for such pairs ($X,D$) for $n\geq 1$ and $0 \leq r \leq p-1$, which is a generalization of Tsuji's syntomic complex $\mathcal{S}_n(r)_{(X,M_X)}$\;(cf.\ \cite{Ka1}, \cite {Ka2}, \cite{Ku}, \cite{Tsu0}, \cite{Tsu1}, \cite{Tsu2} etc.). More explicitly, we have $\mathscr{S}_n(r)_{X|\emptyset}=\mathcal{S}_n(r)_{(X,M_X)}$. In \cite{Tsu0}, \cite{Tsu1} and \cite{Tsu2}, Tsuji constructed the symbol map 
\[{\rm Symb}_X: (M_{X_n}^{gp})^{\otimes r} \longrightarrow \mathcal{H}^r(\mathcal{S}_n(r)_{(X,M_X)})\]
and proved the surjectivity of this map.
The second aim of this paper is to construct a symbol map for $\mathscr{S}_n(r)_{X|D}$ and to investigate its surjectivity.
We will prove the following main result:
\begin{Thm}{\rm(Theorem \ref{main result})} Let $n \geq 1$ be an integer. If $0 \leq r \leq p-2$, $p \geq 3$,
the cokernel of the symbol map \[ {\rm Symb}_{X|D}:(1+I_{D_{n+1}})^{\times} \otimes (M_{X_{n+1}}^{gp})^{\otimes{r-1}} \longrightarrow \mathcal{H}^r\left(\mathscr{S}_n(r)_{X|D}\right)\] is Mittag-Leffler zero with respect to the multiplicities of the prime components of $D$. Here $X_n:=X\otimes \mathbb{Z}/p^n\mathbb{Z}$, $D_n:=D\otimes \mathbb{Z}/p^n\mathbb{Z}$ and $I_{D_{n+1}}(\subset \mathscr{O}_{X_{n+1}})$ is the definition ideal of $D_{n+1}$;\;$M_X$ denotes the log structure on $X$ associated with $Y \cup D_{{\rm red}}$, and $M_{X_n}$ is the inverse image of $M_X$ onto $X_n$.
\end{Thm}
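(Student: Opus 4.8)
The plan is to derive the theorem from the explicit computation of the cohomology sheaves $\mathcal{H}^r(\mathscr{S}_n(r)_{X|D})$ carried out earlier in the paper, combined with Tsuji's surjectivity of $\mathrm{Symb}_X$ and a characteristic‑$p$ rigidity that accounts for the Mittag--Leffler triviality. Recall what the assertion means: writing $D=\sum_i D_i$ with the $D_i$ the prime components and letting $eD:=\sum_i e_iD_i$ range over multiplicity vectors $e=(e_i)$, the complexes $\mathscr{S}_n(r)_{X|eD}$ form an inverse system via the ``increase the modulus'' morphisms $\mathscr{S}_n(r)_{X|e'D}\to\mathscr{S}_n(r)_{X|eD}$ for $e'\ge e$, hence so do the cokernels $\mathrm{Coker}(\mathrm{Symb}_{X|eD})$, and one must produce, for each $e$, an $e'\ge e$ making the induced transition map on cokernels vanish.

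First I would record the ``forget the modulus'' morphism $\mathscr{S}_n(r)_{X|D}\to\mathcal{S}_n(r)_{(X,M_X)}$ (maximising the modulus, i.e. passing to the reduced/logarithmic case), together with the evident map $(1+I_{D_{n+1}})^{\times}\otimes(M_{X_{n+1}}^{gp})^{\otimes r-1}\to(M_{X_n}^{gp})^{\otimes r}$ induced by $1+I_{D_{n+1}}\subset\mathscr{O}_{X_{n+1}}^{\times}\subset M_{X_{n+1}}^{gp}$ and the reduction $X_{n+1}\to X_n$; these are compatible with the two symbol maps. Since $\mathrm{Symb}_X$ is surjective onto $\mathcal{H}^r(\mathcal{S}_n(r)_{(X,M_X)})$, the cokernel of $\mathrm{Symb}_{X|D}$ is a subquotient of the kernel of $\mathcal{H}^r(\mathscr{S}_n(r)_{X|D})\to\mathcal{H}^r(\mathcal{S}_n(r)_{(X,M_X)})$, which by the computation of the left‑hand sheaf from the previous sections is an explicit object built from $I_D$ and logarithmic de Rham--Witt sheaves. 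I would then run the dévissage in the level $n$: the standard exact triangles relating $\mathscr{S}_n(r)_{X|D}$, $\mathscr{S}_{n-1}(r)_{X|D}$ and $\mathscr{S}_1(r)_{X|D}$ are compatible with $\mathrm{Symb}_{X|D}$ (after the corresponding change of level in the source), so by induction on $n$ and the five lemma it suffices to treat $n=1$, where $\mathscr{S}_1(r)_{X|D}$ is, up to the usual twist and shift, governed by the logarithmic de Rham complex of the special fibre $Y$ with poles along $Y\cup D_{\mathrm{red}}$ subject to a modulus condition along $D_Y:=D\cap Y$.

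The key input at $n=1$ is a characteristic‑$p$ phenomenon: if $D_i$ is cut out locally by a coordinate $t_i$, then $d(t_i^{pe_i})=0$, so for multiplicities divisible by $p$ the derivative does not lower the modulus. Consequently, working Zariski‑locally with a $p$‑basis adapted to the normal‑crossing divisor $Y\cup D_{\mathrm{red}}$, the sections of modulus $peD$ of the $n=1$ cohomology sheaf are generated by expressions whose ``non‑symbol'' part lies in $d(I_{eD})$ composed with the inverse Cartier operator, hence is exact and vanishes in $\mathcal{H}^r$, while the remainder rewrites as a sum of terms $\mathrm{dlog}(1+a)\wedge\omega$ with $a\in I_{eD}$ and $\omega$ a logarithmic $(r-1)$‑form, i.e. as images of modulus symbols. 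Feeding this back one step per level of the Witt tower shows that replacing each $e_i$ by $p^n e_i$ (or a fixed multiple thereof) kills the transition map on cokernels, which is the asserted Mittag--Leffler triviality. The hypotheses $0\le r\le p-2$ and $p\ge 3$ enter precisely here: the expansion $\mathrm{dlog}(1+a)=da\cdot(1-a+a^2-\cdots)$ must be truncated against the divided‑power filtration of the syntomic complex, and its tail $a^p/p$ is only absorbed at the cost of one extra power of the modulus and one degree relative to Tsuji's range $r\le p-1$, while $p=2$ is excluded by the familiar failure of the relevant $\log(1+a)$‑congruences.

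The main obstacle I anticipate is the bookkeeping of the modulus through the dévissage and through the local normalisation near the crossings: the graded pieces of the filtration on $\mathcal{H}^r(\mathscr{S}_n(r)_{X|D})$ supplied by the earlier computation are not simply differential‑form sheaves with modulus but mixtures also involving $\mathscr{O}_Y/I_{D_Y}$‑twists and contributions of the operator $1-\phi/p^r$, and on each piece one must check separately --- distinguishing the components of $D$ meeting $Y$ from the behaviour at the self‑crossings of $Y$, where local coordinates and the $p$‑basis must be chosen carefully --- that the multiplicity‑$p$ transition map either lands in the span of the modulus symbols or is zero. Verifying the compatibility of $\mathrm{Symb}_{X|D}$ with the level triangles and with the comparison morphism, and ruling out boundary obstructions in the five‑lemma step, is the remaining point requiring care.
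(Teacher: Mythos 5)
Your overall plan---reduce to $n=1$ via the mod-$p$ triangles and then exploit the characteristic-$p$ fact that $d(t^{pm})=0$, so that multiplying the multiplicities of $D$ by $p$ kills the obstruction to surjectivity---is indeed the mechanism of the paper, whose key input is the Cartier inverse isomorphism with modulus (Lemma \ref{Omega}), $C^{-1}\colon \omega^q_{Y|D'_s}\xrightarrow{\ \cong\ }\mathcal{H}^q(\omega^{\cdot}_{Y|D_s})$ with $D'$ having multiplicities $\lceil m_\lambda/p\rceil$. But as written there are genuine gaps. First, your reduction step is unjustified: from the commutative square and Tsuji's surjectivity of ${\rm Symb}_X$ it does not follow that $\Coker({\rm Symb}_{X|D})$ is a subquotient of $\Ker\bigl(\mathcal{H}^r(\mathscr{S}_n(r)_{X|D})\to\mathcal{H}^r(\mathcal{S}_n(r)_{(X,M_X)})\bigr)$. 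Given a local section $x$ of the modulus cohomology, its image is a symbol ${\rm Symb}_X(\beta)$, but $\beta\in (M^{gp}_{X_{n+1}})^{\otimes r}$ need not come from $(1+I_{D_{n+1}})^{\times}\otimes (M^{gp}_{X_{n+1}})^{\otimes r-1}$, so one cannot subtract a modulus symbol from $x$ and land in that kernel. The paper never uses such a comparison; it works directly with $\mathcal{H}^q(s_1(q)_{X|D})$.

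Second, the heart of the matter---that after increasing multiplicities by a factor of $p$ the non-symbol part dies---is exactly the content you defer as ``bookkeeping'', and it is not automatic. The paper introduces the symbol filtrations $U^{\cdot}, V^{\cdot}$ and the $T$-adic filtration $\tilde{U}^{\cdot}$ built from $T^m\mathscr{O}_{\mathscr{E}_1}+J^{[p]}_{\mathscr{E}_1}$, proves they are comparable (Lemma \ref{Lem3}, Lemma \ref{compU}), computes all graded pieces (Lemmas \ref{Lem1}--\ref{Lem8}), and identifies the failure of surjectivity on each graded piece with the explicit sheaf $\mathscr{O}_Y\otimes_{\mathscr{O}_{Z_1}}\omega^{q-1}_{Z_1|\mathscr{D}'_1}\big/\mathscr{O}_Y\otimes_{\mathscr{O}_{Z_1}}\omega^{q-1}_{Z_1|\mathscr{D}_1}$ (Lemma \ref{Lem5}, Lemma \ref{CokernelSymb}), whose transition maps vanish once the multiplicities are multiplied by $p$; the finiteness $U^m=0$ for $m\ge pe/(p-1)$ (Theorem \ref{Main result} (4)) is then needed to pass from graded pieces to the full cokernel. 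Note also that your intermediate claim that the non-symbol part is ``exact and vanishes in $\mathcal{H}^r$'' is false at any fixed modulus: the cokernel is genuinely non-zero unless $D$ is zero or reduced; only the transition maps of the projective system vanish. Finally, $p\ge 3$ enters not through a $\log(1+a)$ congruence but through $\varphi_1(J^{[p]}_{\mathscr{E}_1})=0$, which is what makes the divided Frobenius compatible with $\tilde{U}^{\cdot}$. Without carrying out (or citing a substitute for) this filtration analysis, the proposal does not yet prove the theorem.
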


In fact, the cokernel of ${\rm Symb}_{X|D}$ is non-zero unless $D$ is zero or reduced, and deeply depends on the multiplicities of the prime components of $D$.
Nevertheless our main result asserts that those cokernels are Mittag-Leffler zero as a projective system. A key fact to understand this phenomenon is a Cartier inverse isomorphism in a modulus situation (see Lemma \ref{Omega} below).  From this key lemma, we will obtain an explicit description of the cokernel of the symbol map in a sufficiently local situation.

As an application of the subject of this paper, we will consider a $p$-adic \'etale Tate twists for a modulus pair $(X, D)$ in a forthcoming paper \cite{Y}, which is a generalization of Sato's $p$-adic \'etale Tate twists (\cite{Sat}). We will show that our new object is a ``dual'' of the usual $p$-adic \'etale Tate twists of $X-D$.\\

\noindent
\textbf{Notation and conventions.}\begin{enumerate}[\upshape (i)]
\item Throughout this paper, $p$ denotes a prime number and $K$ denotes a henselian discrete valuation field of characteristic $0$ whose residue field $k$ is a perfect field of characteristic $p$. We write  $\mathscr{O}_K$ for the integer ring of $K$, and $\pi$ denotes a prime element of $\mathscr{O}_K$. We denote by $\hat{K}$ the completion of $K$ with respect to the discrete valuation and by $\mathscr{O}_{\hat{K}}$ its ring of integers.
\item\ Throughout this paper, we assume that a scheme $X$ is always separated over $\mathscr{O}_K$. 
\item For a scheme $X$, we put $X_n:=X \otimes \mathbb{Z}/p^n\mathbb{Z}$.
\item \label{Not(iv)}Let $X$ be a pure-dimensional scheme which is flat of finite type over $\Spec(\mathscr{O}_K)$. We call X a $regular\;semistable\;family$ over  $\Spec(\mathscr{O}_K)$, if it is regular and 
evrywhere \'etale locally isomorphic to 
\[\Spec\big(\mathscr{O}_K[T_0,\dotsc,T_d]/(T_0 \dotsb T_a-\pi)\big),\] 
\noindent
for some $a$ such that $0\leq a \leq d:=\dim(X/\mathscr{O}_K)$.
\end{enumerate}

\section{\textbf{syntomic complex with modulus}}\label{Tate modulus}\mbox{}
In this section, we will define syntomic complex with modulus $\mathscr{S}_n(q)_{X|D}$ for $0 \leq q \leq p-1$.

\noindent
\textbf{Setting}:
\begin{itemize}
\item Let $X$ be a $regular\;semistable\;family$ over  $\Spec(\mathscr{O}_K)$. We set $Y:=X \otimes_{\mathscr{O}_K}k$ and $X_K:=X \otimes_{\mathscr{O}_K}K$. Let $D \subset X$ be an effective Cartier divisor on $X$ which is flat over $\Spec(\mathscr{O}_K)$ and $Y \cup D_{{\rm red}}$ has normal crossings on $X$.
\item Let $M_X$ be a logarithmic structure on $X$ associated with $Y \cup D_{{\rm red}} $. Let $M_D$ be a logarithmic structure on $D$ defined as the restriction of $M_X$. For $n\geq 1$, we write $M_{X_n}$ for the inverse image of log structure of $M_X$ onto $X_n$.
Let $(Y, M_Y)$ be the reduction mod $\pi$ of $(X, M_X)$.
\end{itemize}

\subsection{Local construction}
To define the syntomic complex with modulus in a sufficiently local situation, we assume the existence of the following data:
\begin{Asum}\label{Asum}\mbox{}

\begin{itemize}
\item\ There exist exact closed immersions \[\beta_n: (X_n,M_{X_n}) \hookrightarrow(Z_n,M_{Z_n})\ \ {\rm and}\ \  \beta_{n,D}: (D_n,M_{D_n}) \hookrightarrow(\mathscr{D}_n,M_{\mathscr{D}_n})\] of log schemes for $n \geq 1$ such that $(Z_n,M_{Z_n})$ and $(\mathscr{D}_n,M_{\mathscr{D}_n})$ are smooth over $W:=W(k)$,  and such that the following diagram is commutative:
\[
\xymatrix@M=10pt{
X_n \ar@<-0.3ex>@{^{(}->}[r]^-{\beta_n}&  Z_n &\\
D_n \ar@<-0.3ex>@{^{(}->}[u]\ar@<-0.3ex>@{^{(}->}[r]^-{\beta_{D,n}}& \mathscr{D}_n \ar[u]&  
}
\]
\item\ There exist a compatible system of liftings of  Frobenius endomorphisms $\{F_{Z_n}:(Z_n,M_{Z_n})\rightarrow (Z_n,M_{Z_n})\}$ and $\{F_{\mathscr{D}_n}:(\mathscr{D}_n,M_{\mathscr{D}_n})\rightarrow (\mathscr{D}_n,M_{\mathscr{D}_n})\}$ for each $n \in \mathbb{N}$ {\rm(\cite[p.71, (2.1.1)--(2.1.3)]{Tsu0})}.
\item\  The systems $\{F_{Z_n}:(Z_n,M_{Z_n})\rightarrow (Z_n,M_{Z_n})\}$ and $\{F_{\mathscr{D}_n}:(\mathscr{D}_n,M_{\mathscr{D}_n})\rightarrow (\mathscr{D}_n,M_{\mathscr{D}_n})\}$ fit into the following commutative diagram for for each $n \in \mathbb{N}$: 
\[
\xymatrix@M=10pt{
(Z_n,M_{Z_n}) \ar[r]^-{F_{Z_n}}& (Z_n,M_{Z_n})&\\
(\mathscr{D}_n,M_{\mathscr{D}_n}) \ar[u]\ar[r]^-{F_{\mathscr{D}_n}}&(\mathscr{D}_n,M_{\mathscr{D}_n})  \ar[u]&  
}\]
\end{itemize}
\end{Asum}
 Let $(\mathscr{E}_n, M_{\mathscr{E}_n})$ be the PD-envelope of $(X_n,M_{X_n})$ in $(Z_n,M_{Z_n})$ which is compatible with the canonical PD-structure on the ideal ($p$)$\subset \mathbb{Z}/p^n\mathbb{Z}$. Let $(\mathscr{E}_{n,D}, M_{\mathscr{E}_{n,D}})$ be the PD-envelope of $(D_n,M_{D_n})$ in $(\mathscr{D}_n, M_{\mathscr{D}_n})$. By the assumption that $D$ is flat over $\Spec(\mathscr{O}_K)$, we have $\mathscr{E}_{n,D} \cong \mathscr{E}_{n} \otimes_{Z_n} \mathscr{D}_n$.
The morphism $F_{Z_n}$ induces a lifting of Frobenius $F_{\mathscr{E}_n}$ of $(\mathscr{E}_n, M_{\mathscr{E}_n})$.
 For $i \geq 1$, let $ J_{\mathscr{E}_n}^{[i]} \subset \mathscr{O}_{\mathscr{E}_n}$ be the $i$-th devided power of the ideal $J_{\mathscr{E}_n}:=\Ker \Big(\mathscr{O}_{\mathscr{E}_n}\rightarrow \mathscr{O}_{X_n}\Big)$. For $i \leq 0$, we put $J_{\mathscr{E}_n}^{[i]}:=\mathscr{O}_{\mathscr{E}_n}$.
We put \[\omega_{Z_n}^q:=\Omega_{Z_n/W_n}^q\big(\log M_{Z_n}\big),\quad \omega_{Z_n|\mathscr{D}_n}^{q}:=\omega_{Z_n}^{q}\otimes_{\mathscr{O}_{Z_n}} \mathscr{O}_{Z_n}(-\mathscr{D}_n)\quad(q\geq 0) \] which are locally free $\mathscr{O}_{Z_n}$-modules. 

Let us recall that the syntomic complex is defined as follows:
\[\mathcal{S}_n(q)_{(X_n,M_{X_n}),(Z_n,M_{Z_n})}:={\rm Cone}\big(1-p^{-q}\varphi:  \omega_{Z_n}^{\text{\large$\cdot$}}\otimes_{\mathscr{O}_{Z_n}} J_{\mathscr{E}_n}^{[q-\text{\large$\cdot$}]}\rightarrow   \omega_{Z_n}^{\text{\large$\cdot$}}\otimes _{\mathscr{O}_{Z_n}}\mathscr{O}_{\mathscr{E_n}} \big)[-1]
\]
for $0 \leq q \leq p-1$ (cf.\ \cite{Tsu0}, \cite{Tsu1}, \cite{Tsu2}{\rm).

\begin{Prop}\label{distmn}{\rm(\cite[Proposition 2.2.10]{AS})} For $m, n \geq 1$, there is a short exact sequence of complexes on $X_{1, \acute{e}t}$
\[0\longrightarrow \mathcal{S}_n(q)_{X_n, Z_n}\overset{\times p^m}{\longrightarrow} \mathcal{S}_{n+m}(q)_{X_{n+m}, Z_{n+m}}\longrightarrow \mathcal{S}_m(q)_{X_m, Z_m}\longrightarrow 0,\] 
where we abbreviate $(X_n,M_{X_n}),(Z_n,M_{Z_n})$ to $X_n, Z_n$.
\end{Prop}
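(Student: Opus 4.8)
The plan is to write each syntomic complex as the shifted mapping cone of $1-p^{-q}\varphi$ and to deduce the desired short exact sequence from compatible short exact sequences between the source complexes and between the target complexes of that morphism; the passage to mapping cones is then purely formal.

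First I would establish, for every $q'\ge 0$ and every $i\in\mathbb{Z}$, short exact sequences of sheaves on $X_{1,\acute{e}t}$
\[
0\longrightarrow \omega_{Z_n}^{q'}\otimes_{\mathscr{O}_{Z_n}}\mathscr{O}_{\mathscr{E}_n}\overset{\times p^m}{\longrightarrow}\omega_{Z_{n+m}}^{q'}\otimes_{\mathscr{O}_{Z_{n+m}}}\mathscr{O}_{\mathscr{E}_{n+m}}\longrightarrow\omega_{Z_m}^{q'}\otimes_{\mathscr{O}_{Z_m}}\mathscr{O}_{\mathscr{E}_m}\longrightarrow 0,
\]
together with the analogous sequences having $J_{\mathscr{E}_n}^{[i]}$ in place of $\mathscr{O}_{\mathscr{E}_n}$. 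The $(Z_n,M_{Z_n})$ form a compatible system with $Z_n$ log-smooth over $W_n$ and $Z_n=Z_{n+m}\otimes_{W_{n+m}}W_n$, so $\omega_{Z_{n+m}}^{q'}$ is locally free over $\mathscr{O}_{Z_{n+m}}$, hence flat over $W_{n+m}$; and in the log-smooth (semi-stable) situation the PD-envelope $\mathscr{O}_{\mathscr{E}_{n+m}}$, its divided power ideals $J_{\mathscr{E}_{n+m}}^{[i]}$, and the quotients $\mathscr{O}_{\mathscr{E}_{n+m}}/J_{\mathscr{E}_{n+m}}^{[i]}$ are flat over $W_{n+m}$, and their formation commutes with reduction modulo $p^{j}$. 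Granting these facts, one obtains the displayed sequences by tensoring the flat $W_{n+m}$-modules $\omega_{Z_{n+m}}^{q'}\otimes_{\mathscr{O}_{Z_{n+m}}}J_{\mathscr{E}_{n+m}}^{[i]}$ (resp.\ $\omega_{Z_{n+m}}^{q'}\otimes_{\mathscr{O}_{Z_{n+m}}}\mathscr{O}_{\mathscr{E}_{n+m}}$) with the short exact sequence $0\to W_n\overset{\times p^m}{\to}W_{n+m}\to W_m\to 0$ of $W_{n+m}$-modules, the base-change compatibility identifying the two outer terms with the levels $n$ and $m$.

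Next I would check that these sequences are compatible with the morphisms $1-p^{-q}\varphi$ defining the three syntomic complexes. The operator $\varphi$ is induced by the lift of Frobenius $F_{\mathscr{E}_n}$ arising from the compatible system $\{F_{Z_n}\}$ of Assumption \ref{Asum}; compatibility of that system with reduction modulo $p^{j}$ is exactly what makes $\varphi$ commute with $\times p^m$ and with the projections, while $p^{-q}$ and $\id$ are trivially compatible. This produces a commutative diagram of complexes on $X_{1,\acute{e}t}$ with exact rows whose three columns are the morphisms $1-p^{-q}\varphi$ at levels $n$, $n+m$, and $m$. Forming mapping cones and shifting by $[-1]$, a short exact sequence of morphisms of complexes yields a short exact sequence of the associated cones — termwise it is the direct sum of the two given short exact sequences — and this is precisely the asserted sequence $0\to\mathcal{S}_n(q)_{X_n,Z_n}\to\mathcal{S}_{n+m}(q)_{X_{n+m},Z_{n+m}}\to\mathcal{S}_m(q)_{X_m,Z_m}\to 0$.

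The only genuinely technical ingredient — hence the main obstacle — is the flatness over $W_{n+m}$ of the log-PD-envelope $\mathscr{O}_{\mathscr{E}_{n+m}}$ and of its divided power ideals, together with the base-change compatibility of their formation; these are standard facts in semi-stable reduction theory (cf.\ \cite{Tsu0}, and \cite[Proposition 2.2.10]{AS} for the statement itself), and once they are in hand the remainder of the argument is formal homological algebra.
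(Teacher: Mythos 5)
Your proof is correct and is essentially the expected argument: the paper itself offers no proof of this proposition, simply citing \cite[Proposition 2.2.10]{AS}, and your route — termwise short exact sequences coming from flatness of $\mathscr{O}_{\mathscr{E}_{n+m}}$ and $J^{[i]}_{\mathscr{E}_{n+m}}$ over $\mathbb{Z}/p^{n+m}\mathbb{Z}$ together with base-change compatibility, compatibility of the divided Frobenius with $\times p^m$ and the reduction maps, and the formal passage from a termwise exact sequence of morphisms to the exact sequence of shifted cones — is precisely how the cited reference proceeds. Two small points to phrase more carefully: the flatness and base-change statements for the divided power ideals $J^{[i]}_{\mathscr{E}}$ are Kato's lemma and hold for $0\le i\le p-1$, not for arbitrary $i$ as your ``every $i\in\mathbb{Z}$'' suggests, which suffices here exactly because $0\le q\le p-1$ so only $J^{[q-j]}$ with $q-j\le p-1$ occurs; and the compatibility of $p^{-q}\varphi$ with $\times p^m$ and the projections is not a triviality about ``$p^{-q}$'' but follows from the uniqueness in the lifting construction of the divided Frobenius, a routine but genuine verification.
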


\noindent
\begin{Def}{\rm(}syntomic complex with modulus, sufficiently local case{\rm)}\\
We assume $q \leq p-1$. We define
\begin{multline}
{\mathscr{S}_n(q)}^{loc}_{X|D, (Z_n,M_{Z_n}),(\mathscr{D}_n,M_{\mathscr{D}_n})}\\:={\rm Cone}\big(\mathcal{S}_n(q)_{(X_n,M_{X_n}),(Z_n,M_{Z_n})}\longrightarrow \mathcal{S}_n(q)_{(D_n,M_{D_n}),(\mathscr{D}_n,M_{\mathscr{D}_n})}\big)[-1] \in D^+(X_{1,\acute{e}t}, \mathbb{Z}/p^n\mathbb{Z})
\end{multline} under the \textbf{Assumption} \ref{Asum}.
\end{Def}
\begin{Rem} If we define the syntomic complex $\mathcal{S}_n(q)_{(X_n,M_{X_n}),(Z_n,M_{Z_n})}$, we do not need the assumption that  $\mathscr{D}_n$ is an effective Cartier divisor on $Z_n$ for the global construction of the syntomic complex below. If we calculate the syntomic complex in the local situation, we need the assumption that  $\mathscr{D}_n$ is an effective Cartier divisor on $Z_n$.
\end{Rem}
\begin{Lem} The syntomic complex with modulus 
${\mathscr{S}_n(q)}^{loc}_{X|D, (Z_n,M_{Z_n}),(\mathscr{D}_n,M_{\mathscr{D}_n})}$ is independent of the choice of $(Z_n,M_{Z_n})$ and $(\mathscr{D}_n,M_{\mathscr{D}_n})$. 
\end{Lem}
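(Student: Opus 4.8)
The plan is to reduce the independence statement for $\mathscr{S}_n(q)^{loc}_{X|D,(Z_n,M_{Z_n}),(\mathscr{D}_n,M_{\mathscr{D}_n})}$ to the analogous, already-known independence statements for the two syntomic complexes $\mathcal{S}_n(q)_{(X_n,M_{X_n}),(Z_n,M_{Z_n})}$ and $\mathcal{S}_n(q)_{(D_n,M_{D_n}),(\mathscr{D}_n,M_{\mathscr{D}_n})}$ out of which our modulus complex is built as a cone. First I would recall that for the classical (non-modulus) syntomic complex, Tsuji's construction (\cite{Tsu0}, \cite{Tsu1}, \cite{Tsu2}) shows that $\mathcal{S}_n(q)_{(X_n,M_{X_n}),(Z_n,M_{Z_n})}$ is, in the derived category $D^+(X_{1,\acute{e}t},\mathbb{Z}/p^n\mathbb{Z})$, canonically independent of the choice of the embedding system $(Z_n,M_{Z_n})$ together with its Frobenius lift; the same holds for $\mathcal{S}_n(q)_{(D_n,M_{D_n}),(\mathscr{D}_n,M_{\mathscr{D}_n})}$ with respect to $(\mathscr{D}_n,M_{\mathscr{D}_n})$. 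These are the inputs I would cite rather than reprove.

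Next I would treat the comparison between two choices of data. Given two systems $(Z_n,M_{Z_n},F_{Z_n},\mathscr{D}_n,M_{\mathscr{D}_n},F_{\mathscr{D}_n})$ and $(Z_n',M_{Z_n'},F_{Z_n'},\mathscr{D}_n',M_{\mathscr{D}_n'},F_{\mathscr{D}_n'})$ satisfying \textbf{Assumption} \ref{Asum}, the standard device is to form the fiber product $(Z_n \times_{W} Z_n', M_{Z_n}\boxplus M_{Z_n'})$ (and similarly for the divisor embeddings), which dominates both; more precisely one first reduces to the case where one system maps to the other by a morphism of embedding systems compatible with the Frobenius lifts, since any two are connected through such a common refinement. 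In that dominated case, the comparison morphism on $\mathcal{S}_n(q)_{(X_n,M_{X_n}),-}$ and on $\mathcal{S}_n(q)_{(D_n,M_{D_n}),-}$ is the well-known quasi-isomorphism coming from the compatibility of PD-envelopes and the de Rham–PD comparison (functoriality of $\omega^{\cdot}\otimes J^{[\cdot]}$ and $\omega^{\cdot}\otimes \mathscr{O}_{\mathscr{E}}$ under such morphisms, together with the fact that $F$-compatibility makes $1-p^{-q}\varphi$ commute with the comparison maps). The key compatibility I must check is that these two comparison quasi-isomorphisms — one for $X$, one for $D$ — fit into a commutative square with the structural restriction morphism $\mathcal{S}_n(q)_{(X_n,M_{X_n}),-}\to \mathcal{S}_n(q)_{(D_n,M_{D_n}),-}$; this follows from the commutative diagrams imposed in \textbf{Assumption} \ref{Asum} relating the $Z$-data to the $\mathscr{D}$-data and their Frobenius lifts, and from the identification $\mathscr{E}_{n,D}\cong \mathscr{E}_n\otimes_{Z_n}\mathscr{D}_n$ coming from flatness of $D$. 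Once that square commutes (in the derived category, compatibly), functoriality of the mapping cone produces the desired quasi-isomorphism on $\mathscr{S}_n(q)^{loc}_{X|D}$.

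Finally I would address canonicity: one must check that the comparison quasi-isomorphisms are transitive (compatible for three systems $Z\to Z'\to Z''$) so that the resulting object is well-defined up to canonical isomorphism, not merely up to isomorphism. For the classical syntomic complexes this is again part of Tsuji's formalism; for the cone it follows formally once transitivity is known for both entries and for the connecting restriction map, because the mapping cone is functorial in morphisms of distinguished triangles. I expect the main obstacle to be bookkeeping: verifying that the embedding-and-Frobenius data for $D$ can always be chosen compatibly with (indeed, as a base change of) the data for $X$ when passing to a common refinement — i.e.\ that the category of systems satisfying \textbf{Assumption} \ref{Asum} is cofiltered in the appropriate sense — and that the identification $\mathscr{E}_{n,D}\cong \mathscr{E}_n\otimes_{Z_n}\mathscr{D}_n$ is natural enough to make the cone comparison strictly compatible. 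Granting the standard fact that PD-envelopes and these de Rham complexes behave functorially, the proof is then a diagram chase assembling the two known independence results through the cone construction.
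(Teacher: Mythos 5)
Your proposal is correct and follows essentially the same route as the paper: both invoke Tsuji's independence result (\cite[Corollary 1.11]{Tsu2}) for the two constituent complexes $\mathcal{S}_n(q)_{(X_n,M_{X_n}),(Z_n,M_{Z_n})}$ and $\mathcal{S}_n(q)_{(D_n,M_{D_n}),(\mathscr{D}_n,M_{\mathscr{D}_n})}$, check compatibility with the restriction map, and pass to the cone (the paper concludes via the map of long exact sequences and the five lemma, which is the same comparison you phrase as functoriality of the cone). Your extra remarks on reducing to a common refinement by fiber products and on transitivity are points the paper handles only later, in the gluing/hypercovering proposition, but they do not change the substance of the argument.
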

\begin{proof}
Choose another $(Z'_n,M_{Z'_n})$ and $(\mathscr{D}'_n,M_{\mathscr{D}'_n})$, and consider the following commutative diagrams:
\[
\xymatrix@M=10pt{
(X_n, M_{X_n}) \ar[d]^{\id} \ar@<-0.3ex>@{^{(}->}[r]^-{\beta'_n}&  (Z'_n, M_{Z'_n}) \ar[d]&\\
(X_n, M_{X_n})  \ar@<-0.3ex>@{^{(}->}[r]^-{\beta_{n}}& (Z_n, M_{Z_n}), &  
}  \xymatrix@M=10pt{
(D_n, M_{D_n}) \ar[d]^{\id} \ar@<-0.3ex>@{^{(}->}[r]^-{\beta'_{D,n}}& (\mathscr{D}'_n,M_{\mathscr{D}'_n})\ar[d]&\\
(D_n, M_{D_n})  \ar@<-0.3ex>@{^{(}->}[r]^-{\beta_{D,n}}& (\mathscr{D}_n,M_{\mathscr{D}_n}),&  
}
\]
where $\beta'_n$, $\beta_n$, $\beta'_{D,n}$ and $\beta_{D,n}$ are exact closed immersions. Let $(\mathscr{E}_{X,n}, M_{\mathscr{E}_{X,n}})$, $(\mathscr{E}'_{X,n}, M_{\mathscr{E}'_{X,n}})$ (resp. $(\mathscr{E}_{D,n}, M_{\mathscr{E}_{D,n}})$, $(\mathscr{E}'_{D,n}, M_{\mathscr{E}'_{D,n}})$) denote the PD-envelopes of $\beta_n$ and $\beta'_n$ (resp. $\beta_{D,n}$ and $\beta'_{D,n}$). From \cite[Corollary 1.11]{Tsu2}, we have quasi-isomorphisms
\[\mathcal{S}_n(q)_{(X_n,M_{X_n}),(Z_n,M_{Z_n})}  \overset{\mathrm{qis}}{\cong} \mathcal{S}_n(q)_{(X_n,M_{X_n}),(Z'_n,M_{Z'_n})}, \]
\[ \mathcal{S}_n(q)_{(D_n,M_{D_n}),(\mathscr{D}_n,M_{\mathscr{D}_n})} \overset{\mathrm{qis}}{\cong}  \mathcal{S}_n(q)_{(D_n,M_{D_n}),(\mathscr{D}'_n,M_{\mathscr{D}'_n})}. \]
We put $A:=\mathcal{S}_n(q)_{(X_n,M_{X_n}),(Z_n,M_{Z_n})}$, $A':= \mathcal{S}_n(q)_{(X_n,M_{X_n}),(Z'_n,M_{Z'_n})}$, $B:= \mathcal{S}_n(q)_{(D_n,M_{D_n}),(\mathscr{D}_n,M_{\mathscr{D}_n})}$ and $B':= \mathcal{S}_n(q)_{(D_n,M_{D_n}),(\mathscr{D}'_n,M_{\mathscr{D}'_n})}$ and morphisms $u: A^{\cdot} \rightarrow B^{\cdot}$ and $v: A'^{\cdot} \rightarrow B'^{\cdot}$. Then we have the following commutative diagram of the long exact sequences 
{\tiny\[
\xymatrix@=30pt{\cdots \ar[r] & \mathcal{H}^i(A^{\cdot})\ar[d]^-{\cong} \ar[r] & \mathcal{H}^i(B^{\cdot})\ar[d]^-{\cong} \ar[r] &\mathcal{H}^i(\Cone(u))\ar[d] \ar[r] & \mathcal{H}^{i+1}(A^{\cdot})\ar[d]^-{\cong}\ar[r]& \mathcal{H}^{i+1}(B^{\cdot})\ar[d]^-{\cong} \ar[r]& \cdots\\
\cdots \ar[r] & \mathcal{H}^i(A'^{\cdot}) \ar[r] & \mathcal{H}^i(B'^{\cdot}) \ar[r] &\mathcal{H}^i(\Cone(v)) \ar[r] & \mathcal{H}^{i+1}(A'^{\cdot})\ar[r]& \mathcal{H}^{i+1}(B'^{\cdot}) \ar[r]& \cdots.} 
\]}
Thus we have an isomorphism $\mathcal{H}^i(\Cone(u)) \cong \mathcal{H}^i(\Cone(v))$ i.e.,
a quasi-isomorphism \[\mathscr{S}_n(q)^{loc}_{X|D, (Z_n,M_{Z_n}),(\mathscr{D}_n,M_{\mathscr{D}_n})}  \overset{\mathrm{qis}}{\cong}  \mathscr{S}_n(q)^{loc}_{X|D, (Z'_n,M_{Z'_n}),(\mathscr{D}'_n,M_{\mathscr{D}'_n})}.\]
This completes the proof.
\end{proof}

\subsection{Construction in the general case} We keep the \textbf{Assumption 2.1}.
In the general case,  we define $\mathscr{S}_n(q)_{X|D} \in D^+\big(X_{1, \acute{e}t}, \mathbb{Z}/p^n\mathbb{Z}\big)$ by gluing the local complexes:        
We choose a hyper-covering $X^{\bullet}$ of $X$(resp.\;$D^{\bullet}$ of $D$) in the \'etale topology and a closed immersions $\beta_n^{\bullet}:(X_n^{\bullet}, M_{X_n^{\bullet}})\longrightarrow (Z_n^{\bullet}, M_{Z_n^{\bullet}})$ (resp.\;${\beta_{n,D}}^{\bullet}:(D_n^{\bullet}, M_{D_n^{\bullet}})\longrightarrow (\mathscr{D}_n^{\bullet}, M_{\mathscr{D}_n^{\bullet}})$), with the property that, for each integer $\mu \geq 0$, ${\beta_n}^{\mu}$(resp.\\ \;${\beta_{n,D}}^{\mu}$) is an immersion of log schemes and $(Z^{\mu}_n, M_{Z^{\mu}_n})$ (resp\;$(\mathscr{D}^{\mu}_n, M_{\mathscr{D}^{\mu}_n})$) is a smooth log scheme over $W$, in such a way that  there exists a compatible system of liftings of frobenius $\{F_{{Z^{\bullet}}_n}:(Z_n^{\bullet}, M_{Z_n^{\bullet}})\rightarrow (Z_n^{\bullet}, M_{Z_n^{\bullet}})\}$ (resp.\;$\{F_{{\mathscr{D}^{\bullet}}_n}:(\mathscr{D}_n^{\bullet}, M_{\mathscr{D}_n^{\bullet}})\rightarrow (\mathscr{D}_n^{\bullet}, M_{\mathscr{D}_n^{\bullet}})\}$).

\begin{Lem}\label{Lemfunc}{\rm (The functoriality of $\mathscr{S}_n(q)^{loc}_{X|D, (Z_n,M_{Z_n}),(\mathscr{D}_n,M_{\mathscr{D}_n})} $)}
Suppose that we are given two data \[(X, D, Z_n, \mathscr{D}_n, \beta_n, \beta_{D,n})\ \ {\rm and}\ \  (X', D', Z'_n, \mathscr{D}'_n, \beta'_n, \beta_{D',n})\] in Assumption \ref{Asum} which fit into the following commutative squares and a cartesian square:
\[
\xymatrix@M=10pt{
 ({X'}_n, M_{X'_n}) \ar[d]^{f_X} \ar@<-0.3ex>@{^{(}->}[r]^-{\beta'_n}&  (Z'_n, M_{Z'_n}) \ar[d]&\\
(X_n, M_{X_n})  \ar@<-0.3ex>@{^{(}->}[r]^-{\beta_{n}}& (Z_n, M_{Z_n}), &  
}  \xymatrix@M=10pt{
({D'}_n, M_{D'_n}) \ar[d]^{f_D} \ar@<-0.3ex>@{^{(}->}[r]^-{\beta'_{D,n}}& (\mathscr{D}'_n,M_{\mathscr{D}'_n})\ar[d]&\\
(D_n, M_{D_n})  \ar@<-0.3ex>@{^{(}->}[r]^-{\beta_{D,n}}& (\mathscr{D}_n,M_{\mathscr{D}_n}),&  
}
\]
\[
\xymatrix@M=10pt{
({D'}_n, M_{D'_n}) \ar@{}[rd]|{\square} \ar[d]^{f_D} \ar@<-0.3ex>@{^{(}->}[r]&({X'}_n, M_{X'_n})\ar[d]^-{f_X}&\\
(D_n, M_{D_n})  \ar@<-0.3ex>@{^{(}->}[r]& (X_n, M_{X_n}),&  
} \]
where we assume that $f_X : X'_n\rightarrow X_n$ and $f_D: D'_n \rightarrow D_n$ are \'etale, $f_X^*M_{X_n}\cong M_{{X'}_n}$ and that $f_D^*M_{D_n}\cong M_{{D'}_n}$.
Then the natural homomorphism of complexes:
\[(+)\ \ f_X^{*}\mathscr{S}_n(q)^{loc}_{X|D, (Z_n,M_{Z_n}),(\mathscr{D}_n,M_{\mathscr{D}_n})} \longrightarrow \mathscr{S}_n(q)^{loc}_{X'|D', ({Z'}_n,M_{{Z'}_n}),({\mathscr{D}'}_n,M_{{\mathscr{D}'}_n})} \]
is a quasi-isomorphism for any $q \geq 0$.
\end{Lem}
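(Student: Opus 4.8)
The plan is to reduce the statement about the cone complexes to the already-established functoriality of Tsuji's syntomic complexes, applied separately on $X$ and on $D$. First I would recall that, by definition,
\[
\mathscr{S}_n(q)^{loc}_{X|D, (Z_n,M_{Z_n}),(\mathscr{D}_n,M_{\mathscr{D}_n})} = \Cone\big(\mathcal{S}_n(q)_{(X_n,M_{X_n}),(Z_n,M_{Z_n})} \longrightarrow \mathcal{S}_n(q)_{(D_n,M_{D_n}),(\mathscr{D}_n,M_{\mathscr{D}_n})}\big)[-1],
\]
so the transition map $(+)$ is induced by a morphism of distinguished triangles whose outer two terms are $f_X^*\mathcal{S}_n(q)_{(X_n,M_{X_n}),(Z_n,M_{Z_n})} \to \mathcal{S}_n(q)_{(X'_n,M_{X'_n}),(Z'_n,M_{Z'_n})}$ and $f_D^*\mathcal{S}_n(q)_{(D_n,M_{D_n}),(\mathscr{D}_n,M_{\mathscr{D}_n})} \to \mathcal{S}_n(q)_{(D'_n,M_{D'_n}),(\mathscr{D}'_n,M_{\mathscr{D}'_n})}$ (the latter after pulling the restriction map on $D_n$ back along $f_X$, using the cartesian square to identify $f_X^*$ applied to the pushforward-from-$D_n$ term with the pushforward-from-$D'_n$ term of $f_D^*$). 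By the two-out-of-three property for distinguished triangles, once the two outer maps are quasi-isomorphisms, so is the induced map on cones.

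Next I would verify that each of these two outer maps is a quasi-isomorphism. This is exactly the étale-functoriality (base-change) property of Tsuji's log syntomic complex along an étale morphism: since $f_X$ (resp.\ $f_D$) is étale with $f_X^*M_{X_n}\cong M_{X'_n}$ (resp.\ $f_D^*M_{D_n}\cong M_{D'_n}$), the embedding system pulls back, the PD-envelope commutes with the étale base change (flat base change for the divided-power envelope), the de Rham–Witt/log differential sheaves $\omega^{\text{\large$\cdot$}}_{Z_n}$ and the ideals $J^{[q-\text{\large$\cdot$}]}_{\mathscr{E}_n}$ pull back compatibly, and $p^{-q}\varphi$ is compatible with the Frobenius lifts chosen to be compatible under the square. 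Hence $f_X^*\mathcal{S}_n(q)_{X_n,Z_n}\cong \mathcal{S}_n(q)_{X'_n,Z'_n}$ and likewise for $D$; this is essentially \cite[Corollary 1.11]{Tsu2} combined with étale base change, or can be cited directly from Tsuji. One subtle point to spell out: the pullback $f_X^*$ of the cone is taken on $X_{1,\acute{e}t}$, and one must check that $f_X$ restricted to the special fibers agrees with $f_D$ on the closed subscheme $D_n$; this is guaranteed by the cartesian square in the hypothesis, which also ensures that $f_X^*$ of the complex supported along $D_n$ (pushed forward to $X_n$, then reduced mod $p$) is computed by $f_D^*$ of the corresponding complex on $D'_n$.

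The main obstacle I expect is bookkeeping rather than conceptual: making precise the identification of $f_X^*$ applied to the map $\mathcal{S}_n(q)_{(X_n,M_{X_n}),(Z_n,M_{Z_n})} \to \mathcal{S}_n(q)_{(D_n,M_{D_n}),(\mathscr{D}_n,M_{\mathscr{D}_n})}$ with the corresponding map over $X'$, i.e.\ checking that the square formed by the restriction maps and $f_X^*$, $f_D^*$ genuinely commutes at the level of complexes (not just up to homotopy). This requires the compatibility of the chosen embeddings $\beta_n,\beta'_n,\beta_{D,n},\beta'_{D,n}$ in the given commutative squares together with the flat-base-change isomorphism $\mathscr{E}_{n,D}\cong \mathscr{E}_n\otimes_{Z_n}\mathscr{D}_n$ recorded above, which propagates to $\mathscr{E}'_{n,D}\cong \mathscr{E}'_n\otimes_{Z'_n}\mathscr{D}'_n$; granting this, the morphism of triangles is honest and the two-out-of-three argument finishes the proof. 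I would close by remarking that since quasi-isomorphism can be checked étale-locally, this local functoriality is precisely what is needed for the gluing construction in the general case to be well defined.
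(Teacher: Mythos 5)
Your proposal is correct and follows essentially the same route as the paper: the paper likewise invokes Tsuji's \cite[Corollary 1.11]{Tsu2} to get $f_X^*\mathcal{S}_n(q)_{X_n,Z_n}\cong \mathcal{S}_n(q)_{X'_n,Z'_n}$ and, via the cartesian square, $f_X^*\mathcal{S}_n(q)_{D_n,\mathscr{D}_n}\cong f_D^*\mathcal{S}_n(q)_{D_n,\mathscr{D}_n}\cong \mathcal{S}_n(q)_{D'_n,\mathscr{D}'_n}$, and then concludes for the cone exactly as in your two-out-of-three argument. Your additional remarks on flat base change of PD-envelopes and strict commutativity of the comparison square are finer detail than the paper records, but they do not change the argument.
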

\begin{proof} We have the following quasi-isomorphisms by using \cite[Corollary 1.11]{Tsu2}:
\[f_X^*\mathcal{S}_n(q)_{(X_n,M_{X_n}),(Z_n,M_{Z_n})}\cong \mathcal{S}_n(q)_{({X'}_n,M_{{X'}_n}),({Z'}_n,M_{{Z'}_n})},\]
\[f_X^*\mathcal{S}_n(q)_{(D_n,M_{D_n}),(\mathscr{D}_n,M_{\mathscr{D}_n})} \cong f_D^*\mathcal{S}_n(q)_{(D_n,M_{D_n}),(\mathscr{D}_n,M_{\mathscr{D}_n})}\cong \mathcal{S}_n(q)_{({D'}_n,M_{{D'}_n}),({\mathscr{D}'}_n,M_{{\mathscr{D}'}_n})}.\]
Hence we obtain the quasi-isomorphism $(+)$.
\end{proof}
We obtain the complex ${\mathscr{S}_n(q)}^{loc}_{X^{\bullet}|D^{\bullet}, (Z^{\bullet}_n,M_{Z^{\bullet}_n}),(\mathscr{D}^{\bullet}_n,M_{\mathscr{D}^{\bullet}_n})}$ on $X^{\bullet}_{1,\acute{e}t}$ by the functoriality of the local syntomic complex with modulus from the above Lemma \ref{Lemfunc}.
\begin{Def}{\rm(}syntomic complex with modulus, the general case; cf. \cite[p. 540]{Tsu2} {\rm)} We define the syntomic complex with modulus  $\mathscr{S}_n(q)_{X|D}$  to be the object
\[ R\theta_*\left({\mathscr{S}_n(q)}^{loc}_{X^{\bullet}|D^{\bullet}, (Z^{\bullet}_n,M_{Z^{\bullet}_n}),(\mathscr{D}^{\bullet}_n,M_{\mathscr{D}^{\bullet}_n})}\right)\]
of $D^{+}(X_{1,\acute{e}t}, \mathbb{Z}/p^n\mathbb{Z})$, where $\theta$ denotes the canonical morphism of topoi $(X_1^{\bullet})^{\tilde{}}_{\acute{e}t}\longrightarrow X_{1,\acute{e}t}$.
 \end{Def} 
 
 \begin{Prop}
 The syntomic complex with modulus $\mathscr{S}_n(q)_{X|D}$ is independent of the choice of hyper coverings $X^{\bullet}$ and $D^{\bullet}$ up to a canonical isomorphism.
 \end{Prop}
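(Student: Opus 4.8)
The plan is to reduce the independence statement to the descent theory for hypercoverings combined with the local comparison results already established. First I would recall the standard fact (cf.\ \cite[p.~540]{Tsu2}) that, given two étale hypercoverings $X^{\bullet}$ and $X'^{\bullet}$ of $X$ (together with compatible hypercoverings $D^{\bullet}$, $D'^{\bullet}$ of $D$ and the associated embedding systems), one may form a common refinement $X''^{\bullet}$ dominating both; by a standard cofinality argument it suffices to treat the case where $X'^{\bullet} \to X^{\bullet}$ is a morphism of hypercoverings over $\mathrm{id}_X$. So the problem becomes: for a morphism $\rho\colon X'^{\bullet} \to X^{\bullet}$ of étale hypercoverings refining the embedding data, show that the induced map $R\theta_*\big(\mathscr{S}_n(q)^{loc}_{X^{\bullet}|D^{\bullet},\dots}\big) \to R\theta'_*\big(\mathscr{S}_n(q)^{loc}_{X'^{\bullet}|D'^{\bullet},\dots}\big)$ is an isomorphism in $D^{+}(X_{1,\acute{e}t},\mathbb{Z}/p^n\mathbb{Z})$.

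The key input is Lemma \ref{Lemfunc}: for each simplicial degree $\mu$ and each structure map in the hypercovering, the transition morphism $f_X^{*}\mathscr{S}_n(q)^{loc}_{X|D,\dots} \to \mathscr{S}_n(q)^{loc}_{X'|D',\dots}$ is a quasi-isomorphism, because $f_X$ and $f_D$ are étale and the log structures pull back correctly, together with the independence-of-embedding Lemma proved just above. I would next observe that this means the local syntomic complexes with modulus assemble, over the simplicial scheme $X^{\bullet}_{1,\acute{e}t}$, into a cartesian object: the complex on $X^{\mu}_{1,\acute{e}t}$ is the pullback of a single complex up to canonical quasi-isomorphism. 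For such a cartesian complex on a hypercovering, cohomological descent (the fact that $R\theta_*$ of the pullback of a complex $\mathscr{F}$ on $X_{1,\acute{e}t}$ recovers $\mathscr{F}$, applied spectral-sequence-wise) gives that $R\theta_*$ of the assembled object is independent of the hypercovering. Concretely I would compare both $R\theta_*$ and $R\theta'_*$ to the derived pushforward along the refinement $X''^{\bullet}$, using that $\theta$ factors as $X''^{\bullet}_{1,\acute{e}t} \to X^{\bullet}_{1,\acute{e}t} \to X_{1,\acute{e}t}$ and that $R(\text{refinement})_*$ applied to a cartesian complex is the identity up to quasi-isomorphism by Lemma \ref{Lemfunc} applied degreewise.

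I would then package the canonicity: the quasi-isomorphisms of Lemma \ref{Lemfunc} are themselves functorial (they come from \cite[Corollary 1.11]{Tsu2}, which provides canonical comparison maps, not merely abstract isomorphisms), so the isomorphism $\mathscr{S}_n(q)_{X|D}$ for one choice $\cong$ $\mathscr{S}_n(q)_{X|D}$ for another choice is canonical and satisfies the obvious cocycle condition for a third choice. This last bookkeeping — verifying that all the comparison quasi-isomorphisms are compatible on overlaps so that the identification is genuinely canonical and not just existential — is where I expect the real work to lie; the descent mechanism itself is formal once Lemma \ref{Lemfunc} is in hand. The main obstacle, then, is not any single hard estimate but the careful simplicial bookkeeping needed to promote the degreewise quasi-isomorphisms to a canonical isomorphism in the derived category that is compatible with all further changes of data; this is handled exactly as in Tsuji's treatment of the classical syntomic complex (\cite{Tsu2}), and the modulus decoration propagates through without new difficulty because $\mathscr{O}_{Z_n}(-\mathscr{D}_n)$ pulls back compatibly under the étale maps $f_X$.
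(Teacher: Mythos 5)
Your proposal is correct and follows essentially the same route as the paper: the paper realizes your ``common refinement'' concretely as the fiber products $X''^{\bullet}=X^{\bullet}\times_X X'^{\bullet}$, $Z''^{\bullet}_n=Z^{\bullet}_n\times Z'^{\bullet}_n$, $\mathscr{D}''^{\bullet}_n=\mathscr{D}^{\bullet}_n\times\mathscr{D}'^{\bullet}_n$ with product Frobenius lifts, and then uses the canonical pullback quasi-isomorphisms of \cite[Corollary 1.11]{Tsu2} (the same input underlying Lemma \ref{Lemfunc}) together with the comparison of $R\theta_*$, $R\theta'_*$, $R\theta''_*$ and the transitivity of these identifications as in Tsuji. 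The only cosmetic difference is that you reduce to a single refinement morphism while the paper compares both choices symmetrically to the product, which is not a genuinely different argument.
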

 \begin{proof}
 If we choose another $X'^{\bullet}$, $\beta_n^{\bullet}: (X'^{\bullet}_n,M_{X'^{\bullet}_n}) \hookrightarrow(Z'^{\bullet}_n,M_{Z'^{\bullet}_n})$, $\beta_{n,D}^{\bullet}: (D_n'^{\bullet},M_{D_n'^{\bullet}}) \hookrightarrow({\mathscr{D}_n^{'\bullet}},M_{{\mathscr{D}_n^{'\bullet}}})$, $\{F_{Z_n}\}_n$ and $\{F_{\mathscr{D}_n}\}_n$, then by taking the fiber products
 \[X''^{\bullet}:=X^{\bullet}\times_{X}X'^{\bullet},\; (Z''^{\bullet}_n,M_{Z''^{\bullet}_n}):=(Z^{\bullet}_n,M_{Z^{\bullet}_n})\times_{\mathbb{Z}/p^n\mathbb{Z}} (Z'^{\bullet}_n,M_{Z'^{\bullet}_n})\]
\[D''^{\bullet}:=D^{\bullet}\times_{D}D'^{\bullet},\;({\mathscr{D}^{''\bullet}}_n,M_{{\mathscr{D}^{''\bullet}}_n}):=({\mathscr{D}^{\bullet}}_n,M_{{\mathscr{D}^{\bullet}}_n})\times_{\mathbb{Z}/p^n\mathbb{Z}} ({\mathscr{D}_n^{'\bullet}},M_{{\mathscr{D}_n^{'\bullet}}}),\] 
\[F_{Z''^{\bullet}_n}:=F_{Z^{\bullet}_n}\times F_{Z'^{\bullet}_n},\;F_{{\mathscr{D}^{''\bullet}}_n}:=F_{{\mathscr{D}^{\bullet}}_n}\times F_{{\mathscr{D}^{'\bullet}}_n}.\]
We put  $\pr$, $\pr'$, $\theta'$, $\theta''$ the canonical morphism of topoi
\[({X''}_1^{\bullet})^{\tilde{}}_{\acute{e}t}\longrightarrow (X_1^{\bullet})^{\tilde{}}_{\acute{e}t},\;({X''}_1^{\bullet})^{\tilde{}}_{\acute{e}t}\longrightarrow ({X'}_1^{\bullet})^{\tilde{}}_{\acute{e}t},\]
\[({X'}_1^{\bullet})^{\tilde{}}_{\acute{e}t}\longrightarrow ({X_1}')^{\tilde{}}_{\acute{e}t},\;({X''}_1^{\bullet})^{\tilde{}}_{\acute{e}t}\longrightarrow ({X_1}'')^{\tilde{}}_{\acute{e}t}.\]
By using \cite[Corollary1.11]{Tsu2}, we obtain canonical quasi-isomorphisms
\[\pr^{-1}\mathcal{S}_n(r)_{(X_n^{\bullet},M_{X_n^{\bullet}}),(Z_n^{\bullet}, M_{Z_n^{\bullet}})}\longrightarrow \mathcal{S}_n(r)_{({X''}_n^{\bullet},M_{{X''}_n^{\bullet}}),({Z''}_n^{\bullet}, M_{{Z''}_n^{\bullet}})},\]
\[{\pr'}^{-1}\mathcal{S}_n(r)_{({X'}_n^{\bullet},M_{{X'}_n^{\bullet}}),({Z'}_n^{\bullet}, M_{{Z'}_n^{\bullet}})}\longrightarrow \mathcal{S}_n(r)_{({X''}_n^{\bullet},M_{{X''}_n^{\bullet}}),({Z''}_n^{\bullet}, M_{{Z''}_n^{\bullet}})},\]
 \[\pr^{-1}\mathcal{S}_n(r)_{(D_n^{\bullet},M_{D_n^{\bullet}}),(\mathscr{D}_n^{\bullet}, M_{\mathscr{D}_n^{\bullet}})}\longrightarrow \mathcal{S}_n(r)_{({D''}_n^{\bullet},M_{{D''}_n^{\bullet}}),({\mathscr{D}''}_n^{\bullet}, M_{{\mathscr{D}'}_n^{\bullet}})},\]
\[{\pr'}^{-1}\mathcal{S}_n(r)_{({D'}_n^{\bullet},M_{{D'}_n^{\bullet}}),({\mathscr{D}'}_n^{\bullet}, M_{{\mathscr{D}'}_n^{\bullet}})}\longrightarrow \mathcal{S}_n(r)_{({D''}_n^{\bullet},M_{{D''}_n^{\bullet}}),({\mathscr{D}''}_n^{\bullet}, M_{{\mathscr{D}''}_n^{\bullet}})}\]
and a canonical quasi-isomorphisms
\begin{align*}
(\alpha)\ \ R\theta_*\Big(\mathcal{S}_n(r)_{(X_n^{\bullet},M_{X_n^{\bullet}}),(Z_n^{\bullet}, M_{Z_n^{\bullet}})}\Big)&\xrightarrow{\cong} R{\theta''}_*\Big(\mathcal{S}_n(r)_{({X''}_n^{\bullet},M_{{X''}_n^{\bullet}}),({Z''}_n^{\bullet}, M_{{Z''}_n^{\bullet}})}\Big)\\
&\xleftarrow{\cong} R{\theta'}_*\Big(\mathcal{S}_n(r)_{({X'}_n^{\bullet},M_{{X'}_n^{\bullet}}),({Z'}_n^{\bullet}, M_{{Z'}_n^{\bullet}})}\Big),
\end{align*}
\begin{align*}
(\beta)\ \ R\theta_*\Big(\mathcal{S}_n(r)_{(D_n^{\bullet},M_{D_n^{\bullet}}),(\mathscr{D}_n^{\bullet}, M_{\mathscr{D}_n^{\bullet}})}\Big)&\xrightarrow{\cong} R{\theta''}_*\Big(\mathcal{S}_n(r)_{({D''}_n^{\bullet},M_{{D''}_n^{\bullet}}),({\mathscr{D}''}_n^{\bullet}, M_{{\mathscr{D}''}_n^{\bullet}})}\Big)\\
&\xleftarrow{\cong} R{\theta'}_*\Big(\mathcal{S}_n(r)_{({D'}_n^{\bullet},M_{{D'}_n^{\bullet}}),({\mathscr{D}'}_n^{\bullet}, M_{{\mathscr{D}'}_n^{\bullet}})}\Big),
\end{align*}
Hence we obtain 
\begin{multline*}
R\theta_*\Big({\mathscr{S}_n(q)}^{loc}_{X^{\bullet}|D^{\bullet},(Z^{\bullet}_n,M_{Z^{\bullet}_n}),(\mathscr{D}^{\bullet}_n,M_{\mathscr{D}^{\bullet}_n}) }\Big)\xrightarrow{\cong} R{\theta''}_*\Big({\mathscr{S}_n(q)}^{loc}_{{X''}^{\bullet}|{D''}^{\bullet},({Z''}^{\bullet}_n,M_{{Z''}^{\bullet}_n}),({\mathscr{D}''}^{\bullet}_n,M_{{\mathscr{D}''}^{\bullet}_n}) }\Big)\\
\xleftarrow{\cong} R{\theta'}_*\Big({\mathscr{S}_n(q)}^{loc}_{X'^{\bullet}|D'^{\bullet},({Z'}^{\bullet}_n,M_{{Z'}^{\bullet}_n}),({\mathscr{D}'}^{\bullet}_n,M_{{\mathscr{D}'}^{\bullet}_n}) }\Big).
\end{multline*}
This quasi-isomorphism satisfies the transitivity because the quasi-isomorphisms $(\alpha)$ and $(\beta)$ satisfy the transitivity (\cite[p.542, l.7]{Tsu2}). This completes the proof.  \end{proof}
 
\begin{Lem}{\rm (cf. Proposition \ref{distmn})}\label{dist} Let $n \geq 1$ be an integer. We have a distinguished triangle
\[\mathscr{S}_n(q)_{X|D}\overset{\times p}{\longrightarrow} \mathscr{S}_{n+1}(q)_{X|D}\longrightarrow \mathscr{S}_1(q)_{X|D}\longrightarrow \mathscr{S}_n(q)_{X|D}[1].\]
\end{Lem}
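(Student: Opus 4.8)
The plan is to deduce the triangle from Proposition \ref{distmn}, applied once on the ambient side $(X_n,Z_n)$ and once on the divisor side $(D_n,\mathscr{D}_n)$, together with the standard fact that a short exact sequence of complexes yields a distinguished triangle, and then to glue along the hyper-covering.

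First I would argue locally. Fix a member $X^{\mu}$ of the chosen hyper-covering together with the embedding data of Assumption \ref{Asum}. By Proposition \ref{distmn} with $m=1$ we have a short exact sequence of complexes on $X^{\mu}_{1,\acute{e}t}$
\[0\longrightarrow \mathcal{S}_n(q)_{X_n, Z_n}\overset{\times p}{\longrightarrow} \mathcal{S}_{n+1}(q)_{X_{n+1}, Z_{n+1}}\longrightarrow \mathcal{S}_1(q)_{X_1, Z_1}\longrightarrow 0,\]
and the same argument as in \cite[Proposition 2.2.10]{AS} (which only uses the smoothness of the ambient scheme over $W$ and the $\mathbb{Z}/p^{n}\mathbb{Z}$-flatness of the graded pieces $\omega^{\text{\large$\cdot$}}\otimes J^{[q-\text{\large$\cdot$}]}$ and $\omega^{\text{\large$\cdot$}}\otimes\mathscr{O}_{\mathscr{E}}$) gives the analogous short exact sequence with $(X_{\bullet},Z_{\bullet})$ replaced by $(D_{\bullet},\mathscr{D}_{\bullet})$. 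The restriction morphisms $u_k\colon \mathcal{S}_k(q)_{X_k, Z_k}\to \mathcal{S}_k(q)_{D_k, \mathscr{D}_k}$ entering the Definition of $\mathscr{S}_k(q)^{loc}_{X|D,(Z_k,M_{Z_k}),(\mathscr{D}_k,M_{\mathscr{D}_k})}$ commute with $\times p$ and with the two surjections, so the two short exact sequences above form a commutative diagram of short exact sequences of complexes.

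Next I would pass to mapping cones. By the Definition above one has $\Cone(u_k)=\mathscr{S}_k(q)^{loc}_{X|D,(Z_k,M_{Z_k}),(\mathscr{D}_k,M_{\mathscr{D}_k})}[1]$. Since in each degree $\Cone(u_k)^i=\mathcal{S}_k(q)_{X_k,Z_k}^{i+1}\oplus \mathcal{S}_k(q)_{D_k,\mathscr{D}_k}^{i}$, and a termwise direct sum of short exact sequences is short exact, the diagram above produces a short exact sequence of complexes
\[0\longrightarrow \Cone(u_n)\longrightarrow \Cone(u_{n+1})\longrightarrow \Cone(u_1)\longrightarrow 0,\]
hence a distinguished triangle; rotating and shifting by $[-1]$ gives the distinguished triangle
\[\mathscr{S}_n(q)^{loc}_{X|D}\overset{\times p}{\longrightarrow} \mathscr{S}_{n+1}(q)^{loc}_{X|D}\longrightarrow \mathscr{S}_1(q)^{loc}_{X|D}\longrightarrow \mathscr{S}_n(q)^{loc}_{X|D}[1]\]
of local syntomic complexes with modulus on $X^{\mu}_{1,\acute{e}t}$.

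Finally I would globalize. All the morphisms constructed are canonical, so by the functoriality established in Lemma \ref{Lemfunc} they are compatible with the simplicial transition maps of the hyper-covering and glue to a short exact sequence of complexes on $X^{\bullet}_{1,\acute{e}t}$ between the objects $\mathscr{S}_{\bullet}(q)^{loc}_{X^{\bullet}|D^{\bullet},(Z^{\bullet}_{\bullet},M),(\mathscr{D}^{\bullet}_{\bullet},M)}$; applying the triangulated functor $R\theta_*$ and invoking the Definition of $\mathscr{S}_{\bullet}(q)_{X|D}$ then yields the asserted distinguished triangle. The one point requiring genuine care is the applicability of Proposition \ref{distmn} to the divisor side, i.e.\ that the $\times p$ short exact sequence is valid for $(D_n,\mathscr{D}_n)$ even though $D$ itself need not be a regular semistable family, and, relatedly, checking that these short exact sequences are \emph{strictly} (not merely up to homotopy) compatible with the hyper-covering data so that the gluing produces an honest distinguished triangle rather than only a degreewise one; the remaining steps are formal homological algebra.
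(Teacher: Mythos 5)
Your proposal is correct and follows essentially the same route as the paper: reduce to the local complexes, apply Proposition \ref{distmn} with $m=1$ on both the $(X_n,Z_n)$ and $(D_n,\mathscr{D}_n)$ sides (valid on the divisor side by the flatness of $D$ and smoothness of $(\mathscr{D}_n,M_{\mathscr{D}_n})$ over $W$), and deduce the triangle for the cones since $\times p$ commutes with the maps $u_k$, which is exactly the paper's identification $\Cone(C^{\cdot}_n\xrightarrow{\times p}C^{\cdot}_{n+1})\cong C^{\cdot}_1$. Your termwise short-exact-sequence-of-cones formulation and the explicit gluing via Lemma \ref{Lemfunc} and $R\theta_*$ just spell out steps the paper leaves implicit.
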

\begin{proof}It is enough to show the existence of the following distinguished triangle
\begin{multline*}  \mathscr{S}_n(q)^{loc}_{X|D, (Z_n,M_{Z_n}),(\mathscr{D}_n,M_{\mathscr{D}_n})}\overset{\times p}{\longrightarrow} \mathscr{S}_{n+1}(q)^{loc}_{X|D, (Z_{n+1},M_{Z_{n+1}}),(\mathscr{D}_{n+1},M_{\mathscr{D}_{n+1}})}\\ \longrightarrow \mathscr{S}_1(q)^{loc}_{X|D, (Z_1,M_{Z_1}),(\mathscr{D}_1,M_{\mathscr{D}_1})}\longrightarrow \mathscr{S}_n(q)^{loc}_{X|D, (Z_n,M_{Z_n}),(\mathscr{D}_n,M_{\mathscr{D}_n})}[1].\end{multline*} For $m=1$ in Proposition \ref{distmn}, we have the following distinguished triangles:
\[\mathcal{S}_n(q)_{X_n, Z_n}\overset{\times p}{\longrightarrow} \mathcal{S}_{n+1}(q)_{X_{n+1}, Z_{n+1}}\longrightarrow \mathcal{S}_1(q)_{X_1, Z_1}\longrightarrow \mathcal{S}_n(q)_{X_n, Z_n}[1],\]
\[ \mathcal{S}_n(q)_{D_n, \mathscr{D}_n}\overset{\times p}{\longrightarrow} \mathcal{S}_{n+1}(q)_{D_{n+1}, \mathscr{D}_{n+1}}\longrightarrow \mathcal{S}_1(q)_{D_1, \mathscr{D}_1}\longrightarrow \mathcal{S}_n(q)_{D_n, \mathscr{D}_n}[1],\]
where we abbreviate $(X_n,M_{X_n}),(Z_n,M_{Z_n})$ to $X_n, Z_n$ and $(D_n,M_{D_n}),(\mathscr{D}_n,M_{\mathscr{D}_n})$ to $D_n, \mathscr{D}_n$.
Here we put $A^{\cdot}_n:=\mathcal{S}_n(q)_{X_n, Z_n}$ and $B^{\cdot}_n:= \mathcal{S}_n(q)_{D_n, \mathscr{D}_n}$ for simplicity.
We put $C^{\cdot}_n:=\Cone(A^{\cdot}_n \rightarrow B^{\cdot}_n)(=\mathscr{S}_n(q)^{loc}_{X|D, (Z_n,M_{Z_n}),(\mathscr{D}_n,M_{\mathscr{D}_n})}[1])$. We have \[\Cone(C^{\cdot}_n \overset{\times p}{\longrightarrow} C^{\cdot}_{n+1})\cong\Cone\left(\Cone(A^{\cdot}_n \longrightarrow A^{\cdot}_{n+1})\rightarrow \Cone(B^{\cdot}_n \longrightarrow B^{\cdot}_{n+1})\right)\cong C^{\cdot}_1.\] 
Then we have the distinguished triangle \[C^{\cdot}_n \overset{\times p}{\longrightarrow} C^{\cdot}_{n+1} \longrightarrow C^{\cdot}_1 \longrightarrow C^{\cdot}_n[1].\] 
This completes the proof. \end{proof}

\noindent

\subsection{Definition of another syntomic complex with modulus}
We assume the following assumption when we use another syntomic complex with modulus $s_n(r)_{X|D}$, which is defined below.
\begin{Asum}\label{Asum*}\mbox{}

\begin{itemize}
\item There exist exact closed immersions \[\beta_n: (X_n,M_{X_n}) \hookrightarrow(Z_n,M_{Z_n})\ \ {\rm and}\ \ \beta_{n,D}: (D_n,M_{D_n}) \hookrightarrow(\mathscr{D}_n,M_{\mathscr{D}_n})\] of log schemes for $n \geq 1$ such that $(Z_n,M_{Z_n})$ and $(\mathscr{D}_n,M_{\mathscr{D}_n})$ are smooth over $W:=W(k)$, and such that the following diagram is cartesian :
\[
\xymatrix@M=10pt{
X_n \ar@{}[rd]|{\square} \ar@<-0.3ex>@{^{(}->}[r]^-{\beta_n}&  Z_n &\\
D_n \ar@<-0.3ex>@{^{(}->}[u]\ar@<-0.3ex>@{^{(}->}[r]^-{\beta_{D,n}}& \mathscr{D}_n \ar@<-0.3ex>@{^{(}->}[u]&  
}
\]

\item There exist a compatible system of liftings of  Frobenius endomorphisms $\{F_{Z_n}:(Z_n,M_{Z_n})\rightarrow (Z_n,M_{Z_n})\}$ and $\{F_{\mathscr{D}_n}:(\mathscr{D}_n,M_{\mathscr{D}_n})\rightarrow(\mathscr{D}_n,M_{\mathscr{D}_n})\}$ for each $n \in \mathbb{N}$ {\rm(\cite[p.71, (2.1.1)--(2.1.3)]{Tsu0})}.
\item\  The systems $\{F_{Z_n}:(Z_n,M_{Z_n})\rightarrow (Z_n,M_{Z_n})\}$ and $\{F_{\mathscr{D}_n}:(\mathscr{D}_n,M_{\mathscr{D}_n})\rightarrow (\mathscr{D}_n,M_{\mathscr{D}_n})\}$fit into the following commutative diagram for for each $n \in \mathbb{N}$: 
\[
\xymatrix@M=10pt{
(Z_n,M_{Z_n}) \ar[r]^-{F_{Z_n}}& (Z_n,M_{Z_n})&\\
(\mathscr{D}_n,M_{\mathscr{D}_n}) \ar[u]\ar[r]^-{F_{\mathscr{D}_n}}&(\mathscr{D}_n,M_{\mathscr{D}_n})  \ar[u]&  
}\]

\item  $\mathscr{D}_n$ is an effective Cartier divisor on $Z_n$ such that $\beta_{D,n}^*\mathscr{D}_n=D_n$ and $F_{Z_n}$ which induces a morphism $\mathscr{D}_n \longrightarrow \mathscr{D}_n$.
\end{itemize}
\end{Asum}

 For an effective Cartier divisor $\mathscr{D}_n:=\sum_{\lambda \in \Lambda} m_\lambda \mathscr{D}_\lambda$,  we define $\mathscr{D}'_n:=\sum_{\lambda \in \Lambda} m'_\lambda \mathscr{D}_\lambda$, where $m'_{\lambda}:=\{l \in \mathbb{N} \mid p\cdot l \geq m_{\lambda}\}$.

We denote $\varphi:J_{\mathscr{E}_n}^{[r]}\otimes_{\mathscr{O}_{Z_n}} \mathscr{O}_{Z_n}(-\mathscr{D}_n) \longrightarrow J_{\mathscr{E}_n}^{[r]}\otimes_{\mathscr{O}_{Z_n}} \mathscr{O}_{Z_n}(-\mathscr{D}_n)\;; a\otimes b \mapsto \varphi(a)\otimes \varphi(b)$, where the homomorphism $\varphi$ induced by $F_{\mathscr{E}_n}$. 
We will define the Frobenius morphism ``devided by $p^r$''  $p^{-r}\varphi$ (or $\varphi_r$) $: J_{\mathscr{E}_n}^{[r-\text{\large$\cdot$}]}\otimes_{\mathscr{O}_{Z_n}} \mathscr{O}_{Z_n}(-\mathscr{D}_n)) \rightarrow \mathscr{O}_{\mathscr{E}_n}\otimes_{\mathscr{O}_{Z_n}} \mathscr{O}_{Z_n}(-\mathscr{D}_n)$ in the following:

We have \begin{equation}\label{p^r}\varphi(J_{\mathscr{E}_n}^{[r]}\otimes_{\mathscr{O}_{Z_n}} \mathscr{O}_{Z_n}(-\mathscr{D}_n))\subset p^r (\mathscr{O}_{\mathscr{E}_n}\otimes_{\mathscr{O}_{Z_n}} \mathscr{O}_{Z_n}(-\mathscr{D}_n)),\end{equation} (cf. \cite[I, Lemma 1.3 (1)]{Ka1}).
On the other hand, $J_{\mathscr{E}_n}^{[r]}$ is flat over $\mathbb{Z}/p^n\mathbb{Z}$ and \[\big(J_{\mathscr{E}_{n+1}}^{[r]}\otimes \mathscr{O}_{Z_{n+1}}(-\mathscr{D}_{n+1})\big) \otimes \mathbb{Z}/p^n\mathbb{Z}\cong J_{\mathscr{E}_{n}}^{[r]}\otimes  \mathscr{O}_{Z_{n}}(-\mathscr{D}_{n})\] for every $n \geq 1$ and $r \geq 0$. Hence, for $0 \leq r \leq p-1$, there exists a unique homomorphism \[\varphi_r:J_{\mathscr{E}_n}^{[r]}\otimes_{\mathscr{O}_{Z_n}} \mathscr{O}_{Z_n}(-\mathscr{D}_n)\rightarrow\mathscr{O}_{\mathscr{E}_n}\otimes_{\mathscr{O}_{Z_n}} \mathscr{O}_{Z_n}(-\mathscr{D}_n)\] which makes the following diagram commute:
\[
\xymatrix@M=10pt{
J_{\mathscr{E}_{n+r}}^{[r]}\otimes_{\mathscr{O}_{Z_{n+r}}} \mathscr{O}_{Z_{n+r}}(-\mathscr{D}_{n+r})  \ar[r]^{\varphi} \ar[d]&{\mathscr{O}}_{\mathscr{E}_{n+r}}\otimes_{\mathscr{O}_{Z_{n+r}}} \mathscr{O}_{Z_{n+r}}(-\mathscr{D}_{n+r}) \\
J_{\mathscr{E}_{n}}^{[r]}\otimes_{\mathscr{O}_{Z_{n}}} \mathscr{O}_{Z_{n}}(-\mathscr{D}_{n}) \ar[r]^{\varphi_r}&{\mathscr{O}}_{\mathscr{E}_{n}}\otimes_{\mathscr{O}_{Z_{n}}} \mathscr{O}_{Z_{n}}(-\mathscr{D}_{n}) \ar[u]^-{p^r}.
}
\]
From the fact that \[d\varphi(\omega_{Z_n/W_n}^1)\subset p\cdot\omega_{Z_n/W_n}^1\quad(n \in \mathbb{N},\;n>0),\]
we can define a frobenius ``divided by $p$'' \begin{equation}\label{frob}\frac{d\varphi}{p}:\omega_{Z_n/W_n}^1\longrightarrow \omega_{Z_n/W_n}^1.\end{equation}
\noindent
\begin{Def}{\rm(}another syntomic complex with modulus, sufficiently local case{\rm)}\\
We assume $r \leq p-1$. We define
\[
s_n(r)_{X|D, (Z_n,M_{Z_n}), (\mathscr{D}_n,M_{\mathscr{D}_n})}:={\rm Cone}\big(1-\varphi_{r}:  J_{\mathscr{E}_n}^{[r-\text{\large$\cdot$}]}\otimes_{\mathscr{O}_{Z_n}}\omega_{Z_n|\mathscr{D}_n}^{\text{\large$\cdot$}} \rightarrow   \mathscr{O}_{\mathscr{E}_n}\otimes_{\mathscr{O}_{Z_n}}\omega_{Z_n|\mathscr{D}_n}^{\text{\large$\cdot$}} \big)[-1],
\]
where $\varphi_r=\varphi_{r-q}\otimes \land^{q} \frac{d\varphi}{p}$ in degree $q$. We denote this complex by $s_n(r)_{X|D}$ for simplicity.
\end{Def}
 
\begin{Lem}\label{quasi-isom} Under \textbf{Assumption} \ref{Asum*},
$s_n(q)_{X|D}$ and ${\mathscr{S}_n(q)}^{loc}_{X|D, (Z_n,M_{Z_n}),(\mathscr{D}_n,M_{\mathscr{D}_n})}$ are naturally quasi-isomorphic.
\end{Lem}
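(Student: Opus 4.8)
The plan is to compare the two local complexes term by term and reduce the claim to the analogous, already-known comparison for the usual syntomic complexes without modulus. Recall that by the flatness of $D$ over $\Spec(\mathscr{O}_K)$ we have $\mathscr{E}_{n,D}\cong \mathscr{E}_n\otimes_{Z_n}\mathscr{D}_n$, and under \textbf{Assumption} \ref{Asum*} the square relating $\beta_n$ and $\beta_{D,n}$ is cartesian with $\mathscr{D}_n$ an effective Cartier divisor on $Z_n$ pulling back to $D_n$. The key observation is that twisting by $\mathscr{O}_{Z_n}(-\mathscr{D}_n)$ is an exact functor (the sheaf is invertible), so tensoring the short exact sequence $0\to \mathscr{O}_{Z_n}(-\mathscr{D}_n)\to \mathscr{O}_{Z_n}\to \mathscr{O}_{\mathscr{D}_n}\to 0$ with the (flat, or locally free) modules appearing in $\mathcal{S}_n(q)_{(X_n,M_{X_n}),(Z_n,M_{Z_n})}$ identifies the complex $J_{\mathscr{E}_n}^{[q-\text{\large$\cdot$}]}\otimes\omega_{Z_n|\mathscr{D}_n}^{\text{\large$\cdot$}}\to \mathscr{O}_{\mathscr{E}_n}\otimes\omega_{Z_n|\mathscr{D}_n}^{\text{\large$\cdot$}}$ with the kernel of the surjection from the $(Z_n,M_{Z_n})$-based $\mathcal{S}_n(q)$-datum onto the $(\mathscr{D}_n,M_{\mathscr{D}_n})$-based one.

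Concretely, first I would write out the two-term double complex $[\,\omega_{Z_n}^{\text{\large$\cdot$}}\otimes J_{\mathscr{E}_n}^{[q-\text{\large$\cdot$}]}\ \xrightarrow{1-p^{-q}\varphi}\ \omega_{Z_n}^{\text{\large$\cdot$}}\otimes\mathscr{O}_{\mathscr{E}_n}\,]$ computing $\mathcal{S}_n(q)_{(X_n,M_{X_n}),(Z_n,M_{Z_n})}$ and the analogous one for $(D_n,M_{D_n})\hookrightarrow(\mathscr{D}_n,M_{\mathscr{D}_n})$; by the cartesian/flatness hypotheses the restriction maps $\omega_{Z_n}^{\text{\large$\cdot$}}\otimes\mathscr{O}_{\mathscr{E}_n}\twoheadrightarrow \omega_{\mathscr{D}_n}^{\text{\large$\cdot$}}\otimes\mathscr{O}_{\mathscr{E}_{n,D}}$ and $\omega_{Z_n}^{\text{\large$\cdot$}}\otimes J_{\mathscr{E}_n}^{[q-\text{\large$\cdot$}]}\twoheadrightarrow \omega_{\mathscr{D}_n}^{\text{\large$\cdot$}}\otimes J_{\mathscr{E}_{n,D}}^{[q-\text{\large$\cdot$}]}$ are surjective with kernels $\omega_{Z_n|\mathscr{D}_n}^{\text{\large$\cdot$}}\otimes\mathscr{O}_{\mathscr{E}_n}$ and $\omega_{Z_n|\mathscr{D}_n}^{\text{\large$\cdot$}}\otimes J_{\mathscr{E}_n}^{[q-\text{\large$\cdot$}]}$ respectively — here one uses that $\Omega^{\text{\large$\cdot$}}_{\mathscr{D}_n/W_n}(\log M_{\mathscr{D}_n})$ is the restriction of $\Omega^{\text{\large$\cdot$}}_{Z_n/W_n}(\log M_{Z_n})$ because $\mathscr{D}_n\hookrightarrow Z_n$ is a smooth log subscheme in the relevant sense, together with $J_{\mathscr{E}_n}^{[i]}$ flat over $\mathbb{Z}/p^n\mathbb{Z}$ and $J_{\mathscr{E}_n}^{[i]}\otimes\mathscr{O}_{\mathscr{D}_n}\cong J_{\mathscr{E}_{n,D}}^{[i]}$. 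Next I would check that under these identifications the operator $1-\varphi_r$ (with $\varphi_r=\varphi_{r-q}\otimes\wedge^q\tfrac{d\varphi}{p}$ in degree $q$) is exactly the restriction of $1-p^{-q}\varphi$: this amounts to the compatibility $\varphi(\mathscr{O}_{Z_n}(-\mathscr{D}_n))\subset\mathscr{O}_{Z_n}(-\mathscr{D}_n)$ coming from the last bullet of \textbf{Assumption} \ref{Asum*} ($F_{Z_n}$ preserving $\mathscr{D}_n$) and the estimates $\varphi(J_{\mathscr{E}_n}^{[r]})\subset p^r\mathscr{O}_{\mathscr{E}_n}$, $d\varphi(\omega^1)\subset p\,\omega^1$ recalled in \eqref{p^r} and \eqref{frob}, so that $\varphi_r$ is well-defined on the twisted modules and agrees with the induced map.

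Then I would invoke the standard fact that the mapping cone of a surjection of complexes is quasi-isomorphic to (a shift of) its kernel: applying this to the surjection $\mathcal{S}_n(q)_{(X_n,M_{X_n}),(Z_n,M_{Z_n})}\to \mathcal{S}_n(q)_{(D_n,M_{D_n}),(\mathscr{D}_n,M_{\mathscr{D}_n})}$ of two-term complexes whose termwise kernels I identified above, the cone $[-1]$ — which by definition is $\mathscr{S}_n(q)^{loc}_{X|D,(Z_n,M_{Z_n}),(\mathscr{D}_n,M_{\mathscr{D}_n})}$ — is quasi-isomorphic to the termwise kernel complex, which is precisely $s_n(q)_{X|D,(Z_n,M_{Z_n}),(\mathscr{D}_n,M_{\mathscr{D}_n})}$. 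Equivalently, one checks that the short exact sequences of complexes $0\to s_n(q)_{X|D}\to \mathcal{S}_n(q)_{(X_n,M_{X_n}),(Z_n,M_{Z_n})}\to \mathcal{S}_n(q)_{(D_n,M_{D_n}),(\mathscr{D}_n,M_{\mathscr{D}_n})}\to 0$ (valid termwise by the above) identifies the long exact sequence of $s_n(q)_{X|D}$ with the one defining the cohomology of the cone, giving the natural quasi-isomorphism. The main obstacle I anticipate is the bookkeeping in the second step — verifying that the Frobenius operators on the twisted modules are literally the restrictions of the untwisted ones, including the degree-wise twist by $\wedge^q\tfrac{d\varphi}{p}$ on the log-differentials and the interaction with $\mathscr{O}_{Z_n}(-\mathscr{D}_n)$ versus $\mathscr{O}_{Z_n}(-\mathscr{D}'_n)$; one must be careful that the divisor appearing in the source and target of $\varphi_r$ is consistently $\mathscr{D}_n$ and that the hypothesis "$F_{Z_n}$ induces a morphism $\mathscr{D}_n\to\mathscr{D}_n$" is exactly what makes the twisted $\varphi$ land in the correct module.
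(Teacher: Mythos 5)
Your overall strategy is the same as the paper's: identify $\mathscr{S}_n(q)^{loc}_{X|D}$ (the cone shifted by $[-1]$) with the termwise kernel of $\mathcal{S}_n(q)_{(X_n,M_{X_n}),(Z_n,M_{Z_n})}\to \mathcal{S}_n(q)_{(D_n,M_{D_n}),(\mathscr{D}_n,M_{\mathscr{D}_n})}$, and then match that kernel degree by degree with $J_{\mathscr{E}_n}^{[q-j]}\otimes\omega^j_{Z_n|\mathscr{D}_n}$ and $\mathscr{O}_{\mathscr{E}_n}\otimes\omega^{j-1}_{Z_n|\mathscr{D}_n}$, checking Frobenius compatibility via \eqref{p^r} and \eqref{frob}. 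The surjectivity half of the termwise identification (right exactness of tensoring the sequence $0\to\mathscr{O}_{Z_n}(-\mathscr{D}_n)\to\mathscr{O}_{Z_n}\to\mathscr{O}_{\mathscr{D}_n}\to 0$) and the identification $J_{\mathscr{E}_{n,D}}^{[q-j]}\otimes\omega^j_{\mathscr{D}_n}\cong J_{\mathscr{E}_n}^{[q-j]}\otimes\omega^j_{Z_n}\otimes\mathscr{O}_{\mathscr{D}_n}$ are handled as in the paper.

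However, there is a genuine gap at the one point where the paper has to do real work: the injectivity of the map $J_{\mathscr{E}_n}^{[q-j]}\otimes_{\mathscr{O}_{Z_n}}\mathscr{O}_{Z_n}(-\mathscr{D}_n)\to J_{\mathscr{E}_n}^{[q-j]}$, i.e.\ that a local equation $f$ of $\mathscr{D}_n$ is a non-zero-divisor on $\mathscr{O}_{\mathscr{E}_n}$ (hence on its subsheaf $J_{\mathscr{E}_n}^{[q-j]}$). Your justification — that twisting by the invertible sheaf $\mathscr{O}_{Z_n}(-\mathscr{D}_n)$ is exact and that the modules appearing in $\mathcal{S}_n(q)$ are ``flat, or locally free'' — does not give this. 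Exactness of $-\otimes L$ for $L$ invertible tells you nothing about injectivity of the natural map $M\otimes L\to M$ induced by $L=\mathscr{O}_{Z_n}(-\mathscr{D}_n)\hookrightarrow\mathscr{O}_{Z_n}$, which locally is multiplication by $f$ on $M$; and the relevant modules $\mathscr{O}_{\mathscr{E}_n}$ and $J_{\mathscr{E}_n}^{[q-j]}$ are \emph{not} a priori flat or locally free over $\mathscr{O}_{Z_n}$ (they are only flat over $\mathbb{Z}/p^n\mathbb{Z}$, which is irrelevant here). Without this $f$-torsion-freeness the kernel of $J_{\mathscr{E}_n}^{[q-j]}\otimes\omega^j_{Z_n}\to J_{\mathscr{E}_n}^{[q-j]}\otimes\omega^j_{Z_n}\otimes\mathscr{O}_{\mathscr{D}_n}$ need not coincide with $J_{\mathscr{E}_n}^{[q-j]}\otimes\omega^j_{Z_n|\mathscr{D}_n}$, and the comparison of complexes fails. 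The paper closes this by reducing to the explicit local model $Z_n=\Spec(W_n[T_0,\dotsc,T_d,T_\infty])$, describing the affine ring $\mathcal{A}_n$ of the PD-envelope as a free module over the polynomial ring with basis the divided-power monomials (Berthelot's result on PD-envelopes of regular immersions), and concluding that $f$, being a non-zero-divisor on the polynomial ring, is a non-zero-divisor on $\mathcal{A}_n$. You would need to supply this (or an equivalent flatness/regular-immersion argument) to make your second paragraph correct; the Frobenius bookkeeping you flag as the main obstacle is comparatively routine.
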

\begin{proof}
By the definition of ${\mathscr{S}_n(q)}^{loc}_{X|D}$, we have a quasi-isomorphism \[\Ker\big(\mathcal{S}_n(q)_{(X_n,M_{X_n}),(Z_n,M_{Z_n})}\longrightarrow \mathcal{S}_n(q)_{(D_n,M_{D_n}),(\mathscr{D}_n,M_{\mathscr{D}_n})}
\big) \overset{\mathrm{qis}}{\cong}{\mathscr{S}_n(q)}^{loc}_{X|D,(Z_n,M_{Z_n}),(\mathscr{D}_n,M_{\mathscr{D}_n})}.\]
We will show that there exists an isomorphism of complexes \[\Ker\big(\mathcal{S}_n(q)_{(X_n,M_{X_n}),(Z_n,M_{Z_n})}\longrightarrow \mathcal{S}_n(q)_{(D_n,M_{D_n}),(\mathscr{D}_n,M_{\mathscr{D}_n})}
\big)  \cong s_n(q)_{X|D}.\]
To show this isomorphism, we will show  the isomorphism
\begin{align*} &\left(J_{\mathscr{E}_n}^{[q-j]}\otimes_{\mathscr{O}_{Z_n}}\omega_{Z_n|\mathscr{D}_n}^{j} \right)\oplus \left( \mathscr{O}_{\mathscr{E}_n}\otimes_{\mathscr{O}_{Z_n}}\omega_{Z_n|\mathscr{D}_n}^{j-1} \right) \\
&\cong \Ker \left( \left(J_{\mathscr{E}_n}^{[q-j]}\otimes_{\mathscr{O}_{Z_n}}\omega_{Z_n}^{j} \right)\oplus \left( \mathscr{O}_{\mathscr{E}_n}\otimes_{\mathscr{O}_{Z_n}}\omega_{Z_n}^{j-1}\right) \longrightarrow  \left(J_{\mathscr{E}_{n,D}}^{[q-j]} \otimes_{\mathscr{O}_{\mathscr{D}_n}}\omega_{\mathscr{D}_n}^{j} \right)\oplus \left( \mathscr{O}_{\mathscr{E}_{n,D}}\otimes_{\mathscr{O}_{\mathscr{D}_n}}\omega_{\mathscr{D}_n}^{j-1} \right) \right) 
\end{align*}
for each degree $j \geq 0$.
It sufficies to show an isomorphism \begin{equation}\label{isom}J_{\mathscr{E}_n}^{[q-j]}\otimes_{\mathscr{O}_{Z_n}} \omega_{Z_n|\mathscr{D}_n}^{j} \xrightarrow{\cong} \Ker\Big(J_{\mathscr{E}_n}^{[q-j]}\otimes_{\mathscr{O}_{Z_n}} \omega_{Z_n}^{j}\longrightarrow J_{\mathscr{E}_n}^{[q-j]}\otimes_{\mathscr{O}_{Z_n}} \omega_{Z_n}^{j}\otimes_{\mathscr{O}_{Z_n}}\mathscr{O}_{\mathscr{D}_n}\Big),\end{equation} where $J_{\mathscr{E}_{n,D}}^{[q-j]} \otimes_{\mathscr{O}_{\mathscr{D}_n}}\omega_{\mathscr{D}_n}^{j}  \cong J_{\mathscr{E}_n}^{[q-j]}\otimes_{\mathscr{O}_{Z_n}} \omega_{Z_n}^{j}\otimes_{\mathscr{O}_{Z_n}}\mathscr{O}_{\mathscr{D}_n}$.
By tensoring $J_{\mathscr{E}_n}^{[q-j]}\otimes_{\mathscr{O}_{Z_n}} \omega_{Z_n}^j$ to the short exact sequence \[0\rightarrow\mathscr{O}_{Z_n}(-\mathscr{D}_n)\rightarrow\mathscr{O}_{Z_n}\rightarrow\mathscr{O}_{\mathscr{D}_n}\rightarrow 0,\] 
we have the exact sequence
\[J_{\mathscr{E}_n}^{[q-j]}\otimes_{\mathscr{O}_{Z_n}} \omega_{Z_n|\mathscr{D}_n}^{j} \rightarrow J_{\mathscr{E}_n}^{[q-j]}\otimes_{\mathscr{O}_{Z_n}} \omega_{Z_n}^{j} \rightarrow J_{\mathscr{E}_n}^{[q-j]}\otimes_{\mathscr{O}_{Z_n}} \omega_{Z_n}^{j}\otimes_{\mathscr{O}_{Z_n}}\mathscr{O}_{\mathscr{D}_n} \rightarrow 0.\]
Thus we have the surjectivity of the morphism \eqref{isom}.
We will prove the injectivity of this morphism. By the smoothness of $Z_n$, $\omega_{Z_n}^{\cdot}$ is locally free sheaf on $\mathscr{O}_{Z_n}$.
Hence it suffices to show that the morphism 
\[(*)\ \ \ J_{\mathscr{E}_n}^{[q-j]}\otimes_{\mathscr{O}_{Z_n}}\mathscr{O}_{Z_n}(-\mathscr{D}_n) \longrightarrow J_{\mathscr{E}_n}^{[q-j]}\]
is injective. It suffices to show that the injectivity of the morphism $(*)$ locally, so we assume that $\mathscr{D}_n$ is defined by the one equation $f$.
Then the inclusion map $\mathscr{O}_{Z_n}(-\mathscr{D}_n) \longrightarrow \mathscr{O}_{Z_n}$ is identified with the map $\mathscr{O}_{Z_n} \xrightarrow{\times f} \mathscr{O}_{Z_n}$. So, we can identify $(*)$ with the map 
\[(**)\ \ \ J_{\mathscr{E}_n}^{[q-j]} \xrightarrow{\times f} J_{\mathscr{E}_n}^{[q-j]}. \]
Since $J_{\mathscr{E}_n}^{[q-j]}$ is the subsheaf of $\mathscr{O}_{\mathscr{E}_n}$, it suffices to show that $\mathscr{O}_{\mathscr{E}_n}\xrightarrow{\times f}\mathscr{O}_{\mathscr{E}_n}$ is injective. The problem is reduced to the case
 \[X_n= \Spec(\mathscr{O}_K/p^n[t_1,\dotsc,t_d]/(t_i\cdots t_d-\pi)),\] \[Z_n=\Spec(W_n[T_0, T_1,\dotsc ,T_d,T_{\infty}]),\] \[D_n=\{t_{i}^{m_{i}}\cdots t_{i+j}^{m_{i+j}}=0\},\] \[\mathscr{D}_n=\{T_{i}^{m_{i}}\cdots T_{i+j}^{m_{i+j}}=T_{\infty}\},\]
 \[\psi:W_n[T_0, T_1,\dotsc ,T_d,T_{\infty}]\rightarrow \mathscr{O}_K/p^n[t_1,\dotsc,t_d]/(t_i\cdots t_d-\pi);\]
\[T_0\mapsto \pi,\;T_i\mapsto t_i \;(1\leq i \leq d),\; T_{\infty}\mapsto 0.\]
In this case, $f=T_{i}^{m_{i}}\cdots T_{i+j}^{m_{i+j}}-T_{\infty}$, and the kernel of the ring homomorphism $\psi$ is \[J:=(T_{\infty},\;T_1\cdots T_d-T_0).\]
We put $g_1:=T_{\infty}$ and $g_2:= T_1\cdots T_d-T_0$.
The affine ring $\mathcal{A}_n$ of $\mathscr{E}_n$ is generated by $\Delta_n:=\{g_1^{[m_1]}\cdot g_2^{[m_2]}\;|\;m_1+m_2=n, m_i \in \mathbb{N} \}$ as a $W_n[T_0, T_1,\dotsc ,T_d,T_{\infty}]$-module. Then any element of $\mathcal{A}_n$ can be written as $\Sigma_{i,\;n \geq 1,\;x \in \Delta_n}\;a_ix$, where $a_i \in W_n[T_0, T_1,\dotsc ,T_d,T_{\infty}]$. The generators $g_1, g_2$ of $J$ are linearly independent on $W_n[T_0, T_1,\dotsc ,T_d,T_{\infty}]$. Thus $\Delta_n$ are basis for $\mathcal{A}_n$ as a $W_n[T_0, T_1,\dotsc ,T_d,T_{\infty}]$-module (\cite[p.31, 1.4.2 and Corollarie 2.3.2 (ii)]{Ber}). Since the polynomial $f$ is a non-zero divisor on $W_n[T_0, T_1,\dotsc ,T_d,T_{\infty}]$, $f$ is a non-zero divisor on $\mathcal{A}_n$. This completes the proof.
\end{proof}

In what follows, we will use the complex $s_n(q)_{X|D}$ when we compute the cohomology sheaf of the syntomic complex with modulus in sufficiently local situation.
By definition, $s_n(q)_{X|D}$ is concentrated in [$0,q$]. Note that $s_n(q)_{X|\emptyset}=\mathcal{S}_n(q)_{(X,M_X)}$,  the syntomic complex defined in \cite{Tsu1}, \cite{Tsu2}.
\begin{Lem}\label{prod s_n}{\rm (cf. \cite{Tsu1}, \cite{Tsu2})}
For $q, q' \geq 0$, there is a morphism in $D^+(X_{1,\acute{e}t}, \mathbb{Z}/p^n\mathbb{Z})$:
\[(\star)\ \   s_n(q)_{X|D} \otimes^{\mathbb{L}} \mathcal{S}_n(q')_{(X_n,M_{X_n}),(Z_n,M_{Z_n})} \longrightarrow s_n(q+q')_{X|D} \] by \[(x, y)\otimes (x', y') \mapsto (xx', (-1)^j  xy' + y \varphi_{q'}(x'))\]
\[(x,y) \in s^j_n(q)_{X|D}=\left(J^{[q-j]}_{\mathscr{E}_n} \otimes_{\mathscr{O}_{Z_n}} \omega^j_{Z_n|\mathscr{D}_n}\right)\oplus \left( \mathscr{O}_{\mathscr{E}_n}\otimes_{\mathscr{O}_{Z_n}} \omega^{j-1}_{Z_n|\mathscr{D}_n} \right)\]
\[(x', y') \in  \mathcal{S}^{j'}_n(q')_{(X_n,M_{X_n}),(Z_n,M_{Z_n})}\left(J^{[q'-j']}_{\mathscr{E}_n} \otimes_{\mathscr{O}_{Z_n}} \omega^{j'}_{Z_n}\right)\oplus \left( \mathscr{O}_{\mathscr{E}_n}\otimes_{\mathscr{O}_{Z_n}} \omega^{j'-1}_{Z_n} \right). \]\end{Lem}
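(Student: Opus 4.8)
The plan is to exhibit $(\star)$ as an honest morphism of complexes out of the termwise tensor product over $\mathbb{Z}/p^n\mathbb{Z}$, and then to observe that this termwise tensor product already computes $\otimes^{\mathbb{L}}_{\mathbb{Z}/p^n\mathbb{Z}}$. For $r\leq p-1$ write $s_n(r)_{X|D}$ as the shifted mapping cone of $1-\varphi_r$ acting on the complex whose degree-$j$ term is $A^j(r):=J^{[r-j]}_{\mathscr{E}_n}\otimes_{\mathscr{O}_{Z_n}}\omega^j_{Z_n|\mathscr{D}_n}$, mapping to $B^{j}(r):=\mathscr{O}_{\mathscr{E}_n}\otimes_{\mathscr{O}_{Z_n}}\omega^{j}_{Z_n|\mathscr{D}_n}$, so that $s^j_n(r)_{X|D}=A^j(r)\oplus B^{j-1}(r)$ carries the standard mapping-cone differential $(x,y)\mapsto(dx,(1-\varphi_r)x-dy)$; the complex $\mathcal{S}_n(q')_{(X_n,M_{X_n}),(Z_n,M_{Z_n})}$ has the same shape but with the untwisted log-differentials $\omega^{\bullet}_{Z_n}$ in place of $\omega^{\bullet}_{Z_n|\mathscr{D}_n}$. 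Since the products occurring in the formula are $\mathscr{O}_{\mathscr{E}_n}$-bilinear (multiplication in the PD de Rham algebra) and $\varphi_{q'}$ is additive, $\mu$ is a well-defined map on the termwise $\otimes_{\mathbb{Z}/p^n\mathbb{Z}}$. The first thing to check is that each of the three products in the formula lands in the prescribed modulus component of $s^{j+j'}_n(q+q')_{X|D}$: for the divided-power factors this is the inclusion $J^{[a]}_{\mathscr{E}_n}\cdot J^{[b]}_{\mathscr{E}_n}\subseteq J^{[a+b]}_{\mathscr{E}_n}$, and for the form factors it is the elementary observation that, since $\mathscr{O}_{Z_n}(-\mathscr{D}_n)$ is an ideal sheaf of $\mathscr{O}_{Z_n}$, one has $\omega^a_{Z_n|\mathscr{D}_n}\wedge\omega^b_{Z_n}\subseteq\omega^{a+b}_{Z_n|\mathscr{D}_n}$ — the modulus twist is absorbed upon multiplication by an untwisted form. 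This last point is precisely why the second factor must be the ordinary complex $\mathcal{S}_n(q')$ and not $s_n(q')_{X|D}$, which would produce a $(-2\mathscr{D}_n)$-twist.

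Next I would check compatibility of the formula with the Frobenius operators. On divided powers, $\varphi$ being a ring homomorphism gives, for $a\in J^{[a']}_{\mathscr{E}_n}$ and $b\in J^{[b']}_{\mathscr{E}_n}$, the chain $\varphi(ab)=\varphi(a)\varphi(b)=p^{a'}\varphi_{a'}(a)\cdot p^{b'}\varphi_{b'}(b)=p^{a'+b'}\varphi_{a'}(a)\varphi_{b'}(b)$, whence $\varphi_{a'+b'}(ab)=\varphi_{a'}(a)\varphi_{b'}(b)$ by the uniqueness characterisation of the $\varphi_r$ recalled just before the definition of $s_n(r)_{X|D}$; on log-differentials, $\tfrac{d\varphi}{p}$ is multiplicative for the wedge product, being $p^{-1}$ times the derivative of the ring map $\varphi$. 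Combining these, and recalling that $\varphi_r=\varphi_{r-j}\otimes\wedge^{j}\tfrac{d\varphi}{p}$ in degree $j$, one obtains $\varphi_{q+q'}(xx')=\varphi_q(x)\,\varphi_{q'}(x')$ on the $A$-components. Here the last bullet of Assumption \ref{Asum*} — that $F_{Z_n}$ preserves $\mathscr{D}_n$ — is what guarantees that $\varphi$ maps $\mathscr{O}_{Z_n}(-\mathscr{D}_n)$ into itself, hence that all of these operators respect the modulus twist, so that everything above stays inside the twisted complexes.

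With these in hand, the remaining step is the chain-map identity: denoting by $d$ the mapping-cone differentials on the two complexes and by $\mu$ the map of the lemma, one verifies $\mu\big(d(x,y)\otimes(x',y')\big)\pm\mu\big((x,y)\otimes d(x',y')\big)=d\big(\mu((x,y)\otimes(x',y'))\big)$, with the sign dictated by the Koszul convention, using the Leibniz rule for $d$ on $\omega^{\bullet}$, the commutation $d\circ\varphi_{q'}=\varphi_{q'}\circ d$ that is built into the definition of $\mathcal{S}_n(q')$ (this is what the $\wedge^{\bullet}\tfrac{d\varphi}{p}$ factors are for), and the Frobenius-multiplicativity just established. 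This is a routine but sign-sensitive calculation, formally the same as the one in \cite{Tsu1}, \cite{Tsu2} for the product on the ordinary syntomic complex, and the sign $(-1)^{j}$ appearing in the formula for $\mu$ (with $j$ the degree of $(x,y)$) is exactly what this verification forces; I expect this sign bookkeeping to be the only genuine — and entirely mechanical — obstacle, the modulus twist playing no further role by the first two paragraphs. Finally, recall that $J^{[i]}_{\mathscr{E}_n}$ — in particular $\mathscr{O}_{\mathscr{E}_n}=J^{[0]}_{\mathscr{E}_n}$ — is flat over $\mathbb{Z}/p^n\mathbb{Z}$, and that $\omega^i_{Z_n}$, $\omega^i_{Z_n|\mathscr{D}_n}$ are locally free $\mathscr{O}_{Z_n}$-modules with $\mathscr{O}_{Z_n}$ flat over $W_n$ and $W_n$ flat over $\mathbb{Z}/p^n\mathbb{Z}$; hence every term of both complexes is a flat $\mathbb{Z}/p^n\mathbb{Z}$-module, so the termwise tensor product of the two complexes computes $\otimes^{\mathbb{L}}_{\mathbb{Z}/p^n\mathbb{Z}}$, and the chain map $\mu$ represents the asserted morphism $(\star)$ in $D^+(X_{1,\acute{e}t},\mathbb{Z}/p^n\mathbb{Z})$.
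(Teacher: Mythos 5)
Your proposal is correct and follows essentially the same route as the paper: the paper simply invokes Tsuji's product morphism on the untwisted complexes $\left(J^{[q-\cdot]}_{\mathscr{E}_n}\otimes\omega^{\cdot}_{Z_n}\right)$ and notes that it induces a map when one factor carries the $(-\mathscr{D}_n)$-twist, which is exactly your observation that $\omega^a_{Z_n|\mathscr{D}_n}\wedge\omega^b_{Z_n}\subseteq\omega^{a+b}_{Z_n|\mathscr{D}_n}$ absorbs the modulus. Your additional checks (multiplicativity $\varphi_{a+b}(xy)=\varphi_a(x)\varphi_b(y)$, the sign-sensitive chain-map identity, and flatness of the terms over $\mathbb{Z}/p^n\mathbb{Z}$ so that the termwise tensor product represents $\otimes^{\mathbb{L}}$) are precisely the details the paper leaves to the citation of \cite{Tsu1}, \cite{Tsu2}, so nothing further is needed.
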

\begin{proof}
There is a product morphism (cf. [Tsu2  $\S$2.2] )
\[ \left(J^{[q-\cdot]}_{\mathscr{E}_n} \otimes_{\mathscr{O}_{Z_n}} \omega^{\cdot}_{Z_n}\right) \oplus \left(J^{[q'-\cdot]}_{\mathscr{E}_n} \otimes_{\mathscr{O}_{Z_n}} \omega^{\cdot}_{Z_n}\right)\longrightarrow \left(J^{[q+q'-\cdot]}_{\mathscr{E}_n} \otimes_{\mathscr{O}_{Z_n}} \omega^{\cdot}_{Z_n}\right).\]
This morphism induces
\[ \left(J^{[q-\cdot]}_{\mathscr{E}_n} \otimes_{\mathscr{O}_{Z_n}} \omega^{\cdot}_{Z_n|\mathscr{D}_n}\right) \oplus \left(J^{[q'-\cdot]}_{\mathscr{E}_n} \otimes_{\mathscr{O}_{Z_n}} \omega^{\cdot}_{Z_n}\right)\longrightarrow \left(J^{[q+q'-\cdot]}_{\mathscr{E}_n} \otimes_{\mathscr{O}_{Z_n}} \omega^{\cdot}_{Z_n|\mathscr{D}_n}\right).\]
Thus we can define the above product morphism $(\star)$.
\end{proof}

\subsection{Construction of the symbol map in local case}\label{symbol section}
In this subsection, we assume \textbf{Assumption} \ref{Asum*}. Let us define a symbol map 
\begin{equation}\label{symb} (1+I_{D_{n+1}})^{\times} \otimes (M_{X_{n+1}}^{gp})^{\otimes{q-1}} \longrightarrow \mathcal{H}^q(\mathscr{S}^{loc}_n(q)_{X|D})
\end{equation}
for $q \geq 0$. Here $I_{D_{n+1}} \subset \mathscr{O}_{X_{n+1}}$ is the definition ideal of $D_{n+1}$ and \[(1+I_{D_{n+1}})^{\times}:=(1+I_{D_{n+1}})\cap \mathscr{O}_{X_{n+1}}^{\times}.\]

We construct a symbol map in the local situation in the following. By taking $R\theta_*$, we immediately obtain its global case.\\
Recall that $(X_n, M_{X_n})$ denotes the reduction mod $p^n$ of $(X, M_{X})$. 
Let $C_{n+1}$ be the complex
\begin{equation}
\big(1+J_{\mathscr{E}_{n+1}})\cap \big(1+\mathscr{O}_{Z_{n+1}}(-\mathscr{D}_{n+1})\big)^{\times}   \longrightarrow \big(1+\mathscr{O}_{Z_{n+1}}(-\mathscr{D}_{n+1})\big)^{\times}\big.
\end{equation}
\[\deg.\;0\quad\quad\quad\quad\quad\quad\quad\quad\quad\quad\quad\quad\quad\quad\quad\quad\deg.\;1\]
We define the morphism of complexes $C_{n+1} \longrightarrow s_n(1)_{X|D}$ by 
\begin{equation}
(1+J_{\mathscr{E}_{n+1}})\cap (1+\mathscr{O}_{Z_{n+1}}(-\mathscr{D}_{n+1}))^{\times} \longrightarrow (s_n(1)_{X|D})^0=J_{\mathscr{E}_{n}}\otimes_{\mathscr{O}_{Z_n}} \mathscr{O}_{Z_n}(-\mathscr{D}_n); 
\end{equation}
\[a \mapsto \log a \mod p^n\]
at degree $0$ and
\begin{equation}
\big(1+\mathscr{O}_{Z_{n+1}}(-\mathscr{D}_{n+1})\big)^{\times} \longrightarrow (s_n(1)_{X|D})^1=\big(\mathscr{O}_{\mathscr{E}_n}\otimes _{\mathscr{O}_{Z_n}}\omega_{Z_n|\mathscr{D}_n}^1\big)\oplus\big( \mathscr{O}_{\mathscr{E}_n}\otimes _{\mathscr{O}_{Z_n}} \mathscr{O}_{\mathscr{Z}_n}(-\mathscr{D}_n) \big);
\end{equation}
\[b\mapsto \Big(d\log b \mod p^n, p^{-1}\log(b^p \varphi_{ \mathscr{E}_{n+1} }(b)^{-1})\Big),\]
at degree $1$, where $\varphi_{ \mathscr{E}_{n} }: \mathscr{O}_{\mathscr{E}_{n}}\rightarrow \mathscr{O}_{\mathscr{E}_{n}}$ denotes the Frobenius operator induced by $\{F_{Z_n}\}$
and we have used the fact that $\log(b^p \varphi_{ \mathscr{E}_{n+1} }(b)^{-1})$ is contained in \[(\flat)\ \ \ p( \mathscr{O}_{\mathscr{E}_{n+1} }\otimes _{\mathscr{O}_{Z_{n+1}}} \mathscr{O}_{Z_{n+1}}(-\mathscr{D}_{n+1}) )\xleftarrow{\cong} \mathscr{O}_{\mathscr{E}_{n} }\otimes _{\mathscr{O}_{Z_{n}}} \mathscr{O}_{Z_{n}}(-\mathscr{D}_{n}),\] since $b^p \varphi_{ \mathscr{E}_{n+1} }(b)^{-1}\in 1+ p( \mathscr{O}_{\mathscr{E}_{n+1} }\otimes _{\mathscr{O}_{Z_{n+1}}} \mathscr{O}_{Z_{n+1}}(-\mathscr{D}_{n+1}))$. 
We will show  the isomorphism $(\flat)$ below:
\begin{Lem}\label{p-sequence} We have the following exact sequence 
\begin{multline*} \mathscr{O}_{\mathscr{E}_{n+m}}\otimes_{ \mathscr{O}_{Z_{n+m}}} \mathscr{O}_{Z_{n+m}}(-\mathscr{D}_{n+m}) \overset{\times p^n}{\longrightarrow} \mathscr{O}_{\mathscr{E}_{n+m}}\otimes_{ \mathscr{O}_{Z_{n+m}}} \mathscr{O}_{Z_{n+m}}(-\mathscr{D}_{n+m}) \overset{\times p^m}{\longrightarrow}\\ \mathscr{O}_{\mathscr{E}_{n+m}}\otimes_{ \mathscr{O}_{Z_{n+m}}} \mathscr{O}_{Z_{n+m}}(-\mathscr{D}_{n+m}) \longrightarrow \mathscr{O}_{\mathscr{E}_{m}}\otimes_{\mathscr{O}_{Z_m}} \mathscr{O}_{Z_m}(-\mathscr{D}_{m}) \longrightarrow 0.\end{multline*}
\end{Lem}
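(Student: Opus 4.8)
The plan is to reduce everything, which is a local assertion on $X_{1,\acute{e}t}$, to the elementary behaviour of a flat $\mathbb{Z}/p^{n+m}\mathbb{Z}$-module under multiplication by powers of $p$. Write $N:=n+m$ and $M:=\mathscr{O}_{\mathscr{E}_N}\otimes_{\mathscr{O}_{Z_N}}\mathscr{O}_{Z_N}(-\mathscr{D}_N)$. First I would record the exact sequence of $\mathbb{Z}/p^N\mathbb{Z}$-modules
\[\mathbb{Z}/p^N\mathbb{Z}\xrightarrow{\;\times p^n\;}\mathbb{Z}/p^N\mathbb{Z}\xrightarrow{\;\times p^m\;}\mathbb{Z}/p^N\mathbb{Z}\longrightarrow\mathbb{Z}/p^m\mathbb{Z}\longrightarrow0,\]
which is exact because the kernel of multiplication by $p^m$ on $\mathbb{Z}/p^N\mathbb{Z}$ is $p^{N-m}\mathbb{Z}/p^N\mathbb{Z}=p^n\mathbb{Z}/p^N\mathbb{Z}$, i.e.\ the image of multiplication by $p^n$, while the cokernel is $\mathbb{Z}/p^m\mathbb{Z}$. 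If $M$ is flat over $\mathbb{Z}/p^N\mathbb{Z}$, then tensoring this sequence with $M$ stays exact, so the sequence in the statement will follow as soon as $M\otimes_{\mathbb{Z}/p^N\mathbb{Z}}\mathbb{Z}/p^m\mathbb{Z}=M/p^mM$ is identified with $\mathscr{O}_{\mathscr{E}_m}\otimes_{\mathscr{O}_{Z_m}}\mathscr{O}_{Z_m}(-\mathscr{D}_m)$.

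For the flatness of $M$ I would pass, using the local nature of the claim, to the explicit local model of the proof of Lemma~\ref{quasi-isom}, where the affine ring $\mathcal{A}_N$ of $\mathscr{E}_N$ is a \emph{free} $W_N[T_0,\dotsc,T_d,T_\infty]$-module on the PD-monomial basis $\Delta_N$. Hence $\mathscr{O}_{\mathscr{E}_N}$ is flat over $W_N=W/p^NW$, and a fortiori over $\mathbb{Z}/p^N\mathbb{Z}$; and since $\mathscr{D}_N$ is an effective Cartier divisor on $Z_N$ (part of Assumption~\ref{Asum*}), the sheaf $\mathscr{O}_{Z_N}(-\mathscr{D}_N)$ is invertible on $Z_N$, so $M$ is an invertible $\mathscr{O}_{\mathscr{E}_N}$-module and therefore flat over $\mathbb{Z}/p^N\mathbb{Z}$.

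The identification of $M/p^mM$ is the flat base-change compatibility of PD-envelopes, which is already available in this section: taking $r=0$ in the isomorphism $\bigl(J_{\mathscr{E}_{n+1}}^{[r]}\otimes\mathscr{O}_{Z_{n+1}}(-\mathscr{D}_{n+1})\bigr)\otimes\mathbb{Z}/p^n\mathbb{Z}\cong J_{\mathscr{E}_n}^{[r]}\otimes\mathscr{O}_{Z_n}(-\mathscr{D}_n)$ (together with \cite{Ber}) gives $\mathscr{O}_{\mathscr{E}_N}\otimes\mathbb{Z}/p^m\mathbb{Z}\cong\mathscr{O}_{\mathscr{E}_m}$, $\mathscr{O}_{Z_N}\otimes\mathbb{Z}/p^m\mathbb{Z}\cong\mathscr{O}_{Z_m}$ and $\mathscr{O}_{Z_N}(-\mathscr{D}_N)\otimes\mathbb{Z}/p^m\mathbb{Z}\cong\mathscr{O}_{Z_m}(-\mathscr{D}_m)$; combining these with the fact that forming a tensor product commutes with the reduction $\otimes_{\mathbb{Z}}\mathbb{Z}/p^m\mathbb{Z}$ yields $M/p^mM\cong\mathscr{O}_{\mathscr{E}_m}\otimes_{\mathscr{O}_{Z_m}}\mathscr{O}_{Z_m}(-\mathscr{D}_m)$, and then Step~1 concludes.

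I do not expect a genuine obstacle here: the argument is essentially bookkeeping around two inputs already present in the paper — the PD-monomial basis $\Delta_N$ of Lemma~\ref{quasi-isom} (which gives flatness of $M$) and the base-change isomorphism for PD-envelopes quoted above (which gives the quotient). The only point needing a little care is checking that $M\otimes_{\mathbb{Z}}\mathbb{Z}/p^m\mathbb{Z}$ really equals $(\mathscr{O}_{\mathscr{E}_N}/p^m)\otimes_{\mathscr{O}_{Z_N}/p^m}(\mathscr{O}_{Z_N}(-\mathscr{D}_N)/p^m)$, i.e.\ that base change along $\mathbb{Z}/p^N\mathbb{Z}\to\mathbb{Z}/p^m\mathbb{Z}$ commutes with the relative tensor product; this is standard.
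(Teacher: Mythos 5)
Your proposal is correct and follows essentially the same route as the paper: both arguments come down to flatness over $\mathbb{Z}/p^{n+m}\mathbb{Z}$ of the PD-envelope tensored with the invertible sheaf $\mathscr{O}_{Z_{n+m}}(-\mathscr{D}_{n+m})$, followed by the base-change identification of the cokernel of $\times p^m$ with $\mathscr{O}_{\mathscr{E}_m}\otimes_{\mathscr{O}_{Z_m}}\mathscr{O}_{Z_m}(-\mathscr{D}_m)$. The only cosmetic difference is that the paper quotes the exactness of $\mathscr{O}_{\mathscr{E}_{n+m}}\xrightarrow{\times p^n}\mathscr{O}_{\mathscr{E}_{n+m}}\xrightarrow{\times p^m}\mathscr{O}_{\mathscr{E}_{n+m}}$ from Tsuji and then tensors with the flat module, whereas you re-derive that exactness from flatness (via the free PD-monomial basis) by tensoring the elementary $\mathbb{Z}/p^{n+m}\mathbb{Z}$-module sequence.
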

\begin{proof} We have the following exact sequence (\cite{Tsu1}) 
\[ \mathscr{O}_{\mathscr{E}_{n+m}}\overset{\times p^n}{\longrightarrow} \mathscr{O}_{\mathscr{E}_{n+m}} \overset{\times p^m}{\longrightarrow} \mathscr{O}_{\mathscr{E}_{n+m}}.\]
The module  $\mathscr{O}_{Z_{n+m}}(-\mathscr{D}_{n+m})$ is a flat $\mathscr{O}_{Z_{n+m}}$-module. Then we have the following exact sequence
\begin{multline*} \mathscr{O}_{\mathscr{E}_{n+m}}\otimes_{ \mathscr{O}_{Z_{n+m}}} \mathscr{O}_{Z_{n+m}}(-\mathscr{D}_{n+m}) \overset{\times p^n}{\longrightarrow} \mathscr{O}_{\mathscr{E}_{n+m}}\otimes_{ \mathscr{O}_{Z_{n+m}}} \mathscr{O}_{Z_{n+m}}(-\mathscr{D}_{n+m}) \overset{\times p^m}{\longrightarrow}\\ \mathscr{O}_{\mathscr{E}_{n+m}}\otimes_{ \mathscr{O}_{Z_{n+m}}} \mathscr{O}_{Z_{n+m}}(-\mathscr{D}_{n+m}) \longrightarrow \Coker(p^m)\longrightarrow 0.\end{multline*}
Here we have $\Coker(p^m) \cong \mathscr{O}_{\mathscr{E}_{m}}\otimes_{\mathscr{O}_{Z_m}} \mathscr{O}_{Z_m}(-\mathscr{D}_{m})$. This completes the proof.
\end{proof}

\begin{Cor} We have an isomorphism
\[(\flat)\ \ \ p( \mathscr{O}_{\mathscr{E}_{n+1} }\otimes _{\mathscr{O}_{Z_{n+1}}} \mathscr{O}_{Z_{n+1}}(-\mathscr{D}_{n+1}) )\xleftarrow{\cong} \mathscr{O}_{\mathscr{E}_{n} }\otimes _{\mathscr{O}_{Z_{n}}} \mathscr{O}_{Z_{n}}(-\mathscr{D}_{n}).\] 
\end{Cor}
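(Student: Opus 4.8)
The plan is to read the Corollary off directly from Lemma \ref{p-sequence}. Abbreviate $E_{k}:=\mathscr{O}_{\mathscr{E}_{k}}\otimes_{\mathscr{O}_{Z_{k}}}\mathscr{O}_{Z_{k}}(-\mathscr{D}_{k})$. First I would specialize Lemma \ref{p-sequence} to the case $m=1$, which produces the exact sequence
\[E_{n+1}\xrightarrow{\times p^{n}}E_{n+1}\xrightarrow{\times p}E_{n+1}\longrightarrow E_{1}\longrightarrow 0.\]
Exactness at the middle $E_{n+1}$ says precisely that $\Ker(\times p\colon E_{n+1}\to E_{n+1})$ equals $p^{n}E_{n+1}$ (not merely that it is contained in it); hence multiplication by $p$ factors as a composite $E_{n+1}\twoheadrightarrow E_{n+1}/p^{n}E_{n+1}\xrightarrow{\,\sim\,}pE_{n+1}$, the second arrow being an isomorphism onto the image.

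Next I would identify $E_{n+1}/p^{n}E_{n+1}$ with $E_{n}$. This is the $r=0$ case of the isomorphism $\bigl(J^{[r]}_{\mathscr{E}_{n+1}}\otimes\mathscr{O}_{Z_{n+1}}(-\mathscr{D}_{n+1})\bigr)\otimes\mathbb{Z}/p^{n}\mathbb{Z}\cong J^{[r]}_{\mathscr{E}_{n}}\otimes\mathscr{O}_{Z_{n}}(-\mathscr{D}_{n})$ recorded earlier in Section \ref{Tate modulus}; alternatively it can be seen on the spot, since $\mathscr{O}_{Z_{n+1}}(-\mathscr{D}_{n+1})$ is a line bundle, hence flat over $\mathscr{O}_{Z_{n+1}}$, so $E_{n+1}\otimes_{\mathbb{Z}}\mathbb{Z}/p^{n}\mathbb{Z}\cong(\mathscr{O}_{\mathscr{E}_{n+1}}/p^{n})\otimes_{\mathscr{O}_{Z_{n+1}}}\mathscr{O}_{Z_{n+1}}(-\mathscr{D}_{n+1})$, while the compatibility of the PD-envelope with reduction mod $p^{n}$ gives $\mathscr{O}_{\mathscr{E}_{n+1}}/p^{n}\cong\mathscr{O}_{\mathscr{E}_{n}}$; base-changing the line bundle along $\mathscr{O}_{Z_{n+1}}\twoheadrightarrow\mathscr{O}_{Z_{n}}$ then identifies the right-hand side with $E_{n}$.

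Combining the two steps gives $(\flat)$: explicitly the map $E_{n}\to pE_{n+1}$ sends $x$ to $p\tilde x$, where $\tilde x\in E_{n+1}$ is any lift of $x$; this is well defined because $p\cdot p^{n}E_{n+1}=p^{n+1}E_{n+1}=0$, and it is bijective because $\times p$ induces an isomorphism $E_{n+1}/p^{n}E_{n+1}\xrightarrow{\,\sim\,}pE_{n+1}$ together with $E_{n+1}/p^{n}E_{n+1}\cong E_{n}$. Its inverse sends $py$ (for $y\in E_{n+1}$) to the class of $y$ modulo $p^{n}$, which is exactly the way $(\flat)$ is used in the construction of the symbol map, namely to read $\log(b^{p}\varphi_{\mathscr{E}_{n+1}}(b)^{-1})\in pE_{n+1}$ as an element of $E_{n}$.

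There is essentially no serious obstacle in this argument — it is a formal consequence of Lemma \ref{p-sequence} and the PD-envelope compatibility. The only points requiring a little care are extracting from the exact sequence of Lemma \ref{p-sequence} that $\Ker(\times p)$ is \emph{exactly} $p^{n}E_{n+1}$ (so that the induced map out of $E_{n+1}/p^{n}E_{n+1}$ is injective), and orienting the isomorphism in the direction in which it is subsequently applied.
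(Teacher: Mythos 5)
Your proposal is correct and follows essentially the paper's own route: the key input in both is Lemma \ref{p-sequence} with $m=1$, which identifies $p(\mathscr{O}_{\mathscr{E}_{n+1}}\otimes\mathscr{O}_{Z_{n+1}}(-\mathscr{D}_{n+1}))$ with the quotient of $\mathscr{O}_{\mathscr{E}_{n+1}}\otimes\mathscr{O}_{Z_{n+1}}(-\mathscr{D}_{n+1})$ by the image of $\times p^{n}$. The only cosmetic difference is in the second step: the paper gets $\Coker(\times p^{n})\cong\mathscr{O}_{\mathscr{E}_{n}}\otimes\mathscr{O}_{Z_{n}}(-\mathscr{D}_{n})$ by applying Lemma \ref{p-sequence} a second time with $(n,m)=(1,n)$, whereas you invoke directly the flat base-change/PD-compatibility isomorphism $\bigl(\mathscr{O}_{\mathscr{E}_{n+1}}\otimes\mathscr{O}_{Z_{n+1}}(-\mathscr{D}_{n+1})\bigr)\otimes\mathbb{Z}/p^{n}\mathbb{Z}\cong\mathscr{O}_{\mathscr{E}_{n}}\otimes\mathscr{O}_{Z_{n}}(-\mathscr{D}_{n})$ already recorded in Section \ref{Tate modulus}, which amounts to the same thing.
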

\begin{proof}If $n=n, m=1$ in Lemma \ref{p-sequence}, we have the exact sequence
\[ \mathscr{O}_{\mathscr{E}_{n+1}}\otimes_{ \mathscr{O}_{Z_{n+m}}} \mathscr{O}_{Z_{n+m}}(-\mathscr{D}_{n+1}) \overset{\times p^n}{\longrightarrow} \mathscr{O}_{\mathscr{E}_{n+m}}\otimes_{ \mathscr{O}_{Z_{n+1}}} \mathscr{O}_{Z_{n+1}}(-\mathscr{D}_{n+1}) \overset{\times p}{\longrightarrow}\\ \mathscr{O}_{\mathscr{E}_{n+1}}\otimes_{ \mathscr{O}_{Z_{n+1}}} \mathscr{O}_{Z_{n+m}}(-\mathscr{D}_{n+1}).\]
Then we have $\Coker(p^n) \cong p( \mathscr{O}_{\mathscr{E}_{n+1} }\otimes _{\mathscr{O}_{Z_{n+1}}} \mathscr{O}_{Z_{n+1}}(-\mathscr{D}_{n+1})).$
On the other hand, if $n=1, m=n$ in Lemma \ref{p-sequence}, we have the exact sequence
\[\mathscr{O}_{\mathscr{E}_{n+1}}\otimes_{ \mathscr{O}_{Z_{n+1}}} \mathscr{O}_{Z_{n+1}}(-\mathscr{D}_{n+1}) \overset{\times p^n}{\longrightarrow}\\ \mathscr{O}_{\mathscr{E}_{n+m}}\otimes_{ \mathscr{O}_{Z_{n+1}}} \mathscr{O}_{Z_{n+1}}(-\mathscr{D}_{n+1}) \longrightarrow \mathscr{O}_{\mathscr{E}_{n}}\otimes_{\mathscr{O}_{Z_n}} \mathscr{O}_{Z_n}(-\mathscr{D}_{n}) \longrightarrow 0.\]
Then we have $\mathscr{O}_{\mathscr{E}_{n}}\otimes_{\mathscr{O}_{Z_n}} \mathscr{O}_{Z_n}(-\mathscr{D}_{n}) \cong \Coker(p^n)$. We obtain the isomorphism $(\flat)$. \end{proof}

Taking $\mathcal{H}^1$, we obtain 
\begin{equation}
{\rm Symb}_{X|D}: \big(1+I_{D_{n+1}}\big)^{\times}= \mathcal{H}^1(C_{n+1})\longrightarrow \mathcal{H}^1(s_n(1)_{X|D})\cong  \mathcal{H}^1(\mathscr{S}_n(1)^{loc}_{X|D}).
\end{equation}
We obtain the symbol map (\ref{symb}) as the following composite maps:
\begin{equation}
 (1+I_{D_{n+1}})^{\times}\otimes (M_{X_{n+1}}^{gp})^{\otimes{q-1}} \xrightarrow{{\rm Symb}_{X|D}\otimes {\rm Symb}_X} \mathcal{H}^1(s_n(1)_{X|D}) \otimes \mathcal{H}^{q-1}(\mathcal{S}_n(q-1)_{(X_n, M_{X_n}), (Z_n, M_{Z_n})})
\end{equation} 
\begin{equation*}
\longrightarrow \mathcal{H}^q(s_n(q)_{X|D})\cong  \mathcal{H}^q(\mathscr{S}_n(q)^{loc}_{X|D}).
\end{equation*}
Here ${\rm Symb}_X:(M_{X_{n+1}}^{gp})^{\otimes{q-1}}\rightarrow  \mathcal{H}^{q-1}(\mathcal{S}_n(q-1)_{(X_n, M_{X_n}), (Z_n, M_{Z_n})})$ is symbol map defined by \cite[\S2]{Tsu1}. The second morphism is product structure $s_n(1)_{X|D} \otimes^{\mathbb{L}} \mathcal{S}_n(q-1)_{(X, M_X)} \rightarrow s_n(q)_{X|D}$.

\subsection{Global construction of the symbol map}
We will construct the symbol map in global. First we show that the local symbol map is independent of the choice of the embedding system below.
 We consider the following three diagrams:
\[
\xymatrix@M=10pt{
X_{n+1} \ar@{}[rd]|{\square\ \ (1)} \ar@<-0.5ex>@{^{(}->}[r]^-{\beta_{n+1}}& Z_{n+1}&X_{n+1} \ar@{}[rd]|{\square\ \ (2)} \ar@<-0.5ex>@{^{(}->}[r]^-{\beta'_{n+1}}&  Z'_{n+1} &\\
D_{n+1} \ar@<-0.5ex>@{^{(}->}[u]\ar@<-0.5ex>@{^{(}->}[r]^-{\beta_{D,{n+1}}}& \mathscr{D}_{n+1}, \ar@<-0.5ex>@{^{(}->}[u]& D_{n+1} \ar@<-0.5ex>@{^{(}->}[u]\ar@<-0.5ex>@{^{(}->}[r]^-{\beta'_{D,{n+1}}}& \mathscr{D}'_{n+1}, \ar@<-0.5ex>@{^{(}->}[u]}\]

\[
\xymatrix@M=10pt{
X_{n+1} \ar@{}[rd]|{\square\ \ (3)} \ar@<-0.3ex>@{^{(}->}[r]^-{\beta''_{n+1}}&  Z_{n+1}\times Z'_{n+1} &\\
D_{n+1} \ar@<-0.3ex>@{^{(}->}[u]\ar@<-0.3ex>@{^{(}->}[r]^-{\beta''_{D,{n+1}}}& \mathscr{D}_{n+1} \times  \mathscr{D}'_{n+1}.  \ar[u]&  
}
\]
Note that $ \mathscr{D}_n \times  \mathscr{D}'_n$ is not effective Cartier divisor on $ Z_n\times Z'_n$. 
We put the projections \[p_{Z_{n+1}} : Z_{n+1}\times Z'_{n+1}  \rightarrow Z_{n+1},\quad p_{Z'_{n+1}} : Z_{n+1}\times Z'_{n+1}  \rightarrow Z'_{n+1},\]
\[p_{\mathscr{D}_{n+1}} : \mathscr{D}_{n+1}\times \mathscr{D}'_{n+1}  \rightarrow \mathscr{D}_{n+1},\quad p_{\mathscr{D}'_{n+1}} : \mathscr{D}_{n+1}\times \mathscr{D}'_{n+1}  \rightarrow \mathscr{D}'_{n+1}.\]
We denote by \[J_{\mathscr{E}_{n+1}, ?}\ (?=Z_{n+1}, Z'_{n+1}, \mathscr{D}_{n+1}, \mathscr{D}'_{n+1}, Z_{n+1}\times Z'_{n+1}, \mathscr{D}_{n+1}\times  \mathscr{D}'_{n+1} )\]
the ideal $\Ker \left(\mathscr{O}_{\mathscr{E}_{n+1}, ?} \longrightarrow \mathscr{O}_{*}  \right)$, where $\mathscr{E}_{n+1, ?}$ is the PD-envelope of \[*=X_{n+1}, X'_{n+1}, D_{n+1}, D'_{n+1}, X_{n+1}, D_{n+1}\]
in $?=Z_{n+1}, Z'_{n+1}, \mathscr{D}_{n+1}, \mathscr{D}'_{n+1}, Z_{n+1}\times Z'_{n+1}, \mathscr{D}_{n+1}\times  \mathscr{D}'_{n+1}$ respectively.
We put 
\[C^I_{n+1}:=\left[\big(1+J_{\mathscr{E}_{n+1}})\cap \big(1+\mathscr{O}_{Z_{n+1}}(-\mathscr{D}_{n+1})\big)^{\times}   \longrightarrow \big(1+\mathscr{O}_{Z_{n+1}}(-\mathscr{D}_{n+1})\big)^{\times}\right],\]
\[C^{II}_{n+1}:=\left[\big(1+J_{\mathscr{E}'_{n+1}})\cap \big(1+\mathscr{O}_{Z'_{n+1}}(-\mathscr{D}'_{n+1})\big)^{\times}   \longrightarrow \big(1+\mathscr{O}_{Z'_{n+1}}(-\mathscr{D}'_{n+1})\big)^{\times}\right],\]
\[C_{n+1}(X_{n+1}, Z_{n+1}):=\left[1+J_{\mathscr{E}_{n+1}, Z_{n+1}} \rightarrow M^{gp}_{\mathscr{E}_{n+1},Z_{n+1}}\right]\]
\[C_{n+1}(X'_{n+1}, Z'_{n+1}):=\left[1+J_{\mathscr{E}'_{n+1},Z'_{n+1}} \rightarrow M^{gp}_{\mathscr{E}'_{n+1},Z'_{n+1}}\right]\]
\[C_{n+1}(D_{n+1}, \mathscr{D}_{n+1}):=\left[1+J_{\mathscr{E}_{n+1},\mathscr{D}_{n+1}} \rightarrow M^{gp}_{\mathscr{E}_{n+1},\mathscr{D}_{n+1}}\right]\]
\[C_{n+1}(D'_{n+1}, \mathscr{D}'_{n+1}):=\left[1+J_{\mathscr{E}_{n+1},\mathscr{D}'_{n+1}} \rightarrow M^{gp}_{\mathscr{E}_{n+1},\mathscr{D}'_{n+1}}\right]\]
\[\tilde{C}^{I}_{n+1}:=\Cone\left(C_{n+1}(X_{n+1}, Z_{n+1})\longrightarrow C_{n+1}(D_{n+1}, \mathscr{D}_{n+1}) \right)[-1],\]
\[\tilde{C}^{II}_{n+1}:=\Cone\left(C_{n+1}(X'_{n+1}, Z'_{n+1})\longrightarrow C_{n+1}(D'_{n+1}, \mathscr{D}'_{n+1}) \right)[-1],\]
\[s^I_n(1)_{X|D}:=s_n(1)_{X|D, (Z_n,M_{Z_n}), (\mathscr{D}_n,M_{\mathscr{D}_n})},\]
\[s^{II}_n(1)_{X|D}:=s_n(1)_{X|D, (Z'_n,M_{Z'_n}), (\mathscr{D}'_n,M_{\mathscr{D}'_n})},\]
\[\mathscr{S}^{loc}_n(1)_{X|D}:={\mathscr{S}_n(q)}^{loc}_{X|D, (Z_n\times Z'_n,M_{Z_n\times Z'_n}), (\mathscr{D}_n\times\mathscr{D}'_n, M_{\mathscr{D}_n\times \mathscr{D}'_n})},\]
\[\mathscr{S}^{I, loc}_n(1)_{X|D}:={\mathscr{S}_n(q)}^{loc}_{X|D, (Z_n,M_{Z_n}),(\mathscr{D}_n,M_{\mathscr{D}_n})},\]
\[\mathscr{S}^{II, loc}_n(1)_{X|D}:={\mathscr{S}_n(q)}^{loc}_{X|D, (Z'_n,M_{Z'_n}),(\mathscr{D}'_n,M_{\mathscr{D}'_n})},\]
\[\mathfrak{C}:=\Cone\left(C_{n+1}(X_{n+1}, Z_{n+1}\times Z'_{n+1})\longrightarrow C_{n+1}(D_{n+1}, \mathscr{D}_{n+1}\times  \mathscr{D}'_{n+1}) \right)[-1],\]
\[C_{n+1}(X_{n+1}, Z_{n+1}\times Z'_{n+1}):=\left[1+J_{\mathscr{E}_{n+1,  Z_{n+1}\times Z'_{n+1}}} \rightarrow M^{gp}_{\mathscr{E}_{n+1,  Z_{n+1}\times Z'_{n+1}}}\right],\]
\[C_{n+1}(D_{n+1}, \mathscr{D}_{n+1}\times  \mathscr{D}'_{n+1}):=\left[1+J_{\mathscr{E}_{n+1, \mathscr{D}_{n+1}\times  \mathscr{D}'_{n+1}}} \rightarrow M^{gp}_{\mathscr{E}_{n+1, \mathscr{D}_{n+1}\times  \mathscr{D}'_{n+1}}}\right].\]
\begin{Lem} The following diagram is commutative:
{\tiny\[
\xymatrix@M=10pt{ & &\mathcal{H}^1(C^{II}_{n+1})\ar[d]^-{\cong}_{f''^{II}}\ar[rrd]&&\\
&(1+I_{D_{n+1}})^{\times}\ar@/_24pt/[ld]^-{\cong}_{f'^I}\ar@/^24pt/[ru]^-{\cong}_{f'^{II}}\ar[d]^-{\cong}_{f^I}\ar[r]^-{\cong}_{f^{II}} & \mathcal{H}^1(\tilde{C}^{II}_{n+1})\ar@{}[lu]|{(1)}\ar[d]\ar[rrd]&(2)& \mathcal{H}^1(s^{II}_n(1)_{X|D})\ar[d]^-{\cong}\\
\mathcal{H}^1(C^{I}_{n+1})\ar[r]^-{\cong}_{f''^I}\ar[rd]& \mathcal{H}^1(\tilde{C}^{I}_{n+1})\ar[rd]\ar[r] \ar@{}[lu]|{(6)}& \mathcal{H}^1(\mathfrak{C})\ar@{}[lu]|{(7)} \ar[rrd]&(3)&\mathcal{H}^1\left(\mathscr{S}^{II, loc}_n(1)_{X|D}\right)\ar[d]\\
 &\mathcal{H}^1(s^{I}_n(1)_{X|D})\ar[r]^-{\cong}\ar@{}[u]|{(5)}&\mathcal{H}^1\left(\mathscr{S}^{I, loc}_n(1)_{X|D}\right)\ar[rr]\ar@{}[u]|{(4)}&& \mathcal{H}^1\left(\mathscr{S}^{loc}_n(1)_{X|D}\right),
}\]} where we define $f^I:=f''^I\circ f'^I$ and  $f^{II}:=f''^{II}\circ f'^{II}$.\\
Here the isomorphisms
\[f'^I : (1+I_{D_{n+1}})^{\times} \xrightarrow{\cong}  \mathcal{H}^1(\tilde{C}^{I}_{n+1}),\quad f'^{II} : (1+I_{D_{n+1}})^{\times} \xrightarrow{\cong}  \mathcal{H}^1(\tilde{C}^{II}_{n+1})\]
are defined from an exact sequences \[0\rightarrow \big(1+J_{\mathscr{E}_{n+1}})\cap \big(1+\mathscr{O}_{Z_{n+1}}(-\mathscr{D}_{n+1})\big)^{\times}   \rightarrow \big(1+\mathscr{O}_{Z_{n+1}}(-\mathscr{D}_{n+1})\big)^{\times} \rightarrow (1+I_{D_{n+1}})^{\times} \rightarrow 0, \]
 \[0\rightarrow \big(1+J_{\mathscr{E}'_{n+1}})\cap \big(1+\mathscr{O}_{Z'_{n+1}}(-\mathscr{D}'_{n+1})\big)^{\times}   \rightarrow \big(1+\mathscr{O}_{Z'_{n+1}}(-\mathscr{D}'_{n+1})\big)^{\times} \rightarrow (1+I_{D_{n+1}})^{\times} \rightarrow 0.\]
For $f''^I$ is defined by the morphism  $(*)\;:\; C^I_{n+1} \rightarrow \tilde{C}^{I}_{n+1}$. In degree $0$, the morphism of complexes $(*)$ is 
\[\big(1+J_{\mathscr{E}_{n+1}})\cap \big(1+\mathscr{O}_{Z_{n+1}}(-\mathscr{D}_{n+1})\big)^{\times}  \rightarrow 1+J_{\mathscr{E}_{n+1}, Z_{n+1}}, \]
and in degree $1$, the morphism $(*)$ is
\[ \big(1+\mathscr{O}_{Z_{n+1}}(-\mathscr{D}_{n+1})\big)^{\times} \rightarrow M^{gp}_{\mathscr{E}_{n+1},Z_{n+1}} \oplus 1+J_{\mathscr{E}_{n+1},\mathscr{D}_{n+1}}.\]
The map $f''^{II}$ is similar.

\end{Lem}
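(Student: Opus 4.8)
The plan is to reduce the commutativity of this large diagram to a short list of elementary verifications, each of which involves only the explicit formulas for the various maps. First I would observe that the diagram is assembled from eight labelled subregions $(1)$--$(7)$ (and several triangles that are merely "composition of definitions"), so by a standard diagram-chase it suffices to check that each labelled region commutes separately. Regions $(1)$ and $(6)$ are the statements that the morphism of complexes $(*): C^I_{n+1}\to\tilde C^{I}_{n+1}$ (resp.\ its primed analogue) is compatible with the two descriptions of $\mathcal H^1$: on the one hand via the defining exact sequence $0\to(1+J_{\mathscr E_{n+1}})\cap(1+\mathscr O_{Z_{n+1}}(-\mathscr D_{n+1}))^\times\to(1+\mathscr O_{Z_{n+1}}(-\mathscr D_{n+1}))^\times\to(1+I_{D_{n+1}})^\times\to0$, and on the other via the cone presentation $\tilde C^I_{n+1}=\Cone(C_{n+1}(X_{n+1},Z_{n+1})\to C_{n+1}(D_{n+1},\mathscr D_{n+1}))[-1]$. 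This is a purely formal check: both sides compute the same connecting map, because $(*)$ sends the degree-$0$ term into $1+J_{\mathscr E_{n+1},Z_{n+1}}$ and the degree-$1$ term into $M^{gp}_{\mathscr E_{n+1},Z_{n+1}}\oplus(1+J_{\mathscr E_{n+1},\mathscr D_{n+1}})$ exactly as dictated by the cone differential.

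Next I would treat regions $(2)$ and $(3)$, which encode the functoriality of the local syntomic complex with modulus under the two projections $p_{Z_{n+1}}$ and $p_{Z'_{n+1}}$ from $Z_{n+1}\times Z'_{n+1}$. Here I would invoke Lemma \ref{Lemfunc} (the functoriality of $\mathscr S_n(q)^{loc}_{X|D}$) together with \cite[Corollary 1.11]{Tsu2}: the two projections are exact closed-immersion-compatible maps of embedding data, so they induce the horizontal quasi-isomorphisms $\mathcal H^1(\mathscr S^{I,loc}_n(1)_{X|D})\xrightarrow{\cong}\mathcal H^1(\mathscr S^{loc}_n(1)_{X|D})\xleftarrow{\cong}\mathcal H^1(\mathscr S^{II,loc}_n(1)_{X|D})$, and the symbol maps are compatible with these because the symbol is built (via Lemma \ref{prod s_n}) from $\dlog$ and $\log$, which are visibly natural in the embedding. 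Regions $(4)$ and $(5)$ are the quasi-isomorphisms of Lemma \ref{quasi-isom} identifying $s_n(1)_{X|D}$ with $\mathscr S_n(1)^{loc}_{X|D}$ for each choice of embedding, again compatible with the symbol by construction. Region $(7)$ is then forced: it commutes because it is sandwiched between regions that do, or equivalently because the map $\mathfrak C\to$ (cone over $Z_{n+1}\times Z'_{n+1}$) factors through both $\tilde C^I$ and $\tilde C^{II}$ compatibly.

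The main obstacle, as I see it, is region $(7)$ together with the triangles feeding into $\mathcal H^1(\mathfrak C)$: one must check that the two maps $\tilde C^I_{n+1}\to\mathfrak C$ and $\tilde C^{II}_{n+1}\to\mathfrak C$ (induced by the projections $p_{Z_{n+1}}$, $p_{Z'_{n+1}}$ pulled back to the product embedding) agree after composing with $f'^I$ and $f'^{II}$ respectively, i.e.\ that the identification of $(1+I_{D_{n+1}})^\times$ with $\mathcal H^1(\mathfrak C)$ obtained through the two routes is literally the same. The delicate point is that $\mathscr D_{n+1}\times\mathscr D'_{n+1}$ is \emph{not} an effective Cartier divisor on $Z_{n+1}\times Z'_{n+1}$ (as the paper explicitly warns), so one cannot use the $s_n$-presentation there and must work with the cone presentation $\mathfrak C$ directly; the resolution is that on $\mathcal H^1$ the non-Cartier issue is invisible because $\mathcal H^1(\mathfrak C)$ is still computed by the connecting homomorphism of $1+J\to M^{gp}$, and both projections induce the identity on the common quotient $(1+I_{D_{n+1}})^\times$. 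Once that identification is pinned down, every remaining region commutes by naturality of $\dlog$, $\log$, and the product map of Lemma \ref{prod s_n}, and the proof concludes.
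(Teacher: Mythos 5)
Your plan matches the paper's proof in essence: it reduces the statement to checking regions $(1)$--$(7)$ separately, with $(1)$/$(6)$ purely definitional, the symbol-type squares verified via the explicit $\log$/$d\log$ formulas and their naturality under the two projections from the product embedding $Z_{n+1}\times Z'_{n+1}$, and $(7)$ settled by identifying the pullbacks $p^*_{Z_{n+1}}J_{\mathscr{E}_{n+1},Z_{n+1}}\cong p^*_{Z'_{n+1}}J_{\mathscr{E}_{n+1},Z'_{n+1}}$ (and likewise for $M^{gp}_{\mathscr{E}}$ and $J_{\mathscr{E},\mathscr{D}}$) inside the PD-envelope of the product, exactly as the paper does. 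Two small corrections for the write-up: the squares governed by the projections are the paper's $(3)$/$(4)$ while the squares comparing $C_{n+1}\to s_n(1)_{X|D}$ with $\tilde{C}_{n+1}\to\mathscr{S}^{loc}_n(1)_{X|D}$ are $(2)$/$(5)$ (your labels are swapped, though your checks collectively cover all of them), and the passing claim that $(7)$ is ``forced'' by neighbouring regions is not a valid inference --- only the direct argument you give afterwards (and which the paper carries out) establishes it.
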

\begin{proof} We show the commutativity of $(1)$--$(7)$ below:\\ The diagrams $(1)$ and $(6)$ are commutative by the definition of $f^I$ and $f^{II}$.\\
\textbf{The commutativity of $(3)$ and $(4)$}: The proofs of $(3)$ and $(4)$ are the same, we only show the case $(4)$.
It is enough to show that the commutativity of the following diagram of complexes:
\[
\xymatrix@M=10pt{
\tilde{C}^{I}_{n+1} \ar[r] \ar[d]& \mathscr{S}^{I, loc}_n(1)_{X|D} \ar[d]\\
 \mathfrak{C}\ar[r] & \mathscr{S}^{loc}_n(1)_{X|D}.
}\]
In degree $0$, the above diagram is 
\[
\xymatrix@M=10pt{
1+J_{\mathscr{E}_{n+1}, Z_{n+1}} \ar[r] \ar[d]^-{p^*_{Z_{n+1}}} & J_{\mathscr{E}_{n}, Z_{n}} \ar[d]^-{p^*_{Z_{n}}}\\
1+J_{\mathscr{E}_{n+1}, Z_{n+1}\times Z'_{n+1} } \ar[r] &J_{\mathscr{E}_{n}, Z_{n}\times Z'_{n}}.
}\]
Here the upper and lower horizontal morphisms are defined by $a \mapsto \log(a)\mod p^n$, $p_{Z_{n+1}} (\resp.\ p_{\mathscr{D}_{n+1}})$ is the projection $Z_{n+1}\times Z'_{n+1}\rightarrow Z_{n+1}\ (\resp.\ \mathscr{D}_{n+1}\times \mathscr{D}'_{n+1} \rightarrow \mathscr{D}_{n+1})$. 
In degree $1$, we have the diagram:
{\footnotesize\[
\xymatrix@M=10pt{
M^{gp}_{\mathscr{E}_{n+1}, Z_{n+1}}\oplus \left(1+J_{\mathscr{E}_{n+1}, \mathscr{D}_{n+1}}\right)  \ar[r] \ar[d]^-{p^*_{Z_{n+1}}\oplus p^*_{\mathscr{D}_{n+1}}} &\left((\mathscr{O}_{\mathscr{E}_n,Z_n}\otimes \omega^1_{Z_n})\oplus \mathscr{O}_{\mathscr{E}_n, Z_n} \right)\oplus J_{\mathscr{E}_{n}, \mathscr{D}_{n}} \ar[d]^-{p^*_{Z_{n}}\oplus p^*_{\mathscr{D}_{n+1}}} \\
M^{gp}_{\mathscr{E}_{n+1}, Z_{n+1}\times Z'_{n+1}}\oplus \left(1+J_{\mathscr{E}_{n+1}, \mathscr{D}_{n+1}\times  \mathscr{D}'_{n+1}}\right)\ar[r] & \left((\mathscr{O}_{\mathscr{E}_n, Z_{n}\times Z'_{n}}\otimes \omega^1_{Z_{n}\times Z'_{n}})\oplus \mathscr{O}_{\mathscr{E}_n,} \right)\oplus J_{\mathscr{E}_{n},\mathscr{D}_{n}\times  \mathscr{D}'_{n}}.
}\]}
Here the upper (\resp.\ lower) horizontal morphisms are defined by 
\[(a,b) \mapsto \left(\left(d\log a\mod p^n, p^{-1}\log(a^p\varphi_{\mathscr{E}_n, Z_{n}}(a)^{-1})\right),\ \log b \mod p^n\right),\]
 \[\left(\resp.\ (c,d) \mapsto \left(\left(d\log c\mod p^n, p^{-1}\log(c^p\varphi_{\mathscr{E}_n, Z_{n}\times Z'_{n}}(c)^{-1})\right),\ \log d \mod p^n\right)\right).\]
 In degree $2$, we have the diagram 
 {\footnotesize\[
\xymatrix@M=10pt{
M^{gp}_{\mathscr{E}_{n+1}, \mathscr{D}_{n+1}} \ar[r] \ar[d]&\mathcal{S}_n(1)^1_{D_n, \mathscr{D}_n}\ar[d] \\
M^{gp}_{\mathscr{E}_{n+1}, \mathscr{D}_{n+1}\times\mathscr{D}'_{n+1}}\ar[r] &\mathcal{S}_n(1)^1_{D_n, \mathscr{D}_n\times \mathscr{D}'_n}.
}\]}
Here the right and left vertical morphisms are projections $p^*_{Z_{n+1}}$. This commutativity is obvious.\\
Then we have the commutativity of the above diagrams $(4)$.\\
\textbf{The commutativity of $(2)$ and $(5)$}:
The proofs of $(2)$ and $(5)$ are the same, we only show the case $(5)$.
 It is enough to show the commutativity of the following diagram of complexes:
\[
\xymatrix@M=10pt{
C^{I}_{n+1} \ar[r] \ar[d]&s^I_n(1)_{X|D} \ar[d]\\
\tilde{C}^{I}_{n+1}\ar[r] & \mathscr{S}^{I, loc}_n(1)_{X|D}.
}\]
First we define the left vertical arrow $C^{I}_{n+1} \longrightarrow \tilde{C}^{I}_{n+1}$.
{\footnotesize\[
\xymatrix@M=10pt{C^{I}_{n+1}:\ar[d]& (1+J_{\mathscr{E}_{n+1}, Z_{n+1}})\cap \left(1+\mathscr{O}_{Z_{n+1}}(-\mathscr{D}_{n+1})\right)^{\times}\ar[d]^-{h^0} \ar[r]&\left(1+\mathscr{O}_{Z_{n+1}}(-\mathscr{D}_{n+1})\right)^{\times}\ar[d]^-{h^1} \\
\tilde{C}^{I}_{n+1}:& 1+J_{\mathscr{E}_{n+1}, Z_{n+1}} \ar[r] & M^{gp}_{\mathscr{E}_{n+1}, Z_{n+1}}\oplus \left(1+J_{\mathscr{E}_{n+1}, \mathscr{D}_{n+1}}\right).
}\]}
Here the upper horizontal arrows are the inclusion map and the lower horizontal arrow is $x \mapsto (-x, x|_{1+J_{\mathscr{E}_{n+1}, \mathscr{D}_{n+1}}})$. The morphism $h^0$ is the inclusion map and the morphism $h^1$ is $z \mapsto (-z, 1)$. Then the above diagram is commutative. Hence we obtain the morphism of complex $C^{I}_{n+1} \longrightarrow \tilde{C}^{I}_{n+1}$.\\
In degree $0$, 
{\footnotesize\[
\xymatrix@M=10pt{ (1+J_{\mathscr{E}_{n+1}, Z_{n+1}})\cap \left(1+\mathscr{O}_{Z_{n+1}}(-\mathscr{D}_{n+1})\right)^{\times} \ar[d] \ar[r]& J_{\mathscr{E}_{n}, Z_{n}}\otimes \mathscr{O}_{Z_{n}}(-\mathscr{D}_{n})\ar[d]\\
1+J_{\mathscr{E}_{n+1}, Z_{n+1}} \ar[r]& J_{\mathscr{E}_{n}, Z_{n}}.
}\]}
Here the upper and the lower horizontal arrow is $a \mapsto \log a \mod p^{n+1}$. The right and the left vertical arrows are the inclusion map. Then the diagram is commutative. \\
In degree $1$, the above diagram is
{\footnotesize\[
\xymatrix@M=10pt{
\left(1+\mathscr{O}_{Z_{n+1}}(-\mathscr{D}_{n+1})\right)^{\times} \ar[r] \ar[d]&\left((\mathscr{O}_{\mathscr{E}_n,Z_n}\otimes \omega^1_{Z_n|\mathscr{D}_n})\oplus \mathscr{O}_{\mathscr{E}_n, Z_n} \right)\oplus \left(\mathscr{O}_{\mathscr{E}_n,Z_n}\otimes \mathscr{O}_{Z_{n}}(-\mathscr{D}_{n})  \right) \ar[d] \ar[d]\\
M^{gp}_{\mathscr{E}_{n+1}, Z_{n+1}}\oplus \left(1+J_{\mathscr{E}_{n+1}, \mathscr{D}_{n+1}}\right) \ar[r] &\left((\mathscr{O}_{\mathscr{E}_n,Z_n}\otimes \omega^1_{Z_n})\oplus \mathscr{O}_{\mathscr{E}_n, Z_n} \right)\oplus J_{\mathscr{E}_n,\mathscr{D}_{n}}.
}\]}
The left vertical arrow is $b \mapsto (-b, 1)$, the upper horizontal arrow is \[b \mapsto (d\log b\mod p^n,\  p^{-1}\log(b^p\varphi_{\mathscr{E}_{n+1}}(b)^{-1})),\]
the lower horizontal arrow is $(x,y) \mapsto \left((d\log x\mod p^n,\  p^{-1}\log(x^p\varphi_{\mathscr{E}_{n+1}}(x)^{-1})),\ \log y \mod p^n \right)$, and the right vertical arrow is the inclusion to the first component $w \mapsto (w, 0)$. Then we obtain the commutativity of this diagram $(5)$.  \\
\textbf{The commutativity of $(7)$}: We consider the diagram
\[
\xymatrix@M=10pt{
 (1+I_{D_{n+1}})^{\times} \ar[r]^-{\cong} \ar[d]^-{\cong}&\mathcal{H}^1(\tilde{C}^{II}_{n+1}) \ar[d]^-{p^*_{Z'_{n+1}}\oplus p^*_{\mathscr{D}'_{n+1}}}\\
\mathcal{H}^1(\tilde{C}^{I}_{n+1})\ar[r]^-{p^*_{Z_{n+1}}\oplus p^*_{\mathscr{D}_{n+1}}} & \mathcal{H}^1(\mathfrak{C}).
}\]
By the isomorphisms  $p^*_{Z_{n+1}}(J_{\mathscr{E}_{n+1}, Z_{n+1}})\cong p^*_{Z'_{n+1}}(J_{\mathscr{E}_{n+1}, Z'_{n+1}})$, $p^*_{Z_{n+1}}(M_{\mathscr{E}_{n+1}, Z_{n+1}})\cong p^*_{Z'_{n+1}}(M_{\mathscr{E}_{n+1}, Z'_{n+1}})$ and $p^*_{\mathscr{D}_{n+1}}(J_{\mathscr{E}_{n+1}, \mathscr{D}_{n+1}})\cong p^*_{\mathscr{D}'_{n+1}}(J_{\mathscr{E}_{n+1}, \mathscr{D}'_{n+1}})$,  the above diagram $(7)$ is commutative. This completes the proof.
\end{proof}

Take $X^{\bullet}$ in the above construction, we have the morphism 
\[(1+I_{D^{\bullet}_{n+1}})^{\times}[-1] \longrightarrow \mathscr{S}^{loc}_n(1)_{X^{\bullet}|D^{\bullet}}.\]
Since $\theta^*(1+I_{D_{n+1}})^{\times}=(1+I_{D^{\bullet}_{n+1}})^{\times}|X^{\bullet}_n$, we obtain a morphism 
\[\theta^*(1+I_{D_{n+1}})^{\times}[-1] \longrightarrow \mathscr{S}^{loc}_n(1)_{X^{\bullet}|D^{\bullet}}.\]
Taking $R\theta_*$, we obtain a morphism 
\[(1+I_{D_{n+1}})^{\times}[-1] \longrightarrow \mathscr{S}_n(1)_{X|D}.\]
The local symbol map has functorial property for $X$ and $Z$, hence we get the symbol map 
\[(1+I_{D_{n+1}})^{\times} \otimes (M_{X_{n+1}}^{gp})^{\otimes{q-1}} \longrightarrow \mathcal{H}^q(\mathscr{S}_n(q)_{X|D}).\]


\section{\textbf{Main Rsults}}
In this and the next section, for $0 \leq q \leq p-2$ and $p \geq 3$, we calculate the cohomology sheaf
\begin{equation}\mathcal{H}^q(s_1(q)_{X|D})\quad(0 \leq q \leq p-2, p\geq 3).\end{equation}
We first define two filtrations on the sheaf $\mathcal{H}^q(s_1(q)_{X|D})$ using symbols and state our main results on the associated graded pieces.

\begin{Def}\label{fil} We define the filtrations $U^{\cdot}$ and $V^{\cdot}$ on $(1+I_{D_2})^{\times} \otimes (M_{X_2}^{gp})^{\otimes (q-1)}$ ($q \geq 1$) by
\begin{equation}
U^0((1+I_{D_2})^{\times} ):=(1+I_{D_2})^{\times},\quad V^0((1+I_{D_2})^{\times} ):=(1+\pi I_{D_2})^{\times} \cdot \pi^{\mathbb{N}} , 
\end{equation}
\begin{equation}
U^m((1+I_{D_2})^{\times} ):=(1+\pi^m I_{D_2})^{\times},\quad V^m((1+I_{D_2})^{\times} ):=U^{m+1}((1+I_{D_2})^{\times} )\;(m \geq 1)  , 
\end{equation}
if $q=1$, and
\begin{equation}
U^m\Big((1+I_{D_2})^{\times} \otimes (M_{X_2}^{gp})^{\otimes (q-1)} \Big):=({\rm the\;image\; of\;} U^m((1+I_{D_2})^{\times} ))\otimes (M_{X_2}^{gp})^{\otimes (q-1)},
\end{equation}
\begin{equation}
V^m\Big((1+I_{D_2})^{\times} \otimes (M_{X_2}^{gp})^{\otimes (q-1)} \Big):=({\rm the\;image\; of\;} U^m((1+I_{D_2})^{\times} ))\otimes (M_{X_2}^{gp})^{\otimes (q-2)}\otimes \pi^{\mathbb{N}} 
\end{equation}
\begin{equation*}
+\;U^{m+1}\Big((1+I_{D_2})^{\times} \otimes (M_{X_2}^{gp})^{\otimes (q-1)} \Big)
\end{equation*}
if $q \geq 2$. Here $(1+\pi^m I_{D_2})^{\times}:=(1+\pi^m I_{D_2}) \cap \mathscr{O}_{X_2}^{\times}$ for $m \geq 0$.
\end{Def}

Here we denote by the same notation $\pi$ the image of $\pi \in \Gamma(S, N)=\mathscr{O}_K\backslash \{0\}$ under the map $\Gamma(S, N)\longrightarrow \Gamma(X, M_X)$ and its images in $\Gamma(X_n, M_{X_n})\ (n \in \mathbb{N})$.
We define the filtration $U^{\cdot}$ and $V^{\cdot}$ on $\mathcal{H}^q(s_1(q)_{X|D})$($q \geq 0$) to be the images of these filtrations under the symbol map \ref{symb}. 
There are natural inclusions $U^{m+1} \subset V^m$ and $V^m \subset U^m$.
Put 
\begin{align}
\gr_0^m\mathcal{H}^q(s_1(q)_{X|D})&:=U^m\mathcal{H}^q(s_1(q)_{X|D})/V^m\mathcal{H}^q(s_1(q)_{X|D}),\\
\gr_1^m\mathcal{H}^q(s_1(q)_{X|D})&:=V^m\mathcal{H}^q(s_1(q)_{X|D})/U^{m+1}\mathcal{H}^q(s_1(q)_{X|D}).
\end{align}

To describe these graded pieces, we introduce some differential sheaves on $Y$.
We define \begin{equation}\omega_{Y|D_s}^q:=\omega_{Y}^q \otimes_{\mathscr{O}_Y}\mathscr{O}_Y(-D_s),\end{equation} where $s:=\Spec(k)$, $\omega_{Y}^q:= \Omega^q_{Y/s}\big(\log(M_Y/N_s)\big)$, $D_s:=D \otimes_{\mathscr{O}_K} k$ and $(s, N_s)$ denotes the log point over $s$. We define the subsheaves $Z_{Y|D_s}^q$ and $B_{Y|D_s}^q$ of $\omega_{Y|D_s}^q$ by 
\begin{equation}
Z_{Y|D_s}^q:=\Ker(d^q:\omega_{Y|D_s}^q \rightarrow \omega_{Y|D_s}^{q+1}),
\end{equation}
\begin{equation}
B_{Y|D_s}^q:=\Imm(d^{q-1}:\omega_{Y|D_s}^{q-1}\rightarrow \omega_{Y|D_s}^q).
\end{equation}

Let $\omega_{Y|D_s,\log}^q$ be the subsheaf of abelian groups of $\omega_{Y}^q$ generated by local sections of the form
\begin{equation*}
d\log(x)\land d\log(a_1)\land \cdots \land d\log(a_{q-1}),
\end{equation*}
where $x \in \Big(1+\mathscr{O}_Y\big(-D_s\big)\Big)^{\times}$ and $a_1, \dotsc , a_{q-1} \in M_Y$.

If $D=\sum_{\lambda \in \Lambda} m_{\lambda}D_{\lambda}$, we denote $D':=\sum_{\lambda \in \Lambda} m_{\lambda}'D_{\lambda}$. Here $m_{\lambda}':=\min\{l \in \mathbb{N}\;|\; p\cdot l \geq m_{\lambda} \}$. We put $D_s:=\sum_{\lambda \in \Lambda} m_{\lambda}{(D_s)}_{\lambda}$. 
 We define a map $d: \omega^{q}_{Y}\otimes_{\mathscr{O}_Y}  \mathscr{O}_Y(-D_s)\rightarrow \omega^{q+1}_{Y}\otimes_{\mathscr{O}_Y}  \mathscr{O}_Y(-D_s)$ by the local assignment 
\[\omega \otimes \prod_{\lambda \in \Lambda}\pi_{\lambda}^{m_\lambda} \mapsto \big(d\omega+\sum_{\lambda \in \Lambda}m_\lambda\cdot d\log(\pi_\lambda)\land \omega\big) \otimes \prod_{\lambda \in \Lambda}\pi_{\lambda}^{m_\lambda}\quad (\omega \in  \omega^{q}_{Y}),\]
where $\pi_\lambda \in \mathscr{O}_Y$ denotes a local uniformizer of ${(D_\lambda)}_s$, for each $\lambda \in \Lambda$. 
\begin{Lem}
 $(\omega_Y^{\cdot}\otimes_{\mathscr{O}_Y} \mathscr{O}_Y(-D_s), d)$ is a complex.
\end{Lem}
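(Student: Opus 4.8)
The plan is to reduce the assertion to a local computation on $Y$ and to check two things: first, that the displayed recipe for $d$ does not depend on the chosen uniformizers $\pi_\lambda$ (so that the local definitions patch to a well-defined morphism of sheaves $d\colon \omega^{q}_Y\otimes_{\mathscr{O}_Y}\mathscr{O}_Y(-D_s)\to \omega^{q+1}_Y\otimes_{\mathscr{O}_Y}\mathscr{O}_Y(-D_s)$), and second, that $d\circ d=0$, which may then be verified chart by chart. Before either step I would record that $\sum_{\lambda}m_\lambda\, d\log(\pi_\lambda)$ is a genuine section of $\omega^1_Y=\Omega^1_{Y/s}(\log(M_Y/N_s))$, because each component $(D_\lambda)_s$ belongs to the log structure $M_Y$, so that the target of the recipe is the claimed one.

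For well-definedness, fix a chart in which only finitely many $(D_\lambda)_s$ meet $Y$, so that $\eta:=\prod_\lambda \pi_\lambda^{m_\lambda}$ generates $\mathscr{O}_Y(-D_s)$ and every local section of $\omega^{q}_Y\otimes_{\mathscr{O}_Y}\mathscr{O}_Y(-D_s)$ has the form $\omega\otimes\eta$. A second choice $\tilde\pi_\lambda=u_\lambda\pi_\lambda$ with $u_\lambda\in\mathscr{O}_Y^{\times}$ replaces $\eta$ by $\tilde\eta=v\eta$, where $v:=\prod_\lambda u_\lambda^{m_\lambda}$, and $\omega\otimes\eta=(v^{-1}\omega)\otimes\tilde\eta$. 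Using $d\log\tilde\pi_\lambda=d\log u_\lambda+d\log\pi_\lambda$, $d\log v=\sum_\lambda m_\lambda\, d\log u_\lambda$, and $d(v^{-1}\omega)=v^{-1}d\omega-v^{-1}(d\log v)\wedge\omega$, one computes
\[
d(v^{-1}\omega)+\Big(\sum_{\lambda}m_\lambda\, d\log\tilde\pi_\lambda\Big)\wedge v^{-1}\omega
= v^{-1}\Big(d\omega+\big(\textstyle\sum_{\lambda}m_\lambda\, d\log\pi_\lambda\big)\wedge\omega\Big),
\]
so the $\tilde\pi$-recipe applied to $(v^{-1}\omega)\otimes\tilde\eta$ gives $\big(d\omega+\sum_\lambda m_\lambda\, d\log\pi_\lambda\wedge\omega\big)\otimes\eta$, which is exactly the $\pi$-recipe applied to $\omega\otimes\eta$. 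Hence $d$ is independent of the chosen uniformizers and glues to a global map; a component not meeting a given chart is handled by the same move (multiplying an irrelevant $\pi_\lambda$ by a unit).

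For $d\circ d=0$, put $\theta:=\sum_{\lambda}m_\lambda\, d\log(\pi_\lambda)\in\omega^1_Y$, so that $d\theta=0$ (each $d\log\pi_\lambda$ is closed) and $\theta\wedge\theta=0$ ($\theta$ is a $1$-form). Since $d(\omega\otimes\eta)=(d\omega+\theta\wedge\omega)\otimes\eta$, applying $d$ once more and using $d(\theta\wedge\omega)=d\theta\wedge\omega-\theta\wedge d\omega=-\theta\wedge d\omega$ yields
\[
d\big(d(\omega\otimes\eta)\big)
=\big(d(d\omega)+d(\theta\wedge\omega)+\theta\wedge d\omega+\theta\wedge\theta\wedge\omega\big)\otimes\eta
=\big(0-\theta\wedge d\omega+\theta\wedge d\omega+0\big)\otimes\eta=0.
\]
As this holds in every chart, $d\circ d=0$, and $(\omega^{\cdot}_Y\otimes_{\mathscr{O}_Y}\mathscr{O}_Y(-D_s),d)$ is a complex.

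I do not expect a serious obstacle here. The only mildly delicate point is the bookkeeping in the change-of-uniformizer identity and the need to make sure the components of $D_s$ not passing through a given chart cause no ambiguity; both are subsumed by the well-definedness computation above. The identity $d\circ d=0$ is then formal once one observes that $\theta$ is closed and $\theta\wedge\theta=0$.
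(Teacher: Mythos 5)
Your proof is correct and follows essentially the same route as the paper: setting $\theta=\sum_\lambda m_\lambda\, d\log(\pi_\lambda)$ and checking $d^2=0$ locally via $d\theta=0$ and $\theta\wedge\theta=0$ is exactly the paper's computation. The extra verification that the local recipe is independent of the chosen uniformizers (and hence glues) is a point the paper leaves implicit, and your treatment of it is fine.
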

\begin{proof} It is enough to show that $d^2=0$. We have 
{\footnotesize\begin{align*}
d^2& \left(\omega \otimes \prod_{\lambda \in \Lambda}\pi_{\lambda}^{m_\lambda} \right)=d\left(\big(d\omega+\sum_{\lambda \in \Lambda}m_\lambda\cdot d\log(\pi_\lambda)\land \omega\big) \otimes \prod_{\lambda \in \Lambda}\pi_{\lambda}^{m_\lambda}\right)\\
&=\left(d\left(d\omega+\sum_{\lambda \in \Lambda}m_\lambda\cdot d\log(\pi_\lambda)\land \omega \right)+ \sum_{\lambda \in \Lambda}m_\lambda\cdot d\log(\pi_\lambda)\land \big(d\omega+\sum_{\lambda \in \Lambda}m_\lambda\cdot d\log(\pi_\lambda)\land \omega\big) \right)\otimes \prod_{\lambda \in \Lambda}\pi_{\lambda}^{m_\lambda}\\
&=\left(-\sum_{\lambda \in \Lambda}m_\lambda\cdot d\log(\pi_\lambda)\land d\omega+ \sum_{\lambda \in \Lambda}m_\lambda\cdot d\log(\pi_\lambda)\land d\omega \right)\otimes \prod_{\lambda \in \Lambda}\pi_{\lambda}^{m_\lambda}=0.
\end{align*} }This completes the proof.\end{proof}
We have the following Lemma:
\begin{Lem}\label{Omega}{\rm (cf. \cite[Theorem 3.2]{SS} )} For each integer $q \geq 0$, there exists an isomorphism
\begin{equation}\label{Carinv}
C^{-1}: \omega_{Y|D_s'}^q \stackrel{\cong}{\longrightarrow} \mathcal{H}^q(\omega_{Y|D_s}^{\text{\large$\cdot$}})
\end{equation}
\begin{equation}
a\;\dlog(b_1)\land\dlog(b_2)\land...\land\dlog(b_q) \mapsto \tcf\;a^p\;\dlog(b_1)\land\dlog(b_2)\land...\land\dlog(b_q),
\end{equation}
\noindent
where $a \in \mathscr{O}_Y(-D'_s)$ and $b_1,...,b_q \in M_Y$.
\end{Lem}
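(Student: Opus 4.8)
The plan is to reduce the statement to a known Cartier isomorphism in the log setting and then track how the modulus twist interacts with it. I work étale-locally on $Y$, so I may assume we are in the standard coordinate situation $Y = \Spec\big(k[t_1,\dots,t_d]/(t_1\cdots t_a)\big)$ with the log structure associated to $\bigcup_i \{t_i=0\}$ together with the residual divisor $D_s = \sum_\lambda m_\lambda (D_s)_\lambda$, where each $(D_s)_\lambda$ is cut out locally by a single coordinate-type function $\pi_\lambda$. The absolute Cartier isomorphism $C^{-1}\colon \omega_Y^q \xrightarrow{\cong} \mathcal H^q(\omega_Y^{\text{\large$\cdot$}})$ for the log de Rham complex of $(Y,M_Y/N_s)$ is the input I would cite (this is the form of \cite[Theorem 3.2]{SS}, or Kato's log Cartier isomorphism). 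The task is to twist it by $\mathscr O_Y(-D_s)$ on the target and $\mathscr O_Y(-D'_s)$ on the source, where $m'_\lambda = \min\{\ell : p\ell \ge m_\lambda\}$, which is exactly $\lceil m_\lambda/p\rceil$.

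First I would make precise the differential $d$ on $\omega_Y^{\text{\large$\cdot$}}\otimes \mathscr O_Y(-D_s)$: writing a local section as $\omega\otimes\prod_\lambda \pi_\lambda^{m_\lambda}$, the twisted differential is $d_{D_s}(\omega\otimes\prod\pi_\lambda^{m_\lambda}) = \big(d\omega + \sum_\lambda m_\lambda\,d\log\pi_\lambda\wedge\omega\big)\otimes\prod\pi_\lambda^{m_\lambda}$, as already recorded in the Lemma preceding the statement. Thus, under the (non-canonical, but locally fixed) trivialization $\omega\otimes\prod\pi_\lambda^{m_\lambda}\leftrightarrow\omega$, the twisted de Rham complex becomes $(\omega_Y^{\text{\large$\cdot$}}, d + \eta\wedge-)$ with $\eta = \sum_\lambda m_\lambda\,d\log\pi_\lambda$, a closed logarithmic $1$-form with $\mathbb Z$-coefficients. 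The key computational step is then the Cartier-type identity for such a twisted complex: for a closed form of the shape $a\,d\log b_1\wedge\cdots\wedge d\log b_q$ one computes $(d+\eta\wedge-)(a\,d\log b_1\wedge\cdots\wedge d\log b_q) = (da + a\eta)\wedge d\log b_1\wedge\cdots\wedge d\log b_q$, so closedness forces $da + a\eta = 0$ locally, i.e. $a = u\prod_\lambda \pi_\lambda^{-m_\lambda}$ for a $d$-closed $u$; combined with the classical fact that $p$-th powers and $d\log$'s span the cohomology, one gets that $\mathcal H^q$ of the twisted complex is generated by classes $a^p\,d\log b_1\wedge\cdots\wedge d\log b_q$ with $a^p\prod\pi_\lambda^{m_\lambda}$ actually a section of the twist — and since $(\prod\pi_\lambda^{m'_\lambda})^p\prod\pi_\lambda^{-m_\lambda}$ is integral precisely because $pm'_\lambda\ge m_\lambda$, the source is $\omega_{Y|D'_s}^q$. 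I would check exactness away from the monomial classes by the usual Poincaré-lemma/contracting-homotopy argument for the twisted complex (the homotopy operator for $d$ on a polynomial ring, conjugated by $\prod\pi_\lambda^{m_\lambda}$, still works because $\eta$ is $d\log$ of a monomial).

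So the concrete steps, in order, are: (1) localize to the standard semistable-with-modulus chart; (2) trivialize the twist and identify the twisted de Rham complex with $(\omega_Y^{\text{\large$\cdot$}}, d+\eta\wedge-)$; (3) prove a twisted Cartier isomorphism for this complex by reducing, via multiplication by $\prod\pi_\lambda^{m_\lambda}$ on cocycles, to the untwisted log-Cartier isomorphism of \cite{SS}, keeping careful track of which power of $\pi_\lambda$ survives — this is where the ceiling $m'_\lambda = \lceil m_\lambda/p\rceil$ emerges and where the $p$-th-power map $a\mapsto a^p$ sends $\mathscr O_Y(-D'_s)$ into the right subsheaf; (4) verify that the local isomorphisms are independent of the auxiliary choice of uniformizers $\pi_\lambda$ and of the chart, hence glue to a global isomorphism on $Y$, and that the resulting map is the one given by the stated formula $a\,d\log b_1\wedge\cdots\wedge d\log b_q\mapsto \text{the class of }a^p\,d\log b_1\wedge\cdots\wedge d\log b_q$.

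The main obstacle I expect is step (3): the bookkeeping of the divisor twist through the $p$-th power operation and the verification that the "defect" between $p m'_\lambda$ and $m_\lambda$ does not obstruct surjectivity. Concretely, one must show that every cohomology class of $\omega_{Y|D_s}^{\text{\large$\cdot$}}$ in degree $q$ has a representative that is a sum of monomials $a^p\,d\log b_1\wedge\cdots\wedge d\log b_q$ with $a\in\mathscr O_Y(-D'_s)$ — i.e. that the image of $a^p\prod_\lambda\pi_\lambda^{m_\lambda}$ genuinely accounts for all of $\mathcal H^q$ and not a strictly smaller subsheaf — and dually that there is no kernel. The cleanest way to handle this is probably to filter both sides by the multiplicities (powers of the $\pi_\lambda$) and compare associated graded pieces against the classical log-Cartier statement degree by degree; the exactness input away from the $d\log$-part is the standard contracting homotopy, which should go through verbatim after conjugating by the monomial, so the real content is the surjectivity/injectivity on the $d\log$-generated part, which is combinatorial in the exponents $m_\lambda$ modulo $p$.
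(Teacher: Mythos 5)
There is a genuine gap in your step (3), and it sits exactly where you predicted the difficulty would be. Your proposed exactness input --- ``the standard contracting homotopy, which should go through verbatim after conjugating by the monomial'' --- is a characteristic-zero Poincar\'e-lemma argument and it fails in characteristic $p$: the de Rham complex of $Y$ is very far from acyclic there (that failure \emph{is} the Cartier isomorphism), and no conjugation by $\prod_\lambda \pi_\lambda^{m_\lambda}$ repairs the division-by-$p$ problem in the usual homotopy operator. Worse, if such a homotopy existed it would prove acyclicity of the relevant pieces independently of the residues $m_\lambda \bmod p$, which contradicts the case where every $m_\lambda$ is divisible by $p$: there the twisted differential coincides with $d\otimes\mathrm{id}$, so $\mathcal{H}^q(\omega_{Y|D_s}^{\text{\large$\cdot$}})\cong \mathcal{H}^q(\omega_Y^{\text{\large$\cdot$}})\otimes\mathscr{O}_Y(-D_s)\neq 0$. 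So your argument, as written, cannot produce the $p$-dependent dichotomy that is the entire source of the ceiling $m'_\lambda=\lceil m_\lambda/p\rceil$ in the statement. (A smaller but real problem: your local claim that closedness of $a\,d\log b_1\wedge\cdots\wedge d\log b_q$ in the twisted complex forces $da+a\eta=0$ and hence $a=u\prod_\lambda\pi_\lambda^{-m_\lambda}$ is not correct --- general $q$-cocycles are not of that monomial shape, and the displayed solution is not a regular function --- and it is not needed.)

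The mechanism that actually makes the lemma work, and that your filtration idea needs, is the one in the paper (following Saito--Sato): first treat the case $p\mid m_\lambda$ for all $\lambda$, where the twist is invisible to $d$ and the ordinary log inverse Cartier isomorphism $C^{-1}\colon \omega_Y^q\xrightarrow{\ \cong\ }\mathcal{H}^q(\omega_Y^{\text{\large$\cdot$}})$ applies after tensoring with $\mathscr{O}_Y(-D_s)$; then show that the inclusion $\omega_Y^{\text{\large$\cdot$}}\otimes\mathscr{O}_Y(-p\cdot D'_s)\hookrightarrow \omega_Y^{\text{\large$\cdot$}}\otimes\mathscr{O}_Y(-D_s)$ is a quasi-isomorphism by filtering through divisors whose multiplicities drop one at a time; the graded pieces are complexes supported on a single component $D_\mu$, and they carry a residue operator $\Res$ satisfying $d\,\Res^q(x)+\Res^{q+1}(dx)=m_{\mu,i}\cdot x$. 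This identity is a homotopy to zero precisely when $p\nmid m_{\mu,i}$, which is what kills all the intermediate graded pieces and leaves only the divisible-by-$p$ case --- that is where the exponent $m'_\lambda$ and the $p$-th power on $a$ come from. Your high-level outline (localize, filter by multiplicities, compare with the classical log Cartier statement) is compatible with this, but without replacing the contracting-homotopy step by the residue-homotopy argument (or an equivalent $p$-sensitive device), the proof does not go through.
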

\begin{proof}
We use a similar argument as in \cite[Theorem 3.2]{SS}.
If $p$ divides $m_\lambda$ for any $\lambda \in \Lambda$, then the map $d: \omega_Y^{q}\otimes \mathscr{O}_Y(-D_s)\rightarrow \omega^{q+1}_Y\otimes \mathscr{O}_Y(-D_s)$ sends \[\omega \otimes  \prod_{\lambda \in \Lambda}\pi_{\lambda}^{m_\lambda} \mapsto d\omega \otimes  \prod_{\lambda \in \Lambda}\pi_{\lambda}^{m_\lambda}.\]
Then we have $\mathcal{H}^q(\omega_{Y|D_s}^{\text{\large$\cdot$}}) \cong \mathcal{H}^q(\omega_{Y}^{\cdot})\otimes  \mathscr{O}_Y(-D_s)$. By the fact that Theorem A3 in Appendix \cite{Tsu2}, we have an isomorphism:\; $C^{-1}:\; \omega^q_{Y} \xrightarrow{\cong} \mathcal{H}^q(\omega_{Y}^{\cdot})$. Thus $C^{-1}: \omega_{Y|D_s'}^q \rightarrow \mathcal{H}^q(\omega_{Y|D_s}^{\text{\large$\cdot$}})$ is injective. By the assumption $p$ divides $m_\lambda$ for any $\lambda \in \Lambda$, the surjectivity of the map in the statement of Proposition \ref{Omega} $C^{-1}: x \otimes \omega \mapsto x^p \otimes \omega$ is obvious.
Thus we have the isomorphism\[C^{-1}:\omega^{q}_Y\otimes \mathscr{O}_Y(-D_s')\xrightarrow{\cong} \mathcal{H}^{q}\big(\omega^{\cdot}_Y\otimes \mathscr{O}_Y(-D_s)\big).\]

We next show the general case. We see that the natural inclusion
\[\omega^{\cdot}_Y\otimes \mathscr{O}_Y(-p\cdot D_s') \hookrightarrow \omega^{\cdot}_Y\otimes \mathscr{O}_Y(-D_s)\]
is a quasi-isomorphism. We define $\omega_{\textbf{m}}^{\cdot}:=\omega^{\cdot}_Y\otimes \mathscr{O}_Y(-D_s)(= \omega^{\cdot}_{Y|D_s}),$ where $\textbf{m}=(m_\lambda)_{\lambda \in \Lambda}$. We can consider a filtration
\[\omega_{p\cdot\textbf{m}'}^{\cdot}= \omega_{\textbf{m}_t}^{\cdot}\subset \cdots \subset \omega_{\textbf{m}_1}^{\cdot}\subset \omega_{\textbf{m}_0}^{\cdot}=\omega_{\textbf{m}}^{\cdot}\]
such that \[\sum_{\lambda \in \Lambda} m_{\lambda, i+1} -\sum_{\lambda \in \Lambda} m_{\lambda,i}=1\quad for\quad 0 \leq i <t,\] 
where $\textbf{m}_i:=(m_{\lambda, i})_{\lambda}$, $\textbf{m}'_i:=(m'_{\lambda, i})_{\lambda}$ and $\omega_{\textbf{m}_i}^{\cdot}:=\omega^{\cdot}_Y\otimes_{\mathscr{O}_Y} \mathscr{O}_Y(-D_s^i)$, $D_s^i:=\sum_\lambda m_{\lambda, i}(D_\lambda)_s$.
We have an short exact sequence $0\longrightarrow \mathscr{O}_Y(-D_s)\longrightarrow \mathscr{O}_Y\longrightarrow \mathscr{O}_{D_s} \longrightarrow 0$ by definitions, which gives 
short exact sequence
\[0\longrightarrow \mathscr{O}_Y\left(-(m+1)D_s\right)\longrightarrow \mathscr{O}_Y(-mD_s)\longrightarrow \mathscr{O}_{D_s}\otimes_{\mathscr{O}_Y}\mathscr{O}_Y(-mD_s) \longrightarrow 0.\]
Then the graded pieces of the above filtration are of the form $\frac{\omega_{\textbf{m}_{i}}^{\cdot}}{\omega_{\textbf{m}_{i+1}}^{\cdot}}$ is isomorphic to $\omega_{\textbf{m}_i,\mu}^{\cdot}:=\omega^{\cdot}_{D_{\mu}}\otimes_{\mathscr{O}_Y} \mathscr{O}_Y(-D^i_s)$. Here $\mu=\mu(i) \in \Lambda$ is the unique element such that $m_{\mu,i+1}>m_{\mu,i}$ and $\omega^{\cdot}_{D_{\mu}}:=\omega^{\cdot}_{Y}\otimes_{\mathscr{O}_Y} \mathscr{O}_{D_{\mu}}$.
\begin{Lem}(cf. {\rm \cite[Lemma 3.4]{SS}})\label{graded}
If $(m_{\mu,i}, p)=1$, the complex of sheaves $\omega_{\textbf{m}_i,\mu}^{\cdot}(=\frac{\omega_{\textbf{m}_{i}}^{\cdot}}{\omega_{\textbf{m}_{i+1}}^{\cdot}})$  are acyclic for each $i$.
\end{Lem}
\begin{proof}The proof is the same as the proof of \rm \cite[Lemma 3.4]{SS}. 
It suffices to show that $\omega_{\textbf{m}_{i}, \mu}^{\cdot}$ is acyclic if $(m_{\mu,i},p )=1$. Note that $ \omega_{D_{\mu}}^q$ is generated by $\Omega^q_{D_{\mu}}$ and the form $d\log(\pi_{\lambda})\land \eta$ with $\lambda \in \Lambda$ and $\eta \in \Omega^{q-1}_{D_{\mu}}$ (cf. \cite[Corollary 1.9]{Tsu2}).
There is a residue homomorphism $\Res^q: \omega^q_{D_{\mu}} \longrightarrow \omega^{q-1}_{D_{\mu}}$ which is characterized by the following properties for $q \geq 0$ (cf. see \cite[Lemma 3.4]{SS}):
 \begin{align*}&(1)\ For\ \omega \in \Omega_{D_{\mu}}^q,\ \Res^q(\omega)=0.\\
&(2)\ For\ \eta \in \Omega_{D_{\mu}}^{q-1},\ we\  have\ \
 \Res^q(d\log(\pi_{\lambda})\land \eta)=\begin{cases} \eta & \ \ \ (\lambda = \eta)\\  
0& \ \ \ (\lambda \neq \eta).   \end{cases} 
\end{align*}
We define a residue homomorphism
\[\Res^q : \omega_{\textbf{m}_{i}, \mu}^q \longrightarrow \omega_{\textbf{m}_{i}, \mu}^{q-1}\]
by $\Res(\alpha \otimes \omega):= \alpha \otimes \Res^q(\omega)$ for $\alpha \in \mathscr{O}_Y(-D^i_ s)$ and $\omega \in \omega^q_{D_{\mu}}$.
We have  \[ d\Res^q(x)+\Res^{q+1}(dx)=m_{\mu,i}\cdot x\ \ \ \ \ {\rm for\ any\ }x \in \omega_{\textbf{m}_{i}, \mu}^q\]
by the same computation as the proof of \cite[Lemma 3.4]{SS}.
This implies that $\omega_{\textbf{m}_{i}, \mu}^{\cdot}$ is acyclic if $(m_{\mu,i}, p)=1$. 
\end{proof}

Using above Lemma \ref{graded}, we obtain the isomorphism \[C^{-1}: \omega^{q}_Y\otimes \mathscr{O}_Y(-D_s')\xrightarrow{\cong}\mathcal{H}^{q}\big(\omega^{\cdot}_Y\otimes \mathscr{O}_Y(-p\cdot D_s') \big)\xrightarrow{\cong} \mathcal{H}^{q}\big(\omega^{\cdot}_Y\otimes \mathscr{O}_Y(-D_s)\big).\] 
Here the first isomorphism comes from the first case. This completes the proof of Lemma \ref{Omega}.
\end{proof}

For each integer $q \geq 0$, we have the following morphism which restricts a morphism (\ref{Carinv}) to $ \omega_{Y|D}^q$:
\begin{equation}
C^{-1}: \omega_{Y|D_s}^q \longrightarrow \mathcal{H}^q(\omega_{Y|D_s}^{\text{\large$\cdot$}})
\end{equation}
\begin{equation}
a\;\dlog(b_1)\land\dlog(b_2)\land...\land\dlog(b_q) \mapsto \tcf\;a^p\;\dlog(b_1)\land\dlog(b_2)\land...\land\dlog(b_q),
\end{equation}
\noindent
where $a \in \mathscr{O}_Y(-D_s)$, and $b_1,...,b_q \in M_Y$.

\begin{Lem}\label{omega log}{\rm(cf. \cite[Theorem 1.2.1, Proposition 1.2.3]{JSZ})}
We keep the notations and the assumptions as in \S3.
Then, for each integer $q \geq 0$, we have the following exact sequence.
\begin{equation}\label{diffmodulus}
0 \longrightarrow \omega_{Y|D_s,\log}^q \longrightarrow Z^q_{Y|D_s} \overset{1-C^{-1}}{\longrightarrow} \mathcal{H}^q(\omega_{Y|D_s}^{\text{\large$\cdot$}}) \longrightarrow 0.
\end{equation}
\end{Lem}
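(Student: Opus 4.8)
The plan is to follow the method of \cite{JSZ} and \cite[Theorem 3.2]{SS} for the logarithmic Cartier sequence, inserting the divisor bookkeeping exactly as in the proof of Lemma \ref{Omega}. Since the three terms are \'etale sheaves on $Y$ and the maps are natural, exactness is an \'etale-local question, so I would reduce to the explicit semistable-plus-normal-crossing model with a fixed coordinate chart, as in the local computations above. The first, routine, point is that $1-C^{-1}$ is well defined: from $\mathscr{O}_Y(-D_s)\subseteq\mathscr{O}_Y(-D'_s)$ the Cartier-type morphism of Lemma \ref{Omega} restricts to $C^{-1}\colon Z^q_{Y|D_s}\subseteq\omega^q_{Y|D_s}\to\mathcal{H}^q(\omega^{\text{\large$\cdot$}}_{Y|D_s})$, and the map labelled $1$ is the canonical surjection $Z^q_{Y|D_s}\twoheadrightarrow Z^q_{Y|D_s}/B^q_{Y|D_s}=\mathcal{H}^q(\omega^{\text{\large$\cdot$}}_{Y|D_s})$. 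Injectivity of $\omega^q_{Y|D_s,\log}\hookrightarrow Z^q_{Y|D_s}$ is automatic once one notes, from the construction of the differential of $\omega^{\text{\large$\cdot$}}_{Y|D_s}$ and the fact that $x$ is a unit, that every generator $d\log(x)\wedge d\log(a_1)\wedge\cdots\wedge d\log(a_{q-1})$ with $x\in(1+\mathscr{O}_Y(-D_s))^{\times}$ is a closed section of $\omega^q_{Y|D_s}$.

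The inclusion $\omega^q_{Y|D_s,\log}\subseteq\Ker(1-C^{-1})$ I would verify directly on generators. Writing $x=1+a$ with $a\in\mathscr{O}_Y(-D_s)$ and using that $\mathscr{O}_Y$ is an $\mathbb{F}_p$-algebra, so that $x^p=1+a^p$, the additivity and Frobenius-semilinearity of $C^{-1}$ together with the defining formula of Lemma \ref{Omega} give $C^{-1}(\eta)\equiv\eta$ modulo $B^q_{Y|D_s}$, hence $(1-C^{-1})(\eta)=0$. Equivalently, using the isomorphism $\mathcal{H}^q(\omega^{\text{\large$\cdot$}}_{Y|D_s})\cong\omega^q_{Y|D'_s}$ of Lemma \ref{Omega} to rewrite $1-C^{-1}$ as the modulus analogue $C-\iota$ of the Artin--Schreier map (with $\iota$ the inclusion and $C$ the Cartier operator), this amounts to the statement that the Cartier operator fixes logarithmic forms.

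The substantive part is the reverse inclusion $\Ker(1-C^{-1})\subseteq\omega^q_{Y|D_s,\log}$ together with surjectivity of $1-C^{-1}$. Here I would use the d\'evissage on the multiplicities $\textbf{m}=(m_\lambda)_\lambda$ of $D_s$ that already appears in the proof of Lemma \ref{Omega}: along the chain $\omega^{\text{\large$\cdot$}}_{Y|D_s}=\omega^{\text{\large$\cdot$}}_{\textbf{m}_0}\supset\omega^{\text{\large$\cdot$}}_{\textbf{m}_1}\supset\cdots\supset\omega^{\text{\large$\cdot$}}_{\textbf{m}_t}=\omega^{\text{\large$\cdot$}}_{Y|pD'_s}$ increasing one multiplicity by one at each step, every multiplicity incremented is prime to $p$ (as in the proof of Lemma \ref{Omega}), so every graded piece is acyclic by Lemma \ref{graded}; a diagram chase (nine lemma applied to this filtration) then shows that the three-term exact sequence for $\textbf{m}_i$ follows from the one for $\textbf{m}_{i+1}$ once one knows the corresponding statement at the graded level, which reduces the problem to the case $p\mid m_\lambda$ for all $\lambda$. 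In that case $\mathscr{O}_Y(-D_s)=\mathscr{O}_Y(-pD'_s)$ is the Frobenius pullback of $\mathscr{O}_Y(-D'_s)$ and the differential of $\omega^{\text{\large$\cdot$}}_{Y|D_s}$ is untwisted, so that $Z^q$, $B^q$ and $\mathcal{H}^q$ of $\omega^{\text{\large$\cdot$}}_{Y|D_s}$ become Frobenius twists of their $D=\emptyset$ counterparts, and the assertion is deduced from the classical logarithmic Cartier sequence $0\to\omega^q_{Y,\log}\to Z^q_Y\xrightarrow{1-C^{-1}}\mathcal{H}^q(\omega^{\text{\large$\cdot$}}_Y)\to 0$ for the log-smooth $(Y,M_Y)$ over the log point $(s,N_s)$ (cf. \cite{JSZ}, and the Appendix of \cite{Tsu2}), the surjectivity being the usual Artin--Schreier-type iteration.

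I expect the main obstacle to be controlling the logarithmic subsheaf throughout this d\'evissage: identifying each graded piece of the induced filtration on $\omega^q_{Y|D_s,\log}$ (and, in the base case, $\omega^q_{Y|pD'_s,\log}$ in terms of $\omega^q_{Y,\log}$) with the corresponding piece of $Z^q_{Y|D_s}$ requires a residue-type computation in the spirit of Lemma \ref{graded} rather than a formal argument, and one must simultaneously keep track of the extra $d\log(\pi_\lambda)$-directions coming from the semistable locus of $Y$, handled as in the log-smooth case.
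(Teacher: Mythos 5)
Your skeleton differs from the paper's: the paper proves surjectivity of $1-C^{-1}$ by passing to strict henselizations and invoking the Artin--Schreier solvability fact of \cite[Lemma 1.2.2]{JSZ}, and it identifies the kernel by comparing with the classical sequence for $D=\emptyset$ (the row $0\to\omega^q_{Y,\log}\to\omega^q_Y\to\omega^q_Y/d\omega^{q-1}_Y\to0$ from \cite{Tsu0}, \cite{Tsu2}) via the vertical inclusions, reducing everything to the single local statement $\omega^q_{Y|D_s,\log}=\omega^q_{Y,\log}\cap\omega^q_{Y|D_s}$, i.e.\ Proposition \ref{diflog}, which is proved by the explicit computation of \cite[Proposition 1.2.3]{JSZ}. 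Your d\'evissage on the multiplicities is a legitimate alternative skeleton, but as written it has a genuine gap exactly at the step that carries all the content: the identification of $\Ker(1-C^{-1})$ with the logarithmic subsheaf is never actually established.

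Two concrete points. First, the base case $p\mid m_\lambda$ is not a Frobenius twist of the classical sequence. Writing $f$ for a local equation of $D'_s$, one has $\omega^{\cdot}_{Y|pD'_s}=f^p\omega^{\cdot}_Y$ with untwisted differential, but $C^{-1}$ as defined in Lemma \ref{Omega} raises the entire coefficient, including the factor $f^p$, to the $p$-th power; after dividing out $f^p$ the relevant operator on $Z^q_Y$ is of the form $1-f^{p(p-1)}C^{-1}$, a twist by a non-unit. Its kernel must be shown to be generated by $d\log(1+f^p h)\wedge d\log(a_1)\wedge\cdots\wedge d\log(a_{q-1})$, a condition on the shape of sections and not merely on divisibility, and this does not follow formally from the untwisted sequence $0\to\omega^q_{Y,\log}\to Z^q_Y\to\mathcal{H}^q(\omega^{\cdot}_Y)\to0$; it is precisely the kind of statement for which Proposition \ref{diflog} (i.e.\ the computation of \cite[Proposition 1.2.3]{JSZ}) is needed. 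Second, in the inductive step your nine-lemma argument needs the graded map $\omega^q_{\log,\mathbf{m}_i}/\omega^q_{\log,\mathbf{m}_{i+1}}\to Z^q(\omega^{\cdot}_{\mathbf{m}_i})/Z^q(\omega^{\cdot}_{\mathbf{m}_{i+1}})$ to be an isomorphism: surjectivity is a computation of the kind you indicate (using $p\nmid m_{\mu,i}$), but injectivity says that a modulus-$\mathbf{m}_i$ logarithmic form lying in $\omega^q_{\mathbf{m}_{i+1}}$ is already a modulus-$\mathbf{m}_{i+1}$ logarithmic form, again a statement of the same depth as Proposition \ref{diflog}, which you flag as "the main obstacle" but do not prove. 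So the proposal does not avoid the hard local computation; it relocates it (twice, in the base case and in each graded step) and leaves it unsupplied, whereas the paper isolates it once as Proposition \ref{diflog} and handles surjectivity separately by the henselian Artin--Schreier fact.
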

\begin{proof}\textbf{The surjectivity of $1-C^{-1}$} : It suffices to show  the surjectivity of $1-C^{-1}$ on sections over 
the strict henselisation of a local ring of $Y$. This follows form the following fact:

\noindent
\underline{Fact} (\cite[Lemma 1.2.2]{JSZ}.): {\it Let $A$ be a strictly henselian regular local ring of equi-characteristic $p>0$ and $\mathfrak{m} \subset A$ be the maximal ideal. Let $\pi \in \mathfrak{m}$ and $a \in A$. If $a \in \pi A$, then there exists $b \in A$, such that $b \in \pi A$ and $b^p-b=a$. }\\
\textbf{The exactness of the middle term} : It suffices to show that the exactness of the sequence
\[0 \longrightarrow \omega_{Y|D_s,\log}^q \longrightarrow \omega^q_{Y|D_s} \overset{1-C^{-1}}{\longrightarrow} \omega_{Y|D_s}^q/d\omega^{q-1}_{Y|D_s} \longrightarrow 0.\]
If we have this short exact sequence, we have \eqref{diffmodulus} because $ \omega_{Y|D_s,\log}^q \subset Z^q_{Y|D_s}$. We have the following commutative diagram:
\[
\xymatrix@M=10pt{ 0 \ar[r]& \omega_{Y,\log}^q \ar[r]& \omega^q_Y \ar[r]^-{1-C^{-1}} & \omega^q_Y/d\omega^{q-1}_Y \ar[r] &0\\
0 \ar[r]&\Ker(1-C^{-1}) \ar@<-0.3ex>@{^{(}->}[u] \ar[r]&\omega^q_{Y|D_s}  \ar@<-0.3ex>@{^{(}->}[u] \ar[r]^-{1-C^{-1}} &  \omega_{Y|D_s}^q/d\omega^{q-1}_{Y|D_s} \ar@<-0.3ex>@{^{(}->}[u] \ar[r] &0
}\] Here the upper horizontal row is proved by \cite[Theorem 6.1.1]{Tsu0} and \cite[Theorem A4]{Tsu2}.
Then it suffices to show that $ \omega_{Y|D_s,\log}^q=\omega_{Y, \log} ^q \cap \omega_{Y|D_s}^q\ (=\Ker(1-C^{-1}))$. This is an \'etale local problem and a consequence of Proposition \ref{diflog} below.  Let $R$ be the henselization of a local ring of $Y$.
We put $Y:=\Spec(R)$ and
\[\mathfrak{G}^q:=\Ker\left((T_1^{n_1}\cdots T_e^{n_e})\cdot \omega^q_Y\xrightarrow{1-C^{-1}} \omega^q_Y/ d\omega^{q-1}_Y\right),\]
where we choose a system of regular parameters $\{T_1,\dotsc,T_d\}$ of $R$ such that \[\Supp(D_s)=\Spec(R/(T^{n_1}_1\cdots T^{n_e}_e))\] for some $e \leq d=\dim R$.
\begin{Prop}\label{diflog}(cf. \cite[Proposition 1.2.3]{JSZ})
$\mathfrak{G^q}$ is generated by elements of the form
\[d\log(a)\land d\log(x_1)\land\cdots\land d\log(x_q)\ \ \ \ \ a\in 1+(T_1^{n_1}\cdots T_e^{n_e}),\ \ \ \ x_i \in R\left[\frac{1}{T_1^{n_1}\cdots T_e^{n_e}}\right]^{\times}\ \ (1\leq i \leq q).\]
\end{Prop}
The proof of Proposition \ref{diflog} is the same computations as the proof of \cite[Proposition 1.2.3]{JSZ}. This completes the proof.
\end{proof}

We have the following main results. 
\begin{Thm}\label{main result}Let $n \geq 1$ be an integer. If $0 \leq r \leq p-2$ and $p \geq 3$, 
the cokernel of the symbol map \[ {\rm Symb}_{X|D}: (1+I_{D_{n+1}})^{\times} \otimes (M_{X_{n+1}}^{gp})^{\otimes{r-1}} \longrightarrow \mathcal{H}^r(\mathscr{S}_n(r)_{X|D})\] is Mittag-Leffler zero with respect to the multiplicities of the prime components of $D$.
\end{Thm}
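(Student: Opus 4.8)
The plan is to reduce everything to the sufficiently local situation and then compute the cohomology sheaf $\mathcal{H}^r(s_1(r)_{X|D})$ explicitly using the Cartier-type isomorphism of Lemma \ref{Omega}. First I would use Lemma \ref{dist} (the distinguished triangle relating $\mathscr{S}_n$, $\mathscr{S}_{n+1}$ and $\mathscr{S}_1$) together with a devissage argument to reduce the Mittag-Leffler-zero statement for general $n$ to a statement about $\mathcal{H}^r(\mathscr{S}_1(r)_{X|D})$; the cokernel for level $n$ is then controlled, up to bounded-level shifts, by the cokernel at level $1$, and a Mittag-Leffler-zero statement propagates through the filtration by $p$-powers. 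Working \'etale locally, Lemma \ref{quasi-isom} replaces $\mathscr{S}_1(r)^{loc}_{X|D}$ by $s_1(r)_{X|D}$, which is concentrated in degrees $[0,r]$, so $\mathcal{H}^r$ is just the cokernel of $1-\varphi_r$ in top degree.

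Next I would analyze $\mathcal{H}^r(s_1(r)_{X|D})$ via the filtrations $U^{\bullet}$ and $V^{\bullet}$ of Definition \ref{fil} and identify the graded pieces $\gr_0^m$ and $\gr_1^m$. The key input is Lemma \ref{Omega}: the Cartier inverse isomorphism in the modulus situation takes the form $C^{-1}\colon \omega^q_{Y|D'_s}\xrightarrow{\ \cong\ }\mathcal{H}^q(\omega^{\bullet}_{Y|D_s})$, where crucially the \emph{source} involves the rounded-up divisor $D'_s=\sum_\lambda m'_\lambda (D_\lambda)_s$ with $m'_\lambda=\lceil m_\lambda/p\rceil$, while the target involves $D_s$ itself. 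Combined with the exact sequence of Lemma \ref{omega log}, $0\to\omega^q_{Y|D_s,\log}\to Z^q_{Y|D_s}\xrightarrow{1-C^{-1}}\mathcal{H}^q(\omega^{\bullet}_{Y|D_s})\to 0$, this lets me write the symbol map's image in terms of $\omega^r_{Y|D_s,\log}$-type sheaves, and the cokernel of $\mathrm{Symb}_{X|D}$ on each graded piece as a quotient of the form $\omega^{r}_{Y|D_s}/(\omega^{r}_{Y|D'_s}+\text{exact})$ — i.e. the gap between the $D_s$-coefficients and the $D'_s$-coefficients. This gap sheaf is supported on the components of $D$ and is nonzero precisely when $D$ is not reduced, which matches the remark after the theorem.

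The crucial observation for the Mittag-Leffler-zero conclusion is then that this gap is \emph{killed by increasing the multiplicities}: if one passes from $D$ to $2D$ (or more precisely replaces each $m_\lambda$ by a sufficiently large multiple), the transition map on cokernels factors through a map that is zero, because rounding up and then multiplying eventually absorbs the discrepancy — concretely, the natural inclusion $\omega^{\bullet}_Y(-p\cdot D'_s)\hookrightarrow \omega^{\bullet}_Y(-D_s)$ of Lemma \ref{Omega} shows that after a bounded number of steps in the projective system indexed by multiplicities the relevant quotient sheaf maps to zero. Running this through the two-step filtration $U^\bullet\supset V^\bullet\supset U^{\bullet+1}$, which is finite in each stalk since $I_{D_2}$ is $\pi$-adically bounded and $M_{X_2}$ is finitely generated, gives that the total cokernel of $\mathrm{Symb}_{X|D}$ is Mittag-Leffler zero as a pro-object in the multiplicities. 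Finally, applying $R\theta_*$ over the hypercovering globalizes the statement to $\mathcal{H}^r(\mathscr{S}_n(r)_{X|D})$, and the devissage in $n$ from the first paragraph completes the proof.

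I expect the main obstacle to be the precise bookkeeping of the graded pieces: correctly matching the two filtrations $U^\bullet, V^\bullet$ on the symbol source with the filtration by multiplicities on the differential sheaves, and verifying that the map $1-C^{-1}$ together with the product structure $(\star)$ of Lemma \ref{prod s_n} identifies $\gr_0^m$ and $\gr_1^m$ of $\mathcal{H}^r(s_1(r)_{X|D})$ with explicit quotients of $\omega^{\bullet}_{Y|D_s}$-sheaves — this is where one must be careful about the discrepancy between $D_s$ and $D'_s$, and where the case distinction "$p\mid m_\lambda$ for all $\lambda$" versus the general case (as in the proof of Lemma \ref{Omega}) reappears. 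The interaction with the semistable log structure (the extra $d\log \pi$ terms and the residue maps of Lemma \ref{graded}) adds a layer of complexity compared to the smooth case of \cite{JSZ}, \cite{SS}, but the residue computation there should carry over.
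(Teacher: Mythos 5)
Your plan follows essentially the same route as the paper: reduction to $n=1$ via the distinguished triangle of Lemma \ref{dist}, passage to the local complex $s_1(r)_{X|D}$, analysis of the symbol filtrations $U^{\bullet}, V^{\bullet}$ against the $T$-adic filtration on the complex, identification via Lemma \ref{Omega} and Lemma \ref{omega log} of the graded cokernel with the gap sheaf between the $\mathscr{D}_1$- and $\mathscr{D}'_1$-twisted forms (the paper's $\mathcal{L}_0'^m \cong \mathscr{O}_Y\otimes\omega^{q-1}_{Z_1|\mathscr{D}'_1}\big/\mathscr{O}_Y\otimes\omega^{q-1}_{Z_1|\mathscr{D}_1}$ in Lemma \ref{CokernelSymb}), which dies under transition maps once multiplicities are multiplied by $p$, and finiteness of the filtration plus Lemma \ref{compU}-type comparison to conclude. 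The sketch is correct in outline (up to the minor slip of writing the gap quotient with the inclusions reversed), so there is nothing substantive to add.
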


\begin{Thm}\label{Main result}
We assume that $p \geq 3$. Let $e$ be the absolute ramification index of $K$. Then the sheaf $\mathcal{H}^q\big(s_1(q)_{X|D}\big)$ has the folllowing structure:
\begin{enumerate}
\item\;For $m=0$, we have short exact sequences:\\
 \begin{equation*} 
0\longrightarrow\frac{\mathcal{R}}{\mathcal{R}\cap\gr_1^0\mathcal{H}^q\big(s_1(q)_{X|D}\big)}\longrightarrow\gr_0^0\mathcal{H}^q\big(s_1(q)_{X|D}\big) \longrightarrow \omega_{Y|D_s,\log}^q\longrightarrow 0,\end{equation*}
\begin{equation*}
\quad\quad\quad\quad\quad\quad\quad\quad\quad\quad\quad\quad\quad\quad\quad\quad\quad\quad\quad\quad\{x, a_1,\dotsc, a_{q-1}\}\mapsto d\log \overline{x} \land d\log \overline{ a_1} \land \cdots \land d\log \overline{a_q}
\end{equation*}
Here $x \in (1+I_{D_2})^{\times}$, $a_1, \dotsc, a_{q-1} \in M_{X_2}^{gp}$ and $y \in \mathscr{O}_{X_2}(-D_2)$. We denote by $\overline{x}$ {\rm (resp. }$\overline{a_i}${\rm )} the image of $x$ {\rm (resp.} $a_i${\rm)} in $M_Y^{gp}$, and we denote by $\overline{y}$ the image of $y$ in $\mathscr{O}_Y(-D_s)$.

\begin{equation*} 
0\longrightarrow\mathcal{R}\cap\gr_1^0\mathcal{H}^q\big(s_1(q)_{X|D}\big)\longrightarrow\gr_1^0\mathcal{H}^q\big(s_1(q)_{X|D}\big)\longrightarrow \omega_{Y|D_s,\log}^{q-1}
\longrightarrow 0,\end{equation*}
\begin{equation*}
\quad\quad\quad\quad\quad\quad\quad\quad\quad\quad\quad\quad\quad\quad\quad\quad\quad\quad\quad\quad\{x, a_1,\dotsc, a_{q-2},\pi\}\mapsto d\log \overline{x} \land  d\log \overline{ a_1} \land \cdots \land d\log \overline{a_{q-2}}
\end{equation*}
where 
\[\mathcal{R}:=\Ker\Big(\gr_U^0\mathcal{H}^q(S_D^{\cdot})\rightarrow  \Ker\big(Z^q\big( \mathscr{O}_Y \otimes_{\mathscr{O}_{Z_1}} \omega_{Z_1|\mathscr{D}_1}^{\cdot}\big)\xrightarrow{1-\varphi\otimes \land^{q}d\varphi/p} \mathcal{H}^q\big( \mathscr{O}_Y \otimes_{\mathscr{O}_{Z_1}} \omega_{Z_1|\mathscr{D}_1}^{\cdot}\big)\big)\Big).\]

\item\;If $0 <m <pe/(p-1)$ and $p \nmid  m$, then we have
\begin{equation*} 
\gr_0^m\mathcal{H}^q\big(s_1(q)_{X|D}\big) \cong \frac{\omega_{Y|D_s}^{q-1}}{B_{Y|D_s}^{q-1}}\end{equation*}
\begin{equation*}
\{1+\pi^my, a_1,\dotsc, a_{q-1}\}\mapsto \overline{y}d\log \overline{ a_1} \land \cdots \land d\log \overline{a_q}
\end{equation*}
\begin{equation*} 
\gr_1^m\mathcal{H}^q\big(s_1(q)_{X|D}\big) \cong \frac{\omega_{Y|D_s}^{q-2}}{Z_{Y|D_s}^{q-2}}\end{equation*}
\begin{equation*}
 \{1+\pi^my, a_1,\dotsc, a_{q-2}, \pi\}\mapsto \overline{y}d\log \overline{ a_1} \land \cdots \land d\log \overline{a_{q-2}}
\end{equation*}
\item\;If $0 <m <pe/(p-1)$ and $p | m$, then we have short exact sequences
\begin{equation*} 
0\longrightarrow\frac{\mathcal{L}^m}{\mathcal{L}^m\cap \mathcal{H}^q\big(s_1(q)_{X|D}\big)}\longrightarrow\gr_0^m\mathcal{H}^q\big(s_1(q)_{X|D}\big) \longrightarrow \frac{\omega_{Y|D_s}^{q-1}}{Z_{Y|D_s}^{q-1}}\rightarrow 0,\end{equation*}
\begin{equation*}
\quad\quad\quad\quad\quad\quad\quad\quad\quad\quad\quad\quad\quad\quad\quad\quad\quad\quad\quad\quad\{1+\pi^my, a_1,\dotsc, a_{q-1}\}\mapsto \overline{y}d\log \overline{ a_1} \land \cdots \land d\log \overline{a_q}
\end{equation*}

\begin{equation*} 
0\longrightarrow  \mathcal{L}^m\cap \mathcal{H}^q\big(s_1(q)_{X|D}\big)\longrightarrow\gr_1^m\mathcal{H}^q\big(s_1(q)_{X|D}\big) \longrightarrow \frac{\omega_{Y|D}^{q-2}}{Z_{Y|D}^{q-2}}\longrightarrow 0,\end{equation*}
\begin{equation*}
 \quad\quad\quad\quad\quad\quad\quad\quad\quad\quad\quad\quad\quad\quad\quad\quad\quad\quad\quad\quad\{1+\pi^my, a_1,\dotsc, a_{q-2}, \pi\}\mapsto \overline{y}d\log \overline{ a_1} \land \cdots \land d\log \overline{a_{q-2}}
\end{equation*}
where  $\mathcal{L}^m$ is a certain subsheaf of $\gr_{U}^m\mathcal{H}^q\big(s_1(q)_{X|D}\big)$ which is given more explicitly in a sufficientlly local situation (see \eqref{L^m} in Lemma \ref{Lem8} below).\item\;If $m \geq pe/(p-1)$, then $U^m\mathcal{H}^q\big(s_1(q)_{X|D}\big)=0$.
\end{enumerate}
\end{Thm}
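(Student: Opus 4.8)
The plan is to carry out the modulus analogue of Tsuji's computation of $\mathcal{H}^q\bigl(\mathcal{S}_1(q)_{(X,M_X)}\bigr)$ from \cite{Tsu1}, \cite{Tsu2}, taking the modulus Cartier isomorphism (Lemma \ref{Omega}) and the logarithmic exact sequence (Lemma \ref{omega log}) as the essential new inputs. Since the filtrations $U^{\bullet}$, $V^{\bullet}$ are defined through the symbol map and every assertion is \'etale-local on $X$, I would first pass to a sufficiently local situation satisfying Assumption \ref{Asum*} and replace $\mathscr{S}_n(q)_{X|D}$ by the explicit complex $s_1(q)_{X|D}$ via Lemma \ref{quasi-isom}. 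Using the standard description of the PD-envelope $\mathscr{E}_1$, of the divided-power filtration $\{J_{\mathscr{E}_1}^{[i]}\}$ on $\mathscr{O}_{\mathscr{E}_1}$, and of the log differentials $\omega^{\bullet}_{Z_1|\mathscr{D}_1}$, I would rewrite $s_1(q)_{X|D}$ as an explicit mapping fibre of $1-\varphi_r$ whose terms, after passing to the $\pi$-adic and divided-power filtrations, are assembled from the modulus log differentials $\omega^{\bullet}_{Y|D_s}$ on $Y$.

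The heart of the argument is the computation of the graded complexes for the $\pi$-adic filtration on $s_1(q)_{X|D}$ that induces $U^{\bullet}$. As in \cite{Tsu1}, on $\gr^m$ the Frobenius-linear operator $\varphi_r$ degenerates drastically, and the four cases of the theorem correspond to four regimes of $m$. For $m\ge pe/(p-1)$ the convergence range of $\log$, together with the twist by $\mathscr{O}(-\mathscr{D}_1)$, forces the relevant graded piece of $\mathscr{O}_{\mathscr{E}_1}\otimes_{\mathscr{O}_{Z_1}}\omega^{\bullet}_{Z_1|\mathscr{D}_1}$ to vanish, which gives (4). For $0<m<pe/(p-1)$ with $p\nmid m$, multiplication by $m$ is invertible and a Poincar\'e-type lemma identifies the degree $q$ and degree $q-1$ cohomology of the graded complex with $\omega^{q-1}_{Y|D_s}/B^{q-1}_{Y|D_s}$ and $\omega^{q-2}_{Y|D_s}/Z^{q-2}_{Y|D_s}$, which gives (2). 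For $p\mid m$ the operator no longer vanishes on $\gr^m$ but factors through a Cartier-type map, so Lemma \ref{Omega} produces the two short exact sequences of (3), and this same computation defines the correction sheaf $\mathcal{L}^m$ (the promised Lemma \ref{Lem8}). Finally, for $m=0$ I would isolate the ``de Rham'' part of the graded complex, whose degree $q$ cohomology is governed by the sequence $0\to\omega^q_{Y|D_s,\log}\to Z^q_{Y|D_s}\xrightarrow{1-C^{-1}}\mathcal{H}^q(\omega^{\bullet}_{Y|D_s})\to0$ of Lemma \ref{omega log}, compare it with Tsuji's computation for the non-modulus complex $\mathcal{S}_1(q)_{(X,M_X)}$, and identify the kernel of that comparison with the sheaf $\mathcal{R}$; threading $\mathcal{R}$ through the long exact sequences attached to the inclusions $U^1\subset V^0\subset U^0$ then yields the two short exact sequences of (1). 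At each stage one checks that the displayed maps are the ones induced by ${\rm Symb}_{X|D}$ by substituting the explicit generators $\{1+\pi^m y, a_1,\dotsc\}$ into the symbol formulas recalled in \S\ref{symbol section}.

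The step I expect to be the main obstacle is the case $p\mid m$: unlike the case $p\nmid m$, the Frobenius-linear map on $\gr^m$ does not become zero after reduction but factors through a Cartier operator, so extracting its kernel and cokernel forces one to keep track of divided powers and of the line-bundle twist by $\mathscr{O}(-\mathscr{D}_1)$ simultaneously; producing $\mathcal{L}^m$ compatibly with the modulus and with the symbol map requires the full strength of Lemma \ref{Omega}, exactly as the classical Cartier isomorphism is needed in the non-modulus case. A secondary difficulty arises in (1): one must verify that $\mathcal{R}$ is genuinely a subsheaf of $\gr_U^0\mathcal{H}^q(s_1(q)_{X|D})$ and that $\mathcal{R}\cap\gr_1^0\mathcal{H}^q(s_1(q)_{X|D})$ behaves as stated, which is precisely where the interaction between the filtrations $U^{\bullet}$ and $V^{\bullet}$ --- equivalently, the role of the extra symbol slot occupied by $\pi$ --- must be handled with care.
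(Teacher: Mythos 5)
Your overall strategy is the paper's: reduce to the local complex $s_1(q)_{X|D}$ via Lemma \ref{quasi-isom}, compute graded pieces of a $T$-adic type filtration in the four regimes of $m$ using the modulus Cartier isomorphism (Lemma \ref{Omega}) and the exact sequence of Lemma \ref{omega log}, substitute the explicit symbol cocycles, and conclude by snake-lemma diagrams. The genuine gap is at your central step, where you speak of ``the $\pi$-adic filtration on $s_1(q)_{X|D}$ that induces $U^{\bullet}$''. No such filtration exists on the nose: $U^{\bullet}$ is defined only as the image of the symbolic filtration under ${\rm Symb}_{X|D}$, and the paper must introduce a second, complex-level filtration $\tilde{U}^{m}$ whose terms are $T^{m}\mathscr{O}_{\mathscr{E}_1}+J_{\mathscr{E}_1}^{[p]}$ (with the shifted exponent $m'\geq\max(e+m/p,\,m)$ on the divided-power part in degree $q-1$), and then prove three comparison statements that your plan omits: (i) symbols land in $\tilde{U}^{m}$ (Lemma \ref{Lem3}, which requires the explicit estimates for $d\log(1+T^{m}x)$ and $p^{-1}\log\big((1+T^{m}x)^{p}\varphi_{\mathscr{E}_2}(1+T^{m}x)^{-1}\big)$ compatibly with the $\mathscr{O}(-\mathscr{D})$-twist); (ii) $\mathcal{H}^{q}(\tilde{U}^{m}S_{D}^{\cdot})=\tilde{U}^{m}\mathcal{H}^{q}(S_{D}^{\cdot})$, obtained from the vanishing of Lemma \ref{Lem6} --- which is where $p\geq 3$ enters through $\varphi_1(J_{\mathscr{E}_1}^{[p]})=0$ and the nilpotence of $\varphi_1$ on $T^{m}\mathscr{O}_{\mathscr{E}_1}+J_{\mathscr{E}_1}^{[p]}$ for $m>pe/(p-1)$, not from a ``convergence range of $\log$'' or from the twist by $\mathscr{O}(-\mathscr{D}_1)$ --- together with the injectivity of Lemma \ref{Lem7} and Corollary \ref{Cor1}; (iii) the explicit surjections of Lemma \ref{Lem8} from the symbolic graded pieces onto $\mathcal{K}$, resp.\ $B^{q}\big((T^{m}\mathscr{O}_{Z_1}/T^{m+1}\mathscr{O}_{Z_1})\otimes\omega_{Z_1|\mathscr{D}_1}^{\cdot}\big)$, which are precisely what define $\mathcal{R}$ and $\mathcal{L}^{m}$ as kernels and make the diagrams in the proof of the theorem collapse to the stated short exact sequences. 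Without (i)--(iii), ``cohomology of the graded complex'' does not compute the graded of the symbolic filtration on cohomology, and the $J^{[p]}$-correction is exactly what makes $1-\varphi_q$ controllable on graded pieces; this is the content your sketch presupposes rather than supplies.

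Two smaller inaccuracies should also be corrected. First, $\mathcal{R}$ is not the kernel of a comparison with Tsuji's non-modulus complex: by definition it is the kernel of $\gr_{U}^{0}\mathcal{H}^{q}(S_{D}^{\cdot})\rightarrow \Ker\big(1-\varphi\otimes\land^{q}d\varphi/p\big)$ on $Z^{q}\big(\mathscr{O}_Y\otimes_{\mathscr{O}_{Z_1}}\omega_{Z_1|\mathscr{D}_1}^{\cdot}\big)$, computed entirely inside the modulus setting from Lemma \ref{Lem5} (1) and Lemma \ref{Lem2} (3); a comparison map to the non-modulus complex would produce a different sheaf. Second, assertion (4) is deduced from $U^{m}\subset\tilde{U}^{m}$, the vanishing $\gr_{\tilde U}^{m}\mathcal{H}^{q}=0$ for $pe/(p-1)\leq m<pe$ (Lemma \ref{Lem5} (4)) and $\tilde{U}^{pe}\mathcal{H}^{q}=0$, by descending through the filtration --- again a statement about the auxiliary filtration, not about the symbolic one directly.
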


\section{\textbf{ Proof of Main Results}}

\subsection{Proof of Theorem \ref{main result}}We put $\Theta_n:=(1+I_{D_{n+1}})^{\times} \otimes (M_{X_{n+1}}^{gp})^{\otimes{r-1}}$.
We consider the diagram
\[
\xymatrix@M=10pt{
& \Theta_n \ar[d]^-{\Symb_{X|D}} \ar[r]^-{\times p} & \Theta_{n+1} \ar[d]^-{\Symb_{X|D}} \ar[r]&\Theta_1 \ar[r] \ar[d]^-{\Symb_{X|D} }&0\\
 \cdots \ar[r]&\mathcal{H}^r(\mathscr{S}_n(r)_{X|D})\ar[r]^-{\times p}& \mathcal{H}^r(\mathscr{S}_{n+1}(r)_{X|D}) \ar[r]& \mathcal{H}^r(\mathscr{S}_1(r)_{X|D}) \ar[r]&\cdots,   
}\]
where the lower horizontal line is the long exact sequence which is obtained by Lemma \ref{dist}. By this diagram, the assertion is reduced to the case $n=1$. 
Then we show the claim in the case $n=1$.
By Lemma \ref{CokernelSymb} and Lemma \ref{compU} below, the cokernel of the morphism \[\gr_{U}^m\big((1+I_{D_2})^{\times} \otimes (M_{X_2}^{gp})^{\otimes{q-1}}\big)\longrightarrow \gr_{U}^m\mathcal{H}^q(s_1(q)_{X|D})\] will be Mittag-Leffler zero with respect to the multiplicities of the prime components of $D$. Then we will obtain that $\Coker({\rm Symb}_{X|D})$ is Mittag-Leffler zero by the finiteness of the filtration $\{U^m\}_{m \in \mathbb{N}}$ in Theorem \ref{Main result} $(4)$. $\square$
\subsection{Proof of Theorem \ref{Main result}}
If $m=0$, by \eqref{K} in Lemma \ref{Lem2} $(3)$ and Lemma \ref{Lem8} $(1)$ below, we have the following diagram of the short exact sequences:
\[
\xymatrix@M=10pt{
0 \ar[r] & \gr_1^0\mathcal{H}^q(s_1(q)_{X|D}) \ar@{->>}[d] \ar[r]&\gr_U^0\mathcal{H}^q(s_1(q)_{X|D}) \ar@{->>}[d] \ar[r]& \gr_0^0\mathcal{H}^q(s_1(q)_{X|D})\ar[d] \ar[r]&0\\
0 \ar[r]&\omega_{Y|D_s, \log}^{q-1} \ar[r]& \mathcal{K} \ar[r] & \omega_{Y|D_s, \log}^{q} \ar[r] & 0,
 }\]
where the surjectivity of the left and middle vertical arrows are form Lemma \ref{Lem8} $(1)$ and Lemma \ref{Lem2} (3) below. Here we put
\[\mathcal{K}:=\Ker\left(Z^q\big( \mathscr{O}_Y \otimes_{\mathscr{O}_{Z_1}} \omega_{Z_1|\mathscr{D}_1}^{\cdot}\big)\xrightarrow{1-\varphi\otimes \land^{q}d\varphi/p} \mathcal{H}^q\big( \mathscr{O}_Y \otimes_{\mathscr{O}_{Z_1}} \omega_{Z_1|\mathscr{D}_1}^{\cdot}\big)\right).\]
By the snake lemma, we have two short exact sequences in the assertion $(1)$. 
If $0< m < pe/(p-1)$ and $p\nmid m$, we consider the following diagram
{\tiny \[
\xymatrix@M=10pt{
0 \ar[r] & \gr_1^m\mathcal{H}^q(s_1(q)_{X|D}) \ar@{->>}[d] \ar[r]&\gr_U^m\mathcal{H}^q(s_1(q)_{X|D}) \ar[d]^-{\cong} \ar[r]& \gr_0^m\mathcal{H}^q(s_1(q)_{X|D})\ar[d] \ar[r]&0\\
0 \ar[r]& \frac{\omega_{Y|D_s}^{q-2}}{Z_{Y|D_s}^{q-2}} \ar[r]&B^q \left(\left(\frac{T^m\mathscr{O}_{Z_1}}{T^{m+1}\mathscr{O}_{Z_1}}\right)\otimes \omega_{Z_1|\mathscr{D}_1}^{\cdot}\right)\ar[r] & \frac{\omega_{Y|D_s}^{q-1}}{B_{Y|D_s}^{q-1}} \ar[r] & 0,
 }\]}
 where the surjectivity of the left vertical arrow and the isomorphism of the middle vertical arrow are form Lemma \ref{Lem8} $(2)$ and Lemma \ref{Lem2} (1) below. By the snake lemma,  we obtain the isomorphisms in the assertion $(2)$.
If $0< m < pe/(p-1)$ and $p | m$, by \eqref{K} in Lemma \ref{Lem2} $(1)$ and Lemma \ref{Lem8} $(2)$ below, we have the following diagram
{\tiny \[
\xymatrix@M=10pt{
0 \ar[r] & \gr_1^m\mathcal{H}^q(s_1(q)_{X|D}) \ar@{->>}[d] \ar[r]&\gr_U^m\mathcal{H}^q(s_1(q)_{X|D}) \ar@{->>}[d] \ar[r]& \gr_0^m\mathcal{H}^q(s_1(q)_{X|D})\ar[d] \ar[r]&0\\
0 \ar[r]& \frac{\omega_{Y|D_s}^{q-2}}{Z_{Y|D_s}^{q-2}} \ar[r]&B^q \left(\left(\frac{T^m\mathscr{O}_{Z_1}}{T^{m+1}\mathscr{O}_{Z_1}}\right)\otimes \omega_{Z_1|\mathscr{D}_1}^{\cdot}\right)\ar[r] & \frac{\omega_{Y|D_s}^{q-1}}{Z_{Y|D_s}^{q-1}} \ar[r] & 0.
 }\]}
Here the surjectivity of the left and middle vertical arrows are form Lemma \ref{Lem8} $(3)$ and Lemma \ref{Lem2} (2) below. By the snake lemma, we obtain the assertion $(3)$. 
From Lemma \ref{Lem5} ($3$) below, we will obtain $\gr_{U}^m\mathcal{H}^q(S_D^{\cdot})=0$ for $pe/(p-1) \leq m <pe$.  Since $U^{pe}\mathcal{H}^q(S_D^{\cdot})=0$ by Lemma \ref{Lem3} and Corollary \ref{Lem6} below, this implies ($4$). This completes the proof of Theorem \ref{Main result}.  $\square$\\

In the rest of this section we prove the lemmas that have been mentioned in the above proof of Theorem \ref{main result} and Theorem \ref{Main result}.
We will work with the following local situation. We keep the \textbf{Assumption \ref{Asum*}} in the following sections.
\subsection{Local computation}\label{Loc} We denote by $(S,N)$ the scheme $\Spec(\mathscr{O}_K)$ with log sturcture $N$ defined by the closed point.
Let $(V,M_V)$ be the scheme $\Spec(W[T])$ with the log structure defined by the divisor $\{T=0\}$, and let $i_V: (S,N) \rightarrow (V,M_V)$ be the exact closed immersion defined by $T \mapsto \pi$.
We assume that there exists a factorization $(Z,M_Z)\rightarrow(V,M_V)\rightarrow \Spec(W)$ such that $(Z,M_Z)\rightarrow(V,M_V)$ is smooth and compatible with the liftings of frobenii, and such  that the following diagram is cartesian (the left cartesian diagram is mentioned in \textbf{Assumption} \ref{Asum*}): 
\[
\xymatrix@M=10pt{   
(D, M_D)\ar@{}[dr]|{\square}\ar[d]^{\beta_D}\ar[r]&(X,M_X)\ar@{}[dr]|{\square}\ar[d]^{\beta} \ar[r]&(S,N) \ar[d]^{i_V}&\\
(\mathscr{D}, M_{\mathscr{D}})\ar[r]&(Z,M_Z)\ar[r]&(V,M_V).
}
\]
We define the liftings of Frobenius $(V, M_V) \longrightarrow (V, M_V)$ by the Frobenius of $W$ and $T \mapsto T^p$.
These assumptions are \'etale locally: We have the following diagram
\[
\xymatrix@M=10pt{   
X=\Spec \mathscr{O}_K[t_1,\dotsc, t_d]/(t_1\cdots t_d-\pi)\ar[r]\ar[d]^-{\beta}&S=\Spec\mathscr{O}_K\ar[d]^-{i_V}\\
Z=\Spec W[T_0,T_1,\dotsc,T_d,T_{\infty}]\ar[r]&V=\Spec W[T].
}\]
Here $\beta$ is defined by $T_0\mapsto \pi,\ T_i\mapsto t_i\ (1\leq i \leq d),\ T_{\infty}\mapsto 0$. The lower horizontal map is defined by $T\mapsto T_0$.
Then this diagram is commutative and the morphism $(Z,M_Z)\rightarrow(V,M_V)$ is smooth and compatible with the liftings of frobenii.

\begin{Lem}\label{Lem1} Let $n$ be a non-negative integer.
\begin{enumerate}
\item\;From the reduction$\mod T $ of the short exact sequence
 \begin{equation}\label{(1)}
0 \longrightarrow \omega_{Z_1/V_1}^{q-1}\otimes_{\mathscr{O}_{Z_1}} \mathscr{O}_{Z_1}(-\mathscr{D}_1)\xrightarrow{\land \dlog T} \omega_{Z_1|\mathscr{D}_1}^{q}\longrightarrow \omega_{Z_1/V_1}^{q}\otimes_{\mathscr{O}_{Z_1}} \mathscr{O}_{Z_1}(-\mathscr{D}_1)\longrightarrow0,
\end{equation}
and the $\mathscr{O}_{Z_1}$-linear isomorphism
\begin{equation}\label{(2)}
\mathscr{O}_{Y} \otimes_{\mathscr{O}_{Z_1}} \omega_{Z_1|\mathscr{D}_1}^{q} \stackrel{\cong}{\longrightarrow}
\big(T^{m}\mathscr{O}_{Z_1}/T^{m+1}\mathscr{O}_{Z_1}\big)\otimes \omega_{Z_1|\mathscr{D}_1}^q
\end{equation}
induced by the multiplication by $T^m$ on $ \omega_{Z_1|\mathscr{D}_1}^{q}$ for each integer $q \geq 0$, we obtain a short exact sequence of complexes:
\begin{equation}\label{omega}
0 \longrightarrow \omega_{Y|D_s}^{\text{\large$\cdot$}}[-1] \longrightarrow \big(T^{m}\mathscr{O}_{Z_1}/T^{m+1}\mathscr{O}_{Z_1}\big)\otimes \omega_{Z_1|\mathscr{D}_1}^{\cdot} \longrightarrow\omega_{Y|D_s}^{\text{\large$\cdot$}} \longrightarrow0.
\end{equation}
$(*)$\; Furthermore, for each integer $q \geq 0$, the connecting homomorphism \[\mathcal{H}^q(\omega_{Y|D_s}^{\text{\large$\cdot$}})\rightarrow\mathcal{H}^q(\omega_{Y|D_s}^{\text{\large$\cdot$}})\] of the long exact sequence associated to {\rm (\ref{omega})} is the multiplication by $(-1)^qm$. In particular, it is the zero map if $p | m$.\\
\item\;If $p\nmid m$, $\mathcal{H}^q(\big(T^{m}\mathscr{O}_{Z_1}/T^{m+1}\mathscr{O}_{Z_1}\big)\otimes \omega_{Z_1|\mathscr{D}_1}^{\cdot})=0.$\\
\item\;If $p|m$, there is an isomorphism:
\begin{equation}
T^m\cdot\Big(\varphi \otimes \land^q \frac{d\varphi}{p} \Big): \mathscr{O}_{Y} \otimes_{\mathscr{O}_{Z_1}} \omega_{Z_1|\mathscr{D'}_1}^{q} \stackrel{\cong}{\longrightarrow}
\mathcal{H}^q\Big(\big(T^{m}\mathscr{O}_{Z_1}/T^{m+1}\mathscr{O}_{Z_1}\big)\otimes \omega_{Z_1|\mathscr{D}_1}^{\cdot}\Big).
\end{equation}
\end{enumerate}
\end{Lem}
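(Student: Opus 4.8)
The plan is to treat the three assertions in order, reducing everything to the analogous statements for the ordinary (non-modulus) de Rham complex $\omega_{Y}^{\cdot}$ which are already available from Tsuji's work (\cite[Appendix, Theorems A3, A4]{Tsu2}) together with the Cartier inverse isomorphism with modulus, Lemma \ref{Omega}. For (1), I would first check that the exact sequence \eqref{(1)} is just the usual residue/wedge sequence for $\omega^{\cdot}_{Z_1}$ (in the factorization $(Z,M_Z)\to(V,M_V)\to\Spec W$, so that $d\log T$ splits off the $V$-direction) tensored with the \emph{flat} line bundle $\mathscr{O}_{Z_1}(-\mathscr{D}_1)$; flatness is what makes it stay exact, and it is exactly the point isolated in Lemma \ref{quasi-isom}. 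Reducing mod $T$ and applying the isomorphism \eqref{(2)} (multiplication by $T^m$, which is $\mathscr{O}_{Z_1}$-linear and visibly an isomorphism onto $T^m\mathscr{O}_{Z_1}/T^{m+1}\mathscr{O}_{Z_1}\otimes\,(-)$) then identifies the middle term with $\big(T^m\mathscr{O}_{Z_1}/T^{m+1}\mathscr{O}_{Z_1}\big)\otimes\omega^{\cdot}_{Z_1|\mathscr{D}_1}$, the sub with $\omega^{\cdot}_{Y|D_s}[-1]$ and the quotient with $\omega^{\cdot}_{Y|D_s}$, giving \eqref{omega}. One must be slightly careful that the differential on the graded piece $T^m\mathscr{O}_{Z_1}/T^{m+1}\mathscr{O}_{Z_1}\otimes\omega^{\cdot}_{Z_1|\mathscr{D}_1}$, when transported through $T^m$, acquires the extra term $(-1)^{?}m\,d\log T\wedge(-)$ (Leibniz applied to $T^m$), which is precisely the source of the twisted differential $d$ on $\omega^{\cdot}_Y\otimes\mathscr{O}_Y(-D_s)$ introduced just before Lemma \ref{Omega}; this is what I would use to compute the connecting map.

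For the claim $(*)$ about the connecting homomorphism, the computation is local and concrete: pick a class represented by a closed form $\omega\in\omega^q_{Y|D_s}$, lift it to $T^m\tilde\omega$ in the middle complex, apply the twisted differential, and observe that the result is $(-1)^q m\, d\log T\wedge(\text{something})\cdot T^m$, whose preimage under $\wedge d\log T$ in $\omega^q_{Y|D_s}$ is $(-1)^q m\,\omega$. Hence the connecting map is multiplication by $(-1)^q m$, which vanishes when $p\mid m$ since we are in characteristic $p$ on $Y$. This is the same bookkeeping as in the classical (non-modulus) case; the modulus twist $\otimes\mathscr{O}_Y(-D_s)$ is carried along passively because it is a line bundle and the residue along $\{T=0\}$ is disjoint from $\Supp(D_s)$ in the relevant local model.

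For (2), when $p\nmid m$ the connecting map of \eqref{omega} is an isomorphism on every cohomology sheaf (by $(*)$, it is multiplication by the unit $(-1)^q m\in k^\times$), so the long exact sequence forces $\mathcal{H}^q\big((T^m\mathscr{O}_{Z_1}/T^{m+1}\mathscr{O}_{Z_1})\otimes\omega^{\cdot}_{Z_1|\mathscr{D}_1}\big)=0$ for all $q$. For (3), when $p\mid m$ the connecting map is zero, so \eqref{omega} splits into short exact sequences of cohomology sheaves
\[
0\to\mathcal{H}^{q-1}(\omega^{\cdot}_{Y|D_s})\to\mathcal{H}^q\big((T^m\mathscr{O}_{Z_1}/T^{m+1}\mathscr{O}_{Z_1})\otimes\omega^{\cdot}_{Z_1|\mathscr{D}_1}\big)\to\mathcal{H}^q(\omega^{\cdot}_{Y|D_s})\to 0,
\]
and by Lemma \ref{Omega} both outer terms are identified, via $C^{-1}$, with $\omega^{\bullet}_{Y|D'_s}$ in the appropriate degrees. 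The remaining work is to exhibit the explicit splitting map: I would use the Frobenius-twisted map $T^m\cdot\big(\varphi\otimes\wedge^q\frac{d\varphi}{p}\big)$, check it lands in the closed forms and descends to the stated isomorphism onto the middle cohomology (using \eqref{p^r}, $d\varphi(\omega^1)\subset p\,\omega^1$, and $p\mid m$ to make the normalization $T^m$ absorb the $p$-divisibility correctly), and that it is compatible with the two $C^{-1}$'s on the outer terms so that the five lemma gives the isomorphism. The main obstacle I anticipate is precisely this last bookkeeping — verifying that after dividing Frobenius by the appropriate power of $p$ and multiplying by $T^m$ one gets a well-defined isomorphism rather than merely a map — but it is formally identical to the corresponding step in Tsuji's computation (\cite{Tsu2}) and in \cite{SS}, with the modulus line bundle $\mathscr{O}_{Z_1}(-\mathscr{D}_1)$ (respectively $\mathscr{O}_{Z_1}(-\mathscr{D}'_1)$) transported along, so no genuinely new difficulty arises beyond keeping $\mathscr{D}$ versus $\mathscr{D}'$ straight, which is governed by the definition $m'_\lambda=\min\{l\in\mathbb{N}\mid p\cdot l\geq m_\lambda\}$.
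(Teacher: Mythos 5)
Your plan follows the paper's own proof quite closely in its overall architecture: the exact sequences in (1) are obtained exactly as you say (the paper simply quotes \cite[Lemma 2.4.2]{Tsu1}, invertibility of $\mathscr{O}_{Z_1}(-\mathscr{D}_1)$ doing the rest), assertion (2) is deduced from $(*)$, and (3) is proved by the very comparison you propose: the map $T^m\cdot\big(\varphi\otimes\wedge^q\tfrac{d\varphi}{p}\big)$ is placed in the middle of a commutative ladder whose outer vertical arrows are the Cartier inverses $C^{-1}$ of Lemma \ref{Omega}, relating the reduction mod $T$ of \eqref{(1)} for $\mathscr{D}'_1$ to the sequence \eqref{omega}; the commutativity is checked from the auxiliary fact that $\wedge^q\tfrac{d\varphi}{p}$ sends $a\,\dlog b_1\wedge\cdots\wedge\dlog b_q$ to $a^p\,\dlog b_1\wedge\cdots\wedge\dlog b_q$ modulo exact forms together with $\tfrac{d\varphi}{p}(\dlog T)=\dlog T$, and (3) then follows by a diagram chase using $(*)$ and Lemma \ref{Omega}, as in your five-lemma step.

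The one genuine gap is in your computation of the connecting homomorphism in $(*)$. A lift $T^m\tilde\omega$ of a closed form $\omega\in Z^q_{Y|D_s}$ need not be closed: one has $d(T^m\tilde\omega)=T^m\big(m\,\dlog T\wedge\tilde\omega+d\tilde\omega\big)$, and $d\tilde\omega$ does not vanish in the middle complex --- only its image in $\omega^{q+1}_{Y|D_s}$ does --- so $d\tilde\omega=\eta\wedge\dlog T$ for some $\eta\in\omega^{q}_{Y|D_s}$. Consequently the connecting map sends the class of $\omega$ to the class of $(-1)^qm\,\omega+\eta$, not simply to $(-1)^qm\,\omega$ as you assert, and the real content of $(*)$ is to show that the class of $\eta$ vanishes. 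The paper handles this, following \cite[Lemma 7.4.3 (2)]{Tsu0}, by observing that $\eta$ depends only on the chosen lift and not on $m$ and by disposing of the case $m=0$ separately, which forces $[\eta]=0$; without this extra step your ``observe that the result is $(-1)^qm\,\dlog T\wedge(\cdots)$'' is unjustified, while with it your argument becomes the paper's. (A minor aside: the modulus twist is harmless not because the residue locus $\{T=0\}$ avoids $\Supp(D_s)$ --- it does not, since $D_s$ lies on $Y$ --- but simply because $\mathscr{O}_{Z_1}(-\mathscr{D}_1)$ is an invertible sheaf and the maps in \eqref{(1)} are $\mathscr{O}_{Z_1}$-linear, the twisted differential introduced before Lemma \ref{Omega} being exactly what is induced on the graded pieces.)
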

\begin{proof}
The assertions \ref{(1)}, \ref{(2)} and \ref{omega} are easily follows from \cite[Lemma 2.4.2]{Tsu1} and $(2)$ follows from $(*)$. 
We prove $(*)$ and ($3$). There is a commutative diagram of complexes with exact rows which comes from \eqref{omega}:\\
$$
\begin{CD}   
0 @>>>\omega_{Y|D_s'}^{q-1}[-q]@>\land d\log(T)>>\mathscr{O}_Y \otimes \omega_{Z_1|\mathscr{D'_1}}^{q}[-q]@>>>\omega_{Y|D_s'}^{q}[-q]@>>> 0\\
@.@VV\land^{q-1}\frac{d\varphi}{p}V@VVT^m\cdot\big(\varphi \otimes \land^{q}\frac{d\varphi}{p}\big)V@VV\land^{q}\frac{d\varphi}{p}V \\
0 @>>>\omega_{Y|D_s}^{\cdot}[-1]@ >T^m\cdot(\land d\log(T))>>\Big(\frac{T^{m}\mathscr{O}_{Z_1}}{T^{m+1}\mathscr{O}_{Z_1}}\Big)\otimes \omega_{Z_1|\mathscr{D}_1}^{\cdot}@>>>\omega_{Y|D_s}^{\cdot}@>>> 0
\end{CD}
$$\\
and taking cohomology, we get the following commutative diagram:

{\tiny$$\begin{CD}   
0@>>>\omega_{Y|D_s'}^{q-1}@>\land d\log(T)>>\mathscr{O}_Y \otimes \omega_{Z_1|\mathscr{D'_1}}^{q}@>>>\omega_{Y|D_s'}^{q}@>>>0\\
@VVV@VVC^{-1}V@VVT^m\cdot\big(\varphi \otimes \land^{q}\frac{d\varphi}{p}\big)V@VVC^{-1}V@VVV \\
\mathcal{H}^{q-1}(\omega_{Y|D_s}^{\cdot})@>>>\mathcal{H}^{q-1}(\omega_{Y|D_s}^{\cdot})@ >T^m\cdot(\land d\log(T))>>\mathcal{H}^q\Big(\big(\frac{T^{m}\mathscr{O}_{Z_1}}{T^{m+1}\mathscr{O}_{Z_1}}\big)\otimes \omega_{Z_1|\mathscr{D}_1}^{\cdot}\Big)@>>>\mathcal{H}^{q}(\omega_{Y|D_s}^{\cdot})@>>> \mathcal{H}^{q}(\omega_{Y|D_s}^{\cdot}),
\end{CD}
$$}\\
where $C^{-1}$ is the inverse Cartier morphism. We have the following lemma:
\begin{Lem}(cf.\cite[Lemma 7.1.4]{Tsu0})
For the map $\frac{d\varphi}{p}$ of $\omega_{Z_1|\mathscr{D}_1}^1$, we have 
\[\land^q \frac{d\varphi}{p} (a\cdot d\log b_1 \land \cdots \land d\log b_q) \equiv a^p\cdot d\log b_1 \land \cdots \land d\log b_q \mod d(\omega_{Z_1|\mathscr{D}_1}^{q-1})\]
for $a \in \mathscr{O}_{Z_1}(-\mathscr{D}_1), b_1, \dotsc b_q \in M_{Z_1}$.
\end{Lem}
\begin{proof} This proof is the same argument as \cite[Lemma 7.1.4]{Tsu0}. It suffices to show that $\frac{d\varphi}{p}(a\cdot d\log b) \equiv a^p \cdot d\log b \mod d(\mathscr{O}_{Z_1}(-\mathscr{D}_1))$. Let $b \in M_{Z_1}$ be the image of $b' \in M_{Z_2}$ and $a \in \mathscr{O}_{Z_1}(-\mathscr{D}_1)$ be the image of $\alpha \in \mathscr{O}_{Z_2}(-\mathscr{D}_2)$. If we have $\varphi_{Z_1}(b)=b^p$, there exists $x \in \mathscr{O}_{Z_2}$ such that
\[\varphi_{Z_2}(b')= b'^p\cdot(1+p\cdot x).\]
Then we have \[d\varphi_{Z_2}(\alpha \cdot d\log b')=\alpha ^p\cdot \{p\cdot d\log b' + p\cdot (1+p\cdot x)^{-1}\cdot dx\}.  \]
Thus we obtain $\frac{d\varphi}{p}(a\cdot d\log b) \equiv a^p\cdot d\log b \mod d(\mathscr{O}_{Z_1}(-\mathscr{D}_1))$.
\end{proof}
The commutativity of the above two diagram follows from this Lemma, the fact $\frac{d\varphi}{p}(d\log T)=d\log T$ and the characterization of Cartier isomorphism.
Then we have (3) from Lemma \ref{Omega}.
We prove the claim $(*)$ by the same argument as in the proof of \cite[Lemma 7.4.3 (2)]{Tsu0}. We note that $T^m\omega' \in \left(T^m\mathscr{O}_{Z_1}/T^{m+1}\mathscr{O}_{Z_1}\right)\otimes \omega^q_{Z_1|\mathscr{D}_1}$ is a lifting of $\omega \in Z^q_{Y|D_s}$. Thus we have 
\[d(T^m\omega' )=mT^md\log T \land \omega' + T^m d\omega'.\]
The image of $T^md\omega'$ in $\omega^{q+1}_{Y|D_s}$ is $d\omega'  \mod d\log T =d\omega =0.$ Then there exists $\eta \in \omega^q_{Y|D_s}$ such that $T^md\omega'=T^m(\eta \land d\log T)$.  Hence we have 
\begin{align*}
d(T^m\omega' )&=mT^md\log T \land \omega' + T^m(\eta \land d\log T)\\
&=T^m\{ \left((-1)^m\cdot m \omega + \eta \right) \land d\log T \}.
\end{align*}
Then $\delta : \mathcal{H}^q(\omega^{\cdot}_{Y|D_s}) \longrightarrow \mathcal{H}^q(\omega^{\cdot}_{Y|D_s})$ maps the class of $\omega$ to the class of $(-1)^m\cdot m \omega+\eta$. If $m=0$, we have $\delta=0$ by the above proof of $(4)$ so that the class of $\eta$ is $0$. We obtain the claim $(2)$. This completes the proof.
\end{proof}

\begin{Lem}\label{Lem2} Let $m$ be a non-negative integer.
\begin{enumerate}
 \item\;If $p\nmid m$, there is a short exact sequence
\begin{equation}
0 \longrightarrow \frac{\omega_{Y|D_s}^{q-2}}{Z_{Y|D_s}^{q-2}}\longrightarrow B^q\Big(\big(T^{m}\mathscr{O}_{Z_1}/T^{m+1}\mathscr{O}_{Z_1}\big)\otimes \omega_{Z_1|\mathscr{D}_1}^{\cdot}\Big)\longrightarrow  \frac{\omega_{Y|D_s}^{q-1}}{B_{Y|D_s}^{q-1}}\longrightarrow 0
\end{equation}
which is characterized by the following properties. For $x \in \mathscr{O}_Y(-\mathscr{D}_1)$ and $a_1,\dotsc,  a_{q-1} \in M_{Z_1}^{gp}$, the image of 
\begin{equation}
d\big(T^m x \otimes d\log a_1 \land \cdots \land d\log a_{q-1}\big) \in B^q\Big(\big(T^{m}\mathscr{O}_{Z_1}/T^{m+1}\mathscr{O}_{Z_1}\big)\otimes \omega_{Z_1|\mathscr{D}}^{\cdot}\Big) 
\end{equation}
in $ \frac{\omega_{Y|D_s}^{q-1}}{B_{Y|D_s}^{q-1}}$ is $x d\log\overline{a_1}\land \cdots \land d\log\overline{a_{q-1}}$, and 
\begin{equation}
d\big(T^m x \otimes d\log a_1 \land \cdots \land d\log a_{q-2} \land d\log T\big) \in B^q\Big(\big(T^{m}\mathscr{O}_{Z_1}/T^{m+1}\mathscr{O}_{Z_1}\big)\otimes \omega_{Z_1|\mathscr{D}_1}^{\cdot}\Big) 
\end{equation}
is the image of $x d\log\overline{a_1}\land \cdots \land d\log\overline{a_{q-2}} \in \frac{\omega_{Y|D_s}^{q-2}}{Z_{Y|D_s}^{q-2}}$, where $\overline{a_i}$ denote the images of 
$a_i$ in $M_Y^{gp}$.

\item\;If $p|m$, there is a short exact sequence
\begin{equation}
0 \longrightarrow \frac{\omega_{Y|D_s}^{q-2}}{Z_{Y|D_s}^{q-2}}\longrightarrow  B^q\Big(\big(T^{m}\mathscr{O}_{Z_1}/T^{m+1}\mathscr{O}_{Z_1}\big)\otimes \omega_{Z_1|\mathscr{D}_1}^{\cdot}\Big)\longrightarrow  \frac{\omega_{Y|D_s}^{q-1}}{Z_{Y|D_s}^{q-1}}\longrightarrow 0
\end{equation}
which is characterized in the same way as {\rm (1)}.

\item\;The  homomorphism 
\begin{equation}
1-\varphi\otimes\land^{q}\frac{d\varphi}{p}:Z^q\big(\mathscr{O}_Y \otimes \omega_{Z_1|\mathscr{D}_1}^{\cdot}\big)\rightarrow \mathcal{H}^q\big(\mathscr{O}_Y \otimes \omega_{Z_1|\mathscr{D}_1}^{\cdot}  \big) 
\end{equation}
is surjective. Its kernel $\mathcal{K}$ is the subsheaf of abelian groups of $Z^q\big(\mathscr{O}_Y \otimes \omega_{Z_1|\mathscr{D}_1}^{\cdot}\big)$ generated by local sections of the form 
\begin{equation}
1\otimes d\log(a_1)\land d\log(a_2)\land\cdots\land d\log(a_q),  \quad( a_1 \in 1+\mathscr{O}_{Z_1}(-\mathscr{D}_1), a_2\dotsc, a_q \in M_{Z_1}^{gp}.\big)
\end{equation}
and there is a short exact sequence
\begin{equation}\label{K}
0 \rightarrow \omega_{Y|D_s,\log}^{q-1}  \rightarrow \mathcal{K} \rightarrow  \omega_{Y|D_s,\log}^{q} \rightarrow 0
\end{equation}
which is characterized by the following properties: \\
For $a_1 \in 1+\mathscr{O}_{Z_1}(-\mathscr{D}_1), a_2, \dotsc, a_q \in M_{Z_1}^{gp}$, the image of 
\begin{equation}
1\otimes d\log(a_1)\land d\log(a_2) \land \cdots \land d\log(a_q)\in \mathcal{K} 
\end{equation}
in $\omega_{Y|D,\log}^{q}$ is $d\log(\overline{a_1})\land d\log(\overline{a_2}) \land \cdots \land d\log(\overline{a_q})$, and 
\begin{equation}
1\otimes d\log(a_1)\land d\log(a_2) \land\cdots\land d\log(a_{q-1})\land d\log T\in \mathcal{K} 
\end{equation}
is the image of $d\log(\overline{a_1})\land d\log(\overline{a_2}) \land\cdots\land d\log(\overline{a_{q-1}}) \in  \omega_{Y|D,\log}^{q-1}$, where $\overline{a_i}$ denote the images of $a_i$ in $M_Y^{gp}$.
\end{enumerate}
\end{Lem}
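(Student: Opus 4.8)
All three statements can be read off — paralleling Tsuji's computations for the absolute syntomic complex in \cite[\S7]{Tsu0} — from Lemma \ref{Lem1}, Lemma \ref{Omega} and Lemma \ref{omega log} by snake-lemma arguments applied to the short exact sequence of complexes
\[
0 \longrightarrow \omega_{Y|D_s}^{\cdot}[-1] \xrightarrow{\ \iota\ } G^{\cdot} \xrightarrow{\ \rho\ } \omega_{Y|D_s}^{\cdot} \longrightarrow 0, \qquad G^{\cdot}:=\big(T^{m}\mathscr{O}_{Z_1}/T^{m+1}\mathscr{O}_{Z_1}\big)\otimes_{\mathscr{O}_{Z_1}}\omega^{\cdot}_{Z_1|\mathscr{D}_1}
\]
provided by Lemma \ref{Lem1}(1); here the differential of $G^{\cdot}$ acts by $d(T^{m}\eta)=T^{m}(m\,d\log T\land\eta+d\eta)$ and the connecting map of the associated cohomology long exact sequence is multiplication by $(-1)^{q}m$ by Lemma \ref{Lem1}$(*)$.

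\textbf{Assertions (1) and (2).} First I would apply the snake lemma to the two-row ladder whose rows are the degree $q-1$ and degree $q$ terms of the sequence above and whose vertical arrows are the differentials of $\omega_{Y|D_s}^{\cdot}[-1]$, $G^{\cdot}$ and $\omega_{Y|D_s}^{\cdot}$; by Lemma \ref{Lem1}$(*)$ its connecting homomorphism $Z^{q-1}_{Y|D_s}\to \omega^{q-1}_{Y|D_s}/B^{q-1}_{Y|D_s}$ is, up to sign, multiplication by $m$, so by exactness $\rho\big(Z^{q-1}(G^{\cdot})\big)$ is its kernel: this is $B^{q-1}_{Y|D_s}$ when $p\nmid m$ — because $p$ annihilates $\omega^{\cdot}_{Y|D_s}$, so multiplication by $m$ is then bijective and preserves $B^{q-1}_{Y|D_s}$ — and all of $Z^{q-1}_{Y|D_s}$ when $p\mid m$. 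I would then produce the surjection $B^{q}(G^{\cdot})=G^{q-1}/Z^{q-1}(G^{\cdot})\twoheadrightarrow \omega^{q-1}_{Y|D_s}/\rho\big(Z^{q-1}(G^{\cdot})\big)$ sending $[\xi]$ to $[\rho(\xi)]$ — whose target is $\omega^{q-1}_{Y|D_s}/B^{q-1}_{Y|D_s}$ if $p\nmid m$ and $\omega^{q-1}_{Y|D_s}/Z^{q-1}_{Y|D_s}$ if $p\mid m$ — and identify its kernel, using $\rho^{-1}\big(\rho(Z^{q-1}(G^{\cdot}))\big)=Z^{q-1}(G^{\cdot})+\iota(\omega^{q-2}_{Y|D_s})$ and $\iota(\omega^{q-2}_{Y|D_s})\cap Z^{q-1}(G^{\cdot})=\iota(Z^{q-2}_{Y|D_s})$, with $\iota(\omega^{q-2}_{Y|D_s})/\iota(Z^{q-2}_{Y|D_s})\cong \omega^{q-2}_{Y|D_s}/Z^{q-2}_{Y|D_s}$. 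This yields both exact sequences at once. The two characterizing formulas come out of $d(T^{m}\eta)=T^{m}(m\,d\log T\land\eta+d\eta)$ on the local generators: $\rho$ carries $d(T^{m}x\otimes d\log a_1\land\cdots\land d\log a_{q-1})$ to $x\,d\log\overline{a_1}\land\cdots\land d\log\overline{a_{q-1}}$, while $d(T^{m}x\otimes d\log a_1\land\cdots\land d\log a_{q-2}\land d\log T)=d\iota\big(x\,d\log\overline{a_1}\land\cdots\land d\log\overline{a_{q-2}}\big)$ lies in the subsheaf and represents the image of $x\,d\log\overline{a_1}\land\cdots\land d\log\overline{a_{q-2}}$.

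\textbf{Assertion (3).} I would take $m=0$, so $G^{\cdot}=\mathscr{O}_Y\otimes_{\mathscr{O}_{Z_1}}\omega^{\cdot}_{Z_1|\mathscr{D}_1}$ and the connecting map vanishes; the cohomology sequence reads $0\to\mathcal{H}^{q-1}(\omega^{\cdot}_{Y|D_s})\to\mathcal{H}^{q}(\mathscr{O}_Y\otimes\omega^{\cdot}_{Z_1|\mathscr{D}_1})\to\mathcal{H}^{q}(\omega^{\cdot}_{Y|D_s})\to0$, and the same vanishing lets one lift it to a short exact sequence of cocycles $0\to Z^{q-1}_{Y|D_s}\xrightarrow{\iota}Z^{q}(G^{\cdot})\xrightarrow{\rho}Z^{q}_{Y|D_s}\to0$. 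The operator $1-\varphi\otimes\land^{q}\tfrac{d\varphi}{p}$ respects this sequence, and — by the computation $\land^{q}\tfrac{d\varphi}{p}(a\,d\log b_1\land\cdots\land d\log b_q)\equiv a^{p}\,d\log b_1\land\cdots\land d\log b_q \bmod d\omega^{q-1}_{Z_1|\mathscr{D}_1}$ together with $\tfrac{d\varphi}{p}(d\log T)=d\log T$ established in the proof of Lemma \ref{Lem1} — it is intertwined on the outer terms with $1-C^{-1}\colon Z^{j}_{Y|D_s}\to\mathcal{H}^{j}(\omega^{\cdot}_{Y|D_s})$, which by Lemma \ref{omega log} is surjective with kernel $\omega^{j}_{Y|D_s,\log}$. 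The snake lemma then immediately gives surjectivity of $1-\varphi\otimes\land^{q}\tfrac{d\varphi}{p}$ and the exact sequence $0\to\omega^{q-1}_{Y|D_s,\log}\to\mathcal{K}\to\omega^{q}_{Y|D_s,\log}\to0$ with maps induced by $\iota$ and $\rho$, whence the stated descriptions. Lastly, the elements $1\otimes d\log a_1\land\cdots\land d\log a_q$ with $a_1\in1+\mathscr{O}_{Z_1}(-\mathscr{D}_1)$ and $a_2,\dots,a_q\in M^{gp}_{Z_1}$ lie in $\mathcal{K}$ by the same $\land^{q}\tfrac{d\varphi}{p}$-formula, and they generate $\mathcal{K}$: via $\rho$ they surject onto the generators $d\log\overline{a_1}\land\cdots\land d\log\overline{a_q}$ of $\omega^{q}_{Y|D_s,\log}$, and those ending in $d\log T$ are $\iota$ of the generators of $\omega^{q-1}_{Y|D_s,\log}$ — here the identification $\omega^{q}_{Y|D_s,\log}=\omega^{q}_{Y,\log}\cap\omega^{q}_{Y|D_s}$ behind surjectivity onto $Z^{q}_{Y|D_s}$ is Lemma \ref{omega log} (and ultimately Proposition \ref{diflog}).

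\textbf{Main obstacle.} The delicate points are (a) the compatibility in (3) of the Frobenius-type operator $\varphi\otimes\land^{q}\tfrac{d\varphi}{p}$ with reduction modulo $T$, i.e. that it reduces to the twisted inverse Cartier operator of $\omega^{\cdot}_{Y|D_s}$, since this is exactly what converts Lemma \ref{omega log} into the statement about $\mathcal{K}$; and (b) for (1)–(2), pinning down which subsheaf $\rho(Z^{q-1}(G^{\cdot}))$ equals — $B^{q-1}_{Y|D_s}$ versus $Z^{q-1}_{Y|D_s}$ — a dichotomy governed solely by whether multiplication by $m$ is invertible on the characteristic-$p$ sheaf $\omega^{\cdot}_{Y|D_s}$, which is precisely the reason the two cases of the lemma look different. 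The remaining verifications (exactness of the auxiliary cocycle sequences, the local generator computations) are routine diagram chases.
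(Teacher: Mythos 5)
Your proposal is correct and follows essentially the same route as the paper: for (1)--(2) you extract the cocycle-level sequences from the short exact sequence of Lemma \ref{Lem1}(1) using the connecting map $(*)$ (equivalently the acyclicity statement of Lemma \ref{Lem1}(2)) and pass to $B^q$, and for (3) you run the snake lemma on the $m=0$ cocycle/cohomology ladder, with Lemma \ref{omega log} giving surjectivity of $1-C^{-1}$ and the $\wedge^q\tfrac{d\varphi}{p}$-mod-boundaries computation giving the needed compatibility with the inverse Cartier operator. Your treatment is only slightly more explicit than the paper's (e.g.\ identifying $B^q(G^{\cdot})\cong G^{q-1}/Z^{q-1}(G^{\cdot})$ and the generator argument for $\mathcal{K}$), but no step differs in substance.
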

\begin{proof} 
If $p \nmid m$, $Z^{q-1}\Big(\big(T^{m}\mathscr{O}_{Z_1}/T^{m+1}\mathscr{O}_{Z_1}\big)\otimes \omega_{Z_1|\mathscr{D}_1}^{\cdot}\Big)=B^{q-1}\Big(\big(T^{m}\mathscr{O}_{Z_1}/T^{m+1}\mathscr{O}_{Z_1}\big)\otimes \omega_{Z_1|\mathscr{D}_1}^{\cdot}\Big)$ by Lemma \ref{Lem1} ($2$). Then we have from \eqref{omega} the following exact sequence:
\begin{equation}
0\rightarrow Z_{Y|D_s}^{q-2}\rightarrow Z^{q-1}\Big(\big(T^{m}\mathscr{O}_{Z_1}/T^{m+1}\mathscr{O}_{Z_1}\big)\otimes \omega_{Z_1|\mathscr{D}_1}^{\cdot}\Big) \rightarrow B_{Y|D_s}^{q-1}\rightarrow 0.
\end{equation}
If $p |m$, the homomorphism  
\begin{equation}
\mathcal{H}^{q-1}\Big(\big(T^{m}\mathscr{O}_{Z_1}/T^{m+1}\mathscr{O}_{Z_1}\big)\otimes \omega_{Z_1|\mathscr{D}_1}^{\cdot}\Big)\rightarrow \mathcal{H}^{q-1}(\omega_{Y|D_s}^{\cdot})
\end{equation}
is surjective by Lemma \ref{Lem1} ($1$). Hence the homomorphism 
\begin{equation}
Z^{q-1}\Big(\big(T^{m}\mathscr{O}_{Z_1}/T^{m+1}\mathscr{O}_{Z_1}\big)\otimes \omega_{Z_1|\mathscr{D}_1}^{\cdot}\Big)\rightarrow Z^{q-1}(\omega_{Y|D_s}^{\cdot})
\end{equation}
is surjective  and  \ref{omega} induces a short exact sequence:
\begin{equation}\label{T^mseq}
0\rightarrow Z_{Y|D_s}^{q-2}\rightarrow Z^{q-1}\Big(\big(T^{m}\mathscr{O}_{Z_1}/T^{m+1}\mathscr{O}_{Z_1}\big)\otimes \omega_{Z_1|\mathscr{D}_1}^{\cdot}\Big) \rightarrow Z_{Y|D_s}^{q-1}\rightarrow 0.
\end{equation}
(1) and (2) follows from these two short exact sequences and \eqref{omega}. 
We will prove $(3)$. 
We have a commutative diagram
{\tiny \[
\xymatrix@M=10pt{
0\ar[d]\ar[r] &Z_{Y|D_s}^{q-1}\ar[r]^-{\land d\log T}\ar[d]^-{1-C^{-1}} &Z^{q-1}\Big(\mathscr{O}_Y\otimes \omega_{Z_1|\mathscr{D}_1}^{\cdot}\Big)\ar[d]^-{1-\varphi\otimes\land^{q}\frac{d\varphi}{p}} \ar[r]& Z_{Y|D_s}^{q}\ar[r]\ar[d]^-{1-C^{-1}} &0\ar[d]\\
 \mathcal{H}^{q-1}(\omega_{Y|D_s}^{\text{\large$\cdot$}})\ar[r]& \mathcal{H}^{q-1}(\omega_{Y|D_s}^{\text{\large$\cdot$}})\ar[r]^-{\land d\log T} &  \mathcal{H}^{q-1}\Big(\mathscr{O}_Y\otimes \omega_{Z_1|\mathscr{D}_1}^{\cdot}\Big)  \ar[r] &  \mathcal{H}^q(\omega_{Y|D_s}^{\text{\large$\cdot$}})\ar[r]& \mathcal{H}^q(\omega_{Y|D_s}^{\text{\large$\cdot$}}), }\]}
where the upper horizontal short exact sequence is the case $m=0$ of \eqref{T^mseq}. The second and fourth morphism $1-C^{-1}$ is surjective by Lemma \ref{omega log}. Then the middle morphism is surjective. By the snake lemma in the above commutative diagram, we have the short exact sequence \eqref{K}.
Hence we obtain the claim $(3)$.  This completes the proof.
\end{proof}

Let $A^{\cdot}$(resp. $B^{\cdot}$) be the subcomplex of $J_{\mathscr{E}_1}^{[q-\cdot]} \otimes \omega_{Z_1|\mathscr{D}_1}^{\cdot}$ (resp. $\mathscr{O}_{\mathscr{E}_1}\otimes \omega_{Z_1|\mathscr{D}_1}^{\cdot}$) which coincides with $J_{\mathscr{E}_1}^{[q-\cdot]} \otimes \omega_{Z_1|\mathscr{D}_1}^{\cdot}$ (resp. $\mathscr{O}_{\mathscr{E}_1}\otimes \omega_{Z_1|\mathscr{D}_1}^{\cdot}$) in degree $q-1$, $q$, and $q+1$ (resp. degree $q-2$, $q-1$, and $q$), and is $0$ in other degree.
\begin{Lem}
 The inclusion map (resp. $\varphi_1\otimes \land^{q-1}d\varphi/p$) $J_{\mathscr{E}_1}^{[q-\cdot]} \otimes \omega_{Z_1|\mathscr{D}_1}^{q-1} \rightarrow \mathscr{O}_{\mathscr{E}_1}\otimes \omega_{Z_1|\mathscr{D}_1}^{q-1}$ and the identity map (resp. $\varphi \otimes \land^{q}d\varphi/p$) $\mathscr{O}_{\mathscr{E}_1}\otimes \omega_{Z_1|\mathscr{D}_1}^{q} \rightarrow \mathscr{O}_{\mathscr{E}_1}\otimes \omega_{Z_1|\mathscr{D}_1}^{q} $ give a morphism of complexes $1$ (resp. $\varphi_q$) : $A^{\cdot}\rightarrow B^{\cdot}$.
 \end{Lem}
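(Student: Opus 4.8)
The plan is to reduce the whole statement to the single fact, already built into the definition of $s_n(q)_{X|D}$, that on the full complexes the inclusion and the Frobenius operator $\varphi_q$ are morphisms of complexes
\[
1,\ \varphi_q\colon J_{\mathscr{E}_1}^{[q-\cdot]}\otimes_{\mathscr{O}_{Z_1}}\omega_{Z_1|\mathscr{D}_1}^{\cdot}\longrightarrow \mathscr{O}_{\mathscr{E}_1}\otimes_{\mathscr{O}_{Z_1}}\omega_{Z_1|\mathscr{D}_1}^{\cdot},
\]
where $\varphi_q=\varphi_{q-j}\otimes\land^{j}\frac{d\varphi}{p}$ in degree $j$. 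For the inclusion this is the relation $d(J_{\mathscr{E}_1}^{[i]})\subset J_{\mathscr{E}_1}^{[i-1]}$, which exhibits $J_{\mathscr{E}_1}^{[q-\cdot]}\otimes\omega_{Z_1|\mathscr{D}_1}^{\cdot}$ as a subcomplex of $\mathscr{O}_{\mathscr{E}_1}\otimes\omega_{Z_1|\mathscr{D}_1}^{\cdot}$; for $\varphi_q$ it is exactly the assertion that $1-\varphi_q$ is a chain map, which is what makes the cone defining $s_n(q)_{X|D}$ legitimate, so $\varphi_q$ is a chain map as well. I would first recall this, with the relevant references to the construction of $\mathcal{S}_n(q)$ and of $s_n(q)_{X|D}$.

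Next I would write out the two truncated complexes concretely. Since $J_{\mathscr{E}_1}^{[1]}=J_{\mathscr{E}_1}$ and $J_{\mathscr{E}_1}^{[i]}=\mathscr{O}_{\mathscr{E}_1}$ for $i\leq 0$, the complex $A^{\cdot}$ is concentrated in degrees $q-1,q,q+1$, with $A^{q-1}=J_{\mathscr{E}_1}\otimes\omega_{Z_1|\mathscr{D}_1}^{q-1}$, $A^{q}=\mathscr{O}_{\mathscr{E}_1}\otimes\omega_{Z_1|\mathscr{D}_1}^{q}$, $A^{q+1}=\mathscr{O}_{\mathscr{E}_1}\otimes\omega_{Z_1|\mathscr{D}_1}^{q+1}$, the differentials in degrees $q-1$ and $q$ being those of the ambient complex and the differential out of degree $q+1$ being $0$; similarly $B^{\cdot}$ equals $\mathscr{O}_{\mathscr{E}_1}\otimes\omega_{Z_1|\mathscr{D}_1}^{\cdot}$ in degrees $q-2,q-1,q$, with the differential out of degree $q$ equal to $0$. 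Since neither $1$ nor $\varphi_q$ lowers degree, each of them sends $A^{\cdot}$ into $B^{\cdot}$, and the induced map is the inclusion $J_{\mathscr{E}_1}\otimes\omega_{Z_1|\mathscr{D}_1}^{q-1}\hookrightarrow\mathscr{O}_{\mathscr{E}_1}\otimes\omega_{Z_1|\mathscr{D}_1}^{q-1}$ (resp. $\varphi_1\otimes\land^{q-1}\frac{d\varphi}{p}$) in degree $q-1$, the identity (resp. $\varphi\otimes\land^{q}\frac{d\varphi}{p}$) in degree $q$, and $0$ in degrees $q-2$ and $q+1$.

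It then remains to check compatibility with the differentials, which I would do square by square. The square in degrees $q-2\to q-1$ is trivial because $A^{q-2}=0$; the square in degrees $q\to q+1$ is trivial because $B^{q+1}=0$, so both composites land in $0$; and the only remaining square, in degrees $q-1\to q$, is a sub-square of the corresponding square for $1$ (resp. $\varphi_q$) on the full complexes, which commutes by the fact recalled above. This yields the two morphisms of complexes $1$ and $\varphi_q\colon A^{\cdot}\to B^{\cdot}$, proving the lemma.

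I do not expect any genuine obstacle: the lemma is pure bookkeeping. The only point that deserves a moment of care is that $B^{\cdot}$ is truncated one degree lower than $A^{\cdot}$ at the top, which is precisely what makes the $q\to q+1$ square innocuous; one should also keep in mind that $\omega_{Z_1|\mathscr{D}_1}^{\cdot}$ is a subcomplex of $\omega_{Z_1}^{\cdot}$ (used implicitly throughout), so that all the differentials in play are indeed defined.
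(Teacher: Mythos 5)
Your bookkeeping is exactly the paper's: both arguments reduce everything to the single square in degrees $q-1\to q$, the two end squares being vacuous because $A^{q-2}=0$ and $B^{q+1}=0$, and for the map $1$ the remaining square is the obvious one. The only substantive difference is how you justify that remaining square for $\varphi_q$, i.e.\ the identity $d\circ\bigl(\varphi_1\otimes\land^{q-1}\tfrac{d\varphi}{p}\bigr)=\bigl(\varphi\otimes\land^{q}\tfrac{d\varphi}{p}\bigr)\circ d$ on $J_{\mathscr{E}_1}^{[1]}\otimes\omega_{Z_1|\mathscr{D}_1}^{q-1}$: you appeal to the chain-map property of $1-\varphi_q$ on the full twisted complexes ``because otherwise the cone defining $s_n(q)_{X|D}$ would not be legitimate''. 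That is the weak point: the paper never proves this compatibility for the modulus-twisted complexes --- it is tacitly assumed when the cone is written down --- so invoking the well-posedness of that definition shifts the burden rather than discharging it, and the present lemma is precisely where the paper performs the honest check in the degrees it needs. The check is short and you should include it (or cite Tsuji for the untwisted case together with the remark that the twist is $\varphi$-stable): by the last condition of Assumption \ref{Asum*} the Frobenius respects $\mathscr{O}_{Z_n}(-\mathscr{D}_n)$, \eqref{p^r} gives $\varphi\bigl(J_{\mathscr{E}_{n+1}}^{[1]}\otimes\mathscr{O}_{Z_{n+1}}(-\mathscr{D}_{n+1})\bigr)\subset p\,\bigl(\mathscr{O}_{\mathscr{E}_{n+1}}\otimes\mathscr{O}_{Z_{n+1}}(-\mathscr{D}_{n+1})\bigr)$, and $\varphi_1$ is characterized by $p\varphi_1=\varphi$ after lifting mod $p^{2}$; since also $p\cdot\tfrac{d\varphi}{p}=d\varphi$ on $\omega^{1}_{Z_n/W_n}$, both composites in the square become, after multiplication by $p$ on the lifted (flat) modules, the same map $d\circ\varphi=\varphi\circ d$, and the uniqueness in the definition of the divided Frobenius forces them to agree mod $p$. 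With that substitution your argument coincides with the paper's proof.
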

 \begin{proof} It is obvious  that the morphism $1: A^{\cdot}\longrightarrow B^{\cdot}$ is a morphism of complex. We consider the case $\varphi_q : A^{\cdot}\longrightarrow B^{\cdot}$. It suffices to show that the following diagram commutative
 \[
\xymatrix@M=10pt{
J_{\mathscr{E}_1}^{[1]} \otimes \omega_{Z_1|\mathscr{D}_1}^{q-1}\ar[d]^-{\varphi_1\otimes \land^{q-1}d\varphi/p}\ar[r]^-{d} &\mathscr{O}_{\mathscr{E}_1}\otimes \omega_{Z_1|\mathscr{D}_1}^{q}\ar[d]^-{\varphi \otimes \land^{q}d\varphi/p}\\
\mathscr{O}_{\mathscr{E}_1}\otimes \omega_{Z_1|\mathscr{D}_1}^{q-1} \ar[r]^-{d} & \mathscr{O}_{\mathscr{E}_1}\otimes \omega_{Z_1|\mathscr{D}_1}^{q}\\
}\]
By \eqref{p^r} and by the definition of $\varphi_1$, we have the commutativity of the above diagram. This completes the proof.
 \end{proof}
We put ${S_D}^{\cdot}$ the mapping fiber of the morphism $1-\varphi_q:A^{\cdot}\rightarrow B^{\cdot}$. Then we have $\mathcal{H}^q(s_1(q)_{X|D})=\mathcal{H}^q({S_D}^{\cdot})$.

We define the descending filtration ${\Tilde{U}}^m$\;($0 \leq m \leq pe$) on $A^{\cdot}$(resp. $B^{\cdot}$) as follows: 
\begin{equation}
\cdots \rightarrow0 \rightarrow \big(T^{m'}\mathscr{O}_{\mathscr{E}_1}+J_{\mathscr{E}_1}^{[p]}\big)\otimes \omega_{Z_1|\mathscr{D}_1}^{q-1}\rightarrow \big(T^{m}\mathscr{O}_{\mathscr{E}_1}+J_{\mathscr{E}_1}^{[p]}\big)\otimes \omega_{Z_1|\mathscr{D}_1}^{q}
\end{equation}
\begin{equation*}
\rightarrow \big(T^{m}\mathscr{O}_{\mathscr{E}_1}+J_{\mathscr{E}_1}^{[p]}\big)\otimes \omega_{Z_1|\mathscr{D}_1}^{q+1} \rightarrow 0 \rightarrow \cdots
\end{equation*}
 \begin{equation}\Big({\rm resp.}
\cdots \rightarrow0 \rightarrow \big(T^{m}\mathscr{O}_{\mathscr{E}_1}+J_{\mathscr{E}_1}^{[p]}\big)\otimes \omega_{Z_1|\mathscr{D}_1}^{q-2}\rightarrow \big(T^{m}\mathscr{O}_{\mathscr{E}_1}+J_{\mathscr{E}_1}^{[p]}\big)\otimes \omega_{Z_1|\mathscr{D}_1}^{q-1}
\end{equation}
\begin{equation*}
\rightarrow \big(T^{m}\mathscr{O}_{\mathscr{E}_1}+J_{\mathscr{E}_1}^{[p]}\big)\otimes \omega_{Z_1|\mathscr{D}_1}^{q} \rightarrow 0 \rightarrow \cdots\Big),
\end{equation*}
where $m'$ denotes the smallest integer which is $\geq \max(e+m/p, m)$. The morphism $1: A^{\cdot}\rightarrow B^{\cdot}$ is compatible with the filtrations ${\Tilde{U}}^{\cdot}$.
By the assumpution $p \geq 3$, we have $\varphi_1(J_{\mathscr{E}_1}^{[p]})=p\cdot \varphi_2(J_{\mathscr{E}_1}^{[p]}) =0$. Then the morphism $\varphi_q:A^{\cdot}\rightarrow B^{\cdot}$ is also compatible with the filtrations ${\Tilde{U}}^{\cdot}$.

We define the filtration ${\Tilde{U}}^{m}$($0 \leq m \leq pe $) on ${S_D}^{\cdot}$ to be the mapping fiber of  $1-\varphi_q: {\Tilde{U}}^{m}A^{\cdot}\rightarrow {\Tilde{U}}^{m}B^{\cdot}$
and define the filtration ${\Tilde{U}}^{m}$ on $\mathcal{H}^q({S_D}^{\cdot})$ to be the image of $\mathcal{H}^q({\Tilde{U}}^{m}{S_D}^{\cdot})$.  We will show that ${\Tilde{U}}^{m}\mathcal{H}^q({S_D}^{\cdot})=\mathcal{H}^q({\Tilde{U}}^{m}{S_D}^{\cdot})$ ($0 \leq m \leq pe$) (see Corollary \ref{Cor1}).

Next we calculate the image of $(1+I_{D_2})^{\times} \otimes (M_{X_2}^{gp})^{\otimes (q-1)}$ under the symbol map \ref{symb}. 
\begin{Lem}\label{symb map 2}
For $x \in \big(1+\mathscr{O}_{Z_2}(-\mathscr{D}_2)\big)^{\times}$, $a_1,\dotsc,a_{q-1} \in M_{\mathscr{E}_{2}}^{gp}$, the image of $\overline{x}\otimes \overline{a_1}\otimes \cdots \otimes\overline{a_{q-1}}$ in $\mathcal{H}^q({S_D}^{\cdot})$ under the symbol map {\rm \ref{symb}},  is the class of the cocycle
{\footnotesize \begin{equation}
\Big(d\log x \land d\log a_1 \land \cdots\land d\log a_{q-1},\; p^{-1}\log(x^p\varphi_{\mathscr{E}_2}(x)^{-1})\cdot d\varphi/p (d\log a_1) \land\cdots\land d\varphi/p (d\log a_{q-1})\end{equation}
\begin{equation*}+\Sigma _{i=1}^{q-1}(-1)^{i-1}p^{-1}\log(a_i^p\varphi_{\mathscr{E}_{2}}(a_i)^{-1})\cdot d\log x \land d\log a_1 \land\cdots\end{equation*}
\begin{equation*}
 \land d\log a_{i-1}\land \frac{d\varphi}{p}(d\log a_{i+1})\land \cdots
\land \frac{d\varphi}{p}(d\log a_{q-1})   \Big)\in \big(\mathscr{O}_{\mathscr{E}_1}\otimes \omega_{Z_1|\mathscr{D}_1}^{q} \big)\oplus\big(\mathscr{O}_{\mathscr{E}_1}\otimes \omega_{Z_1|\mathscr{D}_1}^{q-1} \big),
\end{equation*}}
where $\overline{x}$ denote the image of $x$ in $(1+I_{D_2})^{\times}$ and $\overline{a_i}$ denote the images of $a_i$ in $M_{X_2}^{gp}$.
\end{Lem}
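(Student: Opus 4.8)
The plan is to compose the explicit cocycle-level descriptions of the three maps whose composite is the symbol map \eqref{symb}: the modulus symbol $\mathrm{Symb}_{X|D}$ of Subsection \ref{symbol section}, Tsuji's symbol $\mathrm{Symb}_X$ (\cite[\S2]{Tsu1}, \cite{Tsu2}), and the product morphism $(\star)$ of Lemma \ref{prod s_n}. Since $\mathcal{H}^q(s_1(q)_{X|D}) = \mathcal{H}^q({S_D}^{\cdot})$ it suffices to carry out the computation inside $s_1(q)_{X|D}$ and then transport the answer along this identification; the target group of the cocycle in the statement is precisely $s_1(q)^q_{X|D}$.

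First I would record the input cocycles. By construction of $\mathrm{Symb}_{X|D}$ — via the complex $C_{n+1}$ and the morphism $C_{n+1}\to s_n(1)_{X|D}$ recalled in Subsection \ref{symbol section}, taken with $n=1$ — for a lift $x\in\bigl(1+\mathscr{O}_{Z_2}(-\mathscr{D}_2)\bigr)^{\times}$ of $\overline x\in(1+I_{D_2})^{\times}$, the class $\mathrm{Symb}_{X|D}(\overline x)\in\mathcal{H}^1(s_1(1)_{X|D})$ is represented by the degree-$1$ cocycle $\bigl(d\log x,\ p^{-1}\log(x^p\varphi_{\mathscr{E}_2}(x)^{-1})\bigr)$. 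Likewise, for a lift $a_i\in M_{\mathscr{E}_2}^{gp}$ of $\overline{a_i}\in M_{X_2}^{gp}$, Tsuji's degree-$1$ symbol is represented by $\bigl(d\log a_i,\ p^{-1}\log(a_i^p\varphi_{\mathscr{E}_2}(a_i)^{-1})\bigr)$ in $\mathcal{H}^1(\mathcal{S}_1(1)_{(X_1,M_{X_1}),(Z_1,M_{Z_1})})$, and $\mathrm{Symb}_X(\overline{a_1}\otimes\cdots\otimes\overline{a_{q-1}})$ is the cup product of these classes in the differential graded ring $\mathcal{S}_1(\bullet)_{(X_1,M_{X_1}),(Z_1,M_{Z_1})}$.

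Second I would evaluate this cup product. Using the product rule for the cone complexes $\mathcal{S}_n(\bullet)$ (the analogue of the formula for $(\star)$), the relation $\varphi_1 = \tfrac{d\varphi}{p}$ on $\omega^1_{Z_1}$, and induction on $q-1$, one finds that $\mathrm{Symb}_X(\overline{a_1})\cup\cdots\cup\mathrm{Symb}_X(\overline{a_{q-1}})$ is represented by the degree-$(q-1)$ cocycle whose first component is $d\log a_1\wedge\cdots\wedge d\log a_{q-1}$ and whose second component is $\sum_{i=1}^{q-1}(-1)^{i-1}p^{-1}\log(a_i^p\varphi_{\mathscr{E}_2}(a_i)^{-1})\,d\log a_1\wedge\cdots\wedge d\log a_{i-1}\wedge\tfrac{d\varphi}{p}(d\log a_{i+1})\wedge\cdots\wedge\tfrac{d\varphi}{p}(d\log a_{q-1})$; the essential point is that in a left-to-right cup product the twist $\tfrac{d\varphi}{p}$ is applied only to the factors lying to the right of the $\log$-term that each summand picks up, and the Koszul signs collapse to $(-1)^{i-1}$.

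Finally I would apply $(\star)$ of Lemma \ref{prod s_n} with $(x,y) = \mathrm{Symb}_{X|D}(\overline x)$ of degree $j=1$ in $s_1(1)_{X|D}$ and $(x',y')$ equal to the degree-$(q-1)$, weight-$(q-1)$ cocycle just computed, using that $\varphi_{q-1}$ acts on the $\omega^{q-1}_{Z_1}$-component (recall $J^{[0]}_{\mathscr{E}_1} = \mathscr{O}_{\mathscr{E}_1}$) as $\wedge^{q-1}\tfrac{d\varphi}{p}$. The first component of the product is $d\log x\wedge d\log a_1\wedge\cdots\wedge d\log a_{q-1}$; the term $y\,\varphi_{q-1}(x')$ contributes $p^{-1}\log(x^p\varphi_{\mathscr{E}_2}(x)^{-1})\,\tfrac{d\varphi}{p}(d\log a_1)\wedge\cdots\wedge\tfrac{d\varphi}{p}(d\log a_{q-1})$, and the term $(-1)^{j}xy'$, after moving $d\log x$ to the front and collecting the Koszul signs, contributes $\sum_{i=1}^{q-1}(-1)^{i-1}p^{-1}\log(a_i^p\varphi_{\mathscr{E}_2}(a_i)^{-1})\,d\log x\wedge d\log a_1\wedge\cdots\wedge d\log a_{i-1}\wedge\tfrac{d\varphi}{p}(d\log a_{i+1})\wedge\cdots\wedge\tfrac{d\varphi}{p}(d\log a_{q-1})$, which is exactly the asserted cocycle. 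I expect the only real difficulty to be bookkeeping: reconciling the sign conventions in Tsuji's cup product on $\mathcal{S}_n$, in $(\star)$, and in the Koszul rule, and keeping the level-$2$ liftings (hence the Frobenius $\varphi_{\mathscr{E}_2}$, compatibly with \textbf{Assumption} \ref{Asum*} and Lemma \ref{quasi-isom}) consistent throughout, so that no ingredient beyond the formulas already recorded is needed.
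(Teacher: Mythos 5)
Your proposal is correct and follows essentially the same route as the paper: the paper's proof is exactly this unwinding of the symbol map \eqref{symb} into $\mathrm{Symb}_{X|D}\otimes\mathrm{Symb}_X$ followed by the product of Lemma \ref{prod s_n}, carried out as a direct cocycle computation (the paper only displays the case $q=2$, while you iterate the product formula for general $q$). The only caveat is the sign bookkeeping you already flag: make sure the Koszul signs you collect from $(-1)^{j}xy'+y\varphi_{q'}(x')$ in the iterated product are reconciled with the $(-1)^{i-1}$ in the statement, since this is the one place where conventions could bite.
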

\begin{proof}
This is a straightforward calculation by  (\ref{symb}). We only show that the case $q=2$ for simplicity. 
By the construction of the symbol map, we consider the image of the class of cocycle under product structure
{\footnotesize\[\left(d\log x, \frac{1}{p}\log(x^p\varphi_{\mathscr{E}_2}(x)^{-1})\right) \otimes \left(d\log a_1, \frac{1}{p}\log(a_1^p\varphi_{\mathscr{E}_2}(a_1)^{-1})\right).\]} Its image in $ \mathcal{H}^2(S^{\cdot}_D)$ under product structure (see Lemma \ref{prod s_n} ) is the class of cocycle
\[\left(d\log x \land d\log a_1,  -p^{-1}\log(a_1^p\varphi_{\mathscr{E}_{2}}(a_1)^{-1})\cdot d\log x \land d\log a_1 +  \frac{1}{p}\log(x^p\varphi_{\mathscr{E}_2}(x)^{-1})\cdot \frac{d\varphi}{p}(d\log a_1)\right). \]
This completes the proof of the case $q=2$. \end{proof}

\begin{Lem}\label{Lem3} 
For $0 \leq m \leq pe$, we have ${U}^{m}\mathcal{H}^q({S_D}^{\cdot}) \subset {\Tilde{U}}^{m}\mathcal{H}^q({S_D}^{\cdot})$.
\end{Lem}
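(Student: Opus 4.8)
The plan is to reduce the statement to a comparison on the level of explicit symbol cocycles, using the description of the image of the symbol map from Lemma \ref{symb map 2} together with the definition of the filtration $U^{\bullet}$ as the image of the filtration of Definition \ref{fil} under $\Symb_{X|D}$. Recall that by construction $U^m\mathcal{H}^q(S_D^{\cdot})$ is generated, locally, by classes of cocycles $\Symb_{X|D}(x\otimes a_1 \otimes \cdots \otimes a_{q-1})$ with $x \in U^m((1+I_{D_2})^{\times})=(1+\pi^m I_{D_2})^{\times}$ for $m\geq 1$ (and $x\in(1+I_{D_2})^\times$ for $m=0$) and $a_i \in M_{X_2}^{gp}$. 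So it suffices to check that each such generating cocycle, written out via Lemma \ref{symb map 2}, already lies in the subcomplex $\tilde U^m A^{\cdot} \to \tilde U^m B^{\cdot}$, i.e. that all of its components are divisible (in the appropriate $T^m\mathscr{O}_{\mathscr{E}_1}+J_{\mathscr{E}_1}^{[p]}$ sense) by the correct power of $T$.

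First I would reduce to the local situation of \S\ref{Loc}, where $T\mapsto \pi$ under $\beta$, so that "$1+\pi^m I_{D_2}$" corresponds on the ambient side to elements of $1+ T^m \mathscr{O}_{Z_2}(-\mathscr{D}_2)$ modulo the PD-ideal. The key arithmetic inputs are: (i) for $x\in 1+T^m\mathscr{O}_{Z_2}(-\mathscr{D}_2)$ one has $\log x \in T^m\mathscr{O}_{\mathscr{E}_1}\otimes\mathscr{O}_{Z_1}(-\mathscr{D}_1)$ modulo $J_{\mathscr{E}_1}^{[p]}$ (using $p\ge 3$ and the PD-structure, exactly as in the integrality statements $(\flat)$ and Lemma \ref{p-sequence}), and likewise $d\log x \in T^m\mathscr{O}_{\mathscr{E}_1}\otimes\omega^1_{Z_1|\mathscr{D}_1}$ mod $J_{\mathscr{E}_1}^{[p]}$; (ii) for the Frobenius-defect term, $p^{-1}\log(x^p\varphi_{\mathscr{E}_2}(x)^{-1})$ lies in $T^{m'}\mathscr{O}_{\mathscr{E}_1}\otimes\mathscr{O}_{Z_1}(-\mathscr{D}_1)$ mod $J_{\mathscr{E}_1}^{[p]}$, where $m'$ is the integer $\geq\max(e+m/p,\,m)$ appearing in the definition of $\tilde U^m$ on $A^{\cdot}$ — this is because $\varphi$ multiplies $T$-valuations by $p$ and contributes a factor of $p$ (hence the $e$-shift) while the "$-x$" term keeps valuation $m$, so their difference after dividing by $p$ has valuation $\geq\max(e+m/p,m)$. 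Granting (i) and (ii), I then plug into the cocycle formula of Lemma \ref{symb map 2}: the degree-$q$ component is $d\log x \wedge d\log a_1\wedge\cdots$, which by (i) lands in $\tilde U^m B^{\cdot}$ in degree $q$ (note $B^\cdot$ is not truncated there), and the degree-$(q-1)$ component is a sum of terms each carrying a factor $p^{-1}\log(x^p\varphi(x)^{-1})$ or $d\log x$, hence by (i) and (ii) lands in the relevant $T$-power times $\omega^{q-1}_{Z_1|\mathscr{D}_1}$ modulo $J_{\mathscr{E}_1}^{[p]}$. Thus the whole cocycle is $\tilde U^m$, so its class lies in $\tilde U^m\mathcal{H}^q(S_D^{\cdot})$.

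Finally, for $m=0$ the statement is vacuous since $\tilde U^0=S_D^{\cdot}$; and for the borderline contributions I should separately treat the case $q-1$-th component in degree $q-1$ of $B^{\cdot}$ versus where the truncation of $A^{\cdot}$ forces things — but since $U^m\mathcal{H}^q$ is generated by symbols whose cocycle representatives already have the shape above and these sit in the non-truncated degrees, no issue arises. The main obstacle I anticipate is pinning down precisely the valuation estimate (ii) — verifying that $p^{-1}\log(x^p\varphi_{\mathscr{E}_2}(x)^{-1})$ has $T$-adic (PD-)valuation $\geq m'=\lceil\max(e+m/p,m)\rceil$ and not merely $\geq m$ — since this requires a careful interplay of the $p$-adic valuation of $\pi$ (governed by $e$), the behavior of $\varphi$ on $T$, and the divided-power structure on $J_{\mathscr{E}_1}$; this is essentially the modulus analogue of Tsuji's estimate in \cite[Lemma 7.1.4]{Tsu0}, \cite[\S7.4]{Tsu0}, and the computation should go through the same way once one tensors everything with $\mathscr{O}_{Z_n}(-\mathscr{D}_n)$ and uses its flatness (as in Lemma \ref{p-sequence}).
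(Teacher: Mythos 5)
Your overall strategy is the same as the paper's: reduce to the explicit symbol cocycles of Lemma \ref{symb map 2}, use $Z^q(\Tilde{U}^m S_D^{\cdot})=Z^q(S_D^{\cdot})\cap \Tilde{U}^m S_D^{\cdot}$, and check that the two ``new'' ingredients, $d\log(1+T^m x)$ and the Frobenius-defect term $p^{-1}\log\big((1+T^mx)^p\varphi_{\mathscr{E}_2}(1+T^mx)^{-1}\big)$, have the required $T$-divisibility. Your estimate (i) is correct and is exactly \eqref{eq1}. The problem is your estimate (ii), which you yourself flag as the crux: you claim the Frobenius-defect term must lie in $T^{m'}\mathscr{O}_{\mathscr{E}_1}$ mod $J_{\mathscr{E}_1}^{[p]}$ with $m'=\lceil\max(e+m/p,m)\rceil$. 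That is both unnecessary and, as stated, false. The defect term is the component of the degree-$q$ cocycle sitting in $\mathscr{O}_{\mathscr{E}_1}\otimes\omega_{Z_1|\mathscr{D}_1}^{q-1}$, i.e.\ in $B^{q-1}$ of the mapping fiber, and the filtration $\Tilde{U}^m$ on $B^{\cdot}$ is $\big(T^m\mathscr{O}_{\mathscr{E}_1}+J_{\mathscr{E}_1}^{[p]}\big)\otimes\omega^{\cdot}_{Z_1|\mathscr{D}_1}$ in \emph{every} degree; the twisted exponent $m'$ occurs only in degree $q-1$ of $A^{\cdot}$ (the $J_{\mathscr{E}_1}^{[1]}$-part), which plays no role for a degree-$q$ symbol cocycle. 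Moreover the stronger bound fails: writing $\varphi_{\mathscr{E}_2}(x)=x^p+py$ one finds
\[
(1+T^mx)^p\,\varphi_{\mathscr{E}_2}(1+T^mx)^{-1}=\frac{1+pT^m w(1+T^{pm}x^p)^{-1}}{1+pT^{pm}z},\qquad w\equiv x \ \ (\mathrm{mod}\ T^m),
\]
so $p^{-1}\log(\cdots)\equiv T^m x$ modulo higher terms; its $T$-valuation is exactly $m$ in general, whereas $m'>m$ whenever $1\le m<pe/(p-1)$. So the heuristic ``$\varphi$ multiplies valuations by $p$ and contributes a factor $p$, hence an $e$-shift'' does not apply to this term, and pursuing it would leave you stuck on an unprovable statement.

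The repair is simple and brings you back to the paper's proof: you only need \eqref{eq2}, namely that the defect term lies in $\big(T^{m}\mathscr{O}_{\mathscr{E}_1}+J_{\mathscr{E}_1}^{[p]}\big)\otimes_{\mathscr{O}_{Z_1}}\mathscr{O}_{Z_1}(-\mathscr{D}_1)$, which follows from the displayed factorization above (using $p\ge 3$ so that $\varphi_1(J_{\mathscr{E}_1}^{[p]})=0$ is irrelevant here and the $\log$ converges in the PD-sense). With the exponent corrected to $m$, your argument coincides with the one given in the paper. Two minor points: your remark that $\Tilde{U}^0=S_D^{\cdot}$ is literally false (in degree $q-1$ of $A^{\cdot}$ the level-$0$ filtration is $T^e\mathscr{O}_{\mathscr{E}_1}+J_{\mathscr{E}_1}^{[p]}$), but harmless, since only the degree-$q$ term matters and there $\Tilde{U}^0 S_D^{q}=S_D^{q}$; and for the mixed symbols the defect terms $p^{-1}\log\big(a_i^p\varphi_{\mathscr{E}_2}(a_i)^{-1}\big)$ coming from $a_i\in M_{X_2}^{gp}$ are always multiplied by $d\log x$, which already carries the $T^m$, so no further estimate is needed there, as you correctly observe.
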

\begin{proof} We use a similar argument as in \cite[Lemma 2.5.2]{Tsu1}.
By Lemma \ref{symb map 2}, by $Z^q({\Tilde{U}}^{m}{S_D}^{\cdot})=Z^q({S_D}^{\cdot}) \cap {\Tilde{U}}^{m}{S_D}^{\cdot}$, and by the definition of $ {\Tilde{U}}^{m}{S_D}^{\cdot}$, it suffices to show that the following two assertions:
\begin{equation}\label{eq1}
d\log(1+T^m x) \in \big(T^{m}\mathscr{O}_{\mathscr{E}_1}+J_{\mathscr{E}_1}^{[p]}\big)\otimes \omega_{Z_1|\mathscr{D}_1}^1,
\end{equation}
\begin{equation}\label{eq2}
p^{-1}\log\big((1+T^m x)^p\varphi_{\mathscr{E}_2}(1+T^m x)^{-1}\big) \in \big(T^{m}\mathscr{O}_{\mathscr{E}_1}+J_{\mathscr{E}_1}^{[p]}\big)\otimes_{\mathscr{O}_{{Z}_1}} \mathscr{O}_{Z_1}(-\mathscr{D}_1),
\end{equation}
for $x \in \mathscr{O}_{\mathscr{E}_2}\otimes_{\mathscr{O}_{Z_2}} \mathscr{O}_{Z_2}(-\mathscr{D}_2)$ and $1 \leq m \leq pe$. 
Here we denote by $T \in \Gamma(\mathscr{E}_2, \mathscr{O}_{\mathscr{E}_2})$ a lifting of $\pi \in \Gamma(X_2, \mathscr{O}_{X_2})$.
We have 
\begin{align*}
d\log(1+T^mx)&=(1+T^mx)^{-1}d(1+T^mx)\\
&=(1+T^mx)^{-1}T^mx\cdot d\log x + (1+T^mx)^{-1}mT^mx\cdot d\log T.
\end{align*}
Then we obtain \eqref{eq1}. We show that \eqref{eq2}. There exists $y \in  \mathscr{O}_{\mathscr{E}_2}\otimes_{\mathscr{O}_{Z_2}} \mathscr{O}_{Z_2}(-\mathscr{D}_2)$ such that $\varphi_{\mathscr{E}_2}(x)=x^p+py$. We put $z:=y(1+T^{pm}x^p)^{-1}$. Then we have 
\begin{align*}\varphi_{\mathscr{E}_2}(1+T^mx)&=1+T^{pm}\cdot(x^p+py)\\
&=1+T^{pm}x^p + p\cdot T^{pm}\cdot z(1+T^{pm}x^p)\\
&=(1+T^{pm}x^p)(1+pT^{pm}z). 
\end{align*}

We can write $(1+T^mx)^p=1+T^{pm}x^p+pT^m w$ for some $w \in \mathscr{O}_{\mathscr{E}_2}\otimes_{\mathscr{O}_{Z_2}} \mathscr{O}_{Z_2}(-\mathscr{D}_2)$. Thus we obtain 
\begin{align*}
(1+T^m x)^p&\varphi_{\mathscr{E}_2}(1+T^m x)^{-1} =\frac{1+T^{pm}x^p+pT^m}{(1+T^{pm}x^p)(1+pT^{pm}z) }\\
&= \frac{\big(1+pT^m w(1+T^{pm}x^p)^{-1} \big)}{(1+pT^{pm}z)}.
\end{align*}
This completes the proof.
\end{proof}
\mbox{}

Next we calculate $\mathcal{H}^q(\gr_{\Tilde{U}}^m{S_D}^{\cdot})$ for $0 \leq m < pe$. By definition, we have a long exact sequence:
\begin{equation}\label{longexact}
0\rightarrow Z^{q-2}(\gr_{\Tilde{U}}^m{B}^{\cdot})\rightarrow \mathcal{H}^{q-1}(\gr_{\Tilde{U}}^m{S_D}^{\cdot})\rightarrow Z^{q-1}(\gr_{\Tilde{U}}^m{A}^{\cdot})\xrightarrow{1-\varphi_q}\mathcal{H}^{q-1}(\gr_{\Tilde{U}}^m{B}^{\cdot})
\end{equation}
\begin{equation*}
\rightarrow \mathcal{H}^{q}(\gr_{\Tilde{U}}^m{S_D}^{\cdot}) \rightarrow \mathcal{H}^{q}(\gr_{\Tilde{U}}^m{A}^{\cdot})\xrightarrow{1-\varphi_q} \frac{\gr_{\Tilde{U}}^m{B}^q}{B^q(\gr_{\Tilde{U}}^m{B}^{\cdot})}
\end{equation*}
Since $m \geq e+m/p$\;(resp. $m \leq e+m/p$)$\Longleftrightarrow$$m \geq pe/(p-1)$\;(resp. $m \leq pe/(p-1)$) and the differential \[d^{q-1}: \gr^m_{\tilde{U}}A^{q-1} \longrightarrow \gr^m_{\tilde{U}}A^{q}\] vanishes when $0 \leq m < pe/(p-1)$, we have the following:
\begin{equation}\label{(424)}
Z^{q-1}(\gr_{\Tilde{U}}^m{A}^{\cdot})=\begin{cases} 0,&(0 \leq m < pe/(p-1), p \not | m), \\
\big(T^{e+m/p}\mathscr{O}_{Z_1}/T^{e+m/p+1}\mathscr{O}_{Z_1}\big)\otimes \omega_{Z_1|\mathscr{D}_1}^{q-1}, &(0 \leq m < pe/(p-1), p | m),\\
Z^{q-1}\Big(\big(T^{m}\mathscr{O}_{Z_1}/T^{m+1}\mathscr{O}_{Z_1}\big)\otimes \omega_{Z_1|\mathscr{D}_1}^{\cdot}\Big)&(pe/(p-1) \leq m < pe), \end{cases}
\end{equation}
\begin{equation}\label{(425)}
\mathcal{H}^{q-1}(\gr_{\Tilde{U}}^m{B}^{\cdot})=\mathcal{H}^{q-1}\big( T^{m}\mathscr{O}_{Z_1}/T^{m+1}\mathscr{O}_{Z_1} \otimes \omega_{Z_1|\mathscr{D}_1}^{\cdot}\big),
\end{equation}
\begin{equation}\label{H^qA}
\mathcal{H}^{q}(\gr_{\Tilde{U}}^m {A}^{\cdot})=\begin{cases} 
Z^{q}\Big(\big(T^{m}\mathscr{O}_{Z_1} / T^{m+1}\mathscr{O}_{Z_1}\big)\otimes \omega_{Z_1|\mathscr{D}_1}^{\cdot}\Big)&(0 \leq m < pe/(p-1)),\\

\mathcal{H}^{q}\Big(\big(T^{m}\mathscr{O}_{Z_1}/T^{m+1}\mathscr{O}_{Z_1}\big)\otimes \omega_{Z_1|\mathscr{D}_1}^{\cdot}\Big)&(pe/(p-1) \leq m < pe), \end{cases}
\end{equation}
\begin{equation}\label{H^qB}
\frac{\gr_{\Tilde{U}}^m{B}^q}{B^q(\gr_{\Tilde{U}}^m{B}^{\cdot})}=\frac{\big(T^{m}\mathscr{O}_{Z_1}/T^{m+1}\mathscr{O}_{Z_1}\big)\otimes \omega_{Z_1|\mathscr{D}_1}^{q}}{B^q\Big(\big(T^{m}\mathscr{O}_{Z_1}/T^{m+1}\mathscr{O}_{Z_1}\big)\otimes \omega_{Z_1|\mathscr{D}_1}^{\cdot}\Big)}.
\end{equation}

\begin{Lem}\label{Lem5}  Let $m$ be an integer such that $0 \leq m < pe$. Then :
 \begin{enumerate}
   \renewcommand{\labelenumii}{\roman{enumii}).}
\item\;If $m=0$, we have a short exact sequence
\begin{multline}
0 \longrightarrow \frac{\mathscr{O}_Y\otimes_{\mathscr{O}_{Z_1}}\omega_{Z_1|\mathscr{D'}_1}^{q-1}}{\mathscr{O}_Y\otimes_{\mathscr{O}_{Z_1}}\omega_{Z_1|\mathscr{D}_1}^{q-1}}\longrightarrow  \mathcal{H}^{q}(\gr_{\Tilde{U}}^0{S_D}^{\cdot})\\ \longrightarrow
 \Ker\Big(Z^q\big( \mathscr{O}_Y \otimes_{\mathscr{O}_{Z_1}} \omega_{Z_1|\mathscr{D}_1}^{\cdot}\big)\xrightarrow{1-\varphi\otimes \land^{q}\frac{d\varphi}{p}} \mathcal{H}^q\big( \mathscr{O}_Y \otimes_{\mathscr{O}_{Z_1}} \omega_{Z_1|\mathscr{D}_1}^{\cdot} \big)\Big)\longrightarrow 0.
\end{multline}
\item\;We have an isomorphism
 \begin{equation} 
 Z^{q-2}\big(\gr_{\Tilde{U}}^mB^{\cdot}\big)\xrightarrow{\cong} \mathcal{H}^{q-1}(\gr_{\Tilde{U}}^m{S_D}^{\cdot}).
 \end{equation} 
\item\;If $0<m<pe/(p-1)$ and $p|m$, we have a map
 \begin{equation} \mathcal{H}^{q}(\gr_{\Tilde{U}}^m{S_D}^{\cdot}) \twoheadrightarrow B^q\Big(\big(T^{m}\mathscr{O}_{Z_1} / T^{m+1}\mathscr{O}_{Z_1}\big)\otimes \omega_{Z_1|\mathscr{D}_1}^{\cdot} \Big)\quad\cdots (+), \end{equation}
 its kernel is $ \frac{\mathscr{O}_Y\otimes_{\mathscr{O}_{Z_1}}\omega_{Z_1|\mathscr{D'}_1}^{q-1}}{\mathscr{O}_Y\otimes_{\mathscr{O}_{Z_1}}\omega_{Z_1|\mathscr{D}_1}^{q-1}}$.\\ If $0<m<pe/(p-1)$ and $p \not |m$, the map $(+)$ is an isomorphism.
 \item\;If $pe/(p-1) \leq m<pe$,
 \begin{equation} \mathcal{H}^{q}(\gr_{\Tilde{U}}^m{S_D}^{\cdot})=0. \end{equation}
 \end{enumerate}
\end{Lem}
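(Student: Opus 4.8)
The plan is to extract all four assertions directly from the long exact sequence \eqref{longexact}, whose five non-trivial terms have already been identified in \eqref{(424)}, \eqref{(425)}, \eqref{H^qA} and \eqref{H^qB} in terms of the complex $(T^{m}\mathscr{O}_{Z_1}/T^{m+1}\mathscr{O}_{Z_1})\otimes\omega_{Z_1|\mathscr{D}_1}^{\text{\large$\cdot$}}$ and of closed subcomplexes of it. The remaining work is to pin down the map $1-\varphi_q$ appearing in \eqref{longexact} on each graded piece, after which the four cases are diagram chases. I would record two facts about $\varphi_q$ on $\gr_{\Tilde{U}}^{m}$. First, on the degree $q$ and $q+1$ parts $\varphi_q$ is $\varphi\otimes\land^{q}\tfrac{d\varphi}{p}$, which by the comparison of $\land^{q}\tfrac{d\varphi}{p}$ with the $p$-th power map (established inside the proof of Lemma \ref{Lem1}) together with the characterization of the Cartier isomorphism induces the inverse Cartier map on cohomology; in particular, by \eqref{H^qA}, for $m=0$ the rightmost arrow of \eqref{longexact} is exactly the map $1-\varphi\otimes\land^{q}\tfrac{d\varphi}{p}$ on $Z^{q}\big(\mathscr{O}_Y\otimes\omega_{Z_1|\mathscr{D}_1}^{\text{\large$\cdot$}}\big)$ in assertion (1). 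Second, on the degree $q-1$ part $\varphi_q=\varphi_1\otimes\land^{q-1}\tfrac{d\varphi}{p}$; for $m\geq 1$ and $m<pe/(p-1)$ one has $m'>m$, so the ``inclusion'' part of ``$1$'' kills $\gr_{\Tilde{U}}^{m}A^{q-1}$, and unwinding $\varphi_1$ through liftings to $\mathscr{E}_2$ — using that $T^{e}$ is $p$ times a unit in $\mathscr{O}_X$ and that $j^{p}\in p!\cdot J_{\mathscr{E}_2}^{[p]}$ for $j\in J_{\mathscr{E}_2}$ — shows that, under the identifications of \eqref{(424)} and of $\mathcal{H}^{q-1}(\gr_{\Tilde{U}}^{m}B^{\text{\large$\cdot$}})$ with $\mathscr{O}_Y\otimes\omega^{q-1}_{Z_1|\mathscr{D}'_1}$ coming from Lemma \ref{Lem1}(3), the map $1-\varphi_q$ becomes, up to a unit, the canonical inclusion $\mathscr{O}_Y\otimes\omega^{q-1}_{Z_1|\mathscr{D}_1}\hookrightarrow\mathscr{O}_Y\otimes\omega^{q-1}_{Z_1|\mathscr{D}'_1}$; when $p\nmid m$ the source here is $0$ by \eqref{(424)}, and when $pe/(p-1)\leq m$ one has $m'=m$ and this arrow is simply the identity of $(T^{m}\mathscr{O}_{Z_1}/T^{m+1}\mathscr{O}_{Z_1})\otimes\omega^{q-1}_{Z_1|\mathscr{D}_1}$.

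Granting this, the four cases go as follows. For $m=0$: \eqref{(424)} gives $Z^{q-1}(\gr_{\Tilde{U}}^{0}A^{\text{\large$\cdot$}})\cong\mathscr{O}_Y\otimes\omega^{q-1}_{Z_1|\mathscr{D}_1}$, \eqref{H^qA} gives $\mathcal{H}^{q}(\gr_{\Tilde{U}}^{0}A^{\text{\large$\cdot$}})=Z^{q}(\mathscr{O}_Y\otimes\omega_{Z_1|\mathscr{D}_1}^{\text{\large$\cdot$}})$, and Lemma \ref{Lem1}(3) gives $\mathcal{H}^{q-1}(\gr_{\Tilde{U}}^{0}B^{\text{\large$\cdot$}})\cong\mathscr{O}_Y\otimes\omega^{q-1}_{Z_1|\mathscr{D}'_1}$; since $1-\varphi_q$ on the left is the injective inclusion of the previous paragraph, with cokernel $\frac{\mathscr{O}_Y\otimes\omega^{q-1}_{Z_1|\mathscr{D}'_1}}{\mathscr{O}_Y\otimes\omega^{q-1}_{Z_1|\mathscr{D}_1}}$, \eqref{longexact} yields exactly the short exact sequence of (1). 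For $0<m<pe/(p-1)$ with $p\nmid m$: two terms of \eqref{longexact} vanish (Lemma \ref{Lem1}(2) together with \eqref{(424)}), so it collapses to $\mathcal{H}^{q-1}(\gr_{\Tilde{U}}^{m}S_D^{\text{\large$\cdot$}})\cong Z^{q-2}(\gr_{\Tilde{U}}^{m}B^{\text{\large$\cdot$}})$ — assertion (2) — and to $\mathcal{H}^{q}(\gr_{\Tilde{U}}^{m}S_D^{\text{\large$\cdot$}})\cong Z^{q}=B^{q}$ of $(T^{m}\mathscr{O}_{Z_1}/T^{m+1}\mathscr{O}_{Z_1})\otimes\omega_{Z_1|\mathscr{D}_1}^{\text{\large$\cdot$}}$, the isomorphism $(+)$ of (3). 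For $0<m<pe/(p-1)$ with $p\mid m$: the injectivity of $1-\varphi_q$ on $Z^{q-1}$ again gives (2), and inserting its cokernel into \eqref{longexact} produces $0\to\frac{\mathscr{O}_Y\otimes\omega^{q-1}_{Z_1|\mathscr{D}'_1}}{\mathscr{O}_Y\otimes\omega^{q-1}_{Z_1|\mathscr{D}_1}}\to\mathcal{H}^{q}(\gr_{\Tilde{U}}^{m}S_D^{\text{\large$\cdot$}})\to B^{q}\big((T^{m}\mathscr{O}_{Z_1}/T^{m+1}\mathscr{O}_{Z_1})\otimes\omega_{Z_1|\mathscr{D}_1}^{\text{\large$\cdot$}}\big)\to 0$, which is (3). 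Finally for $pe/(p-1)\leq m<pe$: as $m\geq 1$ the operator $\varphi_q$ vanishes on $\gr_{\Tilde{U}}^{m}$, so by \eqref{H^qA}--\eqref{H^qB} the arrow $\mathcal{H}^{q}(\gr_{\Tilde{U}}^{m}A^{\text{\large$\cdot$}})\to\mathcal{H}^{q}(\gr_{\Tilde{U}}^{m}B^{\text{\large$\cdot$}})$ is the injection $Z^{q}/B^{q}\hookrightarrow (T^{m}\mathscr{O}_{Z_1}/T^{m+1}\mathscr{O}_{Z_1})\otimes\omega^{q}_{Z_1|\mathscr{D}_1}\,/\,B^{q}$ while $Z^{q-1}(\gr_{\Tilde{U}}^{m}A^{\text{\large$\cdot$}})\to\mathcal{H}^{q-1}(\gr_{\Tilde{U}}^{m}B^{\text{\large$\cdot$}})$ is onto; hence $\mathcal{H}^{q}(\gr_{\Tilde{U}}^{m}S_D^{\text{\large$\cdot$}})=0$, which is (4).

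The hard part is the second fact of the first paragraph: the explicit computation of $\varphi_1\otimes\land^{q-1}\tfrac{d\varphi}{p}$ on the graded pieces $\gr_{\Tilde{U}}^{m}A^{q-1}$. One has to lift to $\mathscr{E}_2$, use that $T^{e}\equiv p\cdot(\text{unit})$ modulo the PD-ideal and that $p$-th powers of PD-elements lie in $p!\cdot J_{\mathscr{E}_2}^{[p]}$, and then track both the extra power of $T$ that is produced and the passage $\mathscr{D}_1\rightsquigarrow\mathscr{D}_1'$ forced by $\varphi\big(\mathscr{O}_{Z_1}(-\mathscr{D}_1)\big)\subset\mathscr{O}_{Z_1}(-p\mathscr{D}_1)\subset\mathscr{O}_{Z_1}(-\mathscr{D}_1')$. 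This is the modulus analogue of the computations in \cite[\S 7]{Tsu0} and \cite[Lemma 2.5.2]{Tsu1}; the only genuinely new input is the divisor shift, and the injectivity of $\mathscr{O}_Y\otimes\omega^{q-1}_{Z_1|\mathscr{D}_1}\hookrightarrow\mathscr{O}_Y\otimes\omega^{q-1}_{Z_1|\mathscr{D}'_1}$ needed above is established by the same local non-zero-divisor argument used in the proof of Lemma \ref{quasi-isom}. Once that is in place, all of Lemma \ref{Lem5} is formal manipulation of \eqref{longexact}.
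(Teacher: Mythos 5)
Your route is the same as the paper's: run the long exact sequence \eqref{longexact} with the identifications \eqref{(424)}--\eqref{H^qB}, identify the degree-$(q-1)$ component of $1-\varphi_q$ on $\gr_{\Tilde{U}}^m$ (for $p\mid m$, $m<pe/(p-1)$) with the canonical inclusion $\mathscr{O}_Y\otimes_{\mathscr{O}_{Z_1}}\omega^{q-1}_{Z_1|\mathscr{D}_1}\hookrightarrow\mathscr{O}_Y\otimes_{\mathscr{O}_{Z_1}}\omega^{q-1}_{Z_1|\mathscr{D}'_1}$ via multiplication by $T^{e+m/p}$ and $T^m\cdot\big(\varphi\otimes\land^{q-1}\tfrac{d\varphi}{p}\big)$ -- this is exactly the paper's diagram $(\blacktriangle)$, whose commutativity the paper quotes from \cite{Tsu0} rather than recomputing -- and then conclude by diagram chases. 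Your cases (1), (2) for $0<m<pe/(p-1)$, and (3) agree with the paper's argument.

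The genuine gap is the boundary value $m=pe/(p-1)$ in assertion (4) (an integer whenever $(p-1)\mid e$, in which case it lies in the range and has $p\mid m$, so it must be treated). You assert that for $pe/(p-1)\le m$ one has $m'=m$ and the degree-$(q-1)$ arrow is ``simply the identity'', and later that $\varphi_q$ vanishes on $\gr_{\Tilde{U}}^m$ for $m\ge 1$; this is correct only for $m>pe/(p-1)$, where $p(m-e)\ge m+1$. At $m=pe/(p-1)$ one has $e+m/p=m$, and $\varphi_1(T^m x)=\varphi_1(T^e)\,\varphi(T^{m/p}x)\equiv a_0^p\,T^m\varphi(x)$ modulo $T^{m+1}$ and $J^{[p]}_{\mathscr{E}_1}$ (the Eisenstein computation recorded as $(\dagger\dagger)$ in the proof of Lemma \ref{Lemq<r}), so the arrow $Z^{q-1}(\gr_{\Tilde{U}}^m A^{\cdot})\to\mathcal{H}^{q-1}(\gr_{\Tilde{U}}^m B^{\cdot})$ is of Artin--Schreier type $1-\varphi_1\otimes\land^{q-1}\tfrac{d\varphi}{p}$ (the paper's case (iii)), not the identity. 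To kill the cokernel term of \eqref{longexact} and get $\mathcal{H}^{q}(\gr_{\Tilde{U}}^m{S_D}^{\cdot})=0$ there, you need the surjectivity of this operator, which is not formal: the paper deduces it from Lemma \ref{Lem2} (3), which in turn rests on the surjectivity statement of Lemma \ref{omega log} (the strictly henselian Artin--Schreier input from \cite{JSZ}). Your proposal never invokes this, so (4) is not established at $m=pe/(p-1)$. A smaller omission: assertion (2) is stated for all $0\le m<pe$, but your case analysis yields it only for $0<m<pe/(p-1)$; at $m=0$ it again follows from injectivity of the inclusion, while for $m\ge pe/(p-1)$ it needs the injectivity of the maps just discussed, which your ``identity/vanishing'' claim does not supply at the boundary.
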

\begin{proof}
We describe the homomorphism $Z^{q-1}(\gr_{\Tilde{U}}^m{A}^{\cdot})\xrightarrow{1-\varphi_q}\mathcal{H}^{q-1}(\gr_{\Tilde{U}}^m{B}^{\cdot})$ as follows:

\noindent
{\rm (i)}{\footnotesize\begin{equation*}
 0 \rightarrow \mathcal{H}^{q-1}\Big(\big(T^{m}\mathscr{O}_{Z_1}/T^{m+1}\mathscr{O}_{Z_1}\big)\otimes \omega_{Z_1|\mathscr{D}_1}^{\cdot}\Big),\;(0 \leq m < pe/(p-1), p \not | m),
\end{equation*}}

\noindent
{\rm (ii)}{\footnotesize \begin{equation*}
\big(T^{e+m/p}\mathscr{O}_{Z_1}/T^{e+m/p+1}\mathscr{O}_{Z_1}\big)\otimes \omega_{Z_1|\mathscr{D}_1}^{q-1}
\xrightarrow{\varphi_1 \otimes \land^{q-1}d\varphi/p} \mathcal{H}^{q-1}\Big(\big(T^{m}\mathscr{O}_{Z_1}/T^{m+1}\mathscr{O}_{Z_1}\big)\otimes \omega_{Z_1|\mathscr{D}_1}^{\cdot}\Big),
 \end{equation*}}
\begin{equation*}
(0 \leq m < pe/(p-1), p  | m),
\end{equation*}

\noindent
{\rm (iii)}
{\footnotesize\begin{equation*}
 Z^{q-1}\Big(\big(T^{m}\mathscr{O}_{Z_1} / T^{m+1}\mathscr{O}_{Z_1}\big)\otimes \omega_{Z_1|\mathscr{D}_1}^{\cdot}\Big)
\xrightarrow{1-\varphi_1 \otimes \land^{q-1}d\varphi/p} \mathcal{H}^{q-1}\Big(\big(T^{m}\mathscr{O}_{Z_1}/T^{m+1}\mathscr{O}_{Z_1}\big)\otimes \omega_{Z_1|\mathscr{D}_1}^{\cdot}\Big),
\end{equation*}}
\begin{equation*}
(m=pe/(p-1)),
\end{equation*}

\noindent
{\rm (iv)}
{\footnotesize \begin{equation*}
Z^{q-1}\Big(\big(T^{m}\mathscr{O}_{Z_1} / T^{m+1}\mathscr{O}_{Z_1}\big)\otimes \omega_{Z_1|\mathscr{D}_1}^{\cdot}\Big)
\xrightarrow{1} \mathcal{H}^{q-1}\Big(\big(T^{m}\mathscr{O}_{Z_1}/T^{m+1}\mathscr{O}_{Z_1}\big)\otimes \omega_{Z_1|\mathscr{D}_1}^{\cdot}\Big),
\end{equation*}}
\begin{equation*}
(pe/(p-1)<m<pe).
\end{equation*}

The first homomorphism (i) is an isomorphism by Lemma \ref{Lem1} (2). We consider the following commutative diagram
\[
\xymatrix@M=10pt{
\big(T^{e+m/p}\mathscr{O}_{Z_1}/T^{e+m/p+1}\mathscr{O}_{Z_1}\big)\otimes \omega_{Z_1|\mathscr{D}_1}^{q-1} \ar[r]^-{\varphi_1 \otimes \land^{q-1}\frac{d\varphi}{p}}&\mathcal{H}^{q-1}\Big(\big(T^{m}\mathscr{O}_{Z_1}/T^{m+1}\mathscr{O}_{Z_1}\big)\otimes \omega_{Z_1|\mathscr{D}_1}^{\cdot}\Big)\\
\mathscr{O}_Y\otimes_{Z_1}\omega^{q-1}_{Z_1|\mathscr{D}_1} \ar[u]^-{\cong}_{ T^{e+m/p}\quad\quad\quad\quad\quad\quad(\blacktriangle)}\ar@{^{(}->}[r]&\mathscr{O}_Y\otimes_{Z_1}\omega^{q-1}_{Z_1|\mathscr{D}'_1}\ar[u]^-{\cong}_{T^m\cdot \left(\varphi \otimes \land^{q-1} \frac{d \varphi}{p}\right)}
 }\]
This commutativity is from Lemma 7.4.2 (4), p. 120 in \cite{Tsu0}. 
Here we use \eqref{(2)} for the left vertical isomorphism and Lemma \ref{Lem1} (3) for the right vertcal isomorphism. Hence the second homomorphism is injective.

 The third homomorphism (iii) is surjective by Lemma \ref{Lem2} (3). In the case (iv), we have $\varphi_1(T^m)=T^{pm}/p$. Then this map is $1$ because $pm> m+1$. It is trivial that (iv) is surjective. If $m=0$, by the long exact sequence \eqref{longexact}, we have the short exact sequence
\begin{multline*}0\longrightarrow \Coker\left(Z^{q-1}(\gr_{\Tilde{U}}^0{A}^{\cdot})\xrightarrow{1-\varphi_q}\mathcal{H}^{q-1}(\gr_{\Tilde{U}}^0{B}^{\cdot}) \right) \longrightarrow \mathcal{H}^q(\gr^0_{\tilde{U}}S_{D}^{\cdot})\\ \longrightarrow  \Ker\left( \mathcal{H}^{q}(\gr_{\Tilde{U}}^0{A}^{\cdot})\xrightarrow{1-\varphi_q} \frac{\gr_{\Tilde{U}}^0{B}^q}{B^q(\gr_{\Tilde{U}}^0{B}^{\cdot})}\right)
 \longrightarrow 0.\end{multline*}
 By the above commutative diagram $(\blacktriangle)$, we take $m=0$, we have 
 \begin{align*}\Coker&\left(Z^{q-1}(\gr_{\Tilde{U}}^0{A}^{\cdot})\xrightarrow{1-\varphi_q}\mathcal{H}^{q-1}(\gr_{\Tilde{U}}^0{B}^{\cdot}) \right)\\&\cong  \Coker\left(\big(T^{e}\mathscr{O}_{Z_1}/T^{e+1}\mathscr{O}_{Z_1}\big)\otimes \omega_{Z_1|\mathscr{D}_1}^{q-1} \xrightarrow{\varphi_1 \otimes \land^{q-1}\frac{d\varphi}{p}} \mathcal{H}^{q-1}\Big(\big(\mathscr{O}_{Z_1}/T\mathscr{O}_{Z_1}\big)\otimes \omega_{Z_1|\mathscr{D}_1}^{\cdot}\Big)  \right)\\
 &\cong \frac{\mathscr{O}_Y\otimes_{\mathscr{O}_{Z_1}}\omega_{Z_1|\mathscr{D'}_1}^{q-1}}{\mathscr{O}_Y\otimes_{\mathscr{O}_{Z_1}}\omega_{Z_1|\mathscr{D}_1}^{q-1}}.
 \end{align*}
 By the isomorphisms (\ref{H^qB}), (\ref{H^qA}) and (\ref{(2)}), we have the following isomorphism:
 \begin{align*}\Ker&\left( \mathcal{H}^{q}(\gr_{\Tilde{U}}^0{A}^{\cdot})\xrightarrow{1-\varphi_q} \frac{\gr_{\Tilde{U}}^0{B}^q}{B^q(\gr_{\Tilde{U}}^0{B}^{\cdot})}\right)\\
 & \cong \Ker\left(Z^q((\mathscr{O}_{Z_1}/T\mathscr{O}_{Z_1})\otimes \omega^{\cdot}_{Z_1|\mathscr{D}_1})\xrightarrow{1-\varphi\otimes \land^{q}\frac{d\varphi}{p}}\mathcal{H}^q((\mathscr{O}_{Z_1}/T\mathscr{O}_{Z_1})\otimes \omega^{\cdot}_{Z_1|\mathscr{D}_1})\right)\\
 &\cong  \Ker\Big(Z^q\big( \mathscr{O}_Y \otimes_{\mathscr{O}_{Z_1}} \omega_{Z_1|\mathscr{D}_1}^{\cdot}\big)\xrightarrow{1-\varphi\otimes \land^{q}\frac{d\varphi}{p}} \mathcal{H}^q\big( \mathscr{O}_Y \otimes_{\mathscr{O}_{Z_1}} \omega_{Z_1|\mathscr{D}_1}^{\cdot} \big)\Big). \end{align*}
 Then we obtain the short exact sequence in $(1)$.
 By the long exact sequence \eqref{longexact}, we have 
 \[0 \longrightarrow Z^{q-2}(\gr_{\Tilde{U}}^m{B}^{\cdot})\rightarrow \mathcal{H}^{q-1}(\gr_{\Tilde{U}}^m{S_D}^{\cdot}) \longrightarrow \Ker\left(Z^{q-1}(\gr_{\Tilde{U}}^m{A}^{\cdot})\xrightarrow{1-\varphi_q}\mathcal{H}^{q-1}(\gr_{\Tilde{U}}^m{B}^{\cdot})\right)\longrightarrow 0.\]
Since all homomorphisms (i)--(iv) are injective, we have $\Ker\left(Z^{q-1}(\gr_{\Tilde{U}}^m{A}^{\cdot})\xrightarrow{1-\varphi_q}\mathcal{H}^{q-1}(\gr_{\Tilde{U}}^m{B}^{\cdot})\right)=0.$
 Thus we obtain $(2)$.  We have the surjective homomorphism
\begin{equation}\label{surjectiveK}
 \mathcal{H}^{q}(\gr_{\Tilde{U}}^m{S_D}^{\cdot}) \twoheadrightarrow \Ker\left( \mathcal{H}^{q}(\gr_{\Tilde{U}}^m{A}^{\cdot})\xrightarrow{1-\varphi_q} \frac{\gr_{\Tilde{U}}^m{B}^q}{B^q(\gr_{\Tilde{U}}^m{B}^{\cdot})}\right)(:=\mathscr{K})
\end{equation}
 by the long exact sequence \eqref{longexact}, \eqref{(424)} and \eqref{(425)}. If $0 < m <pe/(p-1)$, we have \[\mathscr{K} \cong  B^q\left(\big(T^{m}\mathscr{O}_{Z_1} / T^{m+1}\mathscr{O}_{Z_1}\big)\otimes \omega_{Z_1|\mathscr{D}_1}^{\cdot} \right).\] If $0 < m <pe/(p-1)$ and $p | m$, this homomorphism is surjective as it is. 
   If $0 < m <pe/(p-1)$ and $p \nmid m$, this homomorphism is an isomorphism by the surjectivity of (i), (iii) and (iv). Therefore we obtain the claim $(3)$.
   Finally, if $pe/(p-1) < m < pe$, the above homomorphism \eqref{surjectiveK} is an isomorphism and $\mathscr{K}=0$ by \eqref{H^qA} and \eqref{H^qB}. Thus we obtain the claim $(4)$. This completes the proof.
\end{proof}

\begin{Lem}\label{Lem6} 
We have $ \mathcal{H}^{q}(\Tilde{U}^m{S_D}^{\cdot})=0$ for $pe/(p-1) < m \leq pe$
\end{Lem}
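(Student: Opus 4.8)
The plan is to argue by descending induction on the integer $m$ over the range $pe/(p-1)<m\le pe$, the base case being $m=pe$. For $pe/(p-1)<m<pe$ I would use the short exact sequence of complexes
\[
0\longrightarrow \Tilde{U}^{m+1}{S_D}^{\cdot}\longrightarrow \Tilde{U}^{m}{S_D}^{\cdot}\longrightarrow \gr_{\Tilde{U}}^{m}{S_D}^{\cdot}\longrightarrow 0,
\]
whose long exact cohomology sequence contains
\[
\mathcal{H}^q(\Tilde{U}^{m+1}{S_D}^{\cdot})\longrightarrow \mathcal{H}^q(\Tilde{U}^{m}{S_D}^{\cdot})\longrightarrow \mathcal{H}^q(\gr_{\Tilde{U}}^{m}{S_D}^{\cdot}).
\]
Here the right-hand term vanishes by Lemma \ref{Lem5}~(4), while the left-hand term vanishes by the inductive hypothesis (as $m+1$ again satisfies $pe/(p-1)<m+1\le pe$); hence $\mathcal{H}^q(\Tilde{U}^{m}{S_D}^{\cdot})=0$. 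So everything reduces to the base case $\mathcal{H}^q(\Tilde{U}^{pe}{S_D}^{\cdot})=0$.

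For the base case I would first record that $T^{pe}=0$ in $\mathscr{O}_{\mathscr{E}_1}$: since $e$ is the absolute ramification index we may write $p=u\pi^{e}$ with $u\in\mathscr{O}_K^{\times}$, so the image of $T^{e}$ in $\mathscr{O}_{X_1}$ equals $\pi^{e}=0$, i.e.\ $T^{e}\in J_{\mathscr{E}_1}$; as $J_{\mathscr{E}_1}$ carries divided powers and $\mathscr{O}_{\mathscr{E}_1}$ is killed by $p$, this gives $T^{pe}=(T^{e})^{p}=p!\,\gamma_{p}(T^{e})=0$. For $m=pe$ the exponent $m'$ in the definition of $\Tilde{U}^{m}A^{\cdot}$ is again $pe$ (because $p\ge 3$ makes $\max(e+pe/p,\,pe)=pe$), so this identity shows that $\Tilde{U}^{pe}A^{\cdot}$, resp.\ $\Tilde{U}^{pe}B^{\cdot}$, equals $J_{\mathscr{E}_1}^{[p]}\otimes_{\mathscr{O}_{Z_1}}\omega_{Z_1|\mathscr{D}_1}^{\cdot}$ in degrees $q-1,q,q+1$, resp.\ $q-2,q-1,q$, and $0$ in all other degrees. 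In particular the morphism ``$1$''$\colon\Tilde{U}^{pe}A^{\cdot}\to\Tilde{U}^{pe}B^{\cdot}$ is the identity in degrees $q-1$ and $q$. Furthermore $\varphi_q$ vanishes on $\Tilde{U}^{pe}A^{\cdot}$: in degree $q-1$ this is the vanishing $\varphi_1(J_{\mathscr{E}_1}^{[p]})=0$ noted before the lemma, and in degree $q$ it follows from $\varphi(J_{\mathscr{E}_1}^{[p]})\subset p^{p}\mathscr{O}_{\mathscr{E}_1}=0$ by \eqref{p^r}. Hence $1-\varphi_q=1$ on $\Tilde{U}^{pe}$, so that $\Tilde{U}^{pe}{S_D}^{\cdot}$ is the mapping fiber of the morphism ``$1$''.

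Finally I would read off $\mathcal{H}^q(\Tilde{U}^{pe}{S_D}^{\cdot})=0$ from the long exact sequence of this mapping fiber,
\[
\mathcal{H}^{q-1}(\Tilde{U}^{pe}A^{\cdot})\longrightarrow \mathcal{H}^{q-1}(\Tilde{U}^{pe}B^{\cdot})\longrightarrow \mathcal{H}^{q}(\Tilde{U}^{pe}{S_D}^{\cdot})\longrightarrow \mathcal{H}^{q}(\Tilde{U}^{pe}A^{\cdot})\longrightarrow \mathcal{H}^{q}(\Tilde{U}^{pe}B^{\cdot}).
\]
Since $\Tilde{U}^{pe}A^{\cdot}$ and $\Tilde{U}^{pe}B^{\cdot}$ share the same terms and differentials in degrees $q-1$ and $q$, while $\Tilde{U}^{pe}A^{\cdot}$ is $0$ in degree $q-2$ and $\Tilde{U}^{pe}B^{\cdot}$ is $0$ in degree $q+1$, the first arrow is the canonical surjection $\Ker(d^{q-1})\twoheadrightarrow \Ker(d^{q-1})/\Imm(d^{q-2})$ and the last arrow is the inclusion $\Ker(d^{q})/\Imm(d^{q-1})\hookrightarrow(\Tilde{U}^{pe}B^{\cdot})^{q}/\Imm(d^{q-1})$; by exactness the middle term is squeezed to $0$. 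The one genuinely non-formal point is this base case, namely the identities $T^{pe}=0$ and $\varphi_q|_{\Tilde{U}^{pe}}=0$ and the resulting description of $\Tilde{U}^{pe}{S_D}^{\cdot}$; once these are in hand, the rest is pure homological bookkeeping.
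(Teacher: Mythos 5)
Your argument is correct, but it follows a different route than the paper. The paper treats every $m$ with $pe/(p-1)<m\le pe$ at once: since $m>e+m/p$ forces $m'=m$, the terms of $\Tilde{U}^mA^{\cdot}$ and $\Tilde{U}^mB^{\cdot}$ coincide in degrees $q-1$ and $q$, and the inclusions $\varphi_1\big(T^{m}\mathscr{O}_{\mathscr{E}_1}+J_{\mathscr{E}_1}^{[p]}\big)\subset T^{m+1}\mathscr{O}_{\mathscr{E}_1}+J_{\mathscr{E}_1}^{[p]}$ together with $\varphi_1\big(T^{pe}\mathscr{O}_{\mathscr{E}_1}+J_{\mathscr{E}_1}^{[p]}\big)=0$ (and $\varphi_1(J_{\mathscr{E}_1}^{[p]})=0$, valid as $p\ge 3$) make $\varphi_q$ nilpotent there, so $1-\varphi_q$ is bijective in those two degrees and $\mathcal{H}^q$ of the mapping fiber vanishes directly, with no induction and no appeal to Lemma \ref{Lem5}. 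You instead reduce by descending induction to the single case $m=pe$, using the short exact sequence $0\to\Tilde{U}^{m+1}S_D^{\cdot}\to\Tilde{U}^mS_D^{\cdot}\to\gr_{\Tilde{U}}^mS_D^{\cdot}\to 0$ and the graded vanishing of Lemma \ref{Lem5}\,(4) (which is indeed proved independently of this lemma, so there is no circularity), and then at $m=pe$ you observe $T^e\in J_{\mathscr{E}_1}$, hence $T^{pe}=(T^e)^p=p!\,\gamma_p(T^e)=0$, so that both filtered complexes degenerate to $J_{\mathscr{E}_1}^{[p]}\otimes\omega_{Z_1|\mathscr{D}_1}^{\cdot}$ in the relevant degrees and $1-\varphi_q$ becomes the canonical map ``$1$''; the final homological bookkeeping is sound. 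What your route buys is that the intermediate range $pe/(p-1)<m<pe$ becomes purely formal once Lemma \ref{Lem5}\,(4) is in hand, at the cost of the explicit base-case analysis; the paper's nilpotence argument is self-contained and uniform in $m$. One small repair: you justify $\varphi\big(J_{\mathscr{E}_1}^{[p]}\big)=0$ in degree $q$ by quoting \eqref{p^r} with exponent $p^{p}$, but \eqref{p^r} is only formulated for $0\le r\le p-1$; the correct deduction is $J_{\mathscr{E}_1}^{[p]}\subset J_{\mathscr{E}_1}^{[p-1]}$, whence $\varphi\big(J_{\mathscr{E}_1}^{[p]}\big)\subset p^{p-1}\mathscr{O}_{\mathscr{E}_1}=0$ (alternatively, $\varphi_1(J_{\mathscr{E}_1}^{[p]})=0$ and $\varphi=p\varphi_1$ on $J_{\mathscr{E}_1}^{[1]}$), which gives the same conclusion, so the proof stands after this adjustment.
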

\begin{proof}
If $pe/(p-1) < m \leq pe$, we have 
\begin{equation}
\Tilde{U}^mA^{q-1}=\Tilde{U}^mB^{q-1}=\big(T^{m}\mathscr{O}_{\mathscr{E}_1}+J_{\mathscr{E}_1}^{[p]}\big)\otimes \omega_{Z_1|\mathscr{D}_1}^{q-1},
\end{equation}
\begin{equation}\Big({\rm resp}.\; \Tilde{U}^mA^{q}=\Tilde{U}^mB^{q}=\big(T^{m}\mathscr{O}_{\mathscr{E}_1}+J_{\mathscr{E}_1}^{[p]}\big)\otimes \omega_{Z_1|\mathscr{D}_1}^q\Big),
\end{equation}
since $m > e+m/p$ ($\Leftrightarrow pe/(p-1) < m$ ). We have
\[\varphi_1\left(T^{m}\mathscr{O}_{\mathscr{E}_1}+J_{\mathscr{E}_1}^{[p]}\right) \subset T^{m+1}\mathscr{O}_{\mathscr{E}_1}+J_{\mathscr{E}_1}^{[p]},\ \ \ \ \varphi_1\left(T^{pe}\mathscr{O}_{\mathscr{E}_1}+J_{\mathscr{E}_1}^{[p]}\right)=0,\]
for $m > pe/(p-1)$. Here we use $\varphi_1(J_{\mathscr{E}_1}^{[p]})=0$ ($\because p \geq 3$). 

Hence $\varphi_1 \otimes \land^{q-1}d\varphi/p$ (\resp.\;$\varphi \otimes \land^{q}d\varphi/p$) is nilpotent on $\big(T^{m}\mathscr{O}_{\mathscr{E}_1}+J_{\mathscr{E}_1}^{[p]}\big)\otimes \omega_{Z_1|\mathscr{D}_1}^{q-1}$ (\resp.\ $\big(T^{m}\mathscr{O}_{\mathscr{E}_1}+J_{\mathscr{E}_1}^{[p]}\big)\otimes \omega_{Z_1|\mathscr{D}_1}^q\Big)$).
Then $1-\varphi_q: {\Tilde{U}}^{m}A^{\cdot}\rightarrow {\Tilde{U}}^{m}B^{\cdot}$ are bijective in degree $q-1$ and degree $q$. 
\end{proof}

\begin{Cor}\label{Cor vanish}
We have  $ \Tilde{U}^m\mathcal{H}^{q}({S_D}^{\cdot})=0$ for $pe/(p-1) < m \leq pe$.
\end{Cor}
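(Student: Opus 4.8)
The plan is simply to unwind the definition of the filtration $\tilde{U}^{\bullet}$ on cohomology and invoke Lemma \ref{Lem6}. Recall that, by construction, $\tilde{U}^m\mathcal{H}^q(S_D^{\cdot})$ is defined to be the image of the natural map $\mathcal{H}^q(\tilde{U}^mS_D^{\cdot}) \to \mathcal{H}^q(S_D^{\cdot})$ induced by the inclusion of complexes $\tilde{U}^mS_D^{\cdot} \hookrightarrow S_D^{\cdot}$.

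Now I would apply Lemma \ref{Lem6}, which asserts that $\mathcal{H}^q(\tilde{U}^mS_D^{\cdot}) = 0$ for $pe/(p-1) < m \leq pe$. Since the source of the map computing $\tilde{U}^m\mathcal{H}^q(S_D^{\cdot})$ vanishes in this range, its image vanishes as well, so $\tilde{U}^m\mathcal{H}^q(S_D^{\cdot}) = 0$ for $pe/(p-1) < m \leq pe$.

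There is essentially no obstacle here: the corollary is a formal consequence of Lemma \ref{Lem6} together with the definition of $\tilde{U}^{\bullet}$ on $\mathcal{H}^q(S_D^{\cdot})$. The only content — the nilpotence of $\varphi_1\otimes\wedge^{q-1}d\varphi/p$ and $\varphi\otimes\wedge^q d\varphi/p$ on the relevant $\tilde{U}^m$-pieces, forcing $1-\varphi_q$ to be bijective in degrees $q-1$ and $q$, and hence the acyclicity of $\tilde{U}^mS_D^{\cdot}$ in degree $q$ — was already carried out in the proof of Lemma \ref{Lem6}, using the assumption $p\geq 3$ (which gives $\varphi_1(J_{\mathscr{E}_1}^{[p]})=0$) and the numerical inequality $m > e+m/p \Leftrightarrow m > pe/(p-1)$.
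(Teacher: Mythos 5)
Your proposal is correct and matches the paper's reasoning: the paper states this corollary without proof precisely because, with $\Tilde{U}^m\mathcal{H}^q(S_D^{\cdot})$ defined as the image of $\mathcal{H}^q(\Tilde{U}^m S_D^{\cdot})$ in $\mathcal{H}^q(S_D^{\cdot})$, the vanishing is an immediate consequence of Lemma \ref{Lem6}. Your unwinding of the definition and appeal to that lemma is exactly the intended argument.
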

\begin{Lem}\label{Lem7} 
The homomorphism $ \mathcal{H}^{q}(\Tilde{U}^{m+1}{S_D}^{\cdot})\rightarrow \mathcal{H}^{q}( \Tilde{U}^m{S_D}^{\cdot})$ is injective for $0 \leq m <pe$.
\end{Lem}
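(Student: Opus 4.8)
The plan is to prove the injectivity of $\mathcal{H}^{q}(\Tilde{U}^{m+1}{S_D}^{\cdot})\rightarrow \mathcal{H}^{q}(\Tilde{U}^{m}{S_D}^{\cdot})$ by descending induction on $m$, starting from the vanishing $\Tilde{U}^{m}\mathcal{H}^{q}({S_D}^{\cdot})=0$ for $m>pe/(p-1)$ provided by Corollary \ref{Cor vanish} (and more precisely $\mathcal{H}^{q}(\Tilde{U}^{m}{S_D}^{\cdot})=0$ itself for $pe/(p-1)<m\leq pe$ by Lemma \ref{Lem6}). The basic mechanism is the short exact sequence of complexes $0\to \Tilde{U}^{m+1}{S_D}^{\cdot}\to \Tilde{U}^{m}{S_D}^{\cdot}\to \gr_{\Tilde{U}}^{m}{S_D}^{\cdot}\to 0$, whose associated long exact sequence gives
\[
\mathcal{H}^{q-1}(\gr_{\Tilde{U}}^{m}{S_D}^{\cdot})\xrightarrow{\partial}\mathcal{H}^{q}(\Tilde{U}^{m+1}{S_D}^{\cdot})\to \mathcal{H}^{q}(\Tilde{U}^{m}{S_D}^{\cdot}).
\]
Thus the kernel of the map in question is a quotient of $\mathcal{H}^{q-1}(\gr_{\Tilde{U}}^{m}{S_D}^{\cdot})$, and injectivity is equivalent to the connecting map $\partial$ being zero. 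So the heart of the argument is to show $\partial=0$ for every $m$ with $0\leq m<pe$.

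First I would record, from Lemma \ref{Lem5}~$(2)$, that $\mathcal{H}^{q-1}(\gr_{\Tilde{U}}^{m}{S_D}^{\cdot})\cong Z^{q-2}(\gr_{\Tilde{U}}^{m}{B}^{\cdot})$, i.e. the degree-$(q-1)$ cohomology of the graded piece is entirely "of $B$-type" (it comes from the degree $q-2$ part of $B$, with no contribution from $A$). The strategy is then to lift cocycles explicitly: given a class in $\mathcal{H}^{q-1}(\gr_{\Tilde{U}}^{m}{S_D}^{\cdot})$ represented by an element $\omega\in \Tilde{U}^{m}B^{q-2}$ which is closed modulo $\Tilde{U}^{m+1}$, one checks that $(0,\omega)\in \Tilde{U}^{m}{S_D}^{q-1}=\Tilde{U}^{m}A^{q-1}\oplus\Tilde{U}^{m}B^{q-2}$ can be modified within its class so that it is an honest cocycle of $\Tilde{U}^{m}{S_D}^{\cdot}$; its image under $\partial$ is then represented by its differential, which lands in $\Tilde{U}^{m+1}{S_D}^{\cdot}$ and one shows it is a coboundary there. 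Concretely this reduces to the statement that the differential $d^{q-2}\colon \Tilde{U}^{m}B^{q-2}\to \Tilde{U}^{m}B^{q-1}$ together with $1-\varphi_q$ can be used to fill in a homotopy; the key inputs are the explicit formulas $d\varphi/p(d\log T)=d\log T$, the compatibility of $\varphi_q$ with $\Tilde{U}^{\cdot}$ (noted in the text, using $p\geq 3$ so that $\varphi_1(J^{[p]}_{\mathscr{E}_1})=0$), and the vanishing of the relevant differentials on $\gr_{\Tilde{U}}^{m}A^{\cdot}$ for $0\leq m<pe/(p-1)$ recorded in \eqref{(424)}.

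I expect the main obstacle to be organizing the case division cleanly: the structure of $\gr_{\Tilde{U}}^{m}{S_D}^{\cdot}$ is genuinely different in the ranges $m=0$, $0<m<pe/(p-1)$ with $p\mid m$, $0<m<pe/(p-1)$ with $p\nmid m$, and $pe/(p-1)\leq m<pe$ (compare Lemma \ref{Lem5}), so the lifting-and-killing argument for $\partial$ has to be checked to go through uniformly, or handled range by range. In the top range $pe/(p-1)<m\leq pe$ injectivity is immediate from Lemma \ref{Lem6} since the target's relevant cohomology, and in fact $\mathcal{H}^{q}(\Tilde{U}^{m+1}{S_D}^{\cdot})$, vanishes. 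The delicate point is near $m=pe/(p-1)$ and in the $p\mid m$ sub-case, where $Z^{q-1}(\gr_{\Tilde{U}}^{m}A^{\cdot})$ is non-trivial and one must verify that the potential new contribution to $\mathcal{H}^{q-1}(\gr_{\Tilde{U}}^{m}{S_D}^{\cdot})$ coming from $A$ is killed — but this is exactly what Lemma \ref{Lem5}~$(2)$ already encodes (all the maps (i)--(iv) in the proof of Lemma \ref{Lem5} are injective), so the argument should reduce to transporting that injectivity statement through the long exact sequence. The proof will therefore be short: invoke Corollary \ref{Cor vanish} for the top range, then for $0\leq m\leq pe/(p-1)$ use Lemma \ref{Lem5}~$(2)$ to identify $\mathcal{H}^{q-1}(\gr_{\Tilde{U}}^{m}{S_D}^{\cdot})$ and argue that $\partial$ annihilates it by an explicit cocycle computation analogous to \cite[Lemma 2.5.3]{Tsu1}.
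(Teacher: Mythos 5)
Your plan is essentially the paper's proof: after discarding the range where $\mathcal{H}^{q}(\Tilde{U}^{m+1}{S_D}^{\cdot})$ already vanishes by Lemma \ref{Lem6}, the paper shows the connecting map is zero by proving that $\mathcal{H}^{q-1}(\Tilde{U}^{m}{S_D}^{\cdot})\rightarrow\mathcal{H}^{q-1}(\gr_{\Tilde{U}}^{m}{S_D}^{\cdot})$ is surjective, using Lemma \ref{Lem5}~(2) exactly as you propose. The one step you leave implicit — lifting closed elements of $\gr_{\Tilde{U}}^{m}B^{q-2}$ to $Z^{q-2}(\Tilde{U}^{m}B^{\cdot})$ — is where the paper's actual work lies (not on the $A$-side, which Lemma \ref{Lem5}~(2) already settles): it is reduced to the surjectivity of $\mathcal{H}^{q-2}(\Tilde{U}^{m}B^{\cdot})\rightarrow\mathcal{H}^{q-2}(\gr_{\Tilde{U}}^{m}B^{\cdot})$, immediate from Lemma \ref{Lem1}~(2) when $p\nmid m$ and deduced from the Frobenius/Cartier description of Lemma \ref{Lem1}~(3) when $p\mid m$.
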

\begin{proof}
By Lemma \ref{Lem6}, we may assume that $m+1 \leq pe/(p-1)$. It is enough to show that 
\begin{equation}
 \mathcal{H}^{q-1}(\Tilde{U}^{m}{S_D}^{\cdot})\rightarrow \mathcal{H}^{q-1}(\gr_{\Tilde{U}}^m{S_D}^{\cdot})
\end{equation}
is surjective. From the argument before Lemma \ref{Lem5} (i), we obtain an isomorphism
\begin{equation}
Z^{q-2}\big(\gr_{\Tilde{U}}^mB^{\cdot}\big)=Z^{q-2}\Big(\big(T^{m}\mathscr{O}_{Z_1}/T^{m+1}\mathscr{O}_{Z_1}\big)\otimes \omega_{Z_1|\mathscr{D}_1}^{\cdot}\Big)\xrightarrow{\cong} \mathcal{H}^{q-1}(\gr_{\Tilde{U}}^m{S_D}^{\cdot}).
\end{equation}
Then it suffices to prove that the natural homomorphism 
\begin{equation}
Z^{q-2}\big(\Tilde{U}^mB^{\cdot}\big)=Z^{q-2}\Big(\big(T^{m}\mathscr{O}_{Z_1}+J_{\mathscr{E}_1}^{[p]}\big)\otimes \omega_{Z_1|\mathscr{D}_1}^{\cdot}\Big)
\end{equation}
\begin{equation*}
\rightarrow Z^{q-2}\big(\gr_{\Tilde{U}}^mB^{\cdot}\big)=Z^{q-2}\Big(\big(T^{m}\mathscr{O}_{Z_1}/T^{m+1}\mathscr{O}_{Z_1}\big)\otimes \omega_{Z_1|\mathscr{D}_1}^{\cdot}\Big)
\end{equation*}
is surjective or equivalently that the homomorphism 
\begin{equation}
\mathcal{H}^{q-2}\Big( \big(T^{m}\mathscr{O}_{Z_1}+J_{\mathscr{E}_1}^{[p]}\big)\otimes \omega_{Z_1|\mathscr{D}_1}^{\cdot} \Big) \rightarrow \mathcal{H}^{q-2}\Big( \big(T^{m}\mathscr{O}_{Z_1}/T^{m+1}\mathscr{O}_{Z_1}\big)\otimes \omega_{Z_1|\mathscr{D}_1}^{\cdot} \Big) 
\end{equation}
is surjective.  When $p \nmid m$, this is obvious by Lemma \ref{Lem1} (2). In the case of $p|m$, this follows from the following commutative diagram in which the lower horizontal arrow is surjective and the right vertical arrow is an isomorphism by Lemma \ref{Lem1} (3).

$$
\begin{CD}   
\mathcal{H}^{q-2}\Big( \big(T^{m}\mathscr{O}_{Z_1}+J_{\mathscr{E}_1}^{[p]}\big)\otimes \omega_{Z_1|\mathscr{D}_1}^{\cdot} \Big)@>>>\mathcal{H}^{q-2}\Big( \big(T^{m}\mathscr{O}_{Z_1}/T^{m+1}\mathscr{O}_{Z_1}\big)\otimes \omega_{Z_1|\mathscr{D}_1}^{\cdot} \Big)  \\
@AAT^m\cdot \varphi \otimes \land^{q-2}\frac{d\varphi}{p}A @AAT^m\cdot \varphi \otimes \land^{q-2}\frac{d\varphi}{p}A \\
\omega_{Z_1|\mathscr{D'}_1}^{q-2}@>>>\mathscr{O}_Y \otimes \omega_{Z_1|\mathscr{D'}_1}^{q-2}.
\end{CD}
$$
\end{proof}

\begin{Cor}\label{Cor1}
$ \mathcal{H}^{q}(\Tilde{U}^m{S_D}^{\cdot}) = \Tilde{U}^m\mathcal{H}^{q}({S_D}^{\cdot})$ for $0 \leq m \leq pe$.
\end{Cor}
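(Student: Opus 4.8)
The statement is that the two natural filtrations on $\mathcal{H}^q(S_D^{\text{\large$\cdot$}})$ — the image filtration $\Tilde{U}^m\mathcal{H}^q(S_D^{\text{\large$\cdot$}}) := \Imm\big(\mathcal{H}^q(\Tilde{U}^m S_D^{\text{\large$\cdot$}}) \to \mathcal{H}^q(S_D^{\text{\large$\cdot$}})\big)$ and the honest cohomology $\mathcal{H}^q(\Tilde{U}^m S_D^{\text{\large$\cdot$}})$ — coincide. The plan is to show that the natural map $\mathcal{H}^q(\Tilde{U}^m S_D^{\text{\large$\cdot$}}) \to \mathcal{H}^q(S_D^{\text{\large$\cdot$}})$ is \emph{injective} for every $m$ with $0 \leq m \leq pe$; once this is known, the image of this map is by definition $\Tilde{U}^m\mathcal{H}^q(S_D^{\text{\large$\cdot$}})$ and injectivity identifies it with the source. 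So the whole corollary reduces to an injectivity statement.

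To prove injectivity I would argue by descending induction on $m$, using the short exact sequences of complexes $0 \to \Tilde{U}^{m+1}S_D^{\text{\large$\cdot$}} \to \Tilde{U}^m S_D^{\text{\large$\cdot$}} \to \gr_{\Tilde{U}}^m S_D^{\text{\large$\cdot$}} \to 0$. The base of the induction is the range $pe/(p-1) < m \leq pe$, where Corollary \ref{Cor vanish} gives $\Tilde{U}^m\mathcal{H}^q(S_D^{\text{\large$\cdot$}})=0$, and Lemma \ref{Lem6} gives $\mathcal{H}^q(\Tilde{U}^m S_D^{\text{\large$\cdot$}})=0$; so both sides vanish and the map is trivially an isomorphism there. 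For the inductive step, assume the result for $m+1$, i.e. $\mathcal{H}^q(\Tilde{U}^{m+1}S_D^{\text{\large$\cdot$}}) \hookrightarrow \mathcal{H}^q(S_D^{\text{\large$\cdot$}})$. From the long exact cohomology sequence attached to the above short exact sequence of complexes, together with Lemma \ref{Lem7} (which says precisely that $\mathcal{H}^q(\Tilde{U}^{m+1}S_D^{\text{\large$\cdot$}}) \to \mathcal{H}^q(\Tilde{U}^m S_D^{\text{\large$\cdot$}})$ is injective for $0 \leq m < pe$), one gets that $\mathcal{H}^q(\Tilde{U}^m S_D^{\text{\large$\cdot$}})$ sits in an exact sequence whose first term is the injectively-embedded $\mathcal{H}^q(\Tilde{U}^{m+1}S_D^{\text{\large$\cdot$}})$ and whose relevant quotient injects into $\mathcal{H}^q(\gr_{\Tilde{U}}^m S_D^{\text{\large$\cdot$}})$; a diagram chase comparing the long exact sequences for $\Tilde{U}^m$ and for the full complex $S_D^{\text{\large$\cdot$}}$ (the latter being the $m=0$ instance, compatible with the filtration) then forces $\mathcal{H}^q(\Tilde{U}^m S_D^{\text{\large$\cdot$}}) \to \mathcal{H}^q(S_D^{\text{\large$\cdot$}})$ to be injective.

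Concretely, the mechanism is: if $\xi \in \mathcal{H}^q(\Tilde{U}^m S_D^{\text{\large$\cdot$}})$ maps to $0$ in $\mathcal{H}^q(S_D^{\text{\large$\cdot$}})$, then a fortiori its image in $\mathcal{H}^q(\gr_{\Tilde{U}}^m S_D^{\text{\large$\cdot$}})$ (which factors through $\mathcal{H}^q(S_D^{\text{\large$\cdot$}}/\Tilde{U}^{m+1}S_D^{\text{\large$\cdot$}})$, by functoriality of the grading) is $0$; hence $\xi$ comes from $\mathcal{H}^q(\Tilde{U}^{m+1}S_D^{\text{\large$\cdot$}})$, say $\xi = \iota(\eta)$ with $\iota$ the transition map; then $\eta$ maps to $0$ in $\mathcal{H}^q(S_D^{\text{\large$\cdot$}})$ because $\xi$ does, and the inductive hypothesis gives $\eta = 0$, hence $\xi = 0$. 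The one subtlety I would be careful about is the exactness of the relevant piece of the long exact sequence — i.e. that the map $\mathcal{H}^{q-1}(\gr_{\Tilde{U}}^m S_D^{\text{\large$\cdot$}}) \to \mathcal{H}^q(\Tilde{U}^{m+1}S_D^{\text{\large$\cdot$}})$ interacts correctly so that $\ker\big(\mathcal{H}^q(\Tilde{U}^m S_D^{\text{\large$\cdot$}}) \to \mathcal{H}^q(\gr_{\Tilde{U}}^m S_D^{\text{\large$\cdot$}})\big)$ is exactly the image of $\mathcal{H}^q(\Tilde{U}^{m+1}S_D^{\text{\large$\cdot$}})$; this is where Lemma \ref{Lem7} is doing real work (it kills the connecting map contribution), so the main obstacle is organizing the diagram chase so that Lemma \ref{Lem7} and the inductive hypothesis slot in at the right spots. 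No new computation is needed — everything follows formally from Lemmas \ref{Lem6}, \ref{Lem7} and Corollary \ref{Cor vanish} together with the snake/long-exact-sequence formalism.
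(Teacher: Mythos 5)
Your reading of the statement is correct (since $\Tilde{U}^m\mathcal{H}^{q}({S_D}^{\cdot})$ is by definition the image of $\mathcal{H}^{q}(\Tilde{U}^m{S_D}^{\cdot})$, the corollary is exactly the injectivity of $\mathcal{H}^{q}(\Tilde{U}^m{S_D}^{\cdot})\rightarrow\mathcal{H}^{q}({S_D}^{\cdot})$), but the inductive step of your descending induction has a genuine gap. From the hypothesis that $\xi\in\mathcal{H}^{q}(\Tilde{U}^m{S_D}^{\cdot})$ dies in $\mathcal{H}^{q}({S_D}^{\cdot})$ you cannot conclude that its image in $\mathcal{H}^{q}(\gr_{\Tilde{U}}^m{S_D}^{\cdot})$ vanishes: the graded piece is a subquotient, not a quotient, of ${S_D}^{\cdot}$, so there is no map $\mathcal{H}^{q}({S_D}^{\cdot})\rightarrow\mathcal{H}^{q}(\gr_{\Tilde{U}}^m{S_D}^{\cdot})$ along which to push $\xi$; the only factorization available is that the composite $\mathcal{H}^{q}(\Tilde{U}^m{S_D}^{\cdot})\rightarrow\mathcal{H}^{q}(\gr_{\Tilde{U}}^m{S_D}^{\cdot})\rightarrow\mathcal{H}^{q}({S_D}^{\cdot}/\Tilde{U}^{m+1}{S_D}^{\cdot})$ agrees with the one through $\mathcal{H}^{q}({S_D}^{\cdot})$. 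Hence what follows from $\xi\mapsto 0$ is only that the image of $\xi$ in $\mathcal{H}^{q}(\gr_{\Tilde{U}}^m{S_D}^{\cdot})$ dies in $\mathcal{H}^{q}({S_D}^{\cdot}/\Tilde{U}^{m+1}{S_D}^{\cdot})$; to upgrade this to vanishing in $\mathcal{H}^{q}(\gr_{\Tilde{U}}^m{S_D}^{\cdot})$ you would need the injectivity of $\mathcal{H}^{q}(\gr_{\Tilde{U}}^m{S_D}^{\cdot})\rightarrow\mathcal{H}^{q}({S_D}^{\cdot}/\Tilde{U}^{m+1}{S_D}^{\cdot})$, which none of Lemma \ref{Lem6}, Lemma \ref{Lem7} or Corollary \ref{Cor vanish} provides and which is essentially as hard as the corollary itself. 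The step cannot be purely formal: for subcomplexes $C'\subset C\subset D$, knowing $\mathcal{H}^q(C')\hookrightarrow\mathcal{H}^q(C)$ and $\mathcal{H}^q(C')\hookrightarrow\mathcal{H}^q(D)$ does not force $\mathcal{H}^q(C)\hookrightarrow\mathcal{H}^q(D)$ (take $C'=0$), so "injective at level $m+1$ into $\mathcal{H}^q({S_D}^{\cdot})$ plus Lemma \ref{Lem7}" does not imply injectivity at level $m$.

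The intended argument runs in the opposite direction and needs neither the descending induction nor the vanishing base case: Lemma \ref{Lem7} gives injectivity of each consecutive transition map, and composing them yields $\mathcal{H}^{q}(\Tilde{U}^m{S_D}^{\cdot})\hookrightarrow\mathcal{H}^{q}(\Tilde{U}^{m-1}{S_D}^{\cdot})\hookrightarrow\cdots\hookrightarrow\mathcal{H}^{q}(\Tilde{U}^{0}{S_D}^{\cdot})$, the bottom of the filtration being identified with ${S_D}^{\cdot}$ itself, so that $\mathcal{H}^{q}(\Tilde{U}^m{S_D}^{\cdot})$ maps isomorphically onto its image $\Tilde{U}^m\mathcal{H}^{q}({S_D}^{\cdot})$. (If you want to be scrupulous, the one point deserving comment in that chain is the bottom step, since $\Tilde{U}^{0}{S_D}^{\cdot}$ differs from ${S_D}^{\cdot}$ at most in degree $q-1$ of the $A$-part; but that is a separate issue from the flaw above, and your downward induction does not avoid it either.) So the repair is to run the chain upward from $m=0$ using Lemma \ref{Lem7} alone, rather than downward from the vanishing range.
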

From lemma \ref{Lem3} and \ref{Cor1}, we have homomorphisms 
\begin{equation}
\alpha_{m,D}: \gr_{U}^m\mathcal{H}^{q}({S_D}^{\cdot}) \rightarrow \gr_{\Tilde{U}}^m\mathcal{H}^{q}({S_D}^{\cdot})
\end{equation}
and injective homomorphisms
\begin{equation}
\beta_{m,D}: \gr_{\Tilde{U}}^m\mathcal{H}^{q}({S_D}^{\cdot}) \rightarrow \mathcal{H}^{q}(\gr_{\Tilde{U}}^m{S_D}^{\cdot})
\end{equation}
for $0 \leq m <pe$.
\begin{Lem}\label{Lem8}
Let $m$ be a non-negative integer. Let $x \in (1+I_{\mathscr{D}_2})^{\times}$, let $a_1,\dotsc  a_{q-1} \in M_{Z_2}^{gp}$ and let $y \in \mathscr{O}_{Z_2}(-\mathscr{D}_2)$.
Let $\overline{x}$ denote the image of $x$ in $ (1+I_{D_2})^{\times}$, let $\overline{a_i}$ denote the image of $a_i$ in $M_{X_2}^{gp}$ and let $\overline{y}$ denote the image of $y$ in $\mathscr{O}_{X_2}(-{D_2})$. Then we have:
\begin{enumerate}
\item\;If $m=0$, the image of \[\overline{x}\otimes \overline{a_1}\otimes\cdots \otimes \overline{a_{q-1}} \in (1+I_{D_2})^{\times}\otimes (M_{X_2}^{gp})^{\otimes (q-1)}\] under the composite 
 \begin{align*}
(\P1)\quad &U^0\left((1+I_{D_2})^{\times}\otimes (M_{X_2}^{gp})^{\otimes (q-1)}\right)=(1+I_{D_2})^{\times}\otimes (M_{X_2}^{gp})^{\otimes (q-1)} \rightarrow \gr_{U}^0\mathcal{H}^{q}({S_D}^{\cdot})\\ &\xrightarrow{\alpha_{0,D}}\gr_{\Tilde{U}}^0\mathcal{H}^{q}({S_D}^{\cdot}) 
\xhookrightarrow{\beta_{0,D}} \mathcal{H}^{q}(\gr_{\Tilde{U}}^0{S_D}^{\cdot})\overset{(a)}{\twoheadrightarrow} \Ker\left(Z^q\big( \mathscr{O}_Y \otimes_{\mathscr{O}_{Z_1}} \omega_{Z_1|\mathscr{D}_1}^{\cdot}\big)\xrightarrow{1-\varphi\otimes \land^{q}d\varphi/p} \mathcal{H}^q\big( \mathscr{O}_Y \otimes_{\mathscr{O}_{Z_1}} \omega_{Z_1|\mathscr{D}_1}^{\cdot} \big)\right)
\end{align*}
is $1 \otimes d\log x \land d\log a_1\land \cdots \land d\log a_{q-1}$.\\
 The image of \[\overline{x}\otimes \overline{a_1}\otimes\cdots \otimes \overline{a_{q-2}}\otimes \pi \in V^0\left((1+I_{D_2})^{\times}\otimes (M_{X_2}^{gp})^{\otimes (q-1)}\right)\] under the map $(\P1)$ is $1 \otimes d\log x \land d\log a_1\land \cdots \land d\log a_{q-2}$.\\

\item\; The image of \[(1+\pi^m \overline{y}) \otimes \overline{a_1}\otimes\cdots \otimes \overline{a_{q-1}} \in U^m\Big( (1+I_{D_2})^{\times}\otimes (M_{X_2}^{gp})^{\otimes (q-1)}\Big)\] under the composite 
\begin{equation}
 (\P2)\quad U^m\Big( (1+I_{D_2})^{\times}\otimes (M_{X_2}^{gp})^{\otimes (q-1)}\Big) \rightarrow  \gr_{U}^m\mathcal{H}^{q}({S_D}^{\cdot}) \xrightarrow{\beta_{m,D}\alpha_{m,D}} \mathcal{H}^{q}(\gr_{\Tilde{U}}^m{S_D}^{\cdot})  \end{equation}
 \begin{equation*}
 \xrightarrow[Lemma\;\ref{Lem5}\;(3)]{(b)} B^q\Big(\big(T^{m}\mathscr{O}_{Z_1} / T^{m+1}\mathscr{O}_{Z_1}\big)\otimes \omega_{Z_1|\mathscr{D}_1}^{\cdot} \Big)
 \end{equation*}
 is $d\big(T^m y \otimes d\log a_1\land\cdots \land d\log a_{q-1}\big)$. If $0< m < pe/(p-1)$ and $p \nmid m$, the map $(b)$ is isomorphism. If $0< m < pe/(p-1)$ and $p | m$, the map $(b)$ is surjective.\\
  The image of \[(1+\pi^m \overline{y}) \otimes \overline{a_1}\otimes\cdots \otimes \overline{a_{q-2}}\otimes \pi \in V^m\Big( (1+I_{D_2})^{\times}\otimes (M_{X_2}^{gp})^{\otimes (q-1)}\Big)\]  under the map $(\P2)$ is $d\big(T^m y \otimes d\log a_1\land\cdots \land d\log a_{q-2}\land d\log T\big)$. 
 \end{enumerate}
 We put 
 \begin{equation}\label{L^m}
 \mathcal{L}^m:=\ker\left(\gr^m_{U}\mathcal{H}^q(S^{\cdot}_D) \twoheadrightarrow B^q\left((T^m\mathscr{O}_{Z_1} / T^{m+1}\mathscr{O}_{Z_1}) \otimes \omega^{\cdot}_{Z_1|\mathscr{D}_1}\right)\right).
 \end{equation}
\end{Lem}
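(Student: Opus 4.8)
The plan is to compute directly, using the explicit cocycle representative of Lemma \ref{symb map 2}, the image of each symbol under the composites $(\P1)$ and $(\P2)$, and to track the resulting class through the maps $\alpha_{m,D}$, $\beta_{m,D}$ and the identifications of $\mathcal{H}^q(\gr_{\widetilde U}^m{S_D}^{\cdot})$ provided by Lemma \ref{Lem5}. I will write $\eta:=d\log a_1\land\cdots\land d\log a_{q-1}$ and $\eta':=d\log a_1\land\cdots\land d\log a_{q-2}$; these are closed, so $d(g\otimes\eta)=dg\land\eta$ (and similarly with $\eta'$) for any function $g$ in the complexes that occur below. The cases $m=0$ and $m\geq 1$ are treated separately.

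For part (1), since $x\in(1+I_{\mathscr{D}_2})^{\times}$ we have $d\log x=x^{-1}\,dx\in\mathscr{O}_{Z_1}(-\mathscr{D}_1)\otimes_{\mathscr{O}_{Z_1}}\omega_{Z_1}^1$, so Lemma \ref{symb map 2} represents $\overline x\otimes\overline{a_1}\otimes\cdots\otimes\overline{a_{q-1}}$ by a cocycle of ${S_D}^{\cdot}$ whose degree-$q$ component is $d\log x\land\eta\in\mathscr{O}_{\mathscr{E}_1}\otimes\omega_{Z_1|\mathscr{D}_1}^q$ and whose degree-$(q-1)$ component lies in $\mathscr{O}_{\mathscr{E}_1}\otimes\omega_{Z_1|\mathscr{D}_1}^{q-1}$. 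Reducing modulo $T\mathscr{O}_{\mathscr{E}_1}+J_{\mathscr{E}_1}^{[p]}$ and invoking the description of the surjection $(a)$ in Lemma \ref{Lem5} (1) --- whose kernel is the image of the degree-$(q-1)$ piece $\dfrac{\mathscr{O}_Y\otimes\omega_{Z_1|\mathscr{D}'_1}^{q-1}}{\mathscr{O}_Y\otimes\omega_{Z_1|\mathscr{D}_1}^{q-1}}$ --- the degree-$(q-1)$ component is killed and the image in $\Ker(1-\varphi\otimes\land^q d\varphi/p)$ is exactly $1\otimes d\log x\land d\log a_1\land\cdots\land d\log a_{q-1}$. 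For the symbol ending in $\pi$, I would use that $\pi$ maps to the chosen lift $T$ and that $\frac{d\varphi}{p}(d\log T)=d\log T$, so the representing cocycle has degree-$q$ component $d\log x\land\eta'\land d\log T$; under the $\land d\log T$-column of the short exact sequence \eqref{K} this is the image of $1\otimes d\log x\land d\log a_1\land\cdots\land d\log a_{q-2}$, as asserted.

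For part (2), fix $m\geq 1$. The key elementary input is the reduction, carried out exactly as in the proof of Lemma \ref{Lem3} (cf. \eqref{eq1}, \eqref{eq2}): modulo $T^{m+1}\mathscr{O}_{\mathscr{E}_1}+J_{\mathscr{E}_1}^{[p]}$,
\[d\log(1+\pi^m y)\ \equiv\ T^m\bigl(dy+m\,y\,d\log T\bigr).\]
Combining this with the isomorphism \eqref{(2)} (multiplication by $T^m$) and $d\eta=0$, the degree-$q$ component of the cocycle representing the symbol of $(1+\pi^m\overline y)\otimes\overline{a_1}\otimes\cdots\otimes\overline{a_{q-1}}$ becomes, in $(T^m\mathscr{O}_{Z_1}/T^{m+1}\mathscr{O}_{Z_1})\otimes\omega_{Z_1|\mathscr{D}_1}^{\cdot}$,
\[T^m\bigl(dy+m\,y\,d\log T\bigr)\land\eta\ =\ d\bigl(T^m y\otimes d\log a_1\land\cdots\land d\log a_{q-1}\bigr).\]
By the construction of the connecting maps in \eqref{longexact}, the composite of $\beta_{m,D}\alpha_{m,D}$ with the projection $(b)$ of Lemma \ref{Lem5} (3) sends a class of $\mathcal{H}^q(\gr_{\widetilde U}^m{S_D}^{\cdot})$ to the degree-$q$ component $\alpha$ of a representing cocycle $(\alpha,\beta)$; since $\varphi_q$ vanishes on $\gr_{\widetilde U}^m$ for $m\geq 1$ (because $\varphi(T^m),\varphi_1(T^m)\in T^{pm}\mathscr{O}_{\mathscr{E}_1}\subset T^{m+1}\mathscr{O}_{\mathscr{E}_1}$), such a cocycle satisfies $\alpha=d\beta$, so $\alpha\in B^q\bigl((T^m\mathscr{O}_{Z_1}/T^{m+1}\mathscr{O}_{Z_1})\otimes\omega_{Z_1|\mathscr{D}_1}^{\cdot}\bigr)$, and the image of the symbol is the displayed coboundary. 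The assertions that $(b)$ is bijective for $p\nmid m$ and surjective for $p\mid m$ are precisely Lemma \ref{Lem5} (3). The symbol ending in $\pi$ is handled identically with $\eta$ replaced by $\eta'\land d\log T$: the term $m\,y\,d\log T\land\eta'\land d\log T$ vanishes, leaving $T^m\,dy\land\eta'\land d\log T=d\bigl(T^m y\otimes d\log a_1\land\cdots\land d\log a_{q-2}\land d\log T\bigr)$. Finally, \eqref{L^m} is merely the definition of $\mathcal{L}^m$, so nothing further is needed there.

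The step I expect to be the main obstacle is the homological bookkeeping through the mapping fiber ${S_D}^{\cdot}$ and the long exact sequence \eqref{longexact}: one must check with care that the isomorphism $\mathcal{H}^q(\gr_{\widetilde U}^m{S_D}^{\cdot})\xrightarrow{\ \cong\ }B^q(\cdots)$ (resp. the surjection onto $\Ker(1-\varphi\otimes\land^q d\varphi/p)$ for $m=0$) really is ``extract the degree-$q$ component of a representing cocycle'', and that the twist by $T^m$ in \eqref{(2)} intertwines the de Rham differential of $\omega_{Z_1|\mathscr{D}_1}^{\cdot}$ with that of $(T^m\mathscr{O}_{Z_1}/T^{m+1}\mathscr{O}_{Z_1})\otimes\omega_{Z_1|\mathscr{D}_1}^{\cdot}$. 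Once these identifications are pinned down by unwinding the constructions of Subsection \ref{Loc}, the remaining computations are the routine $d\log$ reductions indicated above.
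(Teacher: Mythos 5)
Your proposal is correct and follows essentially the same route as the paper: represent the symbol by the explicit cocycle of Lemma \ref{symb map 2}, reduce its degree-$q$ component modulo the filtration via the computation of $d\log(1+T^m y)$, and read off the image through the maps $(a)$ and $(b)$ coming from the long exact sequence in Lemma \ref{Lem5}. The only cosmetic slip is the reference to $\varphi_1(T^m)$ (in degree $q$ the relevant operator is $\varphi$, and $\varphi(T^m)=T^{pm}\in T^{m+1}\mathscr{O}_{\mathscr{E}_1}$ for $m\geq 1$ suffices), which does not affect the argument.
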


\begin{proof}
We note that $T \in \Gamma(Z_2, \mathscr{O}_{Z_2})$ is a lifting of $\pi \in \Gamma(X_2, \mathscr{O}_{X_2})$. 
If $m=0$, the image of $\overline{x}\otimes \overline{a_1}\otimes \cdots \otimes \overline{a_{q-1}} \in (1+I_{D_2})^{\times}\otimes (M_{X_2}^{gp})^{\otimes (q-1)}(\mbox{resp.}\;\; \overline{x}\otimes \overline{a_1}\otimes\cdots \otimes \overline{a_{q-2}}\otimes \pi)$  by the symbol map \eqref{symb} is the class of a cocycle of the form
 \[ \Big(d\log x \land d\log a_1 \land \cdots \land d\log a_{q-1},\ \cdots\Big)\quad\left(\mbox{resp.}\;\;\Big(d\log x \land d\log a_1 \land \cdots \land d\log a_{q-2}\land d\log T,\ \cdots\Big)\right) \]
 by  Lemma \ref{symb map 2}.
 Its image in $\Ker\left(Z^q\big( \mathscr{O}_Y \otimes_{\mathscr{O}_{Z_1}} \omega_{Z_1|\mathscr{D}_1}^{\cdot}\big)\xrightarrow{1-\varphi\otimes \land^{q}d\varphi/p} \mathcal{H}^q\big( \mathscr{O}_Y \otimes_{\mathscr{O}_{Z_1}} \omega_{Z_1|\mathscr{D}_1}^{\cdot} \big)\right)
$ is \[1 \otimes d\log x \land d\log a_1 \land \cdots \land d\log a_{q-1}\quad\Big(\mbox{resp.}\;\;1 \otimes d\log x \land d\log a_1 \land \cdots \land d\log a_{q-2} \land d\log T\Big) \] by the construction of the homomorphism $(a)$. Thus we obtain the claim $(1)$.
If $0 < m < pe/(p-1)$, the image of $(1+\pi^m \overline{y}) \otimes \overline{a_1}\otimes\cdots \otimes \overline{a_{q-1}} \in U^m\Big( (1+I_{D_2})^{\times}\otimes (M_{X_2}^{gp})^{\otimes (q-1)}\Big)\;(\mbox{resp.}\;\;(1+\pi^m \overline{y}) \otimes \overline{a_1}\otimes\cdots \otimes \overline{a_{q-2}}\otimes \pi \in V^m)$ by the symbol map \eqref{symb} is the class of a cocycle of the form
 \begin{align*}(*)\  \Big(d\log (1+ T^m y) \land d\log a_1 \land \cdots \land& d\log a_{q-1},\ \cdots\Big)\\& \left(\mbox{resp.}\;\; \Big(d\log (1+ T^m y) \land d\log a_1 \land \cdots \land d\log a_{q-2}\land d\log T,\ \cdots\Big)\right) \end{align*}
We have 
\begin{align*}
d\log &(1+ T^m y) \land d\log a_1 \land \cdots \land d\log a_{q-1}= (1+T^my)^{-1}\cdot d(1+T^my) \land d\log a_1 \land \cdots \land d\log a_{q-1}\\
&=(1-T^my)\cdot d(1+T^my) \land d\log a_1 \land \cdots \land d\log a_{q-1}\ \ \ \left(\ (1+T^my)^{-1}=1-T^my\ \because\ T^{2m}=0\right)\\
&= d(1+T^my) \land d\log a_1 \land \cdots \land d\log a_{q-1}-T^my d(T^my) \land d\log a_1 \land \cdots \land d\log a_{q-1}\\
&=d(T^my) \land d\log a_1 \land \cdots \land d\log a_{q-1} - T^my\{T^mdy + y d(T^m)\} \land d\log a_1 \land \cdots \land d\log a_{q-1}\\
&=d(T^my \otimes d\log a_1 \land \cdots \land d\log a_{q-1})- T^{2m}y\{dy + y d\log(T^m)\} \land d\log a_1 \land \cdots \land d\log a_{q-1}.
\end{align*} 
Then the image of $(*)$ in {\footnotesize $B^q\Big(\big(T^{m}\mathscr{O}_{Z_1} / T^{m+1}\mathscr{O}_{Z_1}\big)\otimes \omega_{Z_1|\mathscr{D}_1}^{\cdot} \Big)$} is 
\[d\big(T^m y \otimes d\log a_1\land\cdots \land d\log a_{q-1}\big)\quad\left(\mbox{resp.}\;\;d\big(T^m y \otimes d\log a_1\land\cdots \land d\log a_{q-2}\land d\log T\big)\right)\]
by the construction of the homomorphism $(b)$. Then we obtain the claim $(2)$.
\end{proof}
\begin{Rem} By the above Lemma $\ref{Lem8}$ (2), the map $\beta_m$ and $\alpha_m$ are isomorphism in the case $0< m < pe/(p-1),\; p \nmid m$. Then we obtain an isomorphism $ \gr_{\Tilde{U}}^m\mathcal{H}^{q}({S_D}^{\cdot}) \xrightarrow{\cong} \mathcal{H}^{q}(\gr_{\Tilde{U}}^m{S_D}^{\cdot})$ in this case.
\end{Rem}

\begin{Lem}\label{CokernelSymb}
The cokernel of the morphism \[\gr^m{\rm Symb}_{X|D}:\gr_{U}^m\big((1+I_{D_2})^{\times}\otimes(M_{X_2}^{gp})^{\otimes(q-1)}\big)\longrightarrow \gr_{\Tilde{U}}^{m}\mathcal{H}^{q}(S_{D}^{\cdot})\]
is Mittag-Leffler zero with respect to the multiplicities of the prime components of $D$.
\end{Lem}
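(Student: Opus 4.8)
The plan is to combine the explicit descriptions of $\mathcal{H}^q(\gr_{\Tilde{U}}^m S_D^{\cdot})$ and of the symbol image obtained in Lemmas~\ref{Lem5} and~\ref{Lem8} with a direct Mittag--Leffler estimate on a single ``modulus-defect'' sheaf. Everything is \'etale-local, since the vanishing of a transition map of a projective system of sheaves may be checked on stalks, so I work under Assumption~\ref{Asum*} and in the local situation of \S\ref{Loc}. The first, purely formal, reduction is this: by construction $U^m\mathcal{H}^q(S_D^{\cdot})$ is the image of $U^m\big((1+I_{D_2})^{\times}\otimes(M_{X_2}^{gp})^{\otimes(q-1)}\big)$ under the symbol map, so $\gr^m\Symb_{X|D}$ factors as a surjection $\gr_U^m\big((1+I_{D_2})^{\times}\otimes(M_{X_2}^{gp})^{\otimes(q-1)}\big)\twoheadrightarrow\gr_U^m\mathcal{H}^q(S_D^{\cdot})$ followed by $\alpha_{m,D}$; hence $\Coker\big(\gr^m\Symb_{X|D}\big)=\Coker(\alpha_{m,D})$, and it suffices to prove that for each $m$ the projective system $\{\Coker(\alpha_{m,D})\}$, indexed by the multiplicities of the prime components of $D$, is Mittag--Leffler zero.

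Next I carry out the case analysis over $m$ using Lemmas~\ref{Lem5} and~\ref{Lem8}. If $m\geq pe/(p-1)$, then $\gr_{\Tilde{U}}^m\mathcal{H}^q(S_D^{\cdot})=0$ (Corollary~\ref{Cor vanish}, Lemma~\ref{Lem5}(4), and the injectivity of $\beta_{m,D}$), so the cokernel vanishes; if $0<m<pe/(p-1)$ and $p\nmid m$, then $\alpha_{m,D}$ is an isomorphism by Lemma~\ref{Lem8}(2) and the Remark following it, so again the cokernel vanishes. In the remaining ranges — $m=0$, and $0<m<pe/(p-1)$ with $p\mid m$ — Lemma~\ref{Lem5}(1) and~(3) exhibit $\mathcal{H}^q(\gr_{\Tilde{U}}^m S_D^{\cdot})$ as an extension whose kernel sheaf is the modulus-defect sheaf
\[
\mathcal{M}_D^{q-1}:=\frac{\mathscr{O}_Y\otimes_{\mathscr{O}_{Z_1}}\omega_{Z_1|\mathscr{D}'_1}^{q-1}}{\mathscr{O}_Y\otimes_{\mathscr{O}_{Z_1}}\omega_{Z_1|\mathscr{D}_1}^{q-1}},
\]
while Lemma~\ref{Lem8}(1),(2) (together with Lemma~\ref{omega log} and Lemma~\ref{Lem2}(3) when $m=0$) shows that the image of $\gr_U^m\mathcal{H}^q(S_D^{\cdot})$, pushed into $\mathcal{H}^q(\gr_{\Tilde{U}}^m S_D^{\cdot})$ by the injection $\beta_{m,D}$, surjects onto the quotient term of that extension. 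Consequently $\Coker(\alpha_{m,D})$ is a subquotient of $\mathcal{M}_D^{q-1}$; and because the syntomic complexes, the filtrations $U^{\cdot}$ and $\Tilde{U}^{\cdot}$, the symbol maps, and the isomorphisms of Lemmas~\ref{Lem5} and~\ref{Lem8} are all functorial in the inclusions $\mathscr{O}_{Z_1}(-\Tilde{\mathscr{D}}_1)\hookrightarrow\mathscr{O}_{Z_1}(-\mathscr{D}_1)$ that define the transition maps, this identification is compatible with the transition maps of the projective system.

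It then remains to prove the purely local statement that $\{\mathcal{M}_D^{q-1}\}$ is Mittag--Leffler zero. Write $D=\sum_{\lambda}m_{\lambda}D_{\lambda}$, so $\mathscr{D}_1=\sum_{\lambda}m_{\lambda}\mathscr{D}_{\lambda}$ and $\mathscr{D}'_1=\sum_{\lambda}m'_{\lambda}\mathscr{D}_{\lambda}$ with $m'_{\lambda}=\min\{l:pl\geq m_{\lambda}\}$. Given $D$, let $\Tilde{D}$ be the divisor with multiplicities $\Tilde{m}_{\lambda}:=p\,m_{\lambda}$; then $\Tilde{m}_{\lambda}\geq m_{\lambda}$, so there is a transition map $\mathcal{M}_{\Tilde{D}}^{q-1}\to\mathcal{M}_D^{q-1}$ in the system, and $\Tilde{m}'_{\lambda}=m_{\lambda}$, i.e.\ $\Tilde{\mathscr{D}}'_1=\mathscr{D}_1$. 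Hence the numerator $\mathscr{O}_Y\otimes_{\mathscr{O}_{Z_1}}\omega_{Z_1|\Tilde{\mathscr{D}}'_1}^{q-1}=\mathscr{O}_Y\otimes_{\mathscr{O}_{Z_1}}\omega_{Z_1|\mathscr{D}_1}^{q-1}$ of $\mathcal{M}_{\Tilde{D}}^{q-1}$ is carried by the transition map precisely into the submodule $\mathscr{O}_Y\otimes_{\mathscr{O}_{Z_1}}\omega_{Z_1|\mathscr{D}_1}^{q-1}$ by which $\mathcal{M}_D^{q-1}$ is formed as a quotient of $\mathscr{O}_Y\otimes_{\mathscr{O}_{Z_1}}\omega_{Z_1|\mathscr{D}'_1}^{q-1}$; thus the transition map $\mathcal{M}_{\Tilde{D}}^{q-1}\to\mathcal{M}_D^{q-1}$ is zero. (In fact any $\Tilde{m}_{\lambda}\geq p(m_{\lambda}-1)+1$ suffices, using $pD'_s\geq D_s$.) Since $\Coker(\alpha_{m,D})$ is a transition-compatible subquotient of $\mathcal{M}_D^{q-1}$, the same $\Tilde{D}$ annihilates the transition map on cokernels, and the lemma follows.

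The step I expect to be the main obstacle is the case $m=0$: there one must reconcile the genuine symbol filtration $U^{\cdot}$ with the auxiliary grading $\gr_{\Tilde{U}}^0$, i.e.\ bound $\Coker(\alpha_{0,D})$, by carefully combining Lemma~\ref{Lem5}(1), Lemma~\ref{Lem8}(1), Lemma~\ref{omega log} and Lemma~\ref{Lem2}(3) so as to see that the $\omega_{Y|D_s,\log}^{\bullet}$-contributions (notably those coming from the $\pi$-symbols, which lie in $V^0$) are entirely absorbed and only the defect sheaf $\mathcal{M}_D^{q-1}$ can survive; the $p\mid m$ case is a lighter instance of the same bookkeeping, and the Mittag--Leffler estimate on $\{\mathcal{M}_D^{q-1}\}$ itself is brief.
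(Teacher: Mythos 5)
Your proposal is correct and follows essentially the same route as the paper: you use the surjectivity of the symbol image onto the quotient term of the extensions in Lemma \ref{Lem5} (via the explicit formulas of Lemma \ref{Lem8}) to identify $\Coker(\gr^m{\rm Symb}_{X|D})$ with a subquotient of the defect sheaf $\frac{\mathscr{O}_Y\otimes_{\mathscr{O}_{Z_1}}\omega^{q-1}_{Z_1|\mathscr{D}'_1}}{\mathscr{O}_Y\otimes_{\mathscr{O}_{Z_1}}\omega^{q-1}_{Z_1|\mathscr{D}_1}}$ (the paper's $\mathcal{L}'^m\subset\mathcal{L}_0'^m$), which is Mittag--Leffler zero, with the cases $p\nmid m$ and $m\geq pe/(p-1)$ handled by vanishing exactly as in the paper. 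Your only addition is the explicit verification (taking $\tilde{D}$ with multiplicities $pm_\lambda$, so $\tilde{\mathscr{D}}'_1=\mathscr{D}_1$) that the transition maps kill this defect sheaf, a point the paper asserts without computation.
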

\begin{proof}
We have the following commutative diagram:\\
{\footnotesize  \[
\xymatrix{
&&0 \ar[d]&& \\
&&\mathcal{L}'^m\ar@{->>}[ld]_{(*)} \ar[d]&&\\
0&\Coker(\gr^m{\rm Symb}_{X|D})\ar[l]&\gr_{\Tilde{U}}^m\mathcal{H}^q(S_{D}^{\cdot})\ar[l]\ar[d]&\gr_{U}^m\big((1+I_{D_2})^{\times}\otimes(M_{X_2}^{gp})^{\otimes(q-1)}\big)\ar@{->>}[ld]^{(**)} \ar[l]&\\
&&\mathfrak{D} \ar[d]&&\\
&&0&&
}\]
}
where the vertical and horizontal sequences is exact.  Here we put 
 \[ \mathfrak{D}:=\begin{cases}  \Ker\Big(Z^q\big( \mathscr{O}_Y \otimes_{\mathscr{O}_{Z_1}} \omega_{Z_1|\mathscr{D}_1}^{\cdot}\big)\xrightarrow{1-\varphi\otimes \land^{q}\frac{d\varphi}{p}} \mathcal{H}^q\big( \mathscr{O}_Y \otimes_{\mathscr{O}_{Z_1}} \omega_{Z_1|\mathscr{D}_1}^{\cdot} \big)\Big)\ {\rm if}\ m=0,&\\
 B^q\left((T^m\mathscr{O}_{Z_1}/T^{m+1}\mathscr{O}_{Z_1})\otimes \omega^{\cdot}_{Z_1|\mathscr{D}_1}\right)\ \ \ \ \ \ {\rm if}\ 0 <m<pe/(p-1),&\\
 
 \end{cases}\]

{\footnotesize \[ \mathcal{L}_0^{'m}:=\begin{cases}
\Ker\left(\mathcal{H}^q(\gr_{\tilde{U}}^0S_{D}^{\cdot}) \twoheadrightarrow \Ker\Big(Z^q\big( \mathscr{O}_Y \otimes_{\mathscr{O}_{Z_1}} \omega_{Z_1|\mathscr{D}_1}^{\cdot}\big)\xrightarrow{1-\varphi\otimes \land^{q}\frac{d\varphi}{p}} \mathcal{H}^q\big( \mathscr{O}_Y \otimes_{\mathscr{O}_{Z_1}} \omega_{Z_1|\mathscr{D}_1}^{\cdot} \big)\Big)\right)\ {\rm if}\ m=0,& \\
\Ker\left(\mathcal{H}^q(\gr_{\tilde{U}}^mS_{D}^{\cdot}) \twoheadrightarrow  B^q\left((T^m\mathscr{O}_{Z_1}/T^{m+1}\mathscr{O}_{Z_1})\otimes \omega^{\cdot}_{Z_1|\mathscr{D}_1}\right)\right)\ \ \ \ \ \ {\rm if}\ 0 <m<pe/(p-1),\  p|m,&\\
0\ \ \ \ \ \  {\rm if}\ 0 <m<pe/p-1,\  p \nmid m,&
\end{cases}\]}
\[\mathcal{L}'^m:= \mathcal{L}_0^{'m} \cap \gr_{\Tilde{U}}^m\mathcal{H}^q(S_{D}^{\cdot}).\]

The morphism $(**)$ is constructed in Lemma \ref{Lem8}. They are surjective by the explicit assignments in Lemma \ref{Lem8}. 
We have 
\[\mathcal{L}_0^{'m} \cong \frac{\mathscr{O}_Y\otimes_{\mathscr{O}_{Z_1}}\omega^{q-1}_{Z_1|\mathscr{D}'_1}}{\mathscr{O}_Y\otimes_{\mathscr{O}_{Z_1}}\omega^{q-1}_{Z_1|\mathscr{D}_1}}\ \ ({\rm if}\ 0 \leq m \leq pe/(p-1),\ p|m)\]
by Lemma \ref{Lem5} $(1)$ and $(3)$.

Thus $\mathcal{L}_0^{'m}$ is Mittag-Leffler zero with respect to the multiplicities of the prime components of $D$. Since $(**)$ is surjective, $(*)$ is also surjective. Then $\mathcal{L}'^m$ is Mittag-Leffler zero. Hence {\footnotesize$\Coker(\gr^m\Symb_{X|D})$} is also Mittag-Leffler zero. This completes the proof.
\end{proof}

\begin{Lem}\label{compU} The kernel and the cokernel of the morphisms \[\alpha_{m,D}: \gr_{U}^m\mathcal{H}^{q}(S_{D}^{\cdot}) \rightarrow \gr_{\Tilde{U}}^m\mathcal{H}^{q}(S_{D}^{\cdot})\] and the cokernel of 
\[ U^{m}\mathcal{H}^{q}(S_{D}^{\cdot}) \hookrightarrow  \tilde{U}^{m}\mathcal{H}^{q}(S_{D}^{\cdot})\]
are Mittag-Leffler zero  with respect to the multiplicities of the prime components of $D$.
\end{Lem}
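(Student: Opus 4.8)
The plan is to compare the two filtrations $U^{\cdot}$ and $\tilde U^{\cdot}$ on $\mathcal H^q(S_D^{\cdot})$ degree by degree, using the explicit computations already available. First I would record the two a priori facts we have: Lemma \ref{Lem3} gives the inclusion $U^m\mathcal H^q(S_D^{\cdot})\subset\tilde U^m\mathcal H^q(S_D^{\cdot})$, so $\alpha_{m,D}$ is well defined, and Corollary \ref{Cor1} identifies $\tilde U^m\mathcal H^q(S_D^{\cdot})$ with $\mathcal H^q(\tilde U^mS_D^{\cdot})$. Hence it suffices to control, for each $m$ with $0\le m<pe/(p-1)$ (the range where $\gr_{\tilde U}^m\ne 0$ by Corollary \ref{Cor vanish}), the cokernel of $U^m\mathcal H^q\hookrightarrow\tilde U^m\mathcal H^q$ and the kernel/cokernel of $\alpha_{m,D}$ on the associated graded. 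The cases $m=0$, $0<m<pe/(p-1)$ with $p\nmid m$, and $0<m<pe/(p-1)$ with $p\mid m$ should be treated separately, exactly as the structure of Lemma \ref{Lem5} and Lemma \ref{Lem8} dictates.

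Next I would run the comparison on the associated graded pieces. For $p\nmid m$ (and $0<m<pe/(p-1)$) the Remark after Lemma \ref{Lem8} already shows $\alpha_{m,D}$ and $\beta_{m,D}$ are isomorphisms, so there is nothing to prove: the cokernel is zero, a fortiori Mittag-Leffler zero. For $m=0$ and for $p\mid m$, the map $\beta_{m,D}\alpha_{m,D}$ followed by the surjection $(a)$ (resp. $(b)$) of Lemma \ref{Lem5} is surjective by the explicit symbol computation of Lemma \ref{Lem8}, so $\Coker(\alpha_{m,D})$ injects into $\mathcal L_0^{\prime m}$, the kernel of that surjection. By Lemma \ref{Lem5} $(1)$ and $(3)$ this kernel is isomorphic to
\[
\frac{\mathscr O_Y\otimes_{\mathscr O_{Z_1}}\omega^{q-1}_{Z_1|\mathscr D'_1}}{\mathscr O_Y\otimes_{\mathscr O_{Z_1}}\omega^{q-1}_{Z_1|\mathscr D_1}},
\]
which is annihilated once the multiplicities of the components of $D$ are large enough relative to those of $D'$ (this is precisely the Mittag-Leffler-zero phenomenon already exploited in Lemma \ref{CokernelSymb}). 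Since $\mathscr O_Y(-D'_s)\subset\mathscr O_Y(-D_s)$ with the quotient killed by raising all multiplicities, $\mathcal L_0^{\prime m}$, hence $\Coker(\alpha_{m,D})$, is Mittag-Leffler zero with respect to the multiplicities of the prime components of $D$. For $\Ker(\alpha_{m,D})$ one argues the same way: an element in the kernel of $\alpha_{m,D}$ maps to zero in $\gr_{\tilde U}^m$, and by the injectivity of $\beta_{m,D}$ and the exact sequences of Lemma \ref{Lem5} it is detected by the same modulus quotient, which is Mittag-Leffler zero.

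Finally I would deduce the statement for the non-graded map $U^m\mathcal H^q(S_D^{\cdot})\hookrightarrow\tilde U^m\mathcal H^q(S_D^{\cdot})$ by descending induction on $m$: for $m$ with $pe/(p-1)< m\le pe$ both sides vanish by Corollary \ref{Cor vanish}; assuming the cokernel at level $m+1$ is Mittag-Leffler zero, the snake lemma applied to
\[
0\to U^{m+1}\mathcal H^q\to U^m\mathcal H^q\to\gr_U^m\mathcal H^q\to 0,\qquad 0\to\tilde U^{m+1}\mathcal H^q\to\tilde U^m\mathcal H^q\to\gr_{\tilde U}^m\mathcal H^q\to 0
\]
(using Lemma \ref{Lem7} for injectivity of the right-hand vertical maps on the $\tilde U$ side) shows $\Coker(U^m\hookrightarrow\tilde U^m)$ is an extension of $\Coker(\alpha_{m,D})$ by $\Coker(U^{m+1}\hookrightarrow\tilde U^{m+1})$, both Mittag-Leffler zero, hence Mittag-Leffler zero. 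I expect the main obstacle to be bookkeeping: making the "with respect to the multiplicities of the prime components of $D$" condition uniform across all $m$ simultaneously, i.e. checking that a single sufficiently divisible shift of the multiplicity vector $\mathbf m$ kills all the quotients $\mathscr O_Y(-D_s)/\mathscr O_Y(-D'_s)$ appearing at every relevant level $m$ at once — but since there are only finitely many such $m$ (the filtration is finite by Theorem \ref{Main result} $(4)$), this is a finite intersection of cofinal conditions and causes no real difficulty.
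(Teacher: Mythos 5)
Your route is, in substance, the paper's: the paper proves this by the very commutative diagram you write down in your last paragraph, quoting Lemma \ref{Lem3} for the injectivity of the left and middle vertical maps, Corollary \ref{Cor vanish} for the base case at $m=pe/(p-1)$, and Lemma \ref{CokernelSymb} (which packages exactly your graded-piece computation with Lemma \ref{Lem5} and Lemma \ref{Lem8}, since $\gr_{U}^{m}\mathcal{H}^{q}(S_{D}^{\cdot})$ is generated by symbols, so $\Coker(\alpha_{m,D})$ is a quotient of $\Coker(\gr^{m}{\rm Symb}_{X|D})$) for the Mittag--Leffler vanishing of the cokernels, and then concludes by induction on $m$.

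There is, however, a genuine gap in your treatment of $\Ker(\alpha_{m,D})$. You claim an element of the kernel is ``detected by the same modulus quotient'' via the injectivity of $\beta_{m,D}$ and the exact sequences of Lemma \ref{Lem5}; but a class in $\Ker(\alpha_{m,D})$ has, by definition, zero image in $\gr_{\tilde{U}}^{m}\mathcal{H}^{q}(S_{D}^{\cdot})$, so $\beta_{m,D}$ and the sequences of Lemma \ref{Lem5} see nothing of it --- they constrain the image of $\alpha_{m,D}$, not its kernel. Concretely $\Ker(\alpha_{m,D})=(U^{m}\mathcal{H}^{q}\cap\tilde{U}^{m+1}\mathcal{H}^{q})/U^{m+1}\mathcal{H}^{q}$, and the only available handle on it is the diagram you set up at the end: since the left and middle vertical arrows are injective (Lemma \ref{Lem3}), the snake lemma gives the four-term exact sequence $0\to\Ker(\alpha_{m,D})\to\Coker\bigl(U^{m+1}\mathcal{H}^{q}\hookrightarrow\tilde{U}^{m+1}\mathcal{H}^{q}\bigr)\to\Coker\bigl(U^{m}\mathcal{H}^{q}\hookrightarrow\tilde{U}^{m}\mathcal{H}^{q}\bigr)\to\Coker(\alpha_{m,D})\to 0$, so the kernel is bounded by the cokernel of the inclusion one level up and must be absorbed into the same descending induction (base case: all terms vanish for $m>pe/(p-1)$ by Corollary \ref{Cor vanish}). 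As written, you apply the induction only to $\Coker(U^{m}\hookrightarrow\tilde{U}^{m})$, and your reading of the snake lemma silently drops the $\Ker(\alpha_{m,D})$ term: the middle cokernel is an extension of $\Coker(\alpha_{m,D})$ by the quotient of $\Coker(U^{m+1}\hookrightarrow\tilde{U}^{m+1})$ by $\Ker(\alpha_{m,D})$, not by the full cokernel. Running the induction jointly for the kernel and the two cokernels repairs this and recovers the paper's proof; the per-level ``detection'' argument for the kernel does not work on its own. (Minor point: the appeal to Lemma \ref{Lem7} is not what makes the bottom row exact --- that row is exact by the definition of $\gr_{\tilde{U}}^{m}$; Lemma \ref{Lem7} enters earlier, in Corollary \ref{Cor1}.)
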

\begin{proof}We consider the following commutative diagram:\\
$$
\begin{CD}   
0 @>>> U^{m+1}\mathcal{H}^{q}(S_{D}^{\cdot})@>>> U^{m}\mathcal{H}^{q}(S_{D}^{\cdot})@>>>\gr_{U}^{m}\mathcal{H}^{q}(S_{D}^{\cdot})@>>>0\\
@.@VVV @VVV@VVV\\
0@>>>\Tilde{U}^{m+1}\mathcal{H}^{q}(S_{D}^{\cdot})@>>> \Tilde{U}^{m}\mathcal{H}^{q}(S_{D}^{\cdot})@>>> \gr_{\Tilde{U}}^{m}\mathcal{H}^{q}(S_{D}^{\cdot})@>>>0
\end{CD}
$$\\
The left and central vertical morphism is injective by Lemma \ref{Lem3}. If $m > pe/(p-1)$, the claim is trivial. We assume that $0 \leq m \leq pe/(p-1)$. If $m=pe/(p-1)$, the right vertical morphism is injective by Corollaly \ref{Cor vanish} and the cokernel of $\alpha_{m,D}$ is Mittag-Leffler zero from Lemma \ref{CokernelSymb}. We can easily show the assertion by induction on $m$.
\end{proof}


\section{\textbf{Calculation of $\mathcal{H}^q(s_1(r)_{X|D})$ for $0 \leq r < q \leq p-2$}}
In this section, for $0 \leq q< r \leq p-2$, we will calculate the cohomology sheaf $\mathcal{H}^q(s_1(r)_{X|D})$ by a similar computations as in \cite[Appendix]{Tsu2}. 
The setting remains as in \S \ref{Loc}. We keep the assumption $p \geq 3$ in the following sections.

We define a descending filtration $\Tilde{\mathfrak{U}}^m, m\in\mathbb{N}$ on $s_1(r)_{X|D}$ for an integer $0 \leq r \leq p-2$ as follows:
we define the filtration $\Tilde{\mathfrak{U}}^m, m\in\mathbb{N}$ on $\mathscr{O}_{\mathscr{E}_1}$ (resp. $J_{\mathscr{E}_1}^{[r]}$ ($r \leq p-2$)) by 
\[T^m\mathscr{O}_{\mathscr{E}_1}+J_{\mathscr{E}_1}^{[p]}\quad( {\rm resp.}\;T^{ \max\{er+\lceil \frac{m}{p}\rceil,m\}}\mathscr{O}_{\mathscr{E}_1}+J_{\mathscr{E}_1}^{[p]} ).\]
Here $\lceil x \rceil$ for $x \in \mathbb{R}$ denotes the smallest integer $\geq x$. We can easy to see that the morphism $1, \varphi_r : J_{\mathscr{E}_1}^{[r-\cdot]}\otimes \omega_{Z_1|\mathscr{D}_1}^{\cdot} \rightarrow \mathscr{O}_{\mathscr{E}_1}\otimes \omega_{Z_1|\mathscr{D}_1}^{\cdot}$ are compatible with $\Tilde{\mathfrak{U}}^m$.
We define the filtration ${\Tilde{\mathfrak{U}}}^{m}$ on $s_1(r)_{X|D}$ to be the mapping fiber of  $1-\varphi_q: {\Tilde{\mathfrak{U}}}^{m}(J_{\mathscr{E}_1}^{[r-\cdot]})\otimes \omega_{Z_1|\mathscr{D}_1}^{\cdot} \rightarrow {\Tilde{\mathfrak{U}}}^{m}(\mathscr{O}_{\mathscr{E}_1})\otimes \omega_{Z_1|\mathscr{D}_1}^{\cdot}$.

{\begin{Lem}\label{generalLem1}(cf. {\rm \cite[Lemma A.8]{Tsu2}}) Let $m$ be a non-negative integer.
For $a\in k^{*}$, the  homomorphism 
\begin{equation}
1-a^pC^{-1}:Z^q\big(\mathscr{O}_Y \otimes \omega_{Z_1|\mathscr{D}_1}^{\cdot}\big)\rightarrow \mathcal{H}^q\big(\mathscr{O}_Y \otimes \omega_{Z_1|\mathscr{D}_1}^{\cdot}  \big) 
\end{equation}
is surjective. Its kernel $\mathcal{K}$ is the subsheaf of abelian groups of $Z^q\big(\mathscr{O}_Y \otimes \omega_{Z_1|\mathscr{D}_1}^{\cdot}\big)$ generated by local sections of the form 
 \begin{equation}
x \otimes d\log(a_1)\land d\log(a_2)\land\cdots\land d\log(a_q), \end{equation}
\begin{equation*}(x \in \Ker(1-a^p\varphi:\mathscr{O}_Y\rightarrow \mathscr{O}_Y),  a_1 \in 1+\mathscr{O}_{Z_1}(-\mathscr{D}_1), a_2\dotsc, a_q \in M_{Z_1}^{gp}\big).
\end{equation*}
and there is a short exact sequence
\begin{equation}\label{a^p-ses}
0 \rightarrow \Ker\big(1-a^pC^{-1}:Z_{Y|D}^{q-1} \longrightarrow \mathcal{H}^{q-1}(\omega_{Y|D}^{\cdot})\big)  \rightarrow \mathcal{K} \rightarrow \Ker\big(1-a^pC^{-1}:Z_{Y|D}^{q} \longrightarrow \mathcal{H}^{q}(\omega_{Y|D}^{\cdot})\big) \rightarrow 0
\end{equation}
which is characterized by the following properties: \\
For $a_1 \in 1+\mathscr{O}_{Z_1}(-\mathscr{D}_1), a_2, \dotsc, a_q \in M_{Z_1}^{gp}$ and $x \in  \Ker(1-a^p\varphi:\mathscr{O}_Y\rightarrow \mathscr{O}_Y)$, the image of 
\begin{equation}
x\otimes d\log(a_1)\land d\log(a_2) \land \cdots \land d\log(a_q)\in \mathcal{K} 
\end{equation}
in the right term is $d\log(\overline{a_1})\land d\log(\overline{a_2}) \land \cdots \land d\log(\overline{a_q})$, and 
\begin{equation}
x\otimes d\log(a_1)\land d\log(a_2) \land\cdots\land d\log(a_{q-1})\land d\log T\in \mathcal{K} 
\end{equation}
is the image of $d\log(\overline{a_1})\land d\log(\overline{a_2}) \land\cdots\land d\log(\overline{a_{q-1}})$ in the left term, where $\overline{a_i}$ denote the images of $a_i$ in $M_Y^{gp}$.
\end{Lem}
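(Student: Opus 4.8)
The plan is to follow the pattern of the proof of \cite[Lemma A.8]{Tsu2} (which is itself the twisted version of Lemma \ref{Lem2} (3) above), carrying the modulus divisor $\mathscr{D}_1$ along at every step. First I would establish the surjectivity of $1-a^pC^{-1}$ on $Z^q\big(\mathscr{O}_Y \otimes \omega_{Z_1|\mathscr{D}_1}^{\cdot}\big)$. As in Lemma \ref{Lem2} (3), it suffices to check this on the strict henselisation of a local ring of $Y$; there one writes the problem as an Artin--Schreier type equation and solves it by a successive-approximation argument using the Cartier isomorphism $C^{-1}: \omega_{Y|D_s'}^q \xrightarrow{\cong} \mathcal{H}^q(\omega_{Y|D_s}^{\cdot})$ from Lemma \ref{Omega}. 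The coefficient $a^p\in k^{*}$ causes no trouble because $a \mapsto a^p$ is bijective on the perfect field $k$; the new ingredient compared to \cite{Tsu2} is that the section to be produced must lie in the submodule twisted by $\mathscr{O}_Y(-D_s)$, which is exactly what the modulus-version Fact (\cite[Lemma 1.2.2]{JSZ}) invoked in the proof of Lemma \ref{omega log} provides.

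Next I would identify the kernel $\mathcal{K}$. The inclusion ``$\supseteq$'' is immediate: for $x \in \Ker(1-a^p\varphi:\mathscr{O}_Y\rightarrow \mathscr{O}_Y)$ and $a_1\in 1+\mathscr{O}_{Z_1}(-\mathscr{D}_1)$, $a_2,\dots,a_q\in M_{Z_1}^{gp}$, the element $x \otimes d\log a_1\land\cdots\land d\log a_q$ is a cocycle in $\omega_{Z_1|\mathscr{D}_1}^{q}$ (because $d\log a_1$ already lies in the $\mathscr{O}_Y(-D_s)$-twisted part, using $d\log(1+u)=(1+u)^{-1}du$ with $u\in\mathscr{O}_Y(-D_s)$), and by the Cartier description (cf.\ the Lemma on $\tfrac{d\varphi}{p}$ following Lemma \ref{Lem1}, which gives $\varphi\otimes\land^q\tfrac{d\varphi}{p}(x\otimes\omega)\equiv x^p\otimes\omega$) it is killed by $1-a^pC^{-1}$. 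For the reverse inclusion I would run the filtration argument: decompose $\omega_{Z_1|\mathscr{D}_1}^{\cdot}$ via the $\land d\log T$ exact sequence \eqref{omega} into the part over $V_1$ and the residual part, and use that on $Z^q_{Y|D_s}$ the map $1-a^pC^{-1}$ has the explicit logarithmic-differential kernel coming from the equi-characteristic statement $\omega_{Y|D_s,\log}^q = \Ker(1-C^{-1})$ in Lemma \ref{omega log} and Proposition \ref{diflog} (twisted by the Artin--Schreier coefficient $a$).

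Finally, the short exact sequence \eqref{a^p-ses} is obtained from the same commutative diagram that produces \eqref{K} in Lemma \ref{Lem2} (3): namely the snake lemma applied to
\[
\xymatrix@M=8pt{
0\ar[r]&Z_{Y|D_s}^{q-1}\ar[r]^-{\land d\log T}\ar[d]^-{1-a^pC^{-1}}&Z^{q-1}(\mathscr{O}_Y\otimes\omega_{Z_1|\mathscr{D}_1}^{\cdot})\ar[r]\ar[d]^-{1-a^p\varphi\otimes\land^{q}\tfrac{d\varphi}{p}}&Z_{Y|D_s}^{q}\ar[r]\ar[d]^-{1-a^pC^{-1}}&0\\
\mathcal{H}^{q-1}(\omega_{Y|D_s}^{\cdot})\ar[r]&\mathcal{H}^{q-1}(\omega_{Y|D_s}^{\cdot})\ar[r]^-{\land d\log T}&\mathcal{H}^{q-1}(\mathscr{O}_Y\otimes\omega_{Z_1|\mathscr{D}_1}^{\cdot})\ar[r]&\mathcal{H}^q(\omega_{Y|D_s}^{\cdot})\ar[r]&\mathcal{H}^q(\omega_{Y|D_s}^{\cdot}),
}
\]
where the top row is the case $m=0$ of \eqref{T^mseq}; surjectivity of the outer vertical maps follows from the surjectivity statement just proved, hence the middle one is surjective too, and the kernel sequence is \eqref{a^p-ses}. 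The characterisation of the two structure maps (sending a symbol to the appropriate $d\log$ wedge, with or without the $d\log T$ factor) is read off from this diagram exactly as in Lemma \ref{Lem2} (3). I expect the main obstacle to be the reverse inclusion in the identification of $\mathcal{K}$: carrying the twisted-log-differential computation of Proposition \ref{diflog} through the Artin--Schreier coefficient $a^p$ and checking that no non-logarithmic cocycles survive requires repeating the local generator analysis of \cite[Proposition 1.2.3]{JSZ} in the presence of the factor $a$, and verifying that the generators can always be chosen with $a_1\in 1+\mathscr{O}_{Z_1}(-\mathscr{D}_1)$ rather than merely in $M^{gp}_{Z_1}$.
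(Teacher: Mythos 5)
Your overall architecture (surjectivity, kernel generators, then the snake lemma applied to the $\land\, d\log T$ diagram) mirrors the untwisted case, and the final step is indeed exactly how the paper gets \eqref{a^p-ses}: it says the sequence follows "by the same argument as the proof of the exactness of \eqref{K} in Lemma \ref{Lem2}(3)". But the paper disposes of the coefficient $a^p$ at the very start instead of carrying it through: since $k$ has characteristic $p$, the equation $b^{p-1}=a$ is separable, so a root $b:=a^{1/(p-1)}$ exists \'etale locally on $\Spec(k)$, and because $C^{-1}$ (resp.\ $\varphi$) is $p$-semilinear in the $\mathscr{O}_Y$-coefficient, multiplication by $b^{-p}$ gives commutative squares identifying $1-a^pC^{-1}$ with $1-C^{-1}$ on $Z^q_{Y|D_s}$ and on $Z^q\big(\mathscr{O}_Y\otimes\omega^{\cdot}_{Z_1|\mathscr{D}_1}\big)$. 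After this untwisting, surjectivity, the generator description of $\mathcal{K}$ (the twisted constants $x\in\Ker(1-a^p\varphi)$ are \'etale locally $b^{-p}\cdot\mathbb{F}_p$, which is exactly where the coefficient $x$ in the statement comes from), and the exact sequence all follow verbatim from Lemma \ref{Lem2}(3) and Lemma \ref{omega log}, with no new local computation.

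This matters because the step you yourself flag as "the main obstacle" — redoing the generator analysis of \cite[Proposition 1.2.3]{JSZ} in the presence of the factor $a^p$ to pin down $\Ker(1-a^pC^{-1})$ on $Z^q_{Y|D_s}$ — is left unexecuted in your proposal, and it is precisely what the reduction to $a=1$ renders unnecessary. Your justification for surjectivity is also not quite right as stated: bijectivity of $a\mapsto a^p$ on $k$ is irrelevant, since the Artin--Schreier-type equation to be solved lives in sections of $\mathscr{O}_Y$ over a strictly henselian local ring; what one actually uses is either the untwisting above, or the observation that the twisted equation $a^pt^p-t=c$ still defines an \'etale covering (its derivative is $-1$) and hence has solutions, with the modulus constraint handled by the JSZ Fact as in Lemma \ref{omega log}. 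So your route can be completed, but to close the gap you should either carry out the twisted analogues of Lemma \ref{omega log} and Proposition \ref{diflog} in detail, or, much more economically, insert the \'etale-local substitution $b=a^{1/(p-1)}$ at the outset as the paper does and quote the case $a=1$.
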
}
\begin{proof}We can obtain this Lemma in the same way as Lemma \ref{Lem2} $(3)$. 
We can reduce to the case $a=1$ by the commutative diagrams (cf. \cite[Lemma A8]{Tsu2}): 
{\tiny\[
\xymatrix@M=3pt{
Z^q_{Y|D_s}\ar[r]^-{1-a^pC^{-1}}& \mathcal{H}^q(\omega^{\cdot}_{Y|D_s}) &\\
Z^q_{Y|D_s}\ar[r]^-{1-C^{-1}}\ar[u]^-{b^{-p}}_{\cong}& \mathcal{H}^q(\omega^{\cdot}_{Y|D_s})\ar[u]^-{b^{-p}}_{\cong}, &  
}
\xymatrix@M=3pt{
Z^q((\mathscr{O}_{Z_1}/T\mathscr{O}_{Z_1})\otimes \omega^{\cdot}_{Z_1|\mathscr{D}_1})\ar[r]^-{1-a^pC^{-1}}& \mathcal{H}^q((\mathscr{O}_{Z_1}/T\mathscr{O}_{Z_1})\otimes \omega^{\cdot}_{Z_1|\mathscr{D}_1}) &\\
Z^q((\mathscr{O}_{Z_1}/T\mathscr{O}_{Z_1})\otimes \omega^{\cdot}_{Z_1|\mathscr{D}_1})\ar[r]^-{1-C^{-1}}\ar[u]^-{b^{-p}}_{\cong}& \mathcal{H}^q((\mathscr{O}_{Z_1}/T\mathscr{O}_{Z_1})\otimes \omega^{\cdot}_{Z_1|\mathscr{D}_1})\ar[u]^-{b^{-p}}_{\cong},&  
}
\]}
where we put $b:=a^{\frac{1}{p-1}}$ which exists \'etale locally on $\Spec(k)$. The short exact sequence \eqref{a^p-ses} is obtained by the same argument as the proof of the exactness of \eqref{K} in Lemma \ref{Lem2} $(3)$.\end{proof}

\begin{Lem}\label{Lemq<r}(cf. {\rm Lemma \ref{Lem5},\   \cite[Lemma A9]{Tsu2}})
Let $q$ and $r$ be integers such that $0 \leq q \leq r \leq p-2$. 
The map \[\mathcal{H}^q(\gr^m_{\tilde{\mathfrak{U}}}(1-\varphi_r)):\;\mathcal{H}^q\big(\gr_{\Tilde{\mathfrak{U}}}^m( J_{\mathscr{E}_1}^{[r-\cdot]}\otimes \omega_{Z_1|\mathscr{D}_1}^{\cdot} )\big)\longrightarrow \mathcal{H}^q\big(\gr_{\Tilde{\mathfrak{U}}}^m( \mathscr{O}_{\mathscr{E}_1} \otimes \omega_{Z_1|\mathscr{D}_1}^{\cdot})\big)\quad\cdots(\bigstar).\] is 
surjective without the case $ep(r-q)/(p-1)<m<ep(r-q+1)/(p-1), p|m$  and its kernels are follows:
\begin{enumerate}
\item If $m < ep(r-q)/(p-1)$ or $m \geq ep(r-q+1)/(p-1)$, then $(\bigstar)$ is an isomorphism.
\item If $m= ep(r-q)/(p-1)$, then the kernel of $(\bigstar)$ is isomorphic to the kernel of \[1-a_0^{p(r-q)}\cdot C^{-1}: Z^q\big( (\mathscr{O}_{Z_1}/T\mathscr{O}_{Z_1})\otimes_{\mathscr{O}_{Z_1}} \omega_{Z_1|\mathscr{D}_1}^{\cdot}\big)\longrightarrow\mathcal{H}^q\big( (\mathscr{O}_{Z_1}/T\mathscr{O}_{Z_1}) \otimes_{\mathscr{O}_{Z_1}} \omega_{Z_1|\mathscr{D}_1}^{\cdot} \big), \] where $a_0:=N_{\mathscr{O}_{\hat{K}/W}}(-\pi)\cdot p^{-1}\mod p\in k^*$.
\item Suppose $ep(r-q)/(p-1)<m<ep(r-q+1)/(p-1)$, then the kernel of $(\bigstar)$ is isomorphic to $B^q\big((\frac{T^m\mathscr{O}_{Z_1}}{T^{m+1}\mathscr{O}_{Z_1}})\otimes \omega_{Z_1|\mathscr{D}_1}\big)$. 
\end{enumerate}
\end{Lem}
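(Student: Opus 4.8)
The plan is to mimic the structure of the proof of Lemma \ref{Lem5}, carrying out a careful bookkeeping of how the Frobenius ``divided by $p^r$'' acts on the graded pieces of the filtration $\Tilde{\mathfrak{U}}^{\bullet}$. First I would unwind the definitions: since $\Tilde{\mathfrak{U}}^m J_{\mathscr{E}_1}^{[r]}$ is $T^{\max\{er+\lceil m/p\rceil,\,m\}}\mathscr{O}_{\mathscr{E}_1}+J_{\mathscr{E}_1}^{[p]}$ while $\Tilde{\mathfrak{U}}^m\mathscr{O}_{\mathscr{E}_1}=T^m\mathscr{O}_{\mathscr{E}_1}+J_{\mathscr{E}_1}^{[p]}$, the graded pieces $\gr_{\Tilde{\mathfrak{U}}}^m\big(J_{\mathscr{E}_1}^{[r-j]}\otimes\omega^j_{Z_1|\mathscr{D}_1}\big)$ are expressible in terms of $\big(T^{m'}\mathscr{O}_{Z_1}/T^{m'+1}\mathscr{O}_{Z_1}\big)\otimes\omega^j_{Z_1|\mathscr{D}_1}$ with $m'=\max\{e(r-j)+\lceil m/p\rceil,\,m\}$, and likewise for $\mathscr{O}_{\mathscr{E}_1}$ with $m'=m$. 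The comparison $m\lessgtr e(r-j)+m/p$ is governed by $m\lessgtr pe(r-j)/(p-1)$; since cohomology is concentrated in degree $q$ one only has to understand the three adjacent degrees $j=q-1,q,q+1$, and the differential on these graded pieces vanishes precisely in the relevant ranges of $m$ (as in \eqref{(424)}--\eqref{H^qB}). So the first block of work is to produce the analogues of \eqref{(424)}--\eqref{H^qB} for general $r$.

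Next I would compute the map $(\bigstar)$ on each graded piece. The key point is the same computation used in the proof of Lemma \ref{Lem5}(ii)--(iv): on $\big(T^{e(r-q)+m/p}\mathscr{O}_{Z_1}/T^{\cdots}\big)\otimes\omega^{q}_{Z_1|\mathscr{D}_1}$ one has $\varphi_r(T^k\,\omega)=T^{pk}/p^{?}\cdot(\ldots)$, and the crucial numerical coincidence is that $\varphi_{r}$ raises the $T$-adic filtration by exactly the right amount so that, at the threshold $m=pe(r-q)/(p-1)$, the map becomes (up to the unit coming from $N_{\mathscr{O}_{\hat K}/W}(-\pi)\cdot p^{-1}$) the twisted Cartier operator $1-a_0^{p(r-q)}C^{-1}$. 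Here the scalar $a_0$ enters because iterating $\varphi_1$ on powers of $T$ produces factors of $\varphi_1(T)/T^p$-type constants, exactly as in \cite[Lemma A9]{Tsu2}. For $m$ below the threshold the source $\gr_{\Tilde{\mathfrak{U}}}^m J^{[r-\cdot]}$ is concentrated so that $Z^q$ of it vanishes or coincides with $Z^q$ of $\gr_{\Tilde{\mathfrak{U}}}^m\mathscr{O}_{\mathscr{E}_1}$, while for $m\ge pe(r-q+1)/(p-1)$ the operator $\varphi_r$ is nilpotent on the graded pieces (because $\varphi_1(J^{[p]})=0$ for $p\ge 3$ and $\varphi_1$ strictly raises the $T$-filtration there), giving bijectivity; this is the direct analogue of Lemma \ref{Lem6}. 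In the open interval, when $p\nmid m$ the source is acyclic away from degree giving an isomorphism, and when $p\mid m$ one gets the surjection onto $B^q\big((T^m\mathscr{O}_{Z_1}/T^{m+1}\mathscr{O}_{Z_1})\otimes\omega^{\cdot}_{Z_1|\mathscr{D}_1}\big)$ whose kernel is identified via the long exact sequence, just as in Lemma \ref{Lem5}(3) and the diagram $(\blacktriangle)$.

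Assembling these: from the long exact sequence associated to the mapping-fiber presentation of $\gr_{\Tilde{\mathfrak{U}}}^m s_1(r)_{X|D}$ (the analogue of \eqref{longexact}) one reads off, in case (1), that $(\bigstar)$ is an isomorphism of cohomology because either source and target agree or $1-\varphi_r$ is bijective; in case (2) one invokes the twisted inverse-Cartier Lemma \ref{Lemq<r}'s ancestor, namely Lemma \ref{generalLem1} with $a=a_0^{(r-q)}$ after applying the identity $C^{-1}$ on the right factor $r-q$ times, to identify the kernel; in case (3) one uses the description of $B^q\big((\frac{T^m\mathscr{O}_{Z_1}}{T^{m+1}\mathscr{O}_{Z_1}})\otimes\omega_{Z_1|\mathscr{D}_1}^{\cdot}\big)$ exactly as in Lemma \ref{Lem2}. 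I expect the main obstacle to be the precise tracking of the constant $a_0$ and of the exponents of $T$ under iterated application of $\varphi_1$ in the threshold case $m=pe(r-q)/(p-1)$ — one must verify that the extra twist by $r-q$ applications of $C^{-1}$ contributes exactly $a_0^{p(r-q)}$ and no spurious factor of $p$, which requires the divisibility $d\varphi(\omega^1_{Z_1})\subset p\cdot\omega^1_{Z_1}$ and the computation of $N_{\mathscr{O}_{\hat K}/W}(-\pi)\cdot p^{-1}\bmod p$; everything else is a formal transcription of the $r=q$ arguments already carried out in Section 4.
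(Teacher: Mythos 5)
Your overall strategy is indeed the paper's: compute the graded pieces of $\Tilde{\mathfrak{U}}^{\bullet}$ degreewise via the comparison $m \lessgtr e(r-j)+m/p$, use the Eisenstein--polynomial identity $\varphi_{r-q}\big(T^{e(r-q)}\big)\equiv a_0^{p(r-q)} \bmod T$ to produce the unit, and identify the threshold kernel through Lemma \ref{generalLem1} with $a=a_0^{r-q}$. The genuine problem is your case (3). For $ep(r-q)/(p-1)<m<ep(r-q+1)/(p-1)$ you assert that $(\bigstar)$ is an isomorphism when $p\nmid m$ and a ``surjection onto $B^q$'' when $p\mid m$; but the assertion to be proved is that $\ker(\bigstar)\cong B^q\big((T^m\mathscr{O}_{Z_1}/T^{m+1}\mathscr{O}_{Z_1})\otimes\omega^{\cdot}_{Z_1|\mathscr{D}_1}\big)$ throughout this interval, and this sheaf is in general nonzero. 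What you have written is the shape of Lemma \ref{Lem5}\,(3) (the surjection $\mathcal{H}^q(\gr^m_{\Tilde{U}}S_D^{\cdot})\twoheadrightarrow B^q$), i.e.\ a statement about a different map, and it does not identify $\ker(\bigstar)$. The mechanism actually needed is: for $j\geq q$ one has $\Tilde{\mathfrak{U}}^m\big(J^{[r-j]}_{\mathscr{E}_1}\big)=\Tilde{\mathfrak{U}}^m(\mathscr{O}_{\mathscr{E}_1})$, in degree $q-1$ the source filtration sits at the strictly deeper level $e(r-q+1)+\lceil m/p\rceil>m$, and $\varphi_{r-q}\big(\Tilde{\mathfrak{U}}^mJ^{[r-q]}_{\mathscr{E}_1}\big)\subset\Tilde{\mathfrak{U}}^{m+1}\mathscr{O}_{\mathscr{E}_1}$; hence on graded pieces $(\bigstar)$ becomes the canonical projection $Z^q\big((T^m/T^{m+1})\otimes\omega^{\cdot}_{Z_1|\mathscr{D}_1}\big)\rightarrow \mathcal{H}^q\big((T^m/T^{m+1})\otimes\omega^{\cdot}_{Z_1|\mathscr{D}_1}\big)$, whose kernel is $B^q$, with no dichotomy on $p\mid m$ and no appeal to Lemma \ref{Lem2} at this stage.

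Relatedly, you misplace the $(\blacktriangle)$-type argument: it belongs to case (1), subcase $m<ep(r-q)/(p-1)$ with $p\mid m$, not to the open interval. In that subcase it is not true that ``$Z^q$ of the source vanishes or coincides with $Z^q$ of $\gr^m\mathscr{O}_{\mathscr{E}_1}$''; the degree-$q$ graded piece of the source is $\big(T^{e(r-q)+m/p}/T^{e(r-q)+m/p+1}\big)\otimes\omega^{q}_{Z_1|\mathscr{D}_1}\cong\mathscr{O}_Y\otimes\omega^{q}_{Z_1|\mathscr{D}_1}$, and triviality of the kernel must be extracted from $\varphi_{r-q}\big(T^{e(r-q)+m/p}\big)=T^m\cdot z$ with $z\equiv a_0^{p(r-q)}\bmod T$ a unit, combined with the identification $T^m\big(\varphi\otimes\wedge^{q}\tfrac{d\varphi}{p}\big)\colon \mathscr{O}_Y\otimes\omega^{q}_{Z_1|\mathscr{D'}_1}\xrightarrow{\cong}\mathcal{H}^q\big((T^m/T^{m+1})\otimes\omega^{\cdot}_{Z_1|\mathscr{D}_1}\big)$ of Lemma \ref{Lem1}\,(3) --- exactly the commutative square you assign to the open interval. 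Note also that this computation yields injectivity of the induced map, with image $\mathscr{O}_Y\otimes\omega^{q}_{Z_1|\mathscr{D}_1}$ inside $\mathscr{O}_Y\otimes\omega^{q}_{Z_1|\mathscr{D'}_1}$; so your plan should not claim more surjectivity in this subcase than this gives (the resulting $\mathfrak{K}$-type quotients are precisely what reappear as correction terms in Proposition \ref{general}). The remaining cases --- the threshold via Lemma \ref{generalLem1}, and $m\geq ep(r-q+1)/(p-1)$, where the source and target filtrations coincide in degrees $\geq q-1$ while $\varphi_{r-q}$ raises the filtration --- are handled in your proposal essentially as in the paper.
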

\begin{proof}The following argument is the similar computations as the proof of Lemma \ref{Lem5}.
We note that \[m \lesseqqgtr e(r-q)+m/p \Leftrightarrow m \lesseqqgtr ep(r-q)/(p-1). \]
We have the following facts:
 \[(\dagger)\ \ \gr_{\tilde{\mathfrak{U}}}^m(\mathscr{O}_{\mathscr{E}_1}\otimes_{\mathscr{O}_{Z_1}}\omega_{Z_1}^{\cdot}) \xleftarrow{\cong} (T^m\mathscr{O}_{Z_1}/T^{m+1}\mathscr{O}_{Z_1})\otimes_{\mathscr{O}_{Z_1}}\omega_{Z_1|\mathscr{D}_1}^{\cdot},\]
\[(\dagger\dagger)\ \ \varphi_{r-q}\big(T^{e(r-q)}\big)=(a_0^p+a_1^pT^p+\cdots+a_{e-1}^pT^{(e-1)p}+(T^e)^{[p]}\big)^{r-q}.\]
Here $T^e+p(a_{e-1}T^{e-1}+\cdots+a_1T+a_0)\;(a_i \in W)$ denotes the Eisenstein polynomial of $\pi$ over $W$.

\textbf{The proof of $(1)$}: If $m \geq ep(r-q+1)/(p-1)$, we obtain \[\tilde{\mathfrak{U}}^mJ_{\mathscr{E}_1}^{[r-j]}\otimes \omega_{Z_1|\mathscr{D}_1}^{j}=\tilde{\mathfrak{U}}^m\mathscr{O}_{\mathscr{E}_1}\otimes \omega_{Z_1|\mathscr{D}_1}^{j}\ (j \geq q-1),\ \ \ \ \   \varphi_{r-q}\Big( \tilde{\mathfrak{U}}^mJ_{\mathscr{E}_1}^{[r-q]} \Big)\subset \tilde{\mathfrak{U}}^{m+1}\mathscr{O}_{\mathscr{E}_1}.\]
Then the morphism $(\bigstar)$ is the identity. Next if $m<ep(r-q)/(p-1)$, we have the following two cases:
\begin{itemize}
\item\ $p \not |m$ case : By Lemma \ref{Lem1} $(2)$, we have \[\mathcal{H}^q\big(\gr_{\Tilde{\mathfrak{U}}}^m( J_{\mathscr{E}_1}^{[r-\cdot]}\otimes \omega_{Z_1|\mathscr{D}_1}^{\cdot} )\big)=\mathcal{H}^q\big(\gr_{\Tilde{\mathfrak{U}}}^m( \mathscr{O}_{\mathscr{E}_1} \otimes \omega_{Z_1|\mathscr{D}_1}^{\cdot})\big)=0.\]
\item\ $p |m$ case : We have $\varphi_{r-q}(T^{e(r-q)+m/p})=T^m\cdot z,\ z \in \mathscr{O}^*_{\mathscr{E}_1}$ by the fact $(\dagger\dagger)$.
By using $(\dagger)$ and Lemma \ref{Lem1} $(3)$, the kernel of the morphism  $(\bigstar)$ vanishes.
\end{itemize}

\textbf{The proof of $(3)$}: If $ep(r-q)/(p-1)< m< ep(r-q+1)/(p-1)$,  we have \[\tilde{\mathfrak{U}}^mJ_{\mathscr{E}_1}^{[r-j]}\otimes \omega_{Z_1|\mathscr{D}_1}^{j}=\tilde{\mathfrak{U}}^m\mathscr{O}_{\mathscr{E}_1}\otimes \omega_{Z_1|\mathscr{D}_1}^{j}\ (j \geq q),\ \ \ \ \ d^{q-1}\Big(\tilde{\mathfrak{U}}^mJ_{\mathscr{E}_1}^{[r-q+1]}\otimes \omega_{Z_1|\mathscr{D}_1}^{q-1}\Big)\subset \tilde{\mathfrak{U}}^mJ_{\mathscr{E}_1}^{[r-q]}\otimes \omega_{Z_1|\mathscr{D}_1}^{q}.\] 
If $m > ep(r-q)/(p-1)$, we have \[\varphi_{r-q}\Big( \tilde{\mathfrak{U}}^mJ_{\mathscr{E}_1}^{[r-q]} \Big)\subset \tilde{\mathfrak{U}}^{m+1}\mathscr{O}_{\mathscr{E}_1}.\]

Then the kernel of the morphism $(\bigstar)$ is $B^q\big((T^m\mathscr{O}_{Z_1}/T^{m+1}\mathscr{O}_{Z_1})\otimes \omega_{Z_1|\mathscr{D}_1}\big)$.

\textbf{The proof of $(2)$}:
If $m=ep(r-q)/(p-1)$, we have $(2)$ from Lemma \ref{Lem1} and Lemma \ref{generalLem1} and the fact $(\dagger\dagger)$.
This completes the proof. \end{proof}

If $K$ contains a primitive $p$-th root of unity, then we have $a_0 \in (k^*)^{p-1}$(See \cite[the proof of Proposition A17]{Tsu2}).
Choose a $(p-1)$-th root $b_0 \in k$ of $a_0$. Then, by Lemma \ref{Omega}, for integers $q \geq 0, \theta\geq0$, we have 
\[(*1)\quad \omega_{Y|D,\log}^{q} \xrightarrow{\cong}  \Ker\big(1-a_0^{p\theta}C^{-1}:Z_{Y|D}^{q} \longrightarrow \mathcal{H}^{q}(\omega_{Y|D}^{\cdot})\big),\quad \omega \mapsto b_0^{-p\theta}\cdot \omega.\]

\begin{Prop}\label{general}
Let the notation and assumption be as above. Let $q$ and $r$ be an integers such that $0 \leq q \leq r \leq p-2$. Then, for every integer $m \geq 0$, we have the structure of $\mathcal{H}^q\big(\gr_{\Tilde{\mathfrak{U}}}^m(s_1(r)_{X|D})\big)$ as follows:
\begin{enumerate}
\item If $m < ep(r-q)/(p-1)$ or $m \geq ep(r-q+1)/(p-1)$, then \[\mathcal{H}^q\big(\gr_{\Tilde{\mathfrak{U}}}^m(s_1(r)_{X|D})\big)=0.\]
\item If $m= ep(r-q)/(p-1)$, then there exists an exact sequence  \[0 \longrightarrow \omega_{Y|D,\log}^{q-1} \longrightarrow \frac{\mathcal{H}^q\big(\gr_{\Tilde{\mathfrak{U}}}^m(s_1(r)_{X|D})\big)}{\mathfrak{K}^q} \longrightarrow  \omega_{Y|D,\log}^{q} \longrightarrow 0,\]
where $\mathfrak{K}^q:=\frac{\mathscr{O}_Y\otimes_{\mathscr{O}_{Z_1}}\omega_{Z_1|\mathscr{D'}_1}^{q-1}}{\mathscr{O}_Y\otimes_{\mathscr{O}_{Z_1}}\omega_{Z_1|\mathscr{D}_1}^{q-1}}$.
\item Suppose $ep(r-q)/(p-1)<m<ep(r-q+1)/(p-1)$. Then 
\begin{enumerate}
\item If $p \nmid m$, there exists an exact sequence 
\[0 \longrightarrow \frac{\omega_{Y|D}^{q-2}}{Z_{Y|D}^{q-2}}\longrightarrow \mathcal{H}^q\big(\gr_{\Tilde{\mathfrak{U}}}^m(s_1(r)_{X|D})\big)\longrightarrow  \frac{\omega_{Y|D}^{q-1}}{B_{Y|D}^{q-1}}\longrightarrow 0.\]
\item If $p | m$, there exists an exact sequence 
\[0 \longrightarrow \frac{\omega_{Y|D}^{q-2}}{Z_{Y|D}^{q-2}}\longrightarrow  \frac{\mathcal{H}^q\big(\gr_{\Tilde{\mathfrak{U}}}^m(s_1(r)_{X|D}\big)}{\mathfrak{K}^q} \longrightarrow  \frac{\omega_{Y|D}^{q-1}}{Z_{Y|D}^{q-1}}\longrightarrow 0.
\]\end{enumerate}
\end{enumerate}
\end{Prop}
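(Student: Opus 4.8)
The plan is to read off $\mathcal{H}^q\bigl(\gr_{\Tilde{\mathfrak{U}}}^m(s_1(r)_{X|D})\bigr)$ from the long exact cohomology sequence of a mapping fibre, exactly as Lemma \ref{Lem5} and the proof of Theorem \ref{Main result} do in the special case $r=q$. By definition $s_1(r)_{X|D}=\mathrm{Cone}\bigl(1-\varphi_r\colon J^{[r-\cdot]}_{\mathscr{E}_1}\otimes\omega^{\cdot}_{Z_1|\mathscr{D}_1}\to\mathscr{O}_{\mathscr{E}_1}\otimes\omega^{\cdot}_{Z_1|\mathscr{D}_1}\bigr)[-1]$, and the descending filtration $\Tilde{\mathfrak{U}}^{\bullet}$ is compatible with both $1$ and $\varphi_r$; hence $\gr_{\Tilde{\mathfrak{U}}}^m(s_1(r)_{X|D})$ is again a mapping fibre, and its long exact sequence produces, for every $m\ge 0$, a short exact sequence
\[
0\to\Coker\bigl(\mathcal{H}^{q-1}(\gr_{\Tilde{\mathfrak{U}}}^m(1-\varphi_r))\bigr)\to\mathcal{H}^q\bigl(\gr_{\Tilde{\mathfrak{U}}}^m(s_1(r)_{X|D})\bigr)\to\Ker\bigl(\mathcal{H}^q(\gr_{\Tilde{\mathfrak{U}}}^m(1-\varphi_r))\bigr)\to 0,
\]
where $\mathcal{H}^j(\gr_{\Tilde{\mathfrak{U}}}^m(1-\varphi_r))$ is the map $(\bigstar)$ of Lemma \ref{Lemq<r} in degree $j$. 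So the proof splits into computing the kernel term (degree $q$) and the cokernel term (degree $q-1$) of this sequence.

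\textbf{The kernel term.} For degree $q$, Lemma \ref{Lemq<r} gives $\Ker(\bigstar)=0$ when $m<ep(r-q)/(p-1)$ or $m\ge ep(r-q+1)/(p-1)$; $\Ker(\bigstar)\cong\Ker\bigl(1-a_0^{p(r-q)}C^{-1}\colon Z^q(\mathscr{O}_Y\otimes\omega^{\cdot}_{Z_1|\mathscr{D}_1})\to\mathcal{H}^q(\mathscr{O}_Y\otimes\omega^{\cdot}_{Z_1|\mathscr{D}_1})\bigr)$ when $m=ep(r-q)/(p-1)$; and $\Ker(\bigstar)\cong B^q\bigl((T^m\mathscr{O}_{Z_1}/T^{m+1}\mathscr{O}_{Z_1})\otimes\omega_{Z_1|\mathscr{D}_1}\bigr)$ when $ep(r-q)/(p-1)<m<ep(r-q+1)/(p-1)$. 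In the middle case, Lemma \ref{generalLem1} (applied with $a=a_0^{r-q}$, so that $a^p=a_0^{p(r-q)}$) presents this kernel as the middle term of the exact sequence $0\to\Ker(1-a^pC^{-1}\colon Z^{q-1}_{Y|D}\to\mathcal{H}^{q-1}(\omega^{\cdot}_{Y|D}))\to\Ker(\bigstar)\to\Ker(1-a^pC^{-1}\colon Z^q_{Y|D}\to\mathcal{H}^q(\omega^{\cdot}_{Y|D}))\to 0$, and the Cartier-type isomorphism $(*1)$ identifies the two outer terms with $\omega^{q-1}_{Y|D,\log}$ and $\omega^q_{Y|D,\log}$. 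In the last case, Lemma \ref{Lem2} presents $B^q\bigl((T^m\mathscr{O}_{Z_1}/T^{m+1}\mathscr{O}_{Z_1})\otimes\omega_{Z_1|\mathscr{D}_1}\bigr)$ as an extension of $\omega^{q-1}_{Y|D}/B^{q-1}_{Y|D}$ (if $p\nmid m$) or of $\omega^{q-1}_{Y|D}/Z^{q-1}_{Y|D}$ (if $p\mid m$) by $\omega^{q-2}_{Y|D}/Z^{q-2}_{Y|D}$.

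\textbf{The cokernel term.} When $p\nmid m$, the target $\mathcal{H}^{q-1}(\gr_{\Tilde{\mathfrak{U}}}^m(\mathscr{O}_{\mathscr{E}_1}\otimes\omega^{\cdot}_{Z_1|\mathscr{D}_1}))\cong\mathcal{H}^{q-1}\bigl((T^m\mathscr{O}_{Z_1}/T^{m+1}\mathscr{O}_{Z_1})\otimes\omega^{\cdot}_{Z_1|\mathscr{D}_1}\bigr)$ vanishes by Lemma \ref{Lem1}(2), so the cokernel is $0$. When $p\mid m$, I would identify the cokernel with $\mathfrak{K}^q=\frac{\mathscr{O}_Y\otimes_{\mathscr{O}_{Z_1}}\omega^{q-1}_{Z_1|\mathscr{D}'_1}}{\mathscr{O}_Y\otimes_{\mathscr{O}_{Z_1}}\omega^{q-1}_{Z_1|\mathscr{D}_1}}$ by the commutative diagram used in the proof of Lemma \ref{Lem5} (the square $(\blacktriangle)$): the $\Tilde{\mathfrak{U}}$-graded map $\varphi_{r-q+1}\otimes\wedge^{q-1}(d\varphi/p)$ on the degree-$(q-1)$ piece is, by $(\dagger\dagger)$ together with Lemma \ref{Lem1}(3), carried onto the natural inclusion $\mathscr{O}_Y\otimes\omega^{q-1}_{Z_1|\mathscr{D}_1}\hookrightarrow\mathscr{O}_Y\otimes\omega^{q-1}_{Z_1|\mathscr{D}'_1}$, whose cokernel is $\mathfrak{K}^q$. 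Finally, in the two extreme ranges of~(1) the cokernel vanishes: for $m\ge ep(r-q+1)/(p-1)$ because $\varphi_r$ is then nilpotent on $\Tilde{\mathfrak{U}}^m$ in the relevant degrees, so $1-\varphi_r$ is bijective there (argue as in Lemma \ref{Lem6}), and for $m<ep(r-q)/(p-1)$ by the computations in the proof of Lemma \ref{Lemq<r}(1).

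\textbf{Assembly and the main obstacle.} Substituting into the short exact sequence of Step~1: in case (1) both flanking terms vanish, so $\mathcal{H}^q=0$; in case (2) one has $p\mid m$ automatically (writing $m=pk$, $e(r-q)=(p-1)k$), the cokernel is $\mathfrak{K}^q$, and the kernel sits in $0\to\omega^{q-1}_{Y|D,\log}\to\Ker(\bigstar)\to\omega^q_{Y|D,\log}\to 0$, so dividing by $\mathfrak{K}^q$ yields the asserted sequence; in case (3a) the cokernel is $0$ and the extension of $\omega^{q-1}_{Y|D}/B^{q-1}_{Y|D}$ by $\omega^{q-2}_{Y|D}/Z^{q-2}_{Y|D}$ is precisely $\mathcal{H}^q$; in case (3b) the cokernel is $\mathfrak{K}^q$ and dividing by it gives the asserted sequence. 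I expect the only genuinely delicate point to be the cokernel computation when $p\mid m$: it requires the bookkeeping of the divided-power filtration levels $\max\{e(r-j)+\lceil m/p\rceil,m\}$, the use of the Eisenstein identity $(\dagger\dagger)$ to see that $\varphi_{r-q+1}(T^{e(r-q+1)+m/p})$ equals $T^m$ times a unit, and the matching of this through the inverse Cartier isomorphism of Lemma \ref{Lem1}(3); the remaining arguments are routine diagram chases using Lemmas \ref{Lem1}, \ref{Lem2}, \ref{generalLem1} and the isomorphism $(*1)$.
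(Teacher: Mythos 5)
Your proposal is correct and follows the paper's own proof essentially step for step: the long exact sequence of the graded mapping fibre, Lemma \ref{Lemq<r} for the kernel terms, Lemma \ref{generalLem1} together with the isomorphism $(*1)$ in the case $m=ep(r-q)/(p-1)$, Lemma \ref{Lem2} for the extension structure of $B^q\bigl((T^m\mathscr{O}_{Z_1}/T^{m+1}\mathscr{O}_{Z_1})\otimes\omega^{\cdot}_{Z_1|\mathscr{D}_1}\bigr)$, and the argument of Lemma \ref{Lem5} (the diagram $(\blacktriangle)$ via $(\dagger\dagger)$ and Lemma \ref{Lem1}(3)) identifying the degree-$(q-1)$ cokernel with $\mathfrak{K}^q$ when $p\mid m$. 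The only minor deviation is your appeal to nilpotence as in Lemma \ref{Lem6} for the range $m\ge ep(r-q+1)/(p-1)$, where at the boundary value the graded map in degree $q-1$ is merely surjective (by Lemmas \ref{Lemq<r} and \ref{generalLem1}) rather than bijective; this does not affect the conclusion and is no less precise than the paper's own treatment of that case.
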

\begin{proof}
We have the long exact sequence 
{\footnotesize \begin{multline*}(\natural)\ \ \cdots \longrightarrow \mathcal{H}^{q-2}\big(\gr_{\Tilde{\mathfrak{U}}}^m( \mathscr{O}_{\mathscr{E}_1} \otimes \omega_{Z_1|\mathscr{D}_1}^{\cdot})\big)  \longrightarrow  \mathcal{H}^{q-1}\big(\gr_{\Tilde{\mathfrak{U}}}^m(s_1(r)_{X|D})\big) \longrightarrow \mathcal{H}^{q-1}\big(\gr_{\Tilde{\mathfrak{U}}}^m( J_{\mathscr{E}_1}^{[r-\cdot]}\otimes \omega_{Z_1|\mathscr{D}_1}^{\cdot} )\big)\\  \xrightarrow{\mathcal{H}^{q-1}(\gr^m_{\tilde{\mathfrak{U}}}(1-\varphi_q))} \mathcal{H}^{q-1}\big(\gr_{\Tilde{\mathfrak{U}}}^m( \mathscr{O}_{\mathscr{E}_1} \otimes \omega_{Z_1|\mathscr{D}_1}^{\cdot})\big) \longrightarrow  \mathcal{H}^{q}\big(\gr_{\Tilde{\mathfrak{U}}}^m(s_1(r)_{X|D})\big)\\ \longrightarrow  \mathcal{H}^{q}\big(\gr_{\Tilde{\mathfrak{U}}}^m( J_{\mathscr{E}_1}^{[r-\cdot]}\otimes \omega_{Z_1|\mathscr{D}_1}^{\cdot} )\big)  \xrightarrow{\mathcal{H}^{q}(\gr^m_{\tilde{\mathfrak{U}}}(1-\varphi_q))} \mathcal{H}^{q}\big(\gr_{\Tilde{\mathfrak{U}}}^m( \mathscr{O}_{\mathscr{E}_1} \otimes \omega_{Z_1|\mathscr{D}_1}^{\cdot})\big) \longrightarrow \cdots. \end{multline*}}
If $m < ep(r-q)/(p-1)$ or $m \geq ep(r-q+1)/(p-1)$, the morphism $\mathcal{H}^{q}(\gr^m_{\tilde{\mathfrak{U}}}(1-\varphi_q))\ (p-2\geq r\geq q \geq 0)$ is an isomorphism by Lemma \ref{Lemq<r} $(1)$.  Then we have $\mathcal{H}^q\big(\gr_{\Tilde{\mathfrak{U}}}^m(s_1(r)_{X|D})\big)=0$ by the above long exact sequence $(\natural)$.  
If $m= ep(r-q)/(p-1)$, the kernel of $\mathcal{H}^{q-1}(\gr^m_{\tilde{\mathfrak{U}}}(1-\varphi_q))$ is isomorphic to 
\[\mathcal{K}:=\Ker\left(1-a_0^{p(r-q)}\cdot C^{-1}: Z^q\big( (\mathscr{O}_{Z_1}/T\mathscr{O}_{Z_1})\otimes_{\mathscr{O}_{Z_1}} \omega_{Z_1|\mathscr{D}_1}^{\cdot}\big)\longrightarrow\mathcal{H}^q\big( (\mathscr{O}_{Z_1}/T\mathscr{O}_{Z_1}) \otimes_{\mathscr{O}_{Z_1}} \omega_{Z_1|\mathscr{D}_1}^{\cdot} \big)\right)\]
by the Lemma  \ref{Lemq<r} $(2)$. By the same argument as Lemma \ref{Lem5}, the cokernel of  $\mathcal{H}^{q}(\gr^m_{\tilde{\mathfrak{U}}}(1-\varphi_q))$ is isomorphic to $\mathfrak{K}^q=\frac{\mathscr{O}_Y\otimes_{\mathscr{O}_{Z_1}}\omega_{Z_1|\mathscr{D'}_1}^{q-1}}{\mathscr{O}_Y\otimes_{\mathscr{O}_{Z_1}}\omega_{Z_1|\mathscr{D}_1}^{q-1}}$. 
Then we have $(\mathfrak{Z}):\;\;\mathcal{K} \cong  \frac{\mathcal{H}^q\big(\gr_{\Tilde{\mathfrak{U}}}^m(s_1(r)_{X|D})\big)}{\mathfrak{K}^q}$ by $(\natural)$. Hence we obtain the short exact sequence in the claim by the isomorphism $(*1)$ and  \eqref{a^p-ses} of Lemma \ref{generalLem1}. 

Finally, we prove the case $ep(r-q)/(p-1)<m<ep(r-q+1)/(p-1)$. If $p \nmid m$, there is a short exact sequence 
\[0 \longrightarrow \frac{\omega_{Y|D_s}^{q-2}}{Z_{Y|D_s}^{q-2}}\longrightarrow  B^q\Big(\big(T^{m}\mathscr{O}_{Z_1}/T^{m+1}\mathscr{O}_{Z_1}\big)\otimes \omega_{Z_1|\mathscr{D}_1}^{\cdot}\Big)\longrightarrow  \frac{\omega_{Y|D_s}^{q-1}}{B_{Y|D_s}^{q-1}}\longrightarrow 0\]
by Lemma \ref{Lem2} $(2)$. Then we have the claim by Lemma \ref{Lemq<r} $(3)$ and $(\natural)$. If $p | m$, we can obtain the claim by Lemma \ref{Lemq<r} $(3)$ and $(\natural)$.
This completes the proof.
\end{proof}

For integers $0 \leq q \leq r<p-2$, we define the filtration $\Tilde{\mathfrak{U}}^m$ on $\mathcal{H}^q(s_1(r)_{X|D})$ to be the image of $\mathcal{H}^q\big(\gr_{\Tilde{\mathfrak{U}}}^m(s_1(r)_{X|D}\big)$.
By the same argument as in \cite[Proposition A6]{Tsu2}, we have 
\begin{align*}&(*2)\quad\gr_{\Tilde{\mathfrak{U}}}^m\big(\mathcal{H}^q(s_1(r)_{X|D})\big) \cong \mathcal{H}^q\big(\gr_{\Tilde{\mathfrak{U}}}^m(s_1(r)_{X|D}\big),\\
  &(*3)\quad\Tilde{\mathfrak{U}}^{ep}\mathcal{H}^q(s_1(r)_{X|D})=0.\end{align*}

For $x \in \Tilde{\mathfrak{U}}^m\mathcal{H}^q(s_1(r)_{X|D})$ and $x' \in  \Tilde{\mathfrak{U}}^{m'}\mathcal{H}^{q'}(s_1(r')_{X|D})$, where $m, m', q, q' \geq 0$ and $0 \leq r, r', r+r' \leq p-2$, the product $x\cdot x'$ is contained in $\Tilde{\mathfrak{U}}^{m+m'}\mathcal{H}^{q+q'}(s_1(r+r')_{X|D})$.
By Proposition \ref{general} and $(*2)$, for each integer $0 \leq r \leq p-2$, we have an isomorphism
\[(*4)\quad\mathcal{H}^0(s_1(r)_{X|D}) \xleftarrow{\cong} \tilde{\mathfrak{U}}^{epr/(p-1)}\mathcal{H}^0(s_1(r)_{X|D}) \xrightarrow{\cong} \gr_{\tilde{\mathfrak{U}}}^{epr/(p-1)}\mathcal{H}^0(s_1(r)_{X|D}) \xrightarrow{\cong} \mathbb{Z}/p\mathbb{Z}.\]
Here to obtain the last isomorphism, we use $(*1)$ with $q=0, \theta=r$.

\begin{Def} We define a filtrations on $\mathcal{H}^q\big(s_1(r)_{X|D}\big)$ as follows:
\[\mathcal{U}^m\mathcal{H}^q\big(s_1(r)_{X|D}\big):= the\;image\;of\;\mathbb{Z}/p\mathbb{Z}\otimes U^m\mathcal{H}^q\big(s_1(q)_{X|D}\big)\; under\; the\; product\; morphism\;\]
\[\mathbb{Z}/p\mathbb{Z}\otimes \mathcal{H}^q\big(s_1(q)_{X|D}\big)\longrightarrow \mathcal{H}^q\big(s_1(r)_{X|D}\big),\]
\[\mathcal{V}^m\mathcal{H}^q\big(s_1(r)_{X|D}\big):= the\;image\;of\;\mathbb{Z}/p\mathbb{Z}\otimes V^m\mathcal{H}^q\big(s_1(q)_{X|D}\big)\; under\; the\; product\; morphism\;\]
\[\mathbb{Z}/p\mathbb{Z}\otimes \mathcal{H}^q\big(s_1(q)_{X|D}\big)\longrightarrow \mathcal{H}^q\big(s_1(r)_{X|D}\big).\]
\end{Def}
As in Lemma \ref{Lem3}, we see that the image of $U^m( (1+I_{D_2})^{\times}\otimes (M_{X_2}^{gp})^{\otimes (q-1)})\;(m \in \mathbb{N})$ under the symbol map is contained in $\tilde{\mathfrak{U}}^m\mathcal{H}^q(s_1(r)_{X|D})$, i.e. 
\begin{align*}(\heartsuit)\ \ U^m\left( (1+I_{D_2})^{\times}\otimes (M_{X_2}^{gp})^{\otimes (q-1)}\right)&\longrightarrow \mathbb{Z}/p\mathbb{Z} \otimes U^m\left( (1+I_{D_2})^{\times}\otimes (M_{X_2}^{gp})^{\otimes (q-1)}\right)\\
&\longrightarrow \tilde{\mathfrak{U}}^{ep(r-q)/(p-1)}\mathcal{H}^0(s_1(r-q)_{X|D})\otimes \tilde{\mathfrak{U}}^m\mathcal{H}^q(s_1(q)_{X|D})\\
&\longrightarrow  \tilde{\mathfrak{U}}^{ep(r-q)/(p-1)+m}\mathcal{H}^q(s_1(r)_{X|D}).
\end{align*}

Hence we have a homomorphism \[
\gr_{\mathcal{U}}^m\big(\mathcal{H}^q(s_1(r)_{X|D})\big)\longrightarrow \gr_{\tilde{\mathfrak{U}}}^{ep(r-q)/(p-1)+m}\big(\mathcal{H}^q(s_1(r)_{X|D})\big)
\]
by using $(*4)$.
Put 
\begin{align}
\gr_0^m\mathcal{H}^q(s_1(r)_{X|D})&:=\mathcal{U}^m\mathcal{H}^q(s_1(r)_{X|D})/\mathcal{V}^m\mathcal{H}^q(s_1(r)_{X|D}),\\
\gr_1^m\mathcal{H}^q(s_1(r)_{X|D})&:=\mathcal{V}^m\mathcal{H}^q(s_1(r)_{X|D})/\mathcal{U}^{m+1}\mathcal{H}^q(s_1(r)_{X|D}).
\end{align}

\begin{Prop}\label{Prop r not q}(cf. {\rm  Lemma \ref{Lem8}})
Let $m$ be a non-negative integer.  Let $x \in (1+I_{\mathscr{D}_2})^{\times}$, let $a_1,\dotsc  a_{q-1} \in M_{Z_2}^{gp}$ and let $y \in \mathscr{O}_{Z_2}(-\mathscr{D}_2)$.
Let $\overline{x}$ denote the image of $x$ in $ (1+I_{D_2})^{\times}$, let $\overline{a_i}$ denote the image of $a_i$ in $M_{X_2}^{gp}$ and let $\overline{y}$ denote the image of $y$ in $\mathscr{O}_{X_2}(-{D_2})$. Then we have:
\begin{enumerate}
\item\;If $m=0$, the image of \[\overline{x}\otimes \overline{a_1}\otimes\dotsc \overline{a_{q-1}} \in (1+I_{D_2})^{\times}\otimes (M_{X_2}^{gp})^{\otimes (q-1)}\] under the composite 
 \begin{align*}
 (1+I_{D_2})^{\times}\otimes (M_{X_2}^{gp})^{\otimes (q-1)}&\longrightarrow\gr_{\mathcal{U}}^0\big(\mathcal{H}^q(s_1(r)_{X|D})\big)\\
 &\longrightarrow \gr_{\tilde{\mathfrak{U}}}^{ep(r-q)/(p-1)}\big(\mathcal{H}^q(s_1(r)_{X|D})\big)\\
\overset{(\tilde{a})}{\twoheadrightarrow} \Ker\Big(Z^q\big( \left(\frac{\mathscr{O}_{Z_1}}{T\mathscr{O}_{Z_1}}\right)&\otimes_{\mathscr{O}_{Z_1}} \omega_{Z_1|\mathscr{D}_1}^{\cdot}\big)\xrightarrow{1-a_0^{p(r-q)}\cdot C^{-1}}\mathcal{H}^q\big( \left(\frac{\mathscr{O}_{Z_1}}{T\mathscr{O}_{Z_1}}\right) \otimes_{\mathscr{O}_{Z_1}} \omega_{Z_1|\mathscr{D}_1}^{\cdot} \big)\Big)
 \end{align*}
 is $b_0^{-p(r-q)}d\log(x)\land d\log(a_1) \land\cdots\land d\log(a_{q-1})$.
 
 By Proposition \ref{general}, $(*1)$ and $(*2)$, we get an exact sequence :
 \[0 \longrightarrow \omega_{Y|D,\log}^{q-1} \longrightarrow \frac{\gr_{\Tilde{\mathfrak{U}}}^{ep(r-q)/(p-1)}\mathcal{H}^q\big(s_1(r)_{X|D}\big)}{\mathfrak{K}^q} \longrightarrow  \omega_{Y|D,\log}^{q} \longrightarrow 0,\]
 \item\;Suppose $1 \leq m < ep/(p-1)$. If $p \nmid m$, the image of \[1+\pi^m\overline{y}\otimes \overline{a_1}\otimes\dotsc \overline{a_{q-1}} \in U^m\Big((1+I_{D_2})^{\times}\otimes (M_{X_2}^{gp})^{\otimes (q-1)}\Big)\] under the composite
  \begin{align*}
U^m\Big((1+I_{D_2})^{\times}\otimes (M_{X_2}^{gp})^{\otimes (q-1)}\Big)&\longrightarrow \gr_{\mathcal{U}}^m\big(\mathcal{H}^q(s_1(r)_{X|D})\big)\\
 \longrightarrow \gr_{\tilde{\mathfrak{U}}}^{ep(r-q)/(p-1)+m}&(\mathcal{H}^q(s_1(r)_{X|D}))\xrightarrow{\cong\;(\tilde{b})} B^q\left(\frac{T^{ep(r-q)/(p-1)+m}\mathscr{O}_{Z_1}}{T^{ep(r-q)/(p-1)+m+1}\mathscr{O}_{Z_1}}\otimes \omega_{Z_1|\mathscr{D}_1}\right)
\end{align*}
is  $d\Big(T^{ep(r-q)/(p-1)+m}b_0^{-p(r-q)}y\cdot d\log(a_1) \land\cdots\land d\log(a_{q-1})\Big)$.

If $p \nmid m$ (resp. $p|m$), by Proposition \ref{general} and $(*2)$, we get an exact sequence:
\[0 \longrightarrow \frac{\omega_{Y|D}^{q-2}}{Z_{Y|D}^{q-2}}\longrightarrow \gr_{\Tilde{\mathfrak{U}}}^{ep(r-q)/(p-1)+m}\mathcal{H}^q\big(s_1(r)_{X|D}\big)\longrightarrow  \frac{\omega_{Y|D}^{q-1}}{B_{Y|D}^{q-1}}\longrightarrow 0\]
\[(\;resp.\quad0 \longrightarrow \frac{\omega_{Y|D}^{q-2}}{Z_{Y|D}^{q-2}}\longrightarrow  \frac{\gr_{\Tilde{\mathfrak{U}}}^{ep(r-q)/(p-1)+m}\mathcal{H}^q\big(s_1(r)_{X|D}\big)}{\mathfrak{K}^q} \longrightarrow  \frac{\omega_{Y|D}^{q-1}}{Z_{Y|D}^{q-1}}\longrightarrow 0
\quad).\]

\end{enumerate}
\end{Prop}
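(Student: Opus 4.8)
The plan is to follow the template of Lemma \ref{Lem8} as closely as possible, transporting the explicit symbol computations there through the product structure that defines the filtration $\mathcal{U}^\bullet$. The starting point is that, by construction (see $(\heartsuit)$ and $(*4)$), an element $\overline{x}\otimes\overline{a_1}\otimes\cdots\otimes\overline{a_{q-1}}$ of $U^m\big((1+I_{D_2})^\times\otimes(M_{X_2}^{gp})^{\otimes(q-1)}\big)$ is sent to $\mathcal{H}^q(s_1(r)_{X|D})$ by first mapping to $\mathbb{Z}/p\mathbb{Z}\otimes U^m\mathcal{H}^q(s_1(q)_{X|D})$ via $1\otimes\mathrm{Symb}_{X|D}$, and then multiplying by the canonical generator of $\mathcal{H}^0(s_1(r-q)_{X|D})\cong\mathbb{Z}/p\mathbb{Z}$, which under the identification $(*4)$ is the class $b_0^{-p(r-q)}$ in $\gr_{\tilde{\mathfrak{U}}}^{ep(r-q)/(p-1)}\mathcal{H}^0(s_1(r-q)_{X|D})$ coming from $(*1)$ with $\theta=r-q$. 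So the first step is to record this factorization precisely and reduce the claim to computing the product of the cocycle of Lemma \ref{symb map 2} with a representative of this generator.

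Next I would carry out the product computation in the two cases $m=0$ and $1\le m< ep/(p-1)$ separately, exactly mirroring the proof of Lemma \ref{Lem8}. For $m=0$: by Lemma \ref{symb map 2}, $\overline{x}\otimes\overline{a_1}\otimes\cdots\otimes\overline{a_{q-1}}$ maps under $\mathrm{Symb}_{X|D}$ to the class of a cocycle whose leading (degree-$q$, first-component) term is $d\log x\wedge d\log a_1\wedge\cdots\wedge d\log a_{q-1}$; multiplying by the generator $b_0^{-p(r-q)}$ and pushing into $\gr_{\tilde{\mathfrak{U}}}^{ep(r-q)/(p-1)}$, the product structure on $s_n(\bullet)_{X|D}$ from Lemma \ref{prod s_n} shows the leading term becomes $b_0^{-p(r-q)}\otimes d\log x\wedge d\log a_1\wedge\cdots\wedge d\log a_{q-1}\in Z^q\big((\mathscr{O}_{Z_1}/T\mathscr{O}_{Z_1})\otimes\omega_{Z_1|\mathscr{D}_1}^{\cdot}\big)$, and then by the definition of the surjection $(\tilde{a})$ (which is Lemma \ref{Lemq<r}(2) combined with \eqref{a^p-ses} of Lemma \ref{generalLem1}) its image is $b_0^{-p(r-q)}d\log(x)\wedge d\log(a_1)\wedge\cdots\wedge d\log(a_{q-1})$, as claimed. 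The exact sequence in (1) is then just Proposition \ref{general}(2) combined with $(*1)$ and $(*2)$; nothing new is needed. For $1\le m<ep/(p-1)$ with $p\nmid m$: the same computation as in Lemma \ref{Lem8}(2), showing $d\log(1+T^m y)\wedge d\log a_1\wedge\cdots\wedge d\log a_{q-1}\equiv d(T^m y\otimes d\log a_1\wedge\cdots\wedge d\log a_{q-1})$ modulo $T^{2m}(\cdots)$ in $B^q\big((T^m\mathscr{O}_{Z_1}/T^{m+1}\mathscr{O}_{Z_1})\otimes\omega_{Z_1|\mathscr{D}_1}^{\cdot}\big)$, together with the extra factor $b_0^{-p(r-q)}$ from the generator and the degree shift $ep(r-q)/(p-1)$ coming from multiplication by $T^{ep(r-q)/(p-1)}$ (as in the fact $(\dagger\dagger)$ used to prove Lemma \ref{Lemq<r}(3)), yields the stated formula $d\big(T^{ep(r-q)/(p-1)+m}b_0^{-p(r-q)}y\cdot d\log(a_1)\wedge\cdots\wedge d\log(a_{q-1})\big)$. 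The exact sequences claimed in (2) are again immediate from Proposition \ref{general}(3) and $(*2)$.

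The main obstacle I expect is keeping track of where the shift $T^{ep(r-q)/(p-1)}$ enters and confirming that the product structure in Lemma \ref{prod s_n} — which was defined for $s_n(q)_{X|D}\otimes^{\mathbb{L}}\mathcal{S}_n(q')_{(X_n,M_{X_n}),(Z_n,M_{Z_n})}$ and for $s_n(1)_{X|D}\otimes^{\mathbb{L}}\mathcal{S}_n(q-1)_{(X,M_X)}$ — actually extends to pairing $\mathcal{H}^0(s_1(r-q)_{X|D})$ with $\mathcal{H}^q(s_1(q)_{X|D})$ landing in $\mathcal{H}^q(s_1(r)_{X|D})$, and that this pairing is compatible with the filtrations $\tilde{\mathfrak{U}}^\bullet$ in the sense that $\tilde{\mathfrak{U}}^a\cdot\tilde{\mathfrak{U}}^b\subset\tilde{\mathfrak{U}}^{a+b}$. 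That compatibility is asserted in the sentence preceding the Definition of $\gr_0^m,\gr_1^m$ (``the product $x\cdot x'$ is contained in $\tilde{\mathfrak{U}}^{m+m'}$''), so strictly speaking it may be invoked; the delicate point is then only bookkeeping of the explicit representatives. A secondary subtlety is the appearance of $b_0$: since the statement presupposes $K$ contains a primitive $p$-th root of unity (so that $a_0\in(k^*)^{p-1}$ and $b_0$ exists, at least étale-locally on $\Spec(k)$), one should note at the outset that this hypothesis is in force throughout the present discussion, exactly as in the lines surrounding $(*1)$; with that in place the rest is a mechanical transcription of Lemma \ref{Lem8}'s proof.
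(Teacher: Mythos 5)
Your proposal is correct and follows essentially the same route as the paper: the paper likewise constructs $(\tilde{a})$ and $(\tilde{b})$ from $(*2)$, the long exact sequence $(\natural)$ and Lemma \ref{Lemq<r}, writes the generator of $\mathcal{H}^0(s_1(r-q)_{X|D})$ as $c\equiv T^{m_0}b_0^{-p(r-q)}$ (your ``$b_0^{-p(r-q)}$ via $(*1)$, $(*4)$'' is the same identification), multiplies the cocycle of Lemma \ref{symb map 2} by $c$ through the product map $(\heartsuit)$, and then repeats the computation of Lemma \ref{Lem8} modulo $T$ (resp.\ modulo $T^{m_0+m+1}$); the exact sequences are, as you say, immediate from Proposition \ref{general}, $(*1)$ and $(*2)$. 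The two caveats you flag (filtration-compatibility of the product and the primitive $p$-th root of unity hypothesis guaranteeing $b_0$) are exactly the points the paper also simply invokes from the discussion preceding the proposition.
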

\begin{proof}We prove by the same argument as the proof of Lemma \ref{Lem8}. First, we explain the maps $(\tilde{a})$ and $(\tilde{b})$.
By the isomorphism $(*2)$, we have \[\gr_{\Tilde{\mathfrak{U}}}^{ep(r-q)/(p-1)}\big(\mathcal{H}^q(s_1(r)_{X|D})\big) \cong \mathcal{H}^q\big(\gr_{\Tilde{\mathfrak{U}}}^{ep(r-q)/(p-1)}(s_1(r)_{X|D})\big).\] From the long exact sequence $(\natural)$, we obtain the surjective map
\[\gr_{\Tilde{\mathfrak{U}}}^{ep(r-q)/(p-1)}\big(\mathcal{H}^q(s_1(r)_{X|D})\big) \twoheadrightarrow \Ker\left(  \mathcal{H}^q\big(\gr_{\Tilde{\mathfrak{U}}}^{ep(r-q)/(p-1)}(1-\varphi_q)\big) \right).\]
The right hand side is isomorphic to \[\Ker\Big(Z^q\big( \left(\frac{\mathscr{O}_{Z_1}}{T\mathscr{O}_{Z_1}}\right)\otimes_{\mathscr{O}_{Z_1}} \omega_{Z_1|\mathscr{D}_1}^{\cdot}\big)\xrightarrow{1-a_0^{p(r-q)}\cdot C^{-1}}\mathcal{H}^q\big( \left(\frac{\mathscr{O}_{Z_1}}{T\mathscr{O}_{Z_1}}\right) \otimes_{\mathscr{O}_{Z_1}} \omega_{Z_1|\mathscr{D}_1}^{\cdot} \big)\Big)\] by Lemma 5.2 (2). Then we have the map $(\tilde{a})$. The map $(\tilde{b})$ is the same argument and by using Lemma 5.2 (3).

We put $m_0:=ep(r-q)/(p-1)$ and denote by $c$ the image of $1 \in \mathbb{Z}/p\mathbb{Z}$ in 
\[\Tilde{\mathfrak{U}}^{m_0}\mathcal{H}^0(s_1(r-q)_{X|D})=\big(\Tilde{\mathfrak{U}}^{m_0}J_{\mathscr{E}_1}^{[r-q]}\otimes \mathscr{O}_{Z_1}(-\mathscr{D}_1)\big)^{\varphi_{r-q}=1, \nabla=0}\]
under $(*4)$. Then we have $c \equiv T^{m_0}b_0^{-p(r-q)}(\mod \Tilde{\mathfrak{U}}^{m_0+1}J_{\mathscr{E}_1}^{[r-q]}\otimes \mathscr{O}_{Z_1}(-\mathscr{D}_1) )$.
The image of $\overline{x}\otimes \overline{a_1}\otimes\dotsc \overline{a_{q-1}}$ under the map $(\heartsuit)$ is the class of a cocycle of the form
 \[ \Big(c\cdot d\log x \land d\log a_1 \land \cdots \land d\log a_{q-1},\ \cdots\Big) \]
 by using Lemma \ref{symb map 2}. Its image in\[\Ker\left(Z^q\big( \left(\frac{\mathscr{O}_{Z_1}}{T\mathscr{O}_{Z_1}}\right)\otimes_{\mathscr{O}_{Z_1}} \omega_{Z_1|\mathscr{D}_1}^{\cdot}\big)\xrightarrow{1-a_0^{p(r-q)}\cdot C^{-1}}\mathcal{H}^q\big( \left(\frac{\mathscr{O}_{Z_1}}{T\mathscr{O}_{Z_1}}\right) \otimes_{\mathscr{O}_{Z_1}} \omega_{Z_1|\mathscr{D}_1}^{\cdot} \big)\right)\]
 is \[b_0^{-p(r-q)}\cdot d\log x \land d\log a_1 \land \cdots \land d\log a_{q-1}\mod T.\]
 
 If $1 \leq m < pe/(p-1)$ and $p \nmid m$, the image of $(1+\pi^m \overline{y}) \otimes \overline{a_1}\otimes\cdots \otimes \overline{a_{q-1}} \in U^m\Big( (1+I_{D_2})^{\times}\otimes (M_{X_2}^{gp})^{\otimes (q-1)}\Big)$ by the map $(\heartsuit)$ is the class of a cocycle of the form
 \[ \Big(c\cdot d\log (1+ T^m y) \land d\log a_1 \land \cdots \land d\log a_{q-1},\ \cdots\Big). \]
Then its image in {\footnotesize $B^q\Big(\big(T^{m_0+m}\mathscr{O}_{Z_1} / T^{m_0+m+1}\mathscr{O}_{Z_1}\big)\otimes \omega_{Z_1|\mathscr{D}_1}^{\cdot} \Big)$} is 
\[d\big(T^{m_0+m}b_0^{-p(r-q)} y \cdot d\log a_1\land\cdots \land d\log a_{q-1} \mod T^{m_0+m+1}\big).\]
This completes the proof.
\end{proof}

\begin{Cor}If $K$ contains a primitive $p$-th roots of unity, for any integer $q$ and $r$ such that $0 \leq q \leq r \leq p-2$, the homomorphism 
\[(\circledast)\ \ \ \mathcal{H}^0(s_1(r-q)_{X|D})\otimes \mathcal{H}^q(s_1(q)_{X|D}) \longrightarrow \mathcal{H}^q(s_1(r)_{X|D})\]
induced by the product structure is an isomorphism.
\end{Cor}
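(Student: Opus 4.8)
The plan is to prove the isomorphism $(\circledast)$ by the standard dévissage along the filtration $\tilde{\mathfrak U}^\bullet$, exactly as in \cite[Appendix]{Tsu2}, reducing everything to the graded pieces where Proposition \ref{general} and Proposition \ref{Prop r not q} give explicit descriptions. First I would note that the product structure is compatible with the filtrations: if $x\in\tilde{\mathfrak U}^{m}\mathcal H^0(s_1(r-q)_{X|D})$ and $x'\in\tilde{\mathfrak U}^{m'}\mathcal H^q(s_1(q)_{X|D})$ then $x\cdot x'\in\tilde{\mathfrak U}^{m+m'}\mathcal H^q(s_1(r)_{X|D})$, which was recorded just before the Definition preceding $(*4)$. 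By $(*4)$, $\mathcal H^0(s_1(r-q)_{X|D})$ is concentrated in filtration degree $m_0:=ep(r-q)/(p-1)$ and is canonically $\mathbb Z/p\mathbb Z$ there. Hence the source of $(\circledast)$ carries the filtration induced by $\tilde{\mathfrak U}^\bullet$ on the second factor, shifted by $m_0$, and $(\circledast)$ becomes a filtered map; it suffices to check it is an isomorphism on the associated graded pieces, i.e. that
\[
\gr_{\tilde{\mathfrak U}}^{m}\mathcal H^q(s_1(q)_{X|D})\otimes\mathbb Z/p\mathbb Z \longrightarrow \gr_{\tilde{\mathfrak U}}^{m_0+m}\mathcal H^q(s_1(r)_{X|D})
\]
is an isomorphism for every $m\geq 0$. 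Here I am using $(*2)$ to identify $\gr_{\tilde{\mathfrak U}}^\bullet\mathcal H^q$ with $\mathcal H^q(\gr_{\tilde{\mathfrak U}}^\bullet)$ on both sides, and the finiteness statement $(*3)$ (together with the analogous statement for $q=r$ contained in Theorem \ref{Main result}(4) and $(*3)$) to guarantee the filtrations are finite, so that an isomorphism on graded pieces forces an isomorphism of the filtered objects.

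Next I would match the graded pieces one range of $m$ at a time, using that the description of $\gr_{\tilde{\mathfrak U}}^{m_0+m}\mathcal H^q(s_1(r)_{X|D})$ in Proposition \ref{general} is, for $m$ in the relevant window $0\le m<ep/(p-1)$, \emph{the same} short exact sequence (built from $\omega_{Y|D_s,\log}^{\bullet}$, $\omega^{\bullet}_{Y|D_s}/Z^{\bullet}_{Y|D_s}$, $\omega^{\bullet}_{Y|D_s}/B^{\bullet}_{Y|D_s}$ and the error term $\mathfrak K^q$) as the description of $\gr_{\tilde U}^{m}\mathcal H^q(s_1(q)_{X|D})$ coming from Lemma \ref{Lem5}, Lemma \ref{Lem8} and Theorem \ref{Main result}. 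Concretely: for $m=0$ both sides sit in the extension $0\to\omega^{q-1}_{Y|D_s,\log}\to(\cdot)/\mathfrak K^q\to\omega^q_{Y|D_s,\log}\to 0$; for $0<m<ep/(p-1)$ with $p\nmid m$ both are $\frac{\omega^{q-2}_{Y|D_s}}{Z^{q-2}_{Y|D_s}}$-by-$\frac{\omega^{q-1}_{Y|D_s}}{B^{q-1}_{Y|D_s}}$ extensions; for $p\mid m$ both are $\frac{\omega^{q-2}_{Y|D_s}}{Z^{q-2}_{Y|D_s}}$-by-$\frac{\omega^{q-1}_{Y|D_s}}{Z^{q-1}_{Y|D_s}}$ extensions modulo $\mathfrak K^q$. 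The explicit formulas for the product — recalled in Proposition \ref{Prop r not q}(1),(2) — show that the product by the generator $c\equiv T^{m_0}b_0^{-p(r-q)}$ of $\mathcal H^0(s_1(r-q)_{X|D})$ carries the symbol $\{x,a_1,\dots,a_{q-1}\}$ (resp.\ $\{1+\pi^m y,a_1,\dots,a_{q-1}\}$) to $b_0^{-p(r-q)}\cdot d\log x\wedge d\log a_1\wedge\cdots$ (resp.\ to $d(T^{m_0+m}b_0^{-p(r-q)}y\cdot d\log a_1\wedge\cdots)$); since $b_0\in k^\times$ the multiplication by $b_0^{-p(r-q)}$ is an automorphism of each of $\omega^{\bullet}_{Y|D_s,\log}$, $\omega^{\bullet}_{Y|D_s}/Z^{\bullet}_{Y|D_s}$, $\omega^{\bullet}_{Y|D_s}/B^{\bullet}_{Y|D_s}$ and of $\mathfrak K^q$ (the last being $\mathscr O_Y$-linear). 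Thus the product map induces, compatibly, an isomorphism of the three-term filtrations on each graded piece; by the five lemma it is an isomorphism on $\gr_0^\bullet$ and $\gr_1^\bullet$, hence on $\gr_{\tilde{\mathfrak U}}^{m}$, hence — by finiteness of the filtration — an isomorphism of $\mathcal H^q$'s. Since $\mathcal H^q(s_1(q)_{X|D})=\mathcal H^q(\mathscr S_n(q)^{loc}_{X|D})$ via Lemma \ref{quasi-isom}, this is exactly $(\circledast)$.

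The main obstacle I anticipate is not the dévissage itself but the \emph{bookkeeping of the error terms $\mathfrak K^q$}: on the source side the corresponding error term is $\mathcal R$ (for $m=0$) and $\mathcal L^m$ (for $p\mid m$), whose intrinsic identification with $\frac{\mathscr O_Y\otimes\omega^{q-1}_{Z_1|\mathscr D'_1}}{\mathscr O_Y\otimes\omega^{q-1}_{Z_1|\mathscr D_1}}$ is only available in the sufficiently local situation of \S\ref{Loc}; one must check that the product map sends $\mathcal R$ (resp.\ $\mathcal L^m$) isomorphically onto $\mathfrak K^q$ and that this is compatible with the quotient exact sequences, and then that the resulting isomorphism on graded pieces is independent of the local embedding data so that it globalizes via $R\theta_*$. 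This requires invoking the hypothesis that $K$ contains a primitive $p$-th root of unity (so that $a_0\in(k^\times)^{p-1}$ and $b_0$ exists), and carefully checking that the isomorphism one builds locally does not depend on the choice of the $(p-1)$-th root $b_0$ up to the identifications $(*1)$ — a point already handled in \cite{Tsu2} for the $D=\emptyset$ case and which carries over verbatim because all the relevant maps are $\mathscr O_Y$-linear or given by the same universal formulas on symbols.
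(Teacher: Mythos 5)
Your proposal is correct and follows essentially the same route as the paper: reduce via $(*2)$--$(*4)$ and finiteness of the filtration to the graded map $(\circledast')\colon \mathbb{Z}/p\mathbb{Z}\otimes \gr_{\Tilde{\mathfrak{U}}}^{m}\mathcal{H}^q(s_1(q)_{X|D}) \to \gr_{\Tilde{\mathfrak{U}}}^{ep(r-q)/(p-1)+m}\mathcal{H}^q(s_1(r)_{X|D})$, then compare, case by case in $m$, the short exact sequences of Proposition \ref{general} and Proposition \ref{Prop r not q} whose outer terms are matched by the unit $b_0^{-p(r-q)}$, and conclude by the snake/five lemma, the range $m\geq ep/(p-1)$ being trivial. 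The only remark is that your anticipated obstacle about $\mathcal{R}$ and $\mathcal{L}^m$ does not arise in the paper's argument: the source-side graded pieces are described by Proposition \ref{general} applied with $r=q$ (not via the symbol-filtration results of Theorem \ref{Main result}), so the error term is $\mathfrak{K}^q$ on both sides and is handled by one additional commutative diagram of the extensions $0\to\mathfrak{K}^q\to\gr_{\Tilde{\mathfrak{U}}}\to\gr_{\Tilde{\mathfrak{U}}}/\mathfrak{K}^q\to 0$, with no Mittag-Leffler bookkeeping needed.
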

\begin{proof}
 We will prove that the morphism $(\circledast)$ is an isomorphism. The morphism $(\circledast)$ induces a morphism
 \begin{align*}
 \mathcal{H}^0(s_1(r-q)_{X|D})&\otimes \gr_{\Tilde{\mathfrak{U}}}^{m}\mathcal{H}^q(s_1(q)_{X|D})\\
 & \xrightarrow{\cong\  (*4)} \mathbb{Z}/p\mathbb{Z} \otimes \gr_{\Tilde{\mathfrak{U}}}^{m}\mathcal{H}^q(s_1(q)_{X|D})\\
 &\xrightarrow{(\circledast')} \gr_{\Tilde{\mathfrak{U}}}^{ep(r-q)/(p-1)+m}\mathcal{H}^q(s_1(r)_{X|D})
 \end{align*}
for every non-negative integer $m$. It suffices to show that the morphism $(\circledast')$ is an isomorphism. We consider the following diagrams of exact sequences:\\
$(1)$\ If $m=0$ :
{\tiny\[
\xymatrix@M=10pt{
0\ar[r]&\mathbb{Z}/p\mathbb{Z} \otimes \omega^{q-1}_{Y|D_s, \log}\ar[r]\ar[d]^-{\cong}& \mathbb{Z}/p\mathbb{Z} \otimes \frac{\gr_{\Tilde{\mathfrak{U}}}^{0}\mathcal{H}^q(s_1(q)_{X|D})}{\mathfrak{K}^q}\ar[r]\ar[d]^-{(\circledast')} &\mathbb{Z}/p\mathbb{Z} \otimes \omega^{q}_{Y|D_s, \log}\ar[d]^-{\cong}\ar[r] & 0 \\
0\ar[r]& \omega^{q-1}_{Y|D_s, \log} \ar[r]&\frac{\gr_{\Tilde{\mathfrak{U}}}^{ep(r-q)/(p-1)}\mathcal{H}^q(s_1(r)_{X|D})}{\mathfrak{K}^q}\ar[r]&\omega^{q}_{Y|D_s, \log}\ar[r] & 0.
}\]}

$(2)$\ If $1 \leq m < ep/(p-1)$ and $p \nmid m$:
{\tiny\[
\xymatrix@M=10pt{
0\ar[r]&\mathbb{Z}/p\mathbb{Z} \otimes \frac{\omega^{q-2}_{Y|D_s}}{B^{q-2}_{Y|D_s}}\ar[r]\ar[d]^-{\cong}& \mathbb{Z}/p\mathbb{Z} \otimes \gr_{\Tilde{\mathfrak{U}}}^{m}\mathcal{H}^q(s_1(q)_{X|D})\ar[r]\ar[d]^-{(\circledast')} &\mathbb{Z}/p\mathbb{Z} \otimes \frac{\omega^{q-1}_{Y|D_s}}{B^{q-1}_{Y|D_s}}\ar[d]^-{\cong}\ar[r] & 0 \\
0\ar[r]&\frac{\omega^{q-2}_{Y|D_s}}{B^{q-2}_{Y|D_s}} \ar[r]&\gr_{\Tilde{\mathfrak{U}}}^{ep(r-q)/(p-1)+m}\mathcal{H}^q(s_1(r)_{X|D})\ar[r]& \frac{\omega^{q-1}_{Y|D_s}}{B^{q-1}_{Y|D_s}}\ar[r] & 0.
}\]}
$(3)$\ If $1 \leq m < ep/(p-1)$ and $p |m$:
{\tiny\[
\xymatrix@M=10pt{
0\ar[r]&\mathbb{Z}/p\mathbb{Z} \otimes  \frac{\omega^{q-2}_{Y|D_s}}{Z^{q-2}_{Y|D_s}}\ar[r]\ar[d]^-{\cong}& \mathbb{Z}/p\mathbb{Z} \otimes \frac{\gr_{\Tilde{\mathfrak{U}}}^{m}\mathcal{H}^q(s_1(q)_{X|D})}{\mathfrak{K}^q}\ar[r]\ar[d]^-{(\circledast')} &\mathbb{Z}/p\mathbb{Z} \otimes  \frac{\omega^{q-1}_{Y|D_s}}{Z^{q-1}_{Y|D_s}}\ar[d]^-{\cong}\ar[r] & 0 \\
0\ar[r]& \frac{\omega^{q-2}_{Y|D_s}}{Z^{q-2}_{Y|D_s}} \ar[r]&\frac{\gr_{\Tilde{\mathfrak{U}}}^{ep(r-q)/(p-1)+m}\mathcal{H}^q(s_1(r)_{X|D})}{\mathfrak{K}^q}\ar[r]& \frac{\omega^{q-1}_{Y|D_s}}{Z^{q-1}_{Y|D_s}}\ar[r] & 0.
}\]}
Here the upper horizontal exact rows are obtained by Proposition \ref{general} and the lower horizontal exact rows are obtained by Proposition \ref{Prop r not q}.
By the snake lemma, we have the isomorphisms
\[ \mathbb{Z}/p\mathbb{Z} \otimes \gr_{\Tilde{\mathfrak{U}}}^{m}\mathcal{H}^q(s_1(q)_{X|D})\xrightarrow{\cong} \gr_{\Tilde{\mathfrak{U}}}^{ep(r-q)/(p-1)+m}\mathcal{H}^q(s_1(r)_{X|D})\ \ \ {\rm for}\  1 \leq m < ep/(p-1),\  p \nmid m,\]
\[ \mathbb{Z}/p\mathbb{Z} \otimes \frac{\gr_{\Tilde{\mathfrak{U}}}^{m}\mathcal{H}^q(s_1(q)_{X|D})}{\mathfrak{K}^q}\xrightarrow{\cong} \frac{\gr_{\Tilde{\mathfrak{U}}}^{ep(r-q)/(p-1)+m}\mathcal{H}^q(s_1(r)_{X|D})}{\mathfrak{K}^q}\ \ \ {\rm for}\  0 \leq m < ep/(p-1),\  p | m.\]
By the second isomorphism and the commutative diagram
{\tiny\[
\xymatrix@M=10pt{
0\ar[r]&\mathbb{Z}/p\mathbb{Z} \otimes\mathfrak{K}^q\ar[r]\ar[d]^-{\cong}&\mathbb{Z}/p\mathbb{Z} \otimes\gr_{\Tilde{\mathfrak{U}}}^{m}\mathcal{H}^q(s_1(q)_{X|D})\ar[d]^-{(\circledast')}\ar[r]&\mathbb{Z}/p\mathbb{Z} \otimes \frac{\gr_{\Tilde{\mathfrak{U}}}^{m}\mathcal{H}^q(s_1(q)_{X|D})}{\mathfrak{K}^q}\ar[r]\ar[d]^-{\cong}&0\\
0\ar[r]&\mathfrak{K}^q\ar[r]& \gr_{\Tilde{\mathfrak{U}}}^{\frac{ep(r-q)}{(p-1)}+m}\mathcal{H}^q(s_1(r)_{X|D})\ar[r]& \frac{\gr_{\Tilde{\mathfrak{U}}}^{\frac{ep(r-q)}{(p-1)}+m}\mathcal{H}^q(s_1(r)_{X|D})}{\mathfrak{K}^q}\ar[r]&0, 
}\]}
we obtain the isomorphism $(\circledast')$ for $0 \leq m < ep/(p-1),\  p | m$. \\
$(4)$ If the case $m \geq ep/(p-1)$, the claim is trivial by Proposition  \ref{general} (1). This completes the proof.
\end{proof}

\begin{Cor}\label{r not q}(cf. Theorem \ref{Main result})
 Let $e$ be the absolute ramification index of $K$. Then the sheaf $\mathcal{H}^q\big(s_1(r)_{X|D}\big)$ has the folllowing structure:
\begin{enumerate}
\item\;For $m=0$, we have short exact sequences:\\
 \begin{equation*} 
0\longrightarrow\frac{\mathfrak{R}}{\mathfrak{R}\cap\gr_1^0\mathcal{H}^q\big(s_1(r)_{X|D}\big)}\longrightarrow\gr_0^0\mathcal{H}^q\big(s_1(r)_{X|D}\big) \longrightarrow \omega_{Y|D,\log}^q\longrightarrow 0,\end{equation*}
\begin{equation*}
\quad\quad\quad\quad\quad\quad\quad\quad\quad\quad\quad\quad\quad\quad\quad\quad\quad\quad\quad\quad\{x, a_1,\dotsc, a_{q-1}\}\mapsto d\log \overline{x} \land d\log \overline{ a_1} \land \cdots \land d\log \overline{a_q}.
\end{equation*}
Here $x \in (1+I_{D_2})^{\times}$, $a_1, \dotsc, a_{q-1} \in M_{X_2}^{gp}$ and $y \in \mathscr{O}_{X_2}(-D_2)$. We denote by $\overline{x}$ {\rm (resp. }$\overline{a_i}${\rm )} the image of $x$ {\rm (resp.} $a_i${\rm)} in $M_Y^{gp}$, and we denote by $\overline{y}$ the image of $y$ in $\mathscr{O}_Y(-D_s)$.

\begin{equation*} 
0\longrightarrow\mathfrak{R}\cap\gr_1^0\mathcal{H}^q\big(s_1(r)_{X|D}\big)\longrightarrow\gr_1^0\mathcal{H}^q\big(s_1(r)_{X|D}\big)\longrightarrow \omega_{Y|D,\log}^{q-1}
\longrightarrow 0,\end{equation*}
\begin{equation*}
\quad\quad\quad\quad\quad\quad\quad\quad\quad\quad\quad\quad\quad\quad\quad\quad\quad\quad\quad\quad\{x, a_1,\dotsc, a_{q-2},\pi\}\mapsto d\log \overline{x} \land  d\log \overline{ a_1} \land \cdots \land d\log \overline{a_{q-2}},
\end{equation*}
where 
\[\mathfrak{R}:=\Ker\Big(\gr_{\mathcal{U}}^0\big(\mathcal{H}^q(s_1(r)_{X|D})\big)\rightarrow  \Ker\big(Z^q\big( \mathscr{O}_Y \otimes_{\mathscr{O}_{Z_1}} \omega_{Z_1|\mathscr{D}_1}^{\cdot}\big)\xrightarrow{1-a_0^{p(r-q)}C^{-1}} \mathcal{H}^q\big( \mathscr{O}_Y \otimes_{\mathscr{O}_{Z_1}} \omega_{Z_1|\mathscr{D}_1}^{\cdot}\big)\big)\Big).\]

\item\;If $0 <m <pe/(p-1)$ and $p \nmid m$, then we have
\begin{equation*} 
\gr_0^m\mathcal{H}^q\big(s_1(r)_{X|D}\big) \cong \frac{\omega_{Y|D}^{q-1}}{B_{Y|D}^{q-1}},\end{equation*}
\begin{equation*}
\{1+\pi^my, a_1,\dotsc, a_{q-1}\}\mapsto  b_0^{-p(r-q)}\overline{y}d\log \overline{ a_1} \land \cdots \land d\log \overline{a_q},
\end{equation*}
\begin{equation*} 
\gr_1^m\mathcal{H}^q\big(s_1(r)_{X|D}\big) \cong \frac{\omega_{Y|D}^{q-2}}{Z_{Y|D}^{q-2}},\end{equation*}
\begin{equation*}
 \{1+\pi^my, a_1,\dotsc, a_{q-2}, \pi\}\mapsto  b_0^{-p(r-q)}\overline{y}d\log \overline{ a_1} \land \cdots \land d\log \overline{a_{q-2}}.
\end{equation*}
\item\;If $0 <m <pe/(p-1)$ and $p | m$, then we have short exact sequences
\begin{equation*} 
0\longrightarrow\frac{\mathfrak{L}}{\mathfrak{L}\cap \mathcal{H}^q\big(s_1(r)_{X|D}\big)}\longrightarrow\gr_0^m\mathcal{H}^q\big(s_1(r)_{X|D}\big) \longrightarrow \frac{\omega_{Y|D}^{q-1}}{Z_{Y|D}^{q-1}}\rightarrow 0,\end{equation*}
\begin{equation*}
 \quad\quad\quad\quad\quad\quad\quad \quad\quad\quad\quad\quad\quad\quad\ \ \{1+\pi^my, a_1,\dotsc, a_{q-1}\}\mapsto b_0^{-p(r-q)}\overline{y}d\log \overline{ a_1} \land \cdots \land d\log \overline{a_q},
\end{equation*}

\begin{equation*} 
0\longrightarrow  \mathfrak{L}\cap \mathcal{H}^q\big(s_1(r)_{X|D}\big)\longrightarrow\gr_1^m\mathcal{H}^q\big(s_1(r)_{X|D}\big) \longrightarrow \frac{\omega_{Y|D}^{q-2}}{Z_{Y|D}^{q-2}}\longrightarrow 0,\end{equation*}
\begin{equation*}
 \quad\quad\quad\quad\quad\quad\quad \quad\quad\quad\quad\quad\quad\quad\ \ \{1+\pi^my, a_1,\dotsc, a_{q-2}, \pi\}\mapsto  b_0^{-p(r-q)}\overline{y}d\log \overline{ a_1} \land \cdots \land d\log \overline{a_{q-2}},
\end{equation*}
where  $\mathfrak{L}:=\Ker\Big(\gr_{\mathcal{U}}^m\big(\mathcal{H}^q(s_1(r)_{X|D})\big) \longrightarrow  B^q\left(\frac{T^{ep(r-q)/(p-1)+m}\mathscr{O}_{Z_1}}{T^{ep(r-q)/(p-1)+m+1}\mathscr{O}_{Z_1}}\otimes \omega_{Z_1|\mathscr{D}_1}\right) \Big)$.
\item\;If $m \geq pe/(p-1)$, $\mathcal{U}^m\mathcal{H}^q\big(s_1(r)_{X|D}\big)=0$.
\end{enumerate}
\end{Cor}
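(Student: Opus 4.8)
The plan is to repeat, essentially verbatim, the argument used to prove Theorem \ref{Main result}, now feeding in Proposition \ref{general} and Proposition \ref{Prop r not q} in place of Lemmas \ref{Lem2}, \ref{Lem5} and \ref{Lem8}, and using the identifications $(*2)$ and $(*4)$ together with the fact recorded just before Proposition \ref{Prop r not q} that multiplication by the canonical generator $c\in\tilde{\mathfrak{U}}^{ep(r-q)/(p-1)}\mathcal{H}^0(s_1(r-q)_{X|D})$ (normalized so that $c\equiv T^{ep(r-q)/(p-1)}b_0^{-p(r-q)}$ modulo $\tilde{\mathfrak{U}}^{+1}$ via $(*4)$) shifts the filtration $\tilde{\mathfrak{U}}^{\bullet}$ by $ep(r-q)/(p-1)$ and induces the comparison homomorphism $\gr_{\mathcal{U}}^m\mathcal{H}^q(s_1(r)_{X|D})\to\gr_{\tilde{\mathfrak{U}}}^{ep(r-q)/(p-1)+m}\mathcal{H}^q(s_1(r)_{X|D})$. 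Throughout I work in the local situation of \S\ref{Loc}, where a $(p-1)$-th root $b_0$ of $a_0$ is available, and assume as usual that $K$ contains a primitive $p$-th root of unity, so that the product isomorphism $(\circledast)$ applies.

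For $m=0$: Proposition \ref{general}(2) together with $(*2)$ gives the short exact sequence
\[0 \longrightarrow \omega_{Y|D,\log}^{q-1} \longrightarrow \frac{\gr_{\tilde{\mathfrak{U}}}^{ep(r-q)/(p-1)}\mathcal{H}^q(s_1(r)_{X|D})}{\mathfrak{K}^q} \longrightarrow \omega_{Y|D,\log}^{q} \longrightarrow 0 .\]
Placing this below the tautological sequence $0\to\gr_1^0\mathcal{H}^q(s_1(r)_{X|D})\to\gr_{\mathcal{U}}^0\mathcal{H}^q(s_1(r)_{X|D})\to\gr_0^0\mathcal{H}^q(s_1(r)_{X|D})\to 0$, with vertical maps supplied by Proposition \ref{Prop r not q}(1) (whose explicit cocycle formulas also show that the left and middle verticals are surjective), and applying the snake lemma, yields the two short exact sequences of part (1), with $\mathfrak{R}$ the kernel of the middle vertical, i.e. of $\gr_{\mathcal{U}}^0\mathcal{H}^q(s_1(r)_{X|D})\to\Ker(1-a_0^{p(r-q)}C^{-1})$.

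For $0<m<pe/(p-1)$: when $p\nmid m$, Proposition \ref{general}(3)(a) (via $(*2)$) gives the short exact sequence $0\to\omega_{Y|D}^{q-2}/Z_{Y|D}^{q-2}\to\gr_{\tilde{\mathfrak{U}}}^{ep(r-q)/(p-1)+m}\mathcal{H}^q(s_1(r)_{X|D})\to\omega_{Y|D}^{q-1}/B_{Y|D}^{q-1}\to 0$, and Proposition \ref{Prop r not q}(2) identifies the middle comparison map, through the isomorphism $(\tilde b)$ onto $B^q\big((T^{ep(r-q)/(p-1)+m}\mathscr{O}_{Z_1}/T^{ep(r-q)/(p-1)+m+1}\mathscr{O}_{Z_1})\otimes\omega_{Z_1|\mathscr{D}_1}\big)$, with the symbol map on $\gr_{\mathcal{U}}^m$; the resulting $3\times3$ diagram and the snake lemma give the isomorphisms of part (2). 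When $p\mid m$ one argues identically, replacing Proposition \ref{general}(3)(a) by (3)(b), to obtain the two short exact sequences of part (3), with $\mathfrak{L}=\Ker\big(\gr_{\mathcal{U}}^m\mathcal{H}^q(s_1(r)_{X|D})\to B^q(\cdots)\big)$. For part (4): by Proposition \ref{general}(1) and $(*2)$ one has $\gr_{\tilde{\mathfrak{U}}}^{m'}\mathcal{H}^q(s_1(r)_{X|D})=0$ for $m'\ge ep(r-q+1)/(p-1)$, and since $\tilde{\mathfrak{U}}^{ep}\mathcal{H}^q(s_1(r)_{X|D})=0$ by $(*3)$, it follows that $\tilde{\mathfrak{U}}^{m'}\mathcal{H}^q(s_1(r)_{X|D})=0$ for all such $m'$; as $\mathcal{U}^m\mathcal{H}^q(s_1(r)_{X|D})\subset\tilde{\mathfrak{U}}^{ep(r-q)/(p-1)+m}\mathcal{H}^q(s_1(r)_{X|D})$ and $ep(r-q)/(p-1)+m\ge ep(r-q+1)/(p-1)$ exactly when $m\ge ep/(p-1)$, this forces $\mathcal{U}^m\mathcal{H}^q(s_1(r)_{X|D})=0$ for $m\ge pe/(p-1)$. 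The only point requiring real care — the same as in the proof of Theorem \ref{Main result} — is checking that the symbol map is strictly compatible with the shifted filtration $\tilde{\mathfrak{U}}^{\bullet}$ and that the three-by-three diagrams above genuinely commute; this is exactly what Proposition \ref{Prop r not q}, built on the cocycle description of Lemma \ref{symb map 2} and the normalization of $c$, is designed to provide, after which everything is a routine diagram chase.
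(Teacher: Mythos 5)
Your proposal follows essentially the same route as the paper's proof: for each range of $m$ you place the tautological sequence $0\to\gr_1^m\to\gr_{\mathcal{U}}^m\to\gr_0^m\to 0$ over the short exact sequences of Proposition \ref{general} (via $(*2)$), use the explicit symbol computations and surjectivity/isomorphism statements of Proposition \ref{Prop r not q} for the vertical maps, and conclude by the snake lemma, which is exactly the paper's argument. The only deviation is part (4), where you deduce vanishing from the filtration shift $\mathcal{U}^m\subset\tilde{\mathfrak{U}}^{ep(r-q)/(p-1)+m}$ together with $(*2)$, $(*3)$ and Proposition \ref{general}(1), while the paper simply notes that $U^m\mathcal{H}^q(s_1(q)_{X|D})=0$ for $m\geq pe/(p-1)$ forces $\mathcal{U}^m=0$ by the very definition of $\mathcal{U}^m$; both are valid.
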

\begin{proof}
We put $\mathcal{H}^q:=\mathcal{H}^q(s_1(r)_{X|D})$ for simplicity.

If $m=0$, by Proposition \ref{general} $(2)$, we have the following diagram of the short exact sequences:
\[
\xymatrix@M=10pt{
0 \ar[r] & \gr_1^0\mathcal{H}^q \ar@{->>}[d] \ar[r]&\gr_{\mathcal{U}}^0\mathcal{H}^q \ar@{->>}[d] \ar[r]& \gr_0^0\mathcal{H}^q\ar[d] \ar[r]&0\\
0 \ar[r]&\omega_{Y|D_s, \log}^{q-1} \ar[r]& \mathcal{K} \ar[r] & \omega_{Y|D_s, \log}^{q} \ar[r] & 0,
 }\]
where the surjectivity of the left and middle vertical arrows are from Proposition \ref{Prop r not q} $(1)$. Here we put
\[\mathcal{K}:=\Ker\left(Z^q\big( \mathscr{O}_Y \otimes_{\mathscr{O}_{Z_1}} \omega_{Z_1|\mathscr{D}_1}^{\cdot}\big)\xrightarrow{1-a_0^{p(r-q)}C^{-1}} \mathcal{H}^q\big( \mathscr{O}_Y \otimes_{\mathscr{O}_{Z_1}} \omega_{Z_1|\mathscr{D}_1}^{\cdot}\big)\right).\]
By the snake lemma, we have two short exact sequences in the assertion $(1)$. 
If $0< m < pe/(p-1)$ and $p\nmid m$, by Proposition \ref{general} $(3)$ (a), we obtain the following diagram
{\footnotesize\[
\xymatrix@M=10pt{
0 \ar[r] & \gr_1^m\mathcal{H}^q\ar@{->>}[d] \ar[r]&\gr_{\mathcal{U}}^m\mathcal{H}^q\ar[d]^-{\cong} \ar[r]& \gr_0^m\mathcal{H}^q\ar[d] \ar[r]&0\\
0 \ar[r]& \frac{\omega_{Y|D_s}^{q-2}}{Z_{Y|D_s}^{q-2}} \ar[r]&B^q \left(\left(\frac{T^{\frac{ep(r-q)}{p-1}+m}\mathscr{O}_{Z_1}}{T^{\frac{ep(r-q)}{p-1}+m+1}\mathscr{O}_{Z_1}}\right)\otimes \omega_{Z_1|\mathscr{D}_1}^{\cdot}\right)\ar[r]& \frac{\omega_{Y|D_s}^{q-1}}{B_{Y|D_s}^{q-1}} \ar[r] & 0,
 }\]}
where the left vertical arrow is surjective and the middle vertical arrow is an isomorphism by Proposition \ref{Prop r not q} $(2)$. By the snake lemma,  we obtain the isomorphisms in the assertion $(2)$.
If $0< m < pe/(p-1)$ and $p | m$, Proposition \ref{general} $(3)$ (b), we have the following diagram
{\footnotesize \[
\xymatrix@M=10pt{
0 \ar[r] & \gr_1^m\mathcal{H}^q \ar@{->>}[d] \ar[r]&\gr_{\mathcal{U}}^m\mathcal{H}^q \ar@{->>}[d] \ar[r]& \gr_0^m\mathcal{H}^q\ar[d] \ar[r]&0\\
0 \ar[r]& \frac{\omega_{Y|D_s}^{q-2}}{Z_{Y|D_s}^{q-2}} \ar[r]&B^q \left(\left(\frac{T^{\frac{ep(r-q)}{p-1}+m}\mathscr{O}_{Z_1}}{T^{\frac{ep(r-q)}{p-1}+m+1}\mathscr{O}_{Z_1}}\right)\otimes \omega_{Z_1|\mathscr{D}_1}^{\cdot}\right)\ar[r] & \frac{\omega_{Y|D_s}^{q-1}}{Z_{Y|D_s}^{q-1}} \ar[r] & 0.
 }\]}
Here the surjectivity of the left and middle vertical arrows are from Lemma Proposition \ref{Prop r not q} $(2)$. 
The lower exact sequence, we use the identification $(\mathfrak{Z})$ in the proof of Proposition 5.3 (3) $(b)$.
By the snake lemma, we obtain the assertion $(3)$. 
 Since $U^{pe}\mathcal{H}^q(s_1(q)_{X|D})\big)=0$ by Lemma \ref{Lem3} and Corollary \ref{Lem6}, this implies ($4$). This completes the proof of this Proposition.
\end{proof}

Next we do not assume that $K$ contains a primitive $p$-th root  of unity. Let $\mathscr{O}_{K'}$ be a totally ramified extension of $\mathscr{O}_{K}$ of degree $w$.
We denote $(S', N')$ the scheme $\Spec{\mathscr{O}_{K'}}$ with the log structure defined by the closed point. Assume that there exists a prime $\pi'$ of $\mathscr{O}_{K'}$ such that $\pi'^w=\pi$. We choose such a prime $\pi'$.
Let $(V',M_{V'})$ be the scheme $\Spec(W[\mathbb{N}])=\Spec(W[T'])$ endowed with the log structure associated to the inclusion $\mathbb{N}\hookrightarrow W[\mathbb{N}]$.
 We define the exact closed immersion $i_{V'_n}:(S'_n, M_{S'_n})\hookrightarrow (V',M_{V'})$ in the same way as $i_{V_n}$, by using $\pi'$ (see the argument before Lemma \ref{Lem1}). 
We have a cartesian diagram:

\[
\xymatrix@M=10pt{  
(S'_n, M_{S'_n})\ar@{}[rd]|{\square} \ar[r]\ar[d]& (V'_n,M_{V'_n})\ar[d]^{(\clubsuit)}&\\
(S_n,N_n)\ar[r]&(V_n,M_{V_n}),&
}\]

where the morphism $(\clubsuit)$ is defined by the multiplication by $d$ on $\mathbb{N}$. We define $(X', M_{X'}):=(X,M_X)\times_{(S,N)} (S',N')$, $\tilde{D}:=D\times_S S'$, and denote $\tilde{\mathscr{D}}_n$, $(Z'_n, M_{Z'_n})$ and $\{F_{Z'_n}\}$ the base changes of $\mathscr{D}_n$, $(Z_n, M_{Z_n})$ and $\{F_{Z_n}\}$ under the morphism $(\clubsuit)$ above. Then one can apply the above arguments to $\mathscr{O}_{K'}$, $\pi'$, $(X',M_{X'})$, $(Z'_n, M_{Z'_n})$ and $\{F_{Z'_n}\}$. We denote by $'$ the corresponding things.  Since $(Y',M_{Y'}):=(Y,M_Y)\times_{(s, M_s)} (s',M'_s)$, $s=s'$ and $Y'=Y$, then we have $\omega_{Y|D_s}^{\cdot}\xrightarrow{\cong} \omega_{Y'|\tilde{D}_s}^{\cdot}$. Thus we obtain the following relations of the filtrations $\tilde{\mathfrak{U}}^m$ on $\mathcal{H}^q(s_1(r)_{X|D})$ and $\mathcal{H}^q(s_1(r)'_{X|D})$ from Proposition \ref{general} and $(*2)$ :

\begin{Lem}\label{comp}(cf. \cite[Lemma A18]{Tsu2})
Let $r$ and $q$ be integers such that $0 \leq q \leq r \leq p-2$. Then there exists a canonical morphism 
\[\mathcal{H}^q(s_1(r)_{X|D}) \longrightarrow \mathcal{H}^q(s_1(r)_{X|D}'),\]
which sends $\tilde{\mathfrak{U}}^m$ into $\tilde{\mathfrak{U}}^{wm}$ for $m \in \mathbb{N}$.  If $ep(r-q)/(p-1) \leq m< ep(r-q+1)/(p-1)$, then we have the following commutative diagram:

$$
\begin{CD}   
0@>>>\mathfrak{W}_1@>>>\frac{\gr_{\Tilde{\mathfrak{U}}}^m\mathcal{H}^q\big(s_1(r)_{X|D}\big)}{\mathfrak{K}}@>>>\mathfrak{W}_2@>>> 0\quad\cdots(\mathscr{P}_1) \\
@.@VVw\cdot id_{\mathfrak{W}}V@VVV@VVpr.V @. \\
0@>>>\mathfrak{W}_1@>>>\frac{\gr_{\Tilde{\mathfrak{U}}}^{dm}\mathcal{H}^q\big(s_1(r)_{X|D}'\big)}{\mathfrak{K}'}@>>>\mathfrak{W}'_2@>>>0\quad\cdots(\mathscr{P}_2) ,
\end{CD}
$$\\
where the horizontal rows $(\mathscr{P}_1) $ and $(\mathscr{P}_2)$ are exact sequence.
Here
\[\mathfrak{W}_1:= \Ker\big(1-a_0^{p(r-q)}C^{-1}:Z_{Y|D}^{q-1} \longrightarrow \mathcal{H}^{q-1}(\omega_{Y|D}^{\cdot})\big) \quad(resp.\;\frac{\omega_{Y|D}^{q-2}}{Z_{Y|D}^{q-2}},\; resp.\;\frac{\omega_{Y|D}^{q-2}}{Z_{Y|D}^{q-2}}), \]
\[\mathfrak{W}_2:= \Ker\big(1-a_0^{p(r-q)}C^{-1}:Z_{Y|D}^{q} \longrightarrow \mathcal{H}^{q}(\omega_{Y|D}^{\cdot})\big) \quad(resp.\;\frac{\omega_{Y|D}^{q-1}}{B_{Y|D}^{q-1}},\; resp.\; \frac{\omega_{Y|D}^{q-1}}{Z_{Y|D}^{q-1}}), \]
{\rm if}     \[m=ep(r-q)/(p-1)\quad (resp.\;m > ep(r-q)/(p-1),\;p\nmid m,\;resp.\;m > ep(r-q)/(p-1),\;p|m),\]
\[\mathfrak{W}'_2:=\frac{\omega_{Y|D}^{q-1}}{Z_{Y|D}^{q-1}}\quad {\rm if}\;m > ep(r-q)/(p-1),\;p|wm,\]
and $\mathfrak{W}'_2=\mathfrak{W}_2$ otherwise, 
\[\mathfrak{K}:= \mathfrak{K}^q\quad(resp.\;0,\;resp.\;\mathfrak{K}^q)\]
\[{\rm if}\;m=ep(r-q)/(p-1)\quad(resp.\;m > ep(r-q)/(p-1),\;p\nmid m,\;resp.\;m > ep(r-q)/(p-1),\;p|m),\]
\[\mathfrak{K}':= \mathfrak{K}'^q\quad(resp.\;0,\;resp.\;\mathfrak{K}'^q)\]
\[{\rm if}\;m=ep(r-q)/(p-1)\quad(resp.\;m > ep(r-q)/(p-1),\;p\nmid wm,\;resp.\;m > ep(r-q)/(p-1),\;p|wm).\]
Here $\mathfrak{K}^q:=\frac{\mathscr{O}_Y\otimes_{\mathscr{O}_{Z_1}}\omega_{Z_1|\mathscr{D'}_1}^{q-1}}{\mathscr{O}_Y\otimes_{\mathscr{O}_{Z_1}}\omega_{Z_1|\mathscr{D}_1}^{q-1}}$, $\mathfrak{K}'^q:=\frac{\mathscr{O}_Y\otimes_{\mathscr{O}_{Z'_1}}\omega_{Z'_1|\tilde{\mathscr{D}}'_1}^{q-1}}{\mathscr{O}_Y\otimes_{\mathscr{O}_{Z'_1}}\omega_{Z'_1|\tilde{\mathscr{D}}_1}^{q-1}}$. We denote by $pr.$ the canoical projection or the identity.
If $K'$ is tamely ramified filed over $K$, we have an isomorphism:
\[ \frac{\gr_{\Tilde{\mathfrak{U}}}^m\mathcal{H}^q\big(s_1(r)_{X|D}\big)}{\mathfrak{K}} \xrightarrow{\cong} \frac{\gr_{\Tilde{\mathfrak{U}}}^{wm}\mathcal{H}^q\big(s_1(r)_{X|D}'\big)}{\mathfrak{K}'},\]

\end{Lem}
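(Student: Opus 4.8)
The plan is to obtain the morphism by base change along the finite map $(S',N')\to(S,N)$ and then to read off its effect on the $\tilde{\mathfrak{U}}$-graded pieces from Proposition~\ref{general}, the only genuinely new input being the identity $d\log T = w\,d\log T'$.

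First I would construct the canonical morphism $\mathcal{H}^q(s_1(r)_{X|D})\to\mathcal{H}^q(s_1(r)'_{X|D})$. The cartesian square $(\clubsuit)$ together with $\pi=\pi'^{w}$ produces a morphism $(V'_n,M_{V'_n})\to(V_n,M_{V_n})$ given by $T\mapsto T'^{w}$; via the assumed factorizations through $(V,M_V)$ and the base changes defining $(Z'_n,M_{Z'_n})$, $\tilde{\mathscr{D}}_n$ and $\{F_{Z'_n}\}$, this yields a morphism of PD-envelopes $\mathscr{E}_1\to\mathscr{E}'_1$ compatible with the Frobenius lifts (indeed $F_{V}(T)=T^{p}=(T'^{p})^{w}=F_{V'}(T)$), hence pullback maps $J^{[r-\cdot]}_{\mathscr{E}_1}\otimes\omega^{\cdot}_{Z_1|\mathscr{D}_1}\to J^{[r-\cdot]}_{\mathscr{E}'_1}\otimes\omega^{\cdot}_{Z'_1|\tilde{\mathscr{D}}_1}$ and $\mathscr{O}_{\mathscr{E}_1}\otimes\omega^{\cdot}_{Z_1|\mathscr{D}_1}\to\mathscr{O}_{\mathscr{E}'_1}\otimes\omega^{\cdot}_{Z'_1|\tilde{\mathscr{D}}_1}$ commuting with $1-\varphi_r$. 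Passing to mapping fibers and then to $\mathcal{H}^q$ gives the morphism; independence of the embedding data follows from the independence and functoriality statements already proved (the lemma on independence of $(Z_n,\mathscr{D}_n)$ and Lemma~\ref{Lemfunc}), and the global version is obtained by taking $R\theta_*$ over a common hyper-covering.

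Next I would verify the filtration compatibility. Since $T=T'^{w}$ in $\mathscr{O}_{\mathscr{E}'_1}$, one has $T^{m}\mathscr{O}_{\mathscr{E}'_1}=T'^{wm}\mathscr{O}_{\mathscr{E}'_1}$, and $J^{[p]}_{\mathscr{E}_1}$ (resp. $J^{[r]}_{\mathscr{E}_1}$) is carried into $J^{[p]}_{\mathscr{E}'_1}$ (resp. $J^{[r]}_{\mathscr{E}'_1}$) by flatness of the PD-envelope under base change, as in the identification $\mathscr{E}_{n,D}\cong\mathscr{E}_n\otimes_{Z_n}\mathscr{D}_n$. Because $e'=we$, one has $w\max\{er+\lceil m/p\rceil,m\}\geq\max\{e'r+\lceil wm/p\rceil,wm\}$, so $\tilde{\mathfrak{U}}^{m}$ is sent into $\tilde{\mathfrak{U}}^{wm}$; this produces the induced maps on $\gr_{\tilde{\mathfrak{U}}}^{m}$ and hence on $\mathcal{H}^q(\gr_{\tilde{\mathfrak{U}}}^{m}s_1(r)_{X|D})$ and, by $(*2)$, on $\gr_{\tilde{\mathfrak{U}}}^{m}\mathcal{H}^q$.

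Then I would identify the induced map on graded pieces using Proposition~\ref{general}, together with $Y'=Y$ and $\omega^{\cdot}_{Y|D_s}\cong\omega^{\cdot}_{Y'|\tilde{D}_s}$. In the range $ep(r-q)/(p-1)\le m<ep(r-q+1)/(p-1)$ the exact sequences of Proposition~\ref{general} present $\gr^{m}_{\tilde{\mathfrak{U}}}\mathcal{H}^q/\mathfrak{K}$ as an extension of $\mathfrak{W}_2$ by $\mathfrak{W}_1$, where $\mathfrak{W}_1$ is the ``$\wedge\,d\log T$''-part coming from the connecting homomorphism and $\mathfrak{W}_2$ is the complementary piece. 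Since $d\log T=w\,d\log T'$, the base-change map is multiplication by $w$ on $\mathfrak{W}_1$ and the identity on $\mathfrak{W}_2$, the only exception being when $p\nmid m$ but $p\mid wm$ (i.e. $p\mid w$), where the primed graded piece is of the other type and one gets the natural projection $\omega^{q-1}_{Y|D}/B^{q-1}_{Y|D}\twoheadrightarrow\omega^{q-1}_{Y|D}/Z^{q-1}_{Y|D}$; these identifications are pinned down by the explicit symbol-map formulas of Lemma~\ref{Lem8} and Proposition~\ref{Prop r not q}. At the single value $m=ep(r-q)/(p-1)$ one also checks that $wm$ is the corresponding threshold for $K'$ and that $a_0^{p(r-q)}=(a_0')^{p(r-q)}$ in $k$, using the compatibility of Frobenius lifts and $(\dagger\dagger)$ applied to $T^{e(r-q)}=T'^{e'(r-q)}$; hence $\mathfrak{W}_1,\mathfrak{W}_2$ and $\mathfrak{K},\mathfrak{K}'$ are the eigen-subsheaves written in the statement. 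Finally, when $K'/K$ is tame ($p\nmid w$), multiplication by $w$ is an automorphism of the $p$-torsion sheaves and $p\mid m\Leftrightarrow p\mid wm$, so $\mathfrak{W}'_2=\mathfrak{W}_2$ and $\mathfrak{K}'\cong\mathfrak{K}$, and the five lemma applied to $(\mathscr{P}_1)\to(\mathscr{P}_2)$ gives the asserted isomorphism.

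I expect the main obstacle to be the careful execution of the first step — producing the morphism of complexes canonically rather than merely up to quasi-isomorphism, and checking its compatibility with $\varphi_r$ and with the gluing over hyper-coverings — together with the bookkeeping in the third step of precisely which type of graded piece occurs on the primed side and that the induced map is exactly the stated multiplication-by-$w$ resp. projection; everything else is a routine transport of the computations of \S4 and \S5 through the substitution $T\mapsto T'^{w}$.
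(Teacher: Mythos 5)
Your proposal is correct and follows essentially the same route as the paper: the morphism comes from the base change $T\mapsto T'^{w}$ compatibly with the Frobenius lifts (so the filtration shift $\tilde{\mathfrak{U}}^m\to\tilde{\mathfrak{U}}^{wm}$ is immediate from $T=T'^{w}$), the rows are the exact sequences of Lemma \ref{generalLem1} and Proposition \ref{general} via $(*2)$, and commutativity is checked on explicit representatives using $d\log T=w\,d\log T'$, with the tame case concluded by the snake/five lemma exactly as in the paper.
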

\begin{proof}
The horizontal rows of the diagram $(\mathscr{P}_1) $ and $(\mathscr{P}_2)$ are obtained by Proposition 5.3 for each cases.
The first claim is trivial by $T=T'^w$. We show the second claim. The horizontal rows of diagrams of the second claim are obtained by \eqref{a^p-ses} in Lemma \ref{generalLem1}, 
by $(3)$ in Lemma \ref{general} and by $(*2)$. We prove the commutativity of these diagrams below.\\
$(1)$ $m=ep(r-q)/(p-1)$ case:  We have the following diagram
{\tiny\[
\xymatrix@M=3pt{ 
0 \ar[r]&\Ker\big(1-a_0^{p(r-q)}C^{-1}\big)\ar[r]\ar[d]^-{w\cdot id} &\frac{\gr_{\Tilde{\mathfrak{U}}}^m\mathcal{H}^q\big(s_1(r)_{X|D}\big)}{\mathfrak{K}^q} \ar[d]\ar[r]&\Ker\big(1-a_0^{p(r-q)}C^{-1}\big)\ar[r]\ar[d]^-{id}&0\\ 
0 \ar[r]&\Ker\big(1-a_0^{p(r-q)}C^{-1}\big)\ar[r] &\frac{\gr_{\Tilde{\mathfrak{U}}}^{dm}\mathcal{H}^q\big(s_1(r)'_{X|D}\big)}{\mathfrak{K}'^{q}} \ar[r]&\Ker\big(1-a_0^{p(r-q)}C^{-1}\big)\ar[r]&0.}\]}
The image of {\tiny$x d\log(\overline{y}) \land d\log(\overline{a_2}) \land \cdots \land d\log(\overline{a_{q-1}}) \in \Ker\big(1-a_0^{p(r-q)}C^{-1}\big)$} in $\frac{\gr_{\Tilde{\mathfrak{U}}}^{dm}\mathcal{H}^q\big(s_1(r)'_{X|D}\big)}{\mathfrak{K}}$ is 
\begin{align*}
x\cdot (d\log(y) \land &d\log(a_2) \land \cdots \land d\log(a_{q-1})\land d\log(T) \mod T)\\
=wx\cdot (d\log(y) \land &d\log(a_2) \land \cdots \land d\log(a_{q-1})\land d\log(T') \mod T'^w),
\end{align*}
where we use that $d\log T=w \cdot d\log T'$. On the other hand, the image of $w\cdot x d\log(\overline{y}) \land d\log(\overline{a_2}) \land \cdots \land d\log(\overline{a_{q-1}}) \in \Ker\big(1-a_0^{p(r-q)}C^{-1}\big)$ in
$\frac{\gr_{\Tilde{\mathfrak{U}}}^{dm}\mathcal{H}^q\big(s_1(r)'_{X|D}\big)}{\mathfrak{K}}$ is \[wx\cdot (d\log(y) \land d\log(a_2) \land \cdots \land d\log(a_{q-1})\land d\log(T') \mod T'^w)\] by calculating counterclockwise. Thus the left square is commutative. The commutativity of the right square is obvious.\\
$(2)$ $m>ep(r-q)/(p-1)$ case: We have the following diagram:
{\footnotesize\[
\xymatrix@M=10pt{ 
0 \ar[r]&\frac{\omega_{Y|D}^{q-2}}{Z_{Y|D}^{q-2}}\ar[r]\ar[d]^-{w\cdot id} &\frac{\gr_{\Tilde{\mathfrak{U}}}^m\mathcal{H}^q\big(s_1(r)_{X|D}\big)}{\mathfrak{K}} \ar[d]\ar[r]&\frac{\omega_{Y|D}^{q-1}}{Z_{Y|D}^{q-1}} \ar[r]\ar[d]^-{pr}&0\\ 
0 \ar[r]&\frac{\omega_{Y|D}^{q-2}}{Z_{Y|D}^{q-2}}\ar[r] &\frac{\gr_{\Tilde{\mathfrak{U}}}^{dm}\mathcal{H}^q\big(s_1(r)'_{X|D}\big)}{\mathfrak{K}'} \ar[r]&\mathfrak{W}'_2\ar[r]&0.}\]}
The image of $\overline{x} d\log(\overline{y}) \land d\log(\overline{a_2}) \land \cdots \land d\log(\overline{a_{q-2}}) \in \frac{\omega_{Y|D}^{q-2}}{Z_{Y|D}^{q-2}}$ in
$\frac{\gr_{\Tilde{\mathfrak{U}}}^{dm}\mathcal{H}^q\big(s_1(r)'_{X|D}\big)}{\mathfrak{K}'}$ is 
\begin{align*}
d(T^mx\cdot d\log(y) \land &d\log(a_2) \land \cdots \land d\log(a_{q-2})\land d\log(T) \mod T^{m+1})\\
=d(T'^{wm}dx\cdot d\log(y) \land &d\log(a_2) \land \cdots \land d\log(a_{q-2})\land d\log(T') \mod T^{w(m+1)}),
\end{align*}
where we use that $d\log T=w \cdot d\log T'$. On the other hand, the image of $w\cdot \overline{x} d\log(\overline{y}) \land d\log(\overline{a_2}) \land \cdots \land d\log(\overline{a_{q-2}}) \in \frac{\omega_{Y|D}^{q-2}}{Z_{Y|D}^{q-2}}$ in $\frac{\gr_{\Tilde{\mathfrak{U}}}^{dm}\mathcal{H}^q\big(s_1(r)'_{X|D}\big)}{\mathfrak{K}'}$ is 
\[d(T'^{wm}wx\cdot d\log(y) \land d\log(a_2) \land \cdots \land d\log(a_{q-2})\land d\log(T') \mod T^{w(m+1)}).\]
Hence the left square is commutative. The commutativity of the right square is obvious. Finally, if $K'$ is tamely ramified filed over $K$, we have $p \nmid w$. Then the above all cases, the morphisms $w\cdot id$ and $pr(=id)$ are an isomorphism. Thus we have the isomorphism 
\[ \frac{\gr_{\Tilde{\mathfrak{U}}}^m\mathcal{H}^q\big(s_1(r)_{X|D}\big)}{\mathfrak{K}} \xrightarrow{\cong} \frac{\gr_{\Tilde{\mathfrak{U}}}^{dm}\mathcal{H}^q\big(s_1(r)'_{X|D}\big)}{\mathfrak{K}'}\] by using snake lemma. This completes the proof.
\end{proof}

\begin{Cor} If $0\leq m \leq ep/(p-1)$ and $K'$ is tamely ramified filed over $K$,  the kenel and the cokernel of \[\gr_{\tilde{\mathfrak{U}}}^{m}\mathcal{H}^q\big(s_1(r)_{X|D}\big) \longrightarrow \gr_{\tilde{\mathfrak{U}}}^{dm}\mathcal{H}^q\big(s_1(r)_{X|D}'\big)\]
are Mittag-Leffler zero  with respect to the multiplicities of the prime components of $D$. \end{Cor}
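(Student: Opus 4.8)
The plan is to deduce this corollary directly from Lemma~\ref{comp}, whose commutative diagram is the only input needed. First I would unwind what $\gr_{\tilde{\mathfrak{U}}}^{m}\mathcal{H}^q(s_1(r)_{X|D}) \to \gr_{\tilde{\mathfrak{U}}}^{dm}\mathcal{H}^q(s_1(r)'_{X|D})$ is: it is the graded piece of the canonical comparison map of Lemma~\ref{comp}, which carries $\tilde{\mathfrak{U}}^m$ into $\tilde{\mathfrak{U}}^{wm}$ (here $d=w$ is the degree of the tame extension $\mathscr{O}_{K'}/\mathscr{O}_K$). When $K'/K$ is tamely ramified we have $p\nmid w$, so in the three rows $(\mathscr{P}_1)$, $(\mathscr{P}_2)$ of Lemma~\ref{comp} the left vertical map $w\cdot\mathrm{id}_{\mathfrak{W}_1}$ is an isomorphism and the right vertical map $pr.$ is the identity $\mathfrak{W}_2 = \mathfrak{W}'_2$ (the exceptional case $p\mid wm$, $p\nmid m$ cannot occur since $p\nmid w$). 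Hence by the five lemma the induced map on the quotients $\gr_{\tilde{\mathfrak{U}}}^{m}\mathcal{H}^q(s_1(r)_{X|D})/\mathfrak{K} \xrightarrow{\cong} \gr_{\tilde{\mathfrak{U}}}^{wm}\mathcal{H}^q(s_1(r)'_{X|D})/\mathfrak{K}'$ is an isomorphism, which is exactly the last assertion of Lemma~\ref{comp}.

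The remaining point is to pass from the isomorphism modulo $\mathfrak{K}$, $\mathfrak{K}'$ to a statement about the kernel and cokernel of the map on the full graded pieces. I would fit the comparison map into the commutative diagram with exact rows
\[
\begin{CD}
0 @>>> \mathfrak{K}^q @>>> \gr_{\tilde{\mathfrak{U}}}^{m}\mathcal{H}^q(s_1(r)_{X|D}) @>>> \gr_{\tilde{\mathfrak{U}}}^{m}\mathcal{H}^q(s_1(r)_{X|D})/\mathfrak{K}^q @>>> 0\\
@. @VVV @VVV @VV{\cong}V @.\\
0 @>>> \mathfrak{K}'^q @>>> \gr_{\tilde{\mathfrak{U}}}^{wm}\mathcal{H}^q(s_1(r)'_{X|D}) @>>> \gr_{\tilde{\mathfrak{U}}}^{wm}\mathcal{H}^q(s_1(r)'_{X|D})/\mathfrak{K}'^q @>>> 0,
\end{CD}
\]
and apply the snake lemma. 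This reduces the kernel and cokernel of the middle arrow to those of $\mathfrak{K}^q \to \mathfrak{K}'^q$. Now $\mathfrak{K}^q = \frac{\mathscr{O}_Y\otimes_{\mathscr{O}_{Z_1}}\omega^{q-1}_{Z_1|\mathscr{D}'_1}}{\mathscr{O}_Y\otimes_{\mathscr{O}_{Z_1}}\omega^{q-1}_{Z_1|\mathscr{D}_1}}$, and likewise for $\mathfrak{K}'^q$, is the cokernel of an inclusion of locally free sheaves twisted by $\mathscr{O}(-\mathscr{D})$ versus $\mathscr{O}(-\mathscr{D}')$; by the very definition of $\mathscr{D}'$ (with multiplicities $m'_\lambda = \min\{l \mid pl \ge m_\lambda\}$) this quotient is supported on a thickening of $D_s$ and is annihilated once one divides by $\mathscr{O}(-p\cdot\mathscr{D}')$, hence it is Mittag-Leffler zero with respect to the multiplicities of the prime components of $D$ — the same argument already used for $\mathcal{L}_0^{'m}$ in the proof of Lemma~\ref{CokernelSymb}. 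Therefore both $\mathfrak{K}^q$ and $\mathfrak{K}'^q$ are Mittag-Leffler zero, and so are the kernel and cokernel of $\mathfrak{K}^q \to \mathfrak{K}'^q$, since a subquotient of a Mittag-Leffler zero system is Mittag-Leffler zero.

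Putting these together: the snake lemma gives an exact sequence
\[
0 \to \Ker(\mathfrak{K}^q\to\mathfrak{K}'^q) \to \Ker(\text{middle}) \to 0 \to \Coker(\mathfrak{K}^q\to\mathfrak{K}'^q) \to \Coker(\text{middle}) \to 0,
\]
using that the rightmost vertical arrow is an isomorphism, so that both $\Ker(\text{middle})$ and $\Coker(\text{middle})$ are identified with $\Ker(\mathfrak{K}^q\to\mathfrak{K}'^q)$ and $\Coker(\mathfrak{K}^q\to\mathfrak{K}'^q)$ respectively, both of which are Mittag-Leffler zero. For the range $ep/(p-1) < m \le ep/(p-1)$ (the boundary and anything strictly above) one invokes $(*3)$, i.e.\ $\tilde{\mathfrak{U}}^{ep}\mathcal{H}^q = 0$, together with Proposition~\ref{general}~(1) to see the graded pieces vanish on both sides, so the statement is trivial there. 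The main obstacle I anticipate is purely bookkeeping: one must check that in the tame case none of the three mutually exclusive regimes in Lemma~\ref{comp} produces a genuinely non-isomorphic right-hand vertical map, i.e.\ that $p\nmid w$ really does force $pr.$ to be the identity and $\mathfrak{W}'_2 = \mathfrak{W}_2$ in every case $m \ge ep(r-q)/(p-1)$; once that case analysis is dispatched the rest is the formal snake-lemma-plus-Mittag-Leffler argument above.
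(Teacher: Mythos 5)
Your proposal is correct and follows essentially the same route as the paper: the same commutative diagram of short exact sequences with kernels $\mathfrak{K}$, $\mathfrak{K}'$, the isomorphism of the quotients from the tame case of Lemma \ref{comp} as the right vertical arrow, and the snake lemma together with the fact that $\mathfrak{K}$ and $\mathfrak{K}'$ are themselves Mittag-Leffler zero in the multiplicities of $D$. The only difference is that you re-derive the final assertion of Lemma \ref{comp} and spell out the boundary cases, which the paper simply cites.
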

\begin{proof} We consider a commutative diagram

$$
\begin{CD}   
0 @>>> \mathfrak{K}@>>> \gr_{\tilde{\mathfrak{U}}}^{m}\mathcal{H}^q\big(s_1(r)_{X|D}\big)@>>>\frac{\gr_{\tilde{\mathfrak{U}}}^{m}\mathcal{H}^q\big(s_1(r)_{X|D}\big)}{\mathfrak{K}}@>>>0\\
@.@VVV @VVV@VVV\\
0@>>>\mathfrak{K}'@>>>\gr_{\tilde{\mathfrak{U}}}^{dm}\mathcal{H}^q\big(s_1(r)_{X|D}'\big)@>>> \frac{\gr_{\tilde{\mathfrak{U}}}^{dm}\mathcal{H}^q\big(s_1(r)_{X|D}'\big)}{\mathfrak{K}'}@>>>0.
\end{CD}
$$\\

\noindent
From Lemma \ref{comp}, the right vertical arrow is an isomorphism. The kernel and the cokernel of the left vertical arrow are Mittag-Leffler zero because $\mathfrak{K}$ and $\mathfrak{K}'$ are Mittag-Leffler zero.
Thus we obtain the claim by the snake lemma.
\end{proof}

By the same arguments as in Lemma \ref{CokernelSymb} and Lemma \ref{compU}, we have the following Proposition:
\begin{Prop}\label{kercok}(cf. {\rm Lemma \ref{compU}})
The kernel and the cokernel of the morphism \[\Phi_{m,D}:\ \ \gr_{\mathcal{U}}^m\mathcal{H}^q\big(s_1(r)_{X|D}\big) \rightarrow \gr_{\Tilde{\mathfrak{U}}}^{m+ep(r-q)/(p-1)}\mathcal{H}^q\big(s_1(r)_{X|D}\big)\]
are Mittag-Leffler zero with respect to the multiplicities of the prime components of $D$.
\end{Prop}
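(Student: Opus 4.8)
The plan is to mimic the proof of Lemma~\ref{compU} (the analogous statement for the weight-$q$ situation, $r=q$), comparing the symbol filtration $\mathcal{U}^{\bullet}$ with the explicit filtration $\tilde{\mathfrak{U}}^{\bullet}$ via the graded pieces, and then feeding in the structural description of the associated graded pieces of $\tilde{\mathfrak{U}}^{\bullet}$ provided by Proposition~\ref{general} together with the explicit image computations of Proposition~\ref{Prop r not q}. The key point is that the discrepancy between the two filtrations is measured by the ``error'' subsheaves $\mathfrak{K}^q = \frac{\mathscr{O}_Y\otimes_{\mathscr{O}_{Z_1}}\omega_{Z_1|\mathscr{D}'_1}^{q-1}}{\mathscr{O}_Y\otimes_{\mathscr{O}_{Z_1}}\omega_{Z_1|\mathscr{D}_1}^{q-1}}$, and these are Mittag-Leffler zero with respect to the multiplicities of the prime components of $D$ because the transition maps between the systems indexed by the multiplicities $m_\lambda$ (for the divisors $\mathscr{D}_1$, $\mathscr{D}'_1$) eventually factor through zero --- this is the same phenomenon already exploited in the proof of Lemma~\ref{CokernelSymb}, where $\mathcal{L}_0^{'m}$ was identified with exactly such a quotient.

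First I would reduce the statement about $\Phi_{m,D}$ on the graded piece $\gr_{\mathcal{U}}^m$ to a statement about a map of short exact sequences. Concretely: from Proposition~\ref{Prop r not q} the composite $\gr_{\mathcal{U}}^m\mathcal{H}^q(s_1(r)_{X|D}) \to \gr_{\tilde{\mathfrak{U}}}^{m+ep(r-q)/(p-1)}\mathcal{H}^q(s_1(r)_{X|D})$ has image the ``main'' part of the target (the part outside $\mathfrak{K}^q$, or the part surjecting onto the relevant $\Ker(1-a_0^{p(r-q)}C^{-1})$ or $B^q(\cdots)$), and the explicit formulas there show the induced maps on the outer terms of the short exact sequences in Proposition~\ref{general} are \emph{surjective} (indeed isomorphisms on the $\omega_{Y|D_s,\log}$ resp.\ $\omega_{Y|D_s}^{q-i}/Z^{q-i}_{Y|D_s}$ resp.\ $\omega_{Y|D_s}^{q-i}/B^{q-i}_{Y|D_s}$ pieces, by the same argument as Lemma~\ref{Lem8}). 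A diagram chase (snake lemma) in
\[
\xymatrix@M=8pt{
0 \ar[r] & \ker\Phi_{m,D} \ar[r] \ar[d] & \gr_{\mathcal{U}}^m\mathcal{H}^q \ar[r] \ar[d]^-{\Phi_{m,D}} & \im\Phi_{m,D} \ar[r] \ar[d] & 0\\
0 \ar[r] & \mathfrak{K}^q \ar[r] & \gr_{\tilde{\mathfrak{U}}}^{m+ep(r-q)/(p-1)}\mathcal{H}^q \ar[r] & \gr_{\tilde{\mathfrak{U}}}^{m+\cdots}\mathcal{H}^q/\mathfrak{K}^q \ar[r] & 0
}
\]
then shows $\coker\Phi_{m,D}$ is a subquotient of $\mathfrak{K}^q$ (up to the outer terms, which map on surjectively) and $\ker\Phi_{m,D}$ maps to $\mathfrak{K}^q$ with controlled kernel, so both $\ker\Phi_{m,D}$ and $\coker\Phi_{m,D}$ are squeezed between Mittag-Leffler zero sheaves. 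The cases $m=0$, $0<m<pe/(p-1)$ with $p\nmid m$, $0<m<pe/(p-1)$ with $p\mid m$, and $m\ge pe/(p-1)$ have to be handled separately, but only the first three are nontrivial, and the case $m\ge pe/(p-1)$ is immediate from Proposition~\ref{general}~(1), which forces both source and target to vanish (using $\mathcal{U}^m=0$ from Corollary~\ref{r not q}~(4), or directly $\tilde{\mathfrak{U}}^{ep}=0$ from $(*3)$).

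Once this is in place for each individual $m$, I would run the same induction on $m$ as in Lemma~\ref{compU}: using the commutative ladder relating $U^{m+1}\hookrightarrow U^m \twoheadrightarrow \gr^m$ for both $\mathcal{U}^{\bullet}$ and $\tilde{\mathfrak{U}}^{\bullet}$, the finiteness of the filtration (the top step $\tilde{\mathfrak{U}}^{ep}=0$ by $(*3)$, and $\mathcal{U}^m=0$ for $m\ge pe/(p-1)$), and the just-established Mittag-Leffler-zero control on each graded piece, one propagates the conclusion from $\tilde{\mathfrak{U}}^{pe}\mathcal{H}^q = 0$ downward to all $m$. The class of Mittag-Leffler-zero projective systems (with respect to the multiplicity index) is closed under subobjects, quotients, and extensions, which is exactly what makes the induction go through. \textbf{The main obstacle} I anticipate is purely bookkeeping: keeping the indexing shift $m \mapsto m + ep(r-q)/(p-1)$ consistent throughout (in particular making sure $ep(r-q)/(p-1)$ is an integer in the relevant range, which holds when $K$ contains a primitive $p$-th root of unity, and reducing the general case to that one via the tame base change $K'/K$ of Lemma~\ref{comp} and its corollary --- there one also has to check the comparison map is Mittag-Leffler-zero-compatible, but that is already recorded in the Corollary following Lemma~\ref{comp}), and correctly matching which of the three structural sub-cases of Proposition~\ref{general} applies to each residue class of $m$ modulo $p$. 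No genuinely new idea beyond what is already assembled in Lemmas~\ref{CokernelSymb}, \ref{compU} and Propositions~\ref{general}, \ref{Prop r not q} should be needed.
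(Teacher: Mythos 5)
Your proposal is correct and follows essentially the same route as the paper: the paper also controls the graded pieces by the explicit symbol computations of Proposition \ref{Prop r not q} together with the structure results of Proposition \ref{general} (packaged there as Lemma \ref{Cokernel Symmb*}, whose error term $\mathfrak{L}_0^{'m}\cong\mathfrak{K}^q$-type quotient is Mittag-Leffler zero), and then runs the same ladder diagram and induction on $m$ as in Lemma \ref{compU}, using the finiteness of the filtrations.
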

To prove the above Proposition, we need the following Lemma:
\begin{Lem}(cf. {\rm Lemma \ref{CokernelSymb}})\label{Cokernel Symmb*}
The cokernel of the morphism
\[\mathfrak{gr}^m\Symb_{X|D}: \gr_{U}^m\left((1+I_{D_2})^{\times}\otimes (M_{X_2}^{gp})^{\otimes q-1}\right)\longrightarrow \gr_{\tilde{\mathfrak{U}}}^{m+ep(r-q)/(p-1)}\mathcal{H}^q\big(s_1(r)_{X|D}\big)\]
is Mittag-Leffler zero with respect to the multiplicities of the prime components of $D$.
\end{Lem}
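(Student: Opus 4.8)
The plan is to mimic the proof of Lemma \ref{CokernelSymb} almost verbatim, replacing the role of the complex $S_D^{\cdot}$ by $s_1(r)_{X|D}$ and the filtration $\tilde U$ by $\tilde{\mathfrak U}$ shifted by $ep(r-q)/(p-1)$. Concretely, I would first fix an integer $m$ with $0\le m<ep/(p-1)$ (the range outside this is handled trivially by Proposition \ref{general}~(1) together with $(*3)$, which give vanishing), and set $m_0:=ep(r-q)/(p-1)$. The target $\gr_{\tilde{\mathfrak U}}^{m_0+m}\mathcal H^q(s_1(r)_{X|D})$ has been computed in Proposition \ref{general}: it sits in a short exact sequence with outer terms that are subquotients of $\omega_{Y|D_s,\log}^{\bullet}$ (when $m=0$) or of $\omega_{Y|D_s}^{\bullet}/B_{Y|D_s}^{\bullet}$, $\omega_{Y|D_s}^{\bullet}/Z_{Y|D_s}^{\bullet}$ (when $m>0$), possibly modulo $\mathfrak K^q=\frac{\mathscr O_Y\otimes\omega_{Z_1|\mathscr D'_1}^{q-1}}{\mathscr O_Y\otimes\omega_{Z_1|\mathscr D_1}^{q-1}}$.

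The core of the argument is the following diagram, exactly parallel to the one in the proof of Lemma \ref{CokernelSymb}:
\[
\xymatrix@M=6pt{
&&0\ar[d]&&\\
&&\mathfrak L'^m\ar@{->>}[ld]\ar[d]&&\\
0&\Coker(\mathfrak{gr}^m\Symb_{X|D})\ar[l]&\gr_{\tilde{\mathfrak U}}^{m_0+m}\mathcal H^q(s_1(r)_{X|D})\ar[l]\ar[d]&\gr_U^m\big((1+I_{D_2})^{\times}\otimes(M_{X_2}^{gp})^{\otimes(q-1)}\big)\ar@{->>}[ld]\ar[l]&\\
&&\mathfrak D\ar[d]&&\\
&&0&&
}
\]
where $\mathfrak D$ is $\Ker\big(1-a_0^{p(r-q)}C^{-1}\big)$ on $Z^q(\mathscr O_Y\otimes\omega_{Z_1|\mathscr D_1}^{\cdot})$ when $m=0$ and $B^q\big((T^{m_0+m}\mathscr O_{Z_1}/T^{m_0+m+1}\mathscr O_{Z_1})\otimes\omega_{Z_1|\mathscr D_1}^{\cdot}\big)$ when $0<m<ep/(p-1)$. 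The surjectivity of the southwest-pointing map from $\gr_U^m(\cdots)$ onto $\mathfrak D$ is precisely the content of Proposition \ref{Prop r not q} (the explicit assignments $(\tilde a)$ and $(\tilde b)$ there are surjective by construction, using $(*1)$, i.e.\ that $\omega_{Y|D,\log}^q\xrightarrow{\cong}\Ker(1-a_0^{p(r-q)}C^{-1})$ via $\omega\mapsto b_0^{-p(r-q)}\omega$). Chasing the diagram, $\mathfrak L'^m$ surjects onto $\Coker(\mathfrak{gr}^m\Symb_{X|D})$, so it suffices to show $\mathfrak L'^m$ is Mittag-Leffler zero. But $\mathfrak L'^m$ is a subsheaf of the kernel $\mathfrak L_0'^m$ of the map $\mathcal H^q(\gr_{\tilde{\mathfrak U}}^{m_0+m}s_1(r)_{X|D})\twoheadrightarrow\mathfrak D$, and Proposition \ref{general}~(2),(3) identifies $\mathfrak L_0'^m$ with $\mathfrak K^q=\frac{\mathscr O_Y\otimes\omega_{Z_1|\mathscr D'_1}^{q-1}}{\mathscr O_Y\otimes\omega_{Z_1|\mathscr D_1}^{q-1}}$ when $p\mid m$ (and with $0$ when $p\nmid m$, $m>0$). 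Since $\mathscr O_Y(-D'_s)/\mathscr O_Y(-D_s)$ is killed by passing from the prime components $D_\lambda$ with multiplicity $m_\lambda$ to those with multiplicity $pm'_\lambda\ge m_\lambda$, this quotient is visibly Mittag-Leffler zero with respect to the multiplicities of the prime components of $D$; hence so is $\mathfrak L'^m$, and hence so is $\Coker(\mathfrak{gr}^m\Symb_{X|D})$.

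I expect the main obstacle to be purely bookkeeping: keeping the shift $m_0=ep(r-q)/(p-1)$ consistent throughout, and correctly matching the three cases ($m=0$; $0<m<ep/(p-1)$ with $p\nmid m$; $0<m<ep/(p-1)$ with $p\mid m$) to the three outputs of Proposition \ref{general} and Proposition \ref{Prop r not q}, together with the vanishing $\gr_{\tilde{\mathfrak U}}^{m_0+m}=0$ for $m\ge ep/(p-1)$ which truncates the filtration. There is no new analytic input needed — the Cartier-type isomorphism Lemma \ref{Omega}, the identification $(*1)$, and the structure results of Proposition \ref{general} already do all the work, and the Mittag-Leffler vanishing ultimately reduces, exactly as in Lemma \ref{CokernelSymb}, to the elementary fact that $\mathfrak K^q$ is annihilated after replacing $D$ by $pD'$.
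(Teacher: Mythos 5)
Your proposal is correct and follows essentially the same route as the paper: the same three-column diagram reducing the claim to the Mittag--Leffler vanishing of $\mathfrak{L}'^m\subset\mathfrak{L}_0'^m$, surjectivity onto $\mathfrak{D}$ supplied by the explicit symbol computations of Proposition \ref{Prop r not q}, and the identification of $\mathfrak{L}_0'^m$ with $\mathfrak{K}^q=\frac{\mathscr{O}_Y\otimes\omega^{q-1}_{Z_1|\mathscr{D}'_1}}{\mathscr{O}_Y\otimes\omega^{q-1}_{Z_1|\mathscr{D}_1}}$ (for $p\mid m$, zero otherwise) coming from the cokernel computation underlying Proposition \ref{general}, which is Mittag--Leffler zero in the multiplicities of $D$. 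Your bookkeeping of the shift $m_0=ep(r-q)/(p-1)$ in the $\tilde{\mathfrak{U}}$-grading is in fact cleaner than the paper's notation, but the argument is the same.
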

\begin{proof}This proof is the same as the proof of Lemma \ref{CokernelSymb}.
We have the following commutative diagram:\\
{\footnotesize  \[
\xymatrix{
&&0 \ar[d]&& \\
&&\mathfrak{L}'^m\ar@{->>}[ld]_{(\diamond 1)} \ar[d]&&\\
0&\Coker(\mathfrak{gr}^m\Symb_{X|D})\ar[l]&\gr_{\Tilde{\mathfrak{U}}}^{m+\frac{ep(r-q)}{(p-1)}}\mathcal{H}^q(s_1(r)_{X|D})\ar[l]\ar[d]&\gr_{U}^m\big((1+I_{D_2})^{\times}\otimes(M_{X_2}^{gp})^{\otimes(q-1)}\big)\ar@{->>}[ld]^{(\diamond 2)} \ar[l]&\\
&&\mathfrak{E} \ar[d]&&\\
&&0&&
}\]
}
where the vertical and horizontal sequences is exact.   Here we put 
 \[ \mathfrak{E}:=\begin{cases}  \Ker\Big(Z^q\big( \mathscr{O}_Y \otimes_{\mathscr{O}_{Z_1}} \omega_{Z_1|\mathscr{D}_1}^{\cdot}\big)\xrightarrow{1-a_0^{p(r-q)}C^{-1}} \mathcal{H}^q\big( \mathscr{O}_Y \otimes_{\mathscr{O}_{Z_1}} \omega_{Z_1|\mathscr{D}_1}^{\cdot} \big)\Big)\ {\rm if}\ m=0,&\\
 B^q\left((T^{m+\frac{ep(r-q)}{p-1}}\mathscr{O}_{Z_1}/T^{m+\frac{ep(r-q)}{p-1}+1}\mathscr{O}_{Z_1})\otimes \omega^{\cdot}_{Z_1|\mathscr{D}_1}\right)\ \ \ \ \ \ {\rm if}\ 0 <m<pe/(p-1),&\\
 
 \end{cases}\]

 {\tiny\[ \mathfrak{L}_0^{'m}:=\begin{cases}
\Ker\left(\mathcal{H}^q(\gr_{\tilde{\mathfrak{U}}}^0s_1(r)_{X|D}) \twoheadrightarrow \Ker\Big(Z^q\big( \mathscr{O}_Y \otimes_{\mathscr{O}_{Z_1}} \omega_{Z_1|\mathscr{D}_1}^{\cdot}\big)\xrightarrow{1-a_0^{p(r-q)}C^{-1}} \mathcal{H}^q\big( \mathscr{O}_Y \otimes_{\mathscr{O}_{Z_1}} \omega_{Z_1|\mathscr{D}_1}^{\cdot} \big)\Big)\right)\ {\rm if}\ m=0,& \\
\Ker\left(\mathcal{H}^q(\gr_{\tilde{\mathfrak{U}}}^ms_1(r)_{X|D}) \twoheadrightarrow  B^q\left(\left(\frac{T^{m+\frac{ep(r-q)}{p-1}}\mathscr{O}_{Z_1}}{T^{m+\frac{ep(r-q)}{p-1}+1}\mathscr{O}_{Z_1}}\right)\otimes \omega^{\cdot}_{Z_1|\mathscr{D}_1}\right)\right) \ \ \ {\rm if}\ 0 <m<pe/(p-1),\  p|m,&\\
0 \ \ \  {\rm if}\ 0 <m<pe/p-1,\  p \nmid m,&
\end{cases}\]}
\[\mathfrak{L}'^m:= \mathfrak{L}_0^{'m} \cap \gr_{\Tilde{\mathfrak{U}}}^{m+\frac{ep(r-q)}{p-1}}\mathcal{H}^q(s_1(r)_{X|D}).\]
The morphism $(\diamond 2)$ is constructed in Proposition \ref{Prop r not q}. It is surjective by the explicit assignments in Proposition \ref{Prop r not q}. 
We have 
\[\mathfrak{L}_0^{'m} \cong \frac{\mathscr{O}_Y\otimes_{\mathscr{O}_{Z_1}}\omega^{q-1}_{Z_1|\mathscr{D}'_1}}{\mathscr{O}_Y\otimes_{\mathscr{O}_{Z_1}}\omega^{q-1}_{Z_1|\mathscr{D}_1}}\ \ ({\rm if}\ 0 \leq m \leq pe/(p-1),\ p|m)\]
by the similar argument as the proof of Lemma \ref{Lem5} $(1)$ and $(3)$. Thus $\mathfrak{L}_0^{'m}$ is Mittag-Leffler zero with respect to the multiplicities of the prime components of $D$. Since $(\diamond 2)$ is surjective, $(\diamond 1)$ is also surjective. Since $\mathfrak{L}'^m$ is Mittag-Leffler zero, so is $\Coker(\gr^m{\rm Symb}_{X|D})$. \end{proof}

\textit{Proof of Proposition \ref{kercok}} : We put $\mathcal{H}^q:=\mathcal{H}^q(s_1(r)_{X|D})$ for simplicity.
We consider the following commutative diagram:\\
$$
\begin{CD}   
0 @>>> \mathcal{U}^{m+1}\mathcal{H}^{q}@>>>\mathcal{U}^{m}\mathcal{H}^{q}@>>>\gr_{\mathcal{U}}^{m}\mathcal{H}^{q}@>>>0\\
@.@VVV @VVV@VV\Phi_{m,D}V\\
0@>>>\Tilde{\mathfrak{U}}^{m+\frac{pe(r-q)}{p-1}+1}\mathcal{H}^{q}@>>> \Tilde{\mathfrak{U}}^{m+\frac{pe(r-q)}{p-1}}\mathcal{H}^{q}@>>> \gr_{\Tilde{\mathfrak{U}}}^{m+\frac{pe(r-q)}{p-1}}\mathcal{H}^{q}@>>>0
\end{CD}
$$\\
The left and central vertical morphism is injective. If $m > pe/(p-1)$, the claim is trivial. We assume that $0 \leq m \leq pe/(p-1)$. If $m=pe/(p-1)$, the right vertical morphism is injective by the same argument as Lemma \ref{Lem6}  and the cokernel of $\Phi_{m,D}$ is Mittag-Leffler zero from Lemma \ref{Cokernel Symmb*}. We can easily show the assertion by induction on $m$. $\square$

\section{Acknowledgements}
I am thankful to Professor Kanetomo Sato for his great help and discussion. This problem was suggested to me by him. 
I thank Takashi Suzuki for his advice and comment, especially on the proof of the existence of a nice hyper covering.
I would like to thank the referee for his/her numerous valuable comments and suggestions to improve the quality of  this paper.


\begin{thebibliography}{HD}
\bibitem[AS]{AS} Asakura, M., Sato, K.:\ {Syntomic cohomology and Beilinson's Tate conjecture for $K_2$.} J. Algebraic Geom. 22  no. 3, 481--547 (2013).
\bibitem[Ber]{Ber}  Berthelot., U., {Cohomologie cristalline des schemas de caract \'eristiqu\'e $p > 0$.} Lecture Notes in Mathematics, Vol. 407, SpringerVerlag, Berlin (1974)
\bibitem[JSZ]{JSZ} Jannsen., U., Saito., S., Zhao.,Y.:\:{Duality for relative logarithmic de Rham-Witt sheaves and wildly ramified class field theory over finite fields}. 
Compositio Math. 154, 1306--1331 (2018)
\bibitem[KMSY]{KSY}Kahn, B., Miyazaki, H., Saito, S., and Yamazaki, T. \:{Motives with modulus, I: Modulus sheaves with transfers for non-proper modulus pairs.} 2019,  arXiv:1908.02975.
\bibitem[Ka1]{Ka1} Kato, K.:\:{On $p$-adic vanishing cycles (application of ideas of Fontaine-Messing)}. Advan.\ Stud.\ Pure Math.,\ t. 10, 207--251 (1987)
\bibitem[Ka2]{Ka2} Kato, K.:\:{Semi-stable reduction and $p$-adic \'etale cohomology}. P\'eriodes $p$-adiques.\ S\'eminaire de Bures.\ 1988,\ Ast\'erisque,\ t. 223, 269--293 (1994)
\bibitem[Ku]{Ku} Kurihara, M.:\:{A note on $p$-adic \'etale cohomology}. Proc.\ Japan Academy\ 63, 275--278 (1987)
Trans. AMS. 370, 987--1043 (2018)
\bibitem[Sat]{Sat} Sato, K.:\:{$p$-adic \'etale Tate twists and arithmetic duality(with an appendix by Hagihara, K.)}. Ann.\ Sci.\ E\'cole.\ Norm.\ Sup.\;(4) 40, 519--588 (2007)
\bibitem[SS]{SS} S. Saito, K. Sato,\:{On $p$-adic vanishing cycles of log smooth families}. Tunisian J. Math,  \textbf{2}, 309-335 (2020)
\bibitem[Tsu1]{Tsu0} Tsuji, T.:\:{Syntomic complexes and $p$-adic vanishing cycles}. J.\ reine angew.\ Math.,\;t.472, 69-138 (1996)
\bibitem[Tsu2]{Tsu1} Tsuji, T.:\:{$p$-adic \'etale cohomology and crystalline cohomology in the semi-stable reduction case}. Invent.\ Math.,\;t.137, 233--411 (1999)
\bibitem[Tsu3]{Tsu2} Tsuji, T.:\:{On $p$-adic nearby cycles of log smooth families}.Bull.\ Soc.\ Math.\ France.\;128, 529--575 (2000)
\bibitem[Y]{Y} Yamamoto, K.:\:{Duality for $p$-adic \'etale Tate twists with modulus}, in preparation.
\end{thebibliography}
\end{document}